\newtheorem{theorem}{Theorem}[section]
\newtheorem{lemma}[theorem]{Lemma}
\newtheorem{proposition}[theorem]{Proposition}
\newtheorem{corollary}[theorem]{Corollary}
\theoremstyle{definition}
\newtheorem{definition}[theorem]{Definition} 
\newtheorem{remark}[theorem]{Remark}
\theoremstyle{remark}
\newcommand{\C}{\mathbb{C}}
\newcommand{\R}{\mathbb{R}}
\newcommand{\N}{\mathbb{N}}
\newcommand{\Z}{\mathbb{Z}}
\newcommand{\SO}{\textnormal{\textbf{S}}_{0}}
\newcommand{\ghat}{\widehat{G}}
\newcommand{\s}{\textnormal{s}}
\newcommand{\NumWin}{n}
\newcommand{\NumDim}{d}
\newcommand{\La}{\Lambda}
\newcommand{\la}{\lambda}
\newcommand{\Lac}{\Lambda^\circ}
\newcommand{\lac}{\lambda^\circ}
\newcommand\blfootnote[1]{%
  \begingroup
  \renewcommand\thefootnote{}\footnote{#1}%
  \addtocounter{footnote}{-1}%
  \endgroup
}
\newcommand{\cocy}{\textsf{c}}
\newcommand{\tr}{\textnormal{tr}}
\newcommand{\lmodule}{\mathcal{A}}
\newcommand{\rmodule}{\mathcal{B}}
\DeclareBoldMathCommand\boldlangle{\left\langle} 
\DeclareBoldMathCommand\boldrangle{\right\rangle}
\newcommand{\lhs}[2]{\prescript{}{\lmodule}{\boldlangle #1,#2\boldrangle}}
\newcommand{\rhs}[2]{{\boldlangle #1,#2\boldrangle}_{\!\rmodule}}
\newcommand{\mlhs}[2]{\prescript{}{\lmodule}{{\textnormal{\textbf{[}}} #1,#2 \textnormal{\textbf{]}}}}
\newcommand{\mrhs}[2]{ \textnormal{\textbf{[}} #1, #2 {\textnormal{\textbf{]}}}_{\rmodule}}
\newcommand{\mvfun}[3]{
\ifthenelse{\equal{#2}{}}
	{\ifthenelse{\equal{#3}{}}  
    	{#1_{\bullet,\bullet}}{  
    	{#1_{\bullet,#3}}
    }}{ 
    {\ifthenelse{\equal{#3}{}}  
    	{#1_{#2,\bullet}}
    {  
    	{#1_{#2,#3}}}}}
} 
\newcommand{\vvfun}[2]{
\ifthenelse{\equal{#2}{}}
	{#1_{\bullet}}
  {#1_{#2}}
} 
\newcommand{\cE}{\mathcal{E}}
\title{Duality of Gabor frames and Heisenberg modules}
\author{Mads S.\ Jakobsen \thanks{Norwegian University of Science and Technology, Department of Mathematical Sciences, Trondheim, Norway, \mbox{E-mail: \protect\url{mads.jakobsen@ntnu.no}; \protect\url{franz.luef@ntnu.no}}}, Franz Luef\footnotemark[1]}
\begin{document}
\date{}
\maketitle

\blfootnote{\textbf{Mathematics Subject Classification (2010):} 46L07, 58B34}
\blfootnote{\textbf{Keywords:} Heisenberg modules, Gabor frames, noncommutative torus, projections in $C^{∗}$-algebras, Hilbert $C^{∗}$-modules,  Feichtinger algebra}

\begin{abstract}
Given a locally compact abelian group $G$ and a closed subgroup $\La$ in $G\times\ghat$, Rieffel associated to $\La$ a Hilbert $C^*$-module $\cE$, known as a Heisenberg module. He proved that $\cE$ is an equivalence bimodule between the twisted group $C^*$-algebra $C^*(\Lambda,\cocy)$ and $C^*(\Lambda^\circ,\overline{\cocy})$, where $\Lambda^{\circ}$ denotes the adjoint subgroup of $\Lambda$. Our main goal is to study Heisenberg modules using tools from time-frequency analysis and pointing out that Heisenberg modules provide the natural setting of the duality theory of Gabor systems. More concretely, we show that the Feichtinger algebra $\SO(G)$ is an equivalence bimodule between the Banach subalgebras $\SO(\La,\cocy)$ and $\SO(\Lac,\overline{\cocy})$ of $C^*(\Lambda,\cocy)$ and $C^*(\Lambda^\circ,\overline{\cocy})$, respectively. Further, we prove that $\SO(G)$ is finitely generated and projective exactly for co-compact closed subgroups $\Lambda$. In this case the generators $g_1,\ldots,g_n$ of the left $\SO(\La)$-module $\SO(G)$ are the Gabor atoms of a multi-window Gabor frame  for $L^2(G)$. We prove that this is equivalent to $g_1,\ldots,g_n$ being a Gabor super frame for the closed subspace generated by the Gabor system for $\Lac$. This duality principle is of independent interest and is also studied for infinitely many Gabor atoms. We also show that for any non-rational lattice $\La$ in $\R^{2m}$ with volume $s(\La)<1$ there exists a Gabor frame generated by a single atom in $\SO(\R^m)$. 
\end{abstract}

\section{Introduction}

In this paper we revisit Rieffel’s construction of Heisenberg modules for locally compact abelian groups \cite{ri88}, which followed Connes’ construction of these modules for non-commutative 2-tori in his seminal work \cite{co80}. Heisenberg modules have been the subject of many investigations in noncommutative 
geometry \cite{bo99-2,chlu17,ecluphwa10,hu18-1,la17-1,la18,lemo16,li04-4,lali12-1,lapa13,lapa15,lapa17}, 
but the interplay between the left and the right module structure has not been addressed at all. A link between applied harmonic analysis and noncommutative geometry was described in \cite{lu09,lu11} by relating the Heisenberg modules over noncommutative tori, \cite{ri88}, with (multi-window) Gabor frames for $L^2(\mathbb{R}^m)$. In this way we get to see the natural interplay between the duality theory of Gabor analysis and the Morita equivalence of twisted group algebras.

In order to best describe and motivate our results let us give a brief account of the theory in the classical case of $L^{2}(\R)$, \cite{co80}. We consider the modulation operator and the translation operator
\[ E_{\beta} f(t) = e^{2\pi i \beta t} f(t), \ T_{\alpha} f(t) = f(t-\alpha), \ \ \alpha,\beta\in\R\backslash\{0\}, \ f\in L^{2}(\R),\]
that act unitarily on $L^{2}(\R)$. In \emph{time-frequency analysis} one wants to find $\alpha,\beta$ and two functions $g$ and $h$ in $L^{2}(\R)$ such that
\begin{equation} \label{eq:1605a} f = \sum_{m,n\in\Z} \langle f, E_{m\beta}T_{n\alpha} g \rangle \, E_{m\beta}T_{n\alpha} h \ \ \text{for all} \ \ f\in L^{2}(\R).\end{equation}
Here $\langle \, \cdot \, , \, \cdot \, \rangle$ is the usual $L^{2}$-inner-product with the linearity in the left entry. The theory of \emph{frames} allows us to describe when reproducing formulas of the form \eqref{eq:1605a} are possible. A system of the form $\{E_{m\beta}T_{n\alpha}g\}_{m,n\in\Z}$ is a \emph{Gabor} system. These systems have been studied extensively, see \cite{grrost18,grst13} for recent results on the frame set of totally positive functions and \cite{jale16-1,jale16-2} for the theory in the setting of locally compact abelian groups. 
We will introduce frames and Gabor systems thoroughly in Section~\ref{sec:mws-gabor}. 

In the theory of noncommutative tori sums of the form \eqref{eq:1605a} appear in the following way:
the two operators $E_{\beta}$ and $T_{\alpha}$ are used to construct a closed subspace of the linear and bounded operators on $L^{2}(\R)$,
\begin{align*} 
\lmodule & = \big\{ {\bf a} \in \mathsf{B}(L^{2}(\R)) \, : \, {\bf{a}} =  \sum_{m,n\in\Z} a(m,n) \, E_{m\beta}T_{n\alpha} \, , \ a\in \ell^{1}(\Z^{2})\big\}.
\end{align*}
The norm $\Vert \mathbf{a} \Vert_{\mathcal{A}} = \Vert a \Vert_{1}$, where $\mathbf{a}$ and $a$ are related as above, turns 
 $\mathcal{A}$ into an involutive Banach algebra with respect to composition of operators and the taking of adjoints.
$\mathcal{A}$ is a faithful representation of the twisted group algebra $\ell^{1}(\Z^{2})$ with the twisted convolution and involution (with a phase factor) given by
\begin{subequations} \label{eq:1605c}
\begin{align} a_{1} \,\natural\, a_{2} (m,n) & = \sum_{m',n'\in\Z} a_{1}(m',n') \, a_{2}(m-m',n-n') \, e^{2\pi i \beta \alpha  (m-m')  n'}, \\
a^{*}(m,n) & = e^{2\pi i \alpha\beta mn} \, \overline{a(-m,-n)}. \end{align}
\end{subequations}
The left-action that $\mathbf{a}\in \lmodule$ has on functions $f\in L^{2}(\R)$ is given by $\mathbf{a}\cdot f=\sum_{m,n\in\Z}a(m,n)E_{m\beta}T_{n\alpha}f$. \\
For functions in $\SO(\R)$ we define an $\lmodule$-valued inner-product in the following way:
\begin{align*} &\lhs{\,\cdot\,}{\,\cdot\,} : \SO(\R)\times \SO(\R) \to \lmodule, \ \lhs{f}{g} = \sum_{m,n\in\Z} \langle f, E_{m\beta}T_{n\alpha} g \rangle \, E_{m\beta} T_{n\alpha}.
\end{align*}
Here $\SO(\R)$ is \emph{Feichtinger's algebra}: a suitable Banach space of test-functions, which is widely used in time-frequency analysis. For now readers may think of it as a space akin to the Schwartz space. We summarize its relevant theory and properties in Section \ref{sec:fei-alg}.
With the inner-product $\lhs{\cdot}{\cdot}$ we can write the desired equality in \eqref{eq:1605a} as $f = \lhs{f}{g}\cdot h$.

Another question in time-frequency analysis is to find values of $\alpha$ and $\beta$, a function $g$, and a process that make it possible to recover a given sequence $c\in \ell^{2}(\Z^{2})$ from a function of the form
\[ f = \sum_{m,n\in\Z} c(m,n) \, E_{m\beta}T_{n\alpha} g.\]
The solution to this problem is to make sure that $\{E_{m\beta}T_{n\alpha}g\}_{m,n\in\Z}$ is a \emph{Riesz sequence}. We give a definition of Riesz sequences in Section \ref{sec:mws-gabor}. For now we wish to emphasize the following:
there is an intimate relation between the frames and Riesz sequences of the form $\{E_{m\beta}T_{n\alpha} g\}_{m,n\in\Z}$. This is known as the \emph{duality  principle for Gabor systems} (we provide references for this result in the last paragraph of the introduction):

\begin{theorem} \label{th:frame-riesz-dual-intro} For any choice of $\alpha,\beta\in\R\backslash\{0\}$ and any $g\in L^{2}(\R)$ the following statements are equivalent. \begin{enumerate}
\item[(i)] The Gabor system $\{E_{m\beta}T_{n\alpha} g\}_{m,n\in\Z}$ is a frame for $L^{2}(\R)$,
\item[(ii)] The Gabor system $\{(E_{m/\alpha}T_{n/\beta})^{*} g\}_{m,n\in\Z}$ is a Riesz sequence for $L^{2}(\R)$.
\end{enumerate}
\end{theorem}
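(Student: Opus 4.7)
My approach starts by reformulating both conditions as spectral statements about canonically associated operators. The Gabor system $\{E_{m\beta}T_{n\alpha}g\}_{m,n\in\Z}$ is a frame for $L^2(\R)$ iff the frame operator $S_g f = \lhs{f}{g}\cdot g$ is bounded above and positive-invertible on $L^2(\R)$, while the adjoint system $\{(E_{m/\alpha}T_{n/\beta})^* g\}_{m,n\in\Z}$ is a Riesz sequence iff its Gram operator on $\ell^2(\Z^2)$, with entries $\langle (E_{m/\alpha}T_{n/\beta})^* g,(E_{m'/\alpha}T_{n'/\beta})^* g\rangle$, is bounded above and bounded below. I would first establish the equivalence for $g\in\SO(\R)$, where every relevant sum converges absolutely, and then pass to general $g\in L^2(\R)$ by a density and continuity argument on the frame and Riesz bounds.

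The key tool is the Janssen representation, i.e., the fundamental identity of Gabor analysis: for $g\in\SO(\R)$ one has
\[ S_g = \frac{1}{\alpha\beta}\sum_{m,n\in\Z}\langle g,(E_{m/\alpha}T_{n/\beta})^* g\rangle\,(E_{m/\alpha}T_{n/\beta})^*, \]
obtained from Poisson summation applied to the short-time Fourier transform of $g$ against itself on the lattice $\alpha\Z\times\beta\Z$. The crucial observation is that the right-hand side is exactly the action on $L^2(\R)$, through adjoint time-frequency shifts, of the single sequence $b(m,n)=(\alpha\beta)^{-1}\langle g,(E_{m/\alpha}T_{n/\beta})^* g\rangle$, viewed as an element of the twisted group algebra $\ell^1(\Z^2,\overline{\cocy})$ associated to $\Lac$.

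A direct computation now shows that the Gram operator of the adjoint system equals the left regular representation of this same element $b$ on $\ell^2(\Z^2)$ by twisted convolution with respect to the adjoint cocycle $\overline{\cocy}$. Thus $S_g$ and the Gram operator $G$ are two faithful $*$-representations of one and the same element of the twisted group $C^*$-algebra $C^*(\Lac,\overline{\cocy})$, and invertibility of that element is representation-independent. Consequently $S_g$ is invertible on $L^2(\R)$ iff $G$ is invertible on $\ell^2(\Z^2)$, which yields the equivalence of (i) and (ii). Alternatively, one can use the associativity relation $\lhs{f}{g}\cdot h = f\cdot \rhs{g}{h}$ between the two module-valued inner products provided by the Heisenberg-module/Morita framework of the paper, which encodes the Janssen identity in a coordinate-free way.

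The main obstacle is the passage from $g\in\SO(\R)$ to a general $g\in L^2(\R)$: the Janssen series fails to converge absolutely in the $L^2$-case, and the identification of $S_g$ with an element of $C^*(\Lac,\overline{\cocy})$ via the adjoint-lattice representation must be interpreted weakly or distributionally. I would deal with this by exploiting the density of $\SO(\R)$ in $L^2(\R)$ together with the Lipschitz dependence of the optimal frame and Riesz bounds on $g\in L^2(\R)$, or, more conceptually, by invoking spectral invariance of the two faithful representations of the $C^*$-algebra on the common element determined by $g$. This step, together with the careful verification that the Gram matrix of the adjoint system really does coincide with the twisted-convolution operator for $b$, is the heart of the argument.
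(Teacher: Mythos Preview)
Your approach is a genuinely different route from the paper's. The paper (in its proof of the general Theorem~\ref{th:duality}, of which Theorem~\ref{th:frame-riesz-dual-intro} is the special case $G=\R$, $\NumDim=\NumWin=1$, $\Lambda=\alpha\Z\times\beta\Z$) follows Janssen's original technique: it inserts carefully chosen test functions into the assumed frame or Riesz inequalities, applies the fundamental identity \eqref{eq:figa} with the \emph{test functions} in $\SO$, integrates over the quotient using Weil's formula, and reads off the dual inequality with explicit bounds. This is a direct inequality manipulation that never needs to interpret $S_g$ as an element of a $C^{*}$-algebra and works for arbitrary $g\in L^{2}(\R)$ from the outset. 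Your approach, by contrast, identifies $S_g$ and the adjoint Gram operator as two faithful representations of the same element of $C^{*}(\Lambda^{\circ},\overline{\cocy})$ and transfers invertibility representation-independently. This is essentially the viewpoint of Gr\"ochenig's \emph{Gabor frames without inequalities} \cite{gr07-2}; it is elegant and buys a conceptual explanation of \emph{why} the duality holds, whereas the paper's route buys the explicit relation between the frame and Riesz bounds and avoids any operator-algebraic machinery.

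There is, however, a real gap in your extension from $\SO(\R)$ to general $g\in L^{2}(\R)$. The claim that the optimal frame and Riesz bounds depend Lipschitz-continuously on $g\in L^{2}(\R)$ is false: the lower bounds are not even continuous (a Gabor frame can degenerate under arbitrarily small $L^{2}$-perturbations), so a density argument of this kind cannot work. Your alternative suggestion, ``spectral invariance of the two faithful representations'', is on the right track but needs substance: for a Bessel vector $g\notin\SO$ the Janssen coefficients need not lie in $\ell^{1}$, so the element $b$ is a priori only in the twisted group von~Neumann algebra, not in $C^{*}(\Lambda^{\circ},\overline{\cocy})$. One then needs either that the Schr\"odinger and regular representations extend to $*$-isomorphic von~Neumann completions (which they do, by amenability of $\Z^{2}$), or an argument that $b$ actually lands in the $C^{*}$-algebra. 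Either way this is the heart of the matter and should be spelled out; the paper's inequality approach sidesteps it entirely by keeping $g\in L^{2}$ but placing the $\SO$-regularity on auxiliary test functions.
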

\noindent Note: in time-frequency analysis assertion (ii) is typically expressed as the (equivalent) statement that $\{E_{m/\alpha} T_{n/\beta} g\}_{m,n\in\Z}$ is a Riesz sequence.

The key behind the equivalence in Theorem \ref{th:frame-riesz-dual-intro} is the following identity in \eqref{eq:janssen-intro} that was 
established independently by Rieffel \cite[Proposition 2.11]{ri88} and seven years later in the  time-frequency analysis community by Daubechies, Laundau, Landau \cite[Theorem 3.1]{dalala95}, and Janssen \cite[Proposition 2.4]{ja95}. For $\alpha,\beta\ne0$ and suitably chosen functions $f,g$ and $h$ (e.g.\ in the Schwartz space, or more generally in $\SO(\R)$, or under slightly more general assumption) one has the equality 
\begin{equation} \label{eq:janssen-intro} \sum_{m,n\in\Z} \langle f, E_{m\beta}T_{n\alpha} g \rangle \, E_{m\beta} T_{n\alpha} h = \frac{1}{\vert\alpha\beta\vert} \sum_{m,n\in\Z} \langle h, \big( E_{m/\alpha} T_{n/\beta}\big)^{*}g\rangle \,\big( E_{m/\alpha} T_{n/\beta}\big)^{*} f. \end{equation}

The connection between Gabor systems generated by the operators $E_{\beta}$ and $T_{\alpha}$ and the Gabor systems generated by the operators $E_{1/\alpha}$ and $T_{1/\beta}$ appear in the study of Morita equivalence for noncommutative tori as follows \cite{lu09}: resembling the definition of $\lmodule$ from before, we set
\begin{align*}
\rmodule & = \big\{ {\bf b} \in \mathsf{B}(L^{2}(\R)) \, : \, {\bf{b}} = \frac{1}{\vert\alpha\beta\vert}\sum_{m,n\in\Z} b(m,n) \, \big(E_{m/\alpha}T_{n/\beta}\big)^{*}, \ b\in \ell^{1}(\Z^{2})\big\} \end{align*}
with the norm $\Vert \mathbf{b} \Vert_{\rmodule} = \Vert b \Vert_{1}$. $\rmodule$ is an involutive Banach algebra with respect to composition and the taking of adjoints and a faithful representation of a twisted group algebra $\ell^{1}(\Z^{2})$. The twisted convolution and involution on $\ell^{1}(\Z^{2})$ is slightly different here compared to the one in \eqref{eq:1605c} due to the differences between $\lmodule$ and $\rmodule$. We define a $\rmodule$-valued inner product 
\[ \rhs{\,\cdot\,}{\,\cdot\,} :\SO(\R)\times\SO(\R)\to \rmodule, \ \rhs{f}{g} = \frac{1}{\vert\alpha\beta\vert} \sum_{m,n\in\Z} \langle g, \big( E_{m/\alpha} T_{n/\beta}\big)^{*}f\rangle \,\big( E_{m/\alpha} T_{n/\beta}\big)^{*} \]
that has a right-action on functions $h\in L^{2}(\R)$ given by
\[ h \cdot \rhs{f}{g} = \frac{1}{\vert\alpha\beta\vert} \sum_{m,n\in\Z} \langle g, \big( E_{m/\alpha} T_{n/\beta}\big)^{*}f\rangle \,\big( E_{m/\alpha} T_{n/\beta}\big)^{*} h.\]
With the $\lmodule$- and $\rmodule$-valued inner products we can write the crucial equality \eqref{eq:janssen-intro} as
\[ \lhs{f}{g}\cdot h = f \cdot \rhs{g}{h} \ \ \text{for all} \ \ f,g,h\in\SO(\R).\]
In Rieffel's theory of Morita equivalence for $C^*$-algebras this is the key ingredient in the argument that $\SO(\R)$ is an $\lmodule$-$\rmodule$-equivalence bimodule. This implies that $\lmodule$ and $\rmodule$ are Morita equivalent. A non-trivial question in the theory of $C^*$-algebras is whether or not there exist elements $p\in\lmodule$ such that $p^2 = p$. This is related with the construction of Gabor Riesz sequences, and by Theorem \ref{th:frame-riesz-dual-intro} also with the construction of Gabor frames \cite{lu11}. In fact, from the general theory of frames and Riesz sequences and Theorem \ref{th:frame-riesz-dual-intro} it is possible to deduce the following.

\begin{theorem} \label{th:duality-for-mws-module-intro} For any pair of functions $g,h\in\SO(\R)$ and parameters $\alpha,\beta\ne 0$ the following statements are equivalent.
\begin{enumerate}[(i)]
\item $f = \lhs{f}{g} \cdot h \ $ for all $\ f\in \SO(\R)$, i.e.,
\begin{equation} \label{eq:1605d} f = \textstyle\sum\limits_{m,n\in\Z} \langle f, E_{m\beta}T_{n\alpha} g\rangle \, E_{m\beta}T_{n\alpha} h.
\end{equation}
\item The operator 
\[\rhs{g}{h} : L^{2}(\R)\to L^{2}(\R), \ f\cdot \rhs{g}{h} = \vert \alpha\beta\vert^{-1} \textstyle\sum\limits_{m,n\in\Z} \langle h, (E_{m/\alpha}T_{n/\beta})^{*}g\rangle \, (E_{m/\alpha}T_{n/\beta})^{*} f\] 
is the identity operator, i.e, $\vert \alpha\beta \vert^{-1} \, \langle h, (E_{m/\alpha}T_{n/\beta})^{*}g \rangle = \delta_{(m,n),(0,0)}$ for all $(m,n)\in\Z^{2}$.
\item The operator
\[ \lhs{g}{h} : L^{2}(\R)\to L^{2}(\R), \ \lhs{g}{h} \cdot f = \textstyle\sum\limits_{m,n\in\Z} \langle g, E_{m\beta}T_{n\alpha} h\rangle \, E_{m\beta}T_{n\alpha} f\] 
is an idempotent operator from $L^{2}(\R)$ onto $ \overline{\textnormal{span}}\{ (E_{m/\alpha}T_{n/\beta})^{*} g\}_{m,n\in\Z}$.
\item $f = g \cdot \rhs{h}{f}$ for all $ f\in\SO(\R)\cap \overline{\textnormal{span}}\{ (E_{m/\alpha}T_{n/\beta})^{*} g\}_{m,n\in\Z}$, i.e.,
\begin{equation} \label{eq:1605e} f = \vert \alpha \beta \vert^{-1} \textstyle\sum\limits_{m,n\in\Z} \langle f, (E_{m/\alpha}T_{n/\beta})^{*}h\rangle \,  (E_{m/\alpha}T_{n/\beta})^{*}g.\end{equation}
\end{enumerate}
The closure in (iii) and (iv) is with respect to the $L^{2}$-norm. The equalities in \eqref{eq:1605d} and \eqref{eq:1605e} extend to all $f\in L^{2}(\R)$ and to all $f\in\overline{\textnormal{span}}\{ (E_{m/\alpha}T_{n/\beta})^{*} g\}_{m,n\in\Z}$, respectively. All statements are equivalent to the ones where $g$ and $h$ are interchanged.
\end{theorem}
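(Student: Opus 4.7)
The entire argument is driven by the Janssen/Rieffel identity \eqref{eq:janssen-intro}, which in the bimodule notation reads $\lhs{f}{g}\cdot h = f\cdot\rhs{g}{h}$ for $f,g,h\in\SO(\R)$, together with two soft ingredients: $\SO(\R)$ is dense in $L^{2}(\R)$, and both brackets take values in the involutive Banach algebras $\lmodule$ and $\rmodule$ of bounded operators on $L^{2}(\R)$. My plan is to treat (ii) as the pivot condition and to show each of (i), (iii), (iv) collapses to it.

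For (i) $\Leftrightarrow$ (ii), substituting the Janssen identity into (i) rewrites the reconstruction as $f=f\cdot\rhs{g}{h}$ for every $f\in\SO(\R)$. Density of $\SO(\R)$ in $L^{2}(\R)$ together with the boundedness of $\rhs{g}{h}$ extends this to the operator identity $\rhs{g}{h}=I_{L^{2}(\R)}$. Reading off coefficients from the series $\rhs{g}{h}=|\alpha\beta|^{-1}\sum_{m,n\in\Z}\langle h,(E_{m/\alpha}T_{n/\beta})^{*}g\rangle\,(E_{m/\alpha}T_{n/\beta})^{*}$ and invoking the linear independence of the adjoint-lattice time-frequency shifts as operators on $L^{2}(\R)$---that is, the faithfulness of the representation defining $\rmodule$---gives the coefficient-wise biorthogonality in (ii); the reverse direction runs the same chain backwards.

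For (ii) $\Leftrightarrow$ (iii), Janssen applied to the triple $(g,h,f)$ yields $\lhs{g}{h}\cdot f = g\cdot\rhs{h}{f}$ on $\SO(\R)$ and, by density, on all of $L^{2}(\R)$. The right-hand side plainly lies in $V := \overline{\mathrm{span}}\{(E_{m/\alpha}T_{n/\beta})^{*}g\}_{m,n\in\Z}$, so $\mathrm{range}(\lhs{g}{h})\subseteq V$. Assuming (ii), one has $\rhs{h}{g}=\rhs{g}{h}^{*}=I$ and hence $\lhs{g}{h}\cdot g=g$; bimodule compatibility (the left $\lmodule$- and right $\rmodule$-actions commute) gives $\lhs{g}{h}\cdot((E_{m/\alpha}T_{n/\beta})^{*}g)=(E_{m/\alpha}T_{n/\beta})^{*}g$ for every $(m,n)$, and idempotency follows from the one-line computation $\lhs{g}{h}^{2}\cdot f=(\lhs{g}{h}\cdot g)\cdot\rhs{h}{f}=\lhs{g}{h}\cdot f$. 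The converse is obtained by testing the idempotent (iii) on the generators $(E_{m/\alpha}T_{n/\beta})^{*}g$ of $V$ and invoking the non-degeneracy of the right $\rmodule$-action on $g$ in the Heisenberg bimodule to force $\rhs{h}{g}=I$.

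Finally, (iii) $\Leftrightarrow$ (iv) is essentially a restatement: by Janssen, (iv) is precisely the identity $\lhs{g}{h}\cdot f = f$ on $\SO(\R)\cap V$, while finite linear combinations of adjoint-lattice shifts of $g$ lie in this intersection and are dense in $V$, so continuity of $\lhs{g}{h}$ transports the equality between the two. The extensions to $L^{2}(\R)$ and to $V$ in the final sentence of the theorem are built into the density arguments, and the $g\leftrightarrow h$ symmetry is manifest after inspecting (ii). The step I expect to require the most care is the faithfulness/non-degeneracy input used above to extract coefficient-level statements from operator-level identities; this is a standard but crucial feature of the Heisenberg equivalence bimodule and, if awkward, can be sidestepped by closing the loop instead via a direct application of Theorem \ref{th:frame-riesz-dual-intro} together with the standard Wexler--Raz biorthogonality for Gabor frames.
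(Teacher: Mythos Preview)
Your chain for (i)$\Leftrightarrow$(ii), for (ii)$\Rightarrow$(iii), and for (iii)$\Leftrightarrow$(iv) is correct and is essentially how the paper argues as well: Janssen/associativity plus density of $\SO(\R)$ and boundedness of the brackets. Your idempotency computation $\lhs{g}{h}^{2}f=(\lhs{g}{h}\cdot g)\cdot\rhs{h}{f}=g\cdot\rhs{h}{f}=\lhs{g}{h}f$ is clean, and the commutation of the left $\lmodule$- and right $\rmodule$-actions is exactly the defining property of $\Lambda^{\circ}$.

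The gap is precisely where you flagged it: your (iii)$\Rightarrow$(ii) step. Testing (iii) on the generators of $V$ only yields $g\cdot(\rhs{h}{g}-I)=0$, and you then want to cancel $g$. But ``non-degeneracy of the right $\rmodule$-action on a single element'' is \emph{not} a general feature of equivalence bimodules: already for the $\C$--$M_{2}(\C)$ bimodule $\C^{2}$ one has $(1,0)\cdot\begin{psmallmatrix}0&0\\1&0\end{psmallmatrix}=0$. In the Heisenberg module this cancellation amounts to the $\ell^{1}$-linear independence of $\{\pi(\lambda^{\circ})^{*}g\}_{\lambda^{\circ}\in\Lambda^{\circ}}$, which is essentially the Riesz sequence property you are trying to reach and cannot be assumed. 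Note also that testing on $\pi(\mu)^{*}g$ for varying $\mu$ gives nothing new, since $\rhs{h}{\pi(\mu)^{*}g}=\rhs{h}{g}\cdot\mathbf b_{\mu}$ and you recover the same relation $g\cdot(\rhs{h}{g}-I)=0$ post-multiplied by $\mathbf b_{\mu}$.

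Your proposed fallback is exactly the paper's route and is the correct way to close the loop: interpret (ii) via the Wexler--Raz relations as saying that $g$ and $h$ generate dual Gabor frames for $\Lambda$, and interpret (iii)/(iv) via the Riesz-sequence characterization (Bessel plus biorthogonality) as saying that $g$ and $h$ generate dual Gabor Riesz sequences for $\Lambda^{\circ}$. The bridge between these two is the frame/Riesz duality of Theorem~\ref{th:frame-riesz-dual-intro}; in the paper this is packaged as the equivalence (v)$\Leftrightarrow$(vi) in Theorem~\ref{th:duality-for-mws-module}, proved through Theorem~\ref{th:wex-raz} and Lemma~\ref{le:riesz-sequence}. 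So the honest summary is: your direct bimodule argument handles three of the four links, but the remaining link genuinely needs the frame-theoretic input you mention at the end.
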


This finishes our review of the theory for the traditional setting. In this work we expand the above theory as follows.

\begin{enumerate}[(A)]
\item \textbf{Multi-window Gabor systems \& finitely generated modules.} \label{lab:mw} 
Instead of just one pair of functions $g$ and $h$ so that $f=\lhs{f}{g}\cdot h$ holds for all $f\in \SO(\R)$, we ask to find collections of functions $\{g_{j}\}_{j=1}^{n}$ and $\{h_{j}\}_{j=1}^{n}$ in $\SO(\R)$ such that 
\begin{equation} \label{eq:1805a} f = \sum_{j=1}^{n} \lhs{f}{g_{j}} \cdot h_{j} \ \ \text{for all} \ \ f \in \SO(\R).\end{equation}
In time-frequency analysis these representations arise from \emph{multi-window} Gabor systems and their frame theory. In the theory of Hilbert $C^*$-modules the equality above implies that the equivalence bimodule $\SO(\R)$ is \emph{finitely generated}. In particular, we show that \eqref{eq:1805a} holds if and only if $(\lhs{g_{j}}{h_{j'}})_{j,j'}$ is an idempotent  $n\times n$ $\lmodule$-valued matrix.
\item \textbf{Super Gabor systems.} \label{lab:super}  
The theory of \emph{super} Gabor frames asks to find collections of functions $\{g_{k}\}_{k=1}^{d}$ and $\{h_{k}\}_{k=1}^{d}$ in $\SO(\R)$ such 
that
\begin{equation} \label{eq:1805b} f_{k} = \sum_{k'=1}^{d} \lhs{f_{k'}}{g_{k'}} \cdot h_{k} \ \ \text{for all} \ \ f \in \SO(\R,\C^{d}),\end{equation}
where $\SO(\R,\C^{d})$ are the $\C^{d}$-vector valued functions over $\R$ that belong to $\SO$. To our knowledge these expansions do not show up in the theory of Hilbert $C^*$-modules. We show that \eqref{eq:1805b} holds if and only $\sum_{k=1}^{d} \lhs{h_{k}}{g_{k}}$ is an idempotent element of $\lmodule$.

\item \textbf{Multi-window super Gabor systems.} \label{enum:mws} In order to take care of the theory of multi-window and super Gabor systems described in \eqref{lab:mw} and \eqref{lab:super} in the most general way, we consider them simultaneously and wish to describe when two families of functions $\{g_{k,j}\}_{k=1,j=1}^{d,n}$ and $\{h_{k,j}\}_{k=1,j=1}^{d,n}$ in $\SO(\R,\C^{d\times n})$ are such that
\[ f_{k} = \sum_{j=1}^{n} \sum_{k'=1}^{d} \lhs{f_{k'}}{g_{k',j}} \cdot h_{k,j} \ \ \text{for all} \ \ f \in \SO(\R,\C^{d}).\]
We then generalize Theorem \ref{th:frame-riesz-dual-intro} to a duality principle for these \emph{multi-window super} Gabor systems (Theorem \ref{th:duality}). In particular, we show that there is a duality between the parameters $n$ and $d$. We extend Theorem \ref{th:duality-for-mws-module-intro} for the associated operator valued inner-products as well (Theorem \ref{th:duality-for-mws-module}). 
\item \textbf{General locally compact abelian groups.} Rather than focusing on (vector/matrix valued) functions on $\R$ we consider the theory presented above for functions on locally compact abelian (LCA) groups $G$ and where the translations and modulations need not arise from a lattice (e.g., of the form $\alpha\Z\times\beta\Z$ as in \eqref{eq:1605a}) but may originate from any closed subgroup $\La$ of the time-frequency plane. In particular, we do not require this subgroup to be discrete. For example, in case  $G=\R^{m}$, we can consider $\La = A (\R^{m_{1}}\times \Z^{m_{2}}\times\{0\}^{m_{3}})$, where $m_{1}+m_{2}+m_{3}= 2m$ and $A\in \textnormal{GL}_{2m}(\R)$ (of course there are far more elaborate examples to be made, e.g., the theory is applicable to the group of the adeles \cite{enjalu18}).
\item  \textbf{$\SO$ vs.\ Schwartz-Bruhat.} We follow the vision of Feichtinger \cite{fe81-2} and use the Segal algebra $\SO$ as a space of test-functions on locally compact abelian groups rather than the technical Schwartz-Bruhat space. This Banach space of functions is easily defined on any LCA group and behaves very much like the Schwartz-Bruhat space. We go over the relevant theory in Section \ref{sec:fei-alg}.
\item \label{enum:existence} \textbf{Existence.} Our simultaneous description of the theory of Gabor frames and Riesz sequences together with that of Heisenberg modules allows us to combine results from time-frequency analysis and operator algebras. We use this to characterize the subgroups $\La$ from which the translations and modulations may arise such that $\SO(G)$ is a finitely generated projective module, i.e., such that there exist multi-window Gabor frames for $L^{2}(G)$ with generators in $\SO(G)$, i.e., such that  there exist functions $\{g_{j}\}_{j=1}^{n}$ and $\{h_{j}\}_{j=1}^{n}$ in $\SO(G)$ such that
\[ f = \sum_{j=1}^{n} \lhs{f}{g_{j}}\cdot h_{j} \ \ \text{for all} \ \ f\in \SO(G)\]
(with a properly defined Banach algebra $\lmodule$ and an associated $\lmodule$-valued inner product that both depend on $\La$). In Theorem~\ref{th:S0-fin-gen} and Theorem~\ref{th:exist} we show that this is the case \emph{exactly} if the quotient group $(G\times\ghat)/\La$ is compact (that this is a necessary condition was established in \cite{jale16-2}). \\ Further, in Theorem \ref{th:existence-lattice-less-than-1}, we contribute to a long standing problem for $G=\R^{m}$: If $\La$ is a lattice in $\R^{2m}$ with lattice size $\s(\La)< 1$, does there exist a function $g\in \SO(\R^{m})$ that generates a Gabor frame for $L^{2}(\R^{m})$ with respect to that lattice? We use results from Rieffel and the here established relationships between Gabor frames and projections in Heisenberg modules to give a completely non-constructive proof of this fact for non-rational lattices.
\end{enumerate}

The paper is structured as follows.
Section \ref{sec:abstract-HA} introduces the reader to  necessary notions and definitions concerning Fourier analysis on locally compact abelian groups. Section \ref{sec:fei-alg} gives a short review of the Feichtinger algebra and highlights some of its properties that are important to us.

Section \ref{sec:module} states and expands upon the Heisenberg module theory by Rieffel. 
Section \ref{sec:module-mws} shows how this theory is generalized to capture the theory of multi-window and super Gabor systems, which we develop in detail in Section 
\ref{sec:mws-gabor}. The main results in Section \ref{sec:module} are Theorem \ref{th:S0-fin-gen} and Theorem \ref{th:exist} (as mentioned under point \eqref{enum:existence} above) and Theorem \ref{th:duality-for-mws-module} (see point \eqref{enum:mws}).


Section \ref{sec:mws-gabor} begins with the definition of the (multi-window super) Gabor systems that we consider in this paper.
Section \ref{sec:frame-and-riesz} gives a brief account of the  theory of frames and Riesz sequences. In Section \ref{sec:continuous-case} the simplest case of Gabor analysis is considered: where the subgroup $\Lambda$ is the entire phase-space. In Section \ref{sec:duality} we state another main result: the duality principle for multi-window super Gabor systems (the generalization of Theorem \ref{th:frame-riesz-dual-intro} from above). In Section \ref{sec:figa-wex-raz} 
we develop the necessary tools for the proof of the duality principle, namely the \emph{fundamental identity of Gabor analysis} (the generalization of \eqref{eq:janssen-intro}) and the Wexler-Raz relations (they yield a characterization of dual Gabor frames).
Finally, Section \ref{sec:proof-duality} contains the proof of the duality principle.

In Section \ref{sec:existence-mws-frames-for-R} we consider Gabor analysis for functions of $\R^{m}$. 
We give a much more concrete proof of Theorem \ref{th:exist}: the existence of multi-window super Gabor frames for any subgroup $\Lambda$ of the time-frequency plane $\R^{2m}$ for which $\R^{2m}/\La$ is compact, Theorem \ref{th:existence-mwsframes-rn}. 
Furthermore, a major result is Theorem \ref{th:existence-lattice-less-than-1} as described in point \eqref{enum:existence}. 

For the Feichtinger algebra, we show the new result that it forms a Banach algebra with respect to the twisted convolution, Lemma \ref{le:SO-twist-conv-algebra}.



\textbf{Related literature on the duality principle for Gabor systems.} For $G=\R$ and where $\Lambda = \alpha\Z \times \beta\Z$ ($\alpha,\beta\ne 0$) the duality principle for Gabor systems (Theorem \ref{th:frame-riesz-dual-intro}) was shown by Daubechies, Landau and Landau \cite{dalala95} and Janssen \cite{ja95}. The more general case for $G=\R^{n}$, $n\in \N$ with $\Lambda = A \Z^{n} \times B\Z^{n}$ ($A,B\in \textnormal{GL}_{n}(\R)$) is due to Ron and Shen \cite{rosh97}. It should be noted that these three papers proved the result independently from one another and with rather different approaches. The duality principle between super/multi-window, Gabor frames and multi-window/super Riesz sequences in the Euclidean setting can be found and is used in \cite{ab10,dogr11}, \cite[Section 5.3]{gr07-2} and \cite[Theorem 2.6]{grly09}.
Recently, in \cite{jale16-2}, the duality principle has been shown for Gabor systems on general locally compact abelian groups $G$ where $\Lambda$ is a closed subgroup in the time-frequency plane $G\times\ghat$.  
Our result, Theorem \ref{th:duality}, contains all the above as special cases. 
An extension of the duality principle beyond Gabor systems is the \emph{R-duality} theory for frames proposed in \cite{cakula04,cakula05}, which is further investigated in \cite{chst15,chst16}. Another generalization to  collections of functions other than Gabor systems is the work by Fan et al.\ \cite{fahesh16}. \\
Furthermore, in \cite{duhala09} a duality principle between frames and Riesz sequences that arise as samples of projective representations is proven. In particular, this contains (single window) Gabor systems with time-frequency shifts from lattices. Further, our work has overlap with the unpublished manuscript \cite{baduhala18}, where the authors establish a duality between ``multi-window'' and ``super'' systems as we consider them here. The system that they consider need not be of the Gabor type but can be any family of functions that arises as the samples of a projective representation that acts on a given finite collection of functions (similar to the setting in \cite{duhala09}). Their setting can only consider the case where $\Lambda$ and $\Lambda^{\circ}$ both are discrete and co-compact subgroups, whereas our approach allows for $\Lambda$ and $\Lambda^{\circ}$ to be any closed subgroup. Furthermore, in the case of Gabor systems, the duality principle that we show here is more general and it also details how the bounds of the frames and of the Riesz sequences are related by the duality.

\section{Preliminaries}
\label{sec:prelim}
\subsection{Abstract harmonic analysis}
\label{sec:abstract-HA}
We let $G$ be a locally compact Hausdorff abelian topological (LCA) group and we let $\ghat$ denote its dual group. 
The action of a character $\omega\in \ghat$ on an element $x\in G$ is written as $\omega(x)$. We assume some fixed Haar measure $\mu_{G}$ on $G$ and we normalize the Haar measure $\mu_{\ghat}$ on $\ghat$ in the unique way such that the Fourier inversion holds. That is, such that it is possible to reconstruct a continuous representative of a function $f\in L^{1}(G)$ by its Fourier transform 
\[ \mathcal{F}f(\omega) = \int_{G} f(t) \, \overline{\omega(t)} \, dt, \ \ \omega\in \ghat,\]
in case $\mathcal{F}f\in L^{1}(\ghat)$, by the formula
\[ f(t) = \int_{\ghat} \mathcal{F}f(\omega) \, \omega(t) \, d\omega \ \ \text{for all} \ \ t\in G.\]
We equip $L^{2}(G)$ with the inner product $\langle f, g\rangle = \int_{G} f(t) \overline{g(t)} \, dt$ which is linear in the first entry. The Fourier transform extends to a unitary operator on $L^{2}(G)$.

For any $x\in G$ and $\omega\in\ghat$ we define the translation operator (time-shift) $T_{x}$ and the modulation operator (frequency-shift) $E_{\omega}$ by
\[ T_{x}f(t) = f(t-x) \ \ \text{and} \ \ E_{\omega} f(t) = \omega(t) f(t), \ \ t\in G,\]
where $f$ is a complex-valued function on $G$.
Observe that 
\[ \mathcal{F}T_{x} = E_{-\omega} \mathcal{F} \ \ , \ \ \mathcal{F}E_{\omega}=T_{\omega} \mathcal{F} \ \ , \ \  E_{\omega}T_{x} = \omega(x) T_{x} E_{\omega} .\]
For any $\chi = (x,\omega)\in G\times\ghat$
we define the \emph{time-frequency shift operator} 
\[ \pi(\chi) \equiv \pi(x,\omega) := E_{\omega} T_{x}.\]
It is clear that time-frequency shift operators are unitary on $L^{2}(G)$. 
For two elements $\chi_{1}=(x_{1},\omega_{1})$ and $\chi_{2}=(x_{2},\omega_{2})$ in $G\times\ghat$ we define the \emph{cocycle}
\begin{equation} \label{eq:def-cocy} \cocy : (G\times\ghat)\times (G\times\ghat) \to \mathbb{T}, \ \cocy(\chi_{1},\chi_{2}) = \overline{\omega_{2}(x_{1})} \end{equation}
and the associated \emph{symplectic cocyle}
\begin{equation} \label{eq:def-cocy-sym} \cocy_s : (G\times\ghat)\times (G\times\ghat) \to \mathbb{T}, \ \cocy_s(\chi_{1},\chi_{2}) = \cocy(\chi_{1},\chi_{2}) \, \overline{\cocy(\chi_{2},\chi_{1})} = \overline{\omega_{2}(x_{1})} \, \omega_{1}(x_{2}). \end{equation}
For any $\chi,\chi_{1},\chi_{2},\chi_{3}\in G\times\ghat$ the cocycle and time-frequency shift satisfy the following,
\begin{align*}
\overline{\cocy(\chi_{1},\chi_{2})} & = \cocy(-\chi_{1},\chi_{2}) = \cocy(\chi_{1},-\chi_{2}), \\
\cocy(\chi_{1}+\chi_{2},\chi_{3}) & = \cocy(\chi_{1},\chi_{3}) \, \cocy(\chi_{2},\chi_{3}), \ \ \cocy(\chi_{1},\chi_{2}+\chi_{3}) = \cocy(\chi_{1},\chi_{2}) \, \cocy(\chi_{1},\chi_{3}), \\
 \pi(\chi_{1}) \, \pi(\chi_{2}) &  = \cocy(\chi_{1},\chi_{2}) \, \pi(\chi_{1}+\chi_{2}), \\
 \pi(\chi_{1}) \, \pi(\chi_{2}) & = \cocy_s(\chi_{1},\chi_{2}) \,\pi(\chi_{2}) \, \pi(\chi_{1}), \\
 \pi(\chi)^* & = \cocy(\chi,\chi) \, \pi(-\chi), \\
 \pi(\chi_{1})^{*} \, \pi(\chi_{2})^{*} & = \overline{\cocy(\chi_{2},\chi_{1})} \, \pi(\chi_{1}+\chi_{2})^{*}.
\end{align*}
The \emph{short-time Fourier transform} with respect to a given function $g\in L^{2}(G)$ is the operator
\begin{equation} \label{eq:def-STFT} \mathcal{V}_{g} : L^{2}(G)\to L^{2}(G\times\ghat), \ \mathcal{V}_{g}{f}(\chi) = \langle f, \pi(\chi) g\rangle, \ \chi\in G\times\ghat. \end{equation}
The operator $\mathcal{V}_{g}^{*} \circ \mathcal{V}_{g}$ is a multiple of the identity. Specifically, for all $f_{1},f_{2},g,h\in L^{2}(G)$
\begin{align} \label{eq:STFT} \langle f_{1},f_{2} \rangle \, \langle h, g\rangle & = \langle \mathcal{V}_{g}f_{1}, \mathcal{V}_{h}f_{2}\rangle \\
& = \int_{G\times\ghat} \langle f, \pi(\chi) g \rangle \, \langle \pi(\chi) h , f_{2}\rangle \, d\mu_{G\times\ghat}(\chi). \nonumber \end{align}

The symbol $\Lambda$ will always denote a closed subgroup of the time-frequency plane $G\times\ghat$. The induced topology and group action on $\Lambda$ and on the quotient group $(G\times\ghat)/\Lambda$ turn those into LCA groups as well, and can therefore be equipped with their own Haar measures. If the measures on $G$, $\ghat$ and $\Lambda$ are fixed, then the Haar measure $\mu_{(G\times\ghat)/\Lambda}$ on the quotient group $(G\times\ghat)/\Lambda$ can be uniquely scaled such that, for all $f\in L^{1}(G\times\ghat)$,
\begin{equation} \label{eq:2503a} \int_{G\times\ghat} f(\chi) \, d\mu_{G\times\ghat}(\chi) = \int_{(G\times\ghat)/\Lambda} \int_{\Lambda} f(\chi+\lambda) \, d\mu_{\Lambda}(\lambda) \, d\mu_{(G\times\ghat)/\Lambda}(\dot{\chi}) \ \ \dot{\chi} = \chi + \Lambda, \ \chi\in G\times\ghat.\end{equation}
If \eqref{eq:2503a} holds we say that $\mu_{G\times\ghat}$, $\mu_{\Lambda}$ and $\mu_{(G\times\ghat)/\Lambda}$ are \emph{canonically related} and the equality in \eqref{eq:2503a} is called \emph{Weil's formula}. We always choose the measures $\mu_{G\times\ghat}$, $\mu_{\Lambda}$ and $\mu_{(G\times\ghat)/\Lambda}$ in this way. For more on this, see \cite[p.87-88]{rest00} and \cite[Theorem~3.4.6]{rest00}.
With the uniquely determined measure $\mu_{(G\times\ghat)/\Lambda}$ we define the \emph{size} or the \emph{covolume} of $\Lambda$, by $\s(\Lambda)=\int_{(G\times\ghat)/\Lambda} 1 \, d\mu_{(G\times\ghat)/\Lambda}$. Note that $\s(\Lambda)$ is finite if and only if $\Lambda$ is a \emph{co-compact} subgroup of $G\times\ghat$, i.e., the quotient group $(G\times\ghat)/\Lambda$ is compact. If $\Lambda$ is discrete, co-compact (hence a lattice), and equipped with the counting measure, then $\s(\Lambda)$ is exactly the measure of any of its fundamental domains. The \emph{adjoint} group of $\Lambda$ is the closed subgroup of $G\times\ghat$ given by
\begin{align*} \Lambda^{\circ} 
& = \{ \chi \in G\times\ghat \, : \, \cocy_{s}(\chi,\lambda) = 1 \ \ \textnormal{for all} \ \ \lambda\in\Lambda  \ \}. \end{align*}

For any closed subgroup $\Lambda$ one has $(\Lambda^{\circ})^{\circ}=\Lambda$ and $\widehat{\Lambda^{\circ}}\cong (G\times\ghat)/\Lambda$. Given these identifications, we take the Haar measure $\mu_{\Lambda^{\circ}}$ on $\Lambda^{\circ}$ such that the Fourier inversion between functions on $\Lambda^{\circ}$ and $(G\times\ghat)/\Lambda$ holds. This unique measure on $\Lambda^{\circ}$ is called the \emph{orthogonal measure} relative to $\mu_{\Lambda}$ \cite[Definition 5.5.1]{rest00}. We now choose the Haar measure on $(G\times\ghat)/\Lambda^{\circ}$ such that the measures $\mu_{(G\times\ghat)}$, $\mu_{\Lambda^{\circ}}$ and $\mu_{(G\times\ghat)/\Lambda^{\circ}}$ are canonically related. This ensures that also the Fourier inversion formula between functions on $\Lambda$ and $(G\times\ghat)/\Lambda^{\circ}$ holds \cite[Theorem 5.5.12]{rest00}. 
\begin{remark} \label{rem:orth-measure-for-cocompact}
For a closed subgroup $\Lambda$ with measure $\mu_{\Lambda}$ it is in general difficult to say more about the orthogonal measure on $\mu_{\Lambda^{\circ}}$ on $\Lambda^{\circ}$. However, if the quotient group $(G\times\ghat)/\Lambda$ is compact (equivalently $\Lambda^{\circ}$ is discrete), then the orthogonal measure on $\Lambda^{\circ}$ satisfies
\begin{equation} \label{eq:cocompact-orth-measure} \int_{\Lambda^{\circ}} f(\lambda^{\circ}) \, d\mu_{\Lambda^{\circ}}(\lambda^{\circ}) = \frac{1}{\s(\Lambda)} \sum_{\lambda^{\circ}\in\Lambda^{\circ}} f(\lambda^{\circ}) \ \ \textnormal{for all} \ \ f\in \ell^{1}(\Lambda^{\circ}).\end{equation}
\end{remark}

We will be considering functions on phase space. For such functions the Fourier transform is the operator
\begin{align*}
& \mathcal{F} : L^{1}(G\times\ghat) \to C_{0}(\ghat\times G), \\
& \quad (\mathcal{F} F )(\omega,x) = \int_{G\times\ghat} F(t,\xi) \, \underbrace{\cocy_{s}\big( (t,\xi), (-x,\omega) \big)}_{=\overline{\omega(t)} \, \overline{\xi(x)}} \, d\mu_{G\times\ghat}(t,\xi), \ (\omega,x) \in \ghat\times G.\end{align*}
Since phase space is a self-dual group it is convenient to use the \emph{symplectic Fourier transform}
\begin{align*}
& \mathcal{F}_{s} : L^{1}(G\times\ghat) \to C_{0}(G\times\ghat), \\
& \quad (\mathcal{F}_{s} F )(x,\omega) = \int_{G\times\ghat} F(t,\xi) \, \underbrace{ \cocy_{s}\big( (t,\xi),(x,\omega)\big)}_{=\overline{\omega(t)} \, \xi(x)} \, d\mu_{G\times\ghat}(t,\xi), \ (x,\omega) \in G\times\ghat.\end{align*}
The symplectic Fourier transform $\mathcal{F}_{s}$ is related to the usual Fourier transform by the obvious relation $\mathcal{F}_{s} F(x,\omega) = \mathcal{F} F(\omega,-x)$ for all $(x,\omega)\in G\times\ghat$. Furthermore, the domain and range of the symplectic Fourier transform are functions over the same group, $G\times\ghat$, and $\mathcal{F}_{s}^{-1} = \mathcal{F}_{s}$.

For more on harmonic analysis on locally compact abelian groups see the book by Reiter and Stegeman \cite{rest00}. Other books are the one by Folland \cite{fo16} and Hewitt and Ross \cite{hero70,hero79}.

\subsection{The Feichtinger algebra}
\label{sec:fei-alg}
For any LCA group $G$ the \emph{Feichtinger algebra}  $\SO(G)$ \cite{fe81-2,ja19,lo80} (sometimes denoted by $\mathbf{M}^{1}(G)$) is the set of functions given by 
%
\[ \SO(G) = \big\{ f\in L^{2}(G) \, : \, \mathcal{V}_{f}f \in L^{1}(G\times\ghat) \big\} 
.\]
For the definition of $\mathcal{V}_{f}f$ see \eqref{eq:def-STFT}. Any non-zero function $g\in \SO(G)$ can be used to define a norm on $\SO(G)$, 
\begin{equation} \label{eq:norm} \Vert \cdot \Vert_{\SO(G),g} : \SO(G) \to \R^{+}_{0}, \ \Vert f \Vert_{\SO(G),g} = \Vert \mathcal{V}_{g}f \Vert_{1}. \end{equation}
All norms defined in this way are equivalent \cite[Proposition 4.10]{ja19} and they turn $\SO(G)$ into a Banach space \cite[Theorem 4.12]{ja19}. The usefulness of the Feichtinger algebra $\SO(G)$ lies in the fact that it behaves very much like the Schwartz-Bruhat space $\mathscr{S}(G)$ (also, one has the inclusion $\mathscr{S}(G)\subset \SO(G)$, see \cite[Theorem 9]{fe81-2}). Among its properties, we mention the following.
\begin{lemma}\label{le:s0-properties} \label{le:SO-properties} 
\begin{enumerate}[(i)]
\item All functions in $\SO(G)$ are absolutely integrable, continuous, and vanish at infinity. 
\item If $G$ is discrete, then $(\SO(G), \Vert \cdot \Vert_{\SO})=(\ell^{1}(G), \Vert \cdot \Vert_{1})$.
\item Time-frequency shifts $\pi(\chi)$, $\chi\in G\times\ghat$, are an isometry on $\SO(G)$. The Fourier transform is a continuous bijection from $\SO(G)$ onto $\SO(\ghat)$. Similarly, the symplectic Fourier transform is a continuous bijection on $\SO(G\times\ghat)$.
\item $\SO(G)$ is continuously embedded into $L^{p}(G)$ for all $p\in[1,\infty]$. 
In fact, if $1/p+1/q=1$, then
\[ \Vert f \Vert_{p} \le \Vert g \Vert_{q}^{-1} \Vert f \Vert_{\SO,g} \ \ \text{for all} \ \ f\in \SO(G).\]
\item $\SO(G)$ is a Banach algebra with respect to convolution and point-wise multiplication. 
\item For any closed subgroup $H$ of $G$, the restriction operator $R_{H} : f \mapsto f\big\vert_{H}$ is a bounded and surjective operator from $\SO(G)$ onto $\SO(H)$
\item For any discrete subgroup $H$ of $G$, the restriction operator $R_{H} : f \mapsto f\big\vert_{H}$ is a bounded and surjective operator from $\SO(G)$ onto $\ell^{1}(H)$
\item For any $f,g\in\SO(G)$ the short-time Fourier transform $\mathcal{V}_{g}{f}$ is a function in $\SO(G\times\ghat)$. Also, there exists is a constant $c>0$ such that $\Vert\mathcal{V}_{g}f\Vert_{\SO} \le c \, \Vert f \Vert_{\SO} \, \Vert g \Vert_{\SO}$ for all $f,g\in \SO(G)$. 
\item The Poisson (summation) formula holds pointwise for all functions in $\SO(G)$. For our purposes it is useful to state the Poisson formula for functions on phase space and with the symplectic Fourier transform: for all $F\in \SO(G\times\ghat)$ and any closed subgroup $\Lambda$ in $G\times\ghat$ it holds that
\[ \int_{\Lambda} F(\lambda) \, d\lambda = \int_{\Lambda^{\circ}} \mathcal{F}_{s} F(\lambda^{\circ}) \, d\lambda^{\circ}. \]
If the quotient group $(G\times\ghat)/\Lambda$ is compact, then the Poisson formula takes the form
\[ \int_{\Lambda} F(\lambda) \, d\lambda = \frac{1}{\s(\Lambda)} \sum_{\lambda^{\circ}\in\Lambda^{\circ}} \mathcal{F}_{s} F(\lambda^{\circ}) . \]
\item If $H$ is a discrete abelian group and $G$ is any LCA group, then $f\in\SO(G\times H)$ if and only if $f(\cdot, k)\in \SO(G)$ for each $k\in H$ and $\sum_{k\in H} \Vert f(\cdot, k) \Vert_{\SO(G)} < \infty$. Moreover, $\Vert f \Vert_{\SO(G)\otimes \ell^{1}(H)} = \sum_{k\in H} \Vert f(\cdot, k)\Vert_{\SO(G)}$ is a norm on $\SO(G\times H)$ that is equivalent to the norm on $\SO(G\times H)$ defined by \eqref{eq:norm}.
\end{enumerate}
\end{lemma}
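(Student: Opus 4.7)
The plan is to treat this lemma as a consolidated statement of well-established facts about the Feichtinger algebra and to obtain each item by citing the appropriate source and, where useful, indicating the technique rather than redoing the whole edifice from scratch. The global framework that makes everything go is the STFT inversion formula \eqref{eq:STFT}, which lets one represent $f$ as a continuous superposition of time-frequency shifts weighted by $\mathcal{V}_{g}f$. Once $\mathcal{V}_{g}f\in L^{1}$, this representation converges absolutely in any Banach space on which the $\pi(\chi)$ act isometrically, and most of the assertions below flow from this observation.

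For item (i), the Riemann-Lebesgue-type argument applied to the STFT inversion formula gives continuity, boundedness and vanishing at infinity; integrability then follows from Fubini applied to $\mathcal{V}_{f}f$; a clean reference is \cite{ja19,fe81-2}. Item (ii) is an elementary rewriting of the defining integral when $G$ is discrete and $\ghat$ compact, so that the STFT condition reduces to $f\in\ell^{1}(G)$; see \cite[Sec.~3]{ja19}. Item (iii) is the invariance of the defining condition under time-frequency shifts (trivially, $\mathcal{V}_{g}(\pi(\chi)f)$ has the same $L^{1}$-norm up to a shift in its argument) and under the Fourier and symplectic Fourier transforms (which permute modulations and translations); this is standard \cite[Thm.~7(ii)]{fe81-2}, \cite[Prop.~5.1]{ja19}. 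Items (iv) and (v) are again in \cite{fe81-2,ja19}: the $L^{p}$ embedding is read off from the inversion formula by placing $\mathcal{V}_{g}f$ in $L^{1}$ and $\pi(\chi)g$ in $L^{q}$ and applying the triangle inequality, while the Banach algebra property follows because convolution corresponds to pointwise multiplication on the Fourier side and both operations preserve the STFT condition.

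Items (vi) and (vii) are Feichtinger's restriction principle: for any closed subgroup $H$, $R_{H}$ is bounded and surjective from $\SO(G)$ onto $\SO(H)$ (\cite{fe81-2}, also \cite[Thm.~5.7]{ja19}). The discrete case (vii) is just (vi) combined with (ii). Item (viii) — that $\mathcal{V}_{g}f\in\SO(G\times\ghat)$ with the expected norm estimate — is the kernel theorem / tensor factorisation for $\SO$ proved in \cite[Sec.~7]{ja19} (the proof uses that $\mathcal{V}_{g}f$ is itself a coherent-state coefficient of an STFT on phase space). For (ix), the Poisson summation formula for $\SO$ is stated and proved pointwise in \cite[Thm.~7]{fe81-2} for the classical Fourier transform; transferring it to the symplectic Fourier transform uses the identity $\mathcal{F}_{s}F(x,\omega)=\mathcal{F}F(\omega,-x)$ recorded in Section~\ref{sec:abstract-HA}, and in the co-compact case the formula on $\Lambda^{\circ}$ simplifies via Remark~\ref{rem:orth-measure-for-cocompact}.

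Item (x) is the only assertion with a slightly non-trivial sum-of-norms decomposition, and it is where I would spend the most care: the inclusion $f\in\SO(G\times H)\Rightarrow \sum_{k}\Vert f(\cdot,k)\Vert_{\SO(G)}<\infty$ is shown by choosing a product window $g=g_{1}\otimes g_{2}$ with $g_{2}$ supported in a fundamental domain for $H$ (possible by (vii) applied on $H$), and using (viii) to identify $\mathcal{V}_{g}f$ as an STFT on a product group; summing over $k$ amounts to integrating $\mathcal{V}_{g}f$ against the counting measure on $H$ and then over $\widehat{H}$ (which is compact when $H$ is discrete). Conversely, if $\sum_{k}\Vert f(\cdot,k)\Vert_{\SO(G)}<\infty$, one sees directly that the STFT with a product window lies in $L^{1}(G\times H\times \widehat{G}\times\widehat{H})$. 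The equivalence of norms is then an application of the open mapping theorem between two Banach space structures on $\SO(G\times H)$. A clean reference for this tensor decomposition is \cite[Thm.~7.4]{ja19}; the main obstacle is keeping book of measures on $H$ and $\widehat{H}$, but no new time-frequency analytic input is required beyond what items (i)–(ix) already provide.
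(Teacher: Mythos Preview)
Your proposal is correct and takes essentially the same approach as the paper: both treat the lemma as a compendium of known properties of $\SO(G)$ and dispatch each item by citation to \cite{fe81-2} and \cite{ja19}, with (vii) derived from (vi)$+$(ii). The paper's proof is in fact even terser than yours (pure citation with no sketch of method), and a few of your pinpoint references differ slightly from the paper's---e.g.\ for (viii) the paper points to \cite[Theorem~5.3(ii)]{ja19} rather than the kernel-theorem section, and for (x) to \cite[Theorem~7.7]{ja19}---but these are cosmetic discrepancies, not mathematical ones.
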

\begin{proof}(i). This follows from \cite[Definition 1]{fe81-2}. Alternatively, see \cite[Lemma 4.19]{ja19}. (ii). see 
\cite[Remark 3]{fe81-2} or \cite[Lemma 4.11]{ja19}. (iii). \cite[Example 5.2(i,iii,v)]{ja19}. (iv). That $\SO$ is continuously embedded into $L^{p}$ follows from the fact that that $\SO(G) = W(\mathcal{F}L^{1},L^{1})$ (\cite[Remark 6]{fe81-2}) together with the inclusions in \cite[Lemma 1.2(iv)]{fe81-1} and the fact that $W(L^{p},L^{p}) = L^{p}$ \cite[Lemma 1.2(i)]{fe81-1}. For the inequality see \cite[Lemma 4.11]{ja19}.  (v). $\SO$ is a Segal algebra (\cite[Theorem 1]{fe81-2}) and any Segal algebra is a convolution algebra \cite[\S 4]{re71}. By (iii) this implies that it is also an algebra under pointwise multiplication. Alternatively, see \cite[Corollary 4.14]{ja19}. (vi). See \cite[Theorem 7.C]{fe81-2} or \cite[Theorem 5.7(ii)]{ja19}. (vii). This follows from (vi) together with (ii). (viii). \cite[Theorem 5.3(ii)]{ja19}. (ix). That the Poisson formula holds for functions in $\SO$ is stated in \cite[Remark 15]{fe81-1}. Alternatively, see \cite[Theorem 5.7(iii), Example 5.11]{ja19}. (x). This follows from \cite[Theorem 7.7]{ja19}.
\end{proof}
\section{Heisenberg modules}
\label{sec:module}

Let $\Lambda$ be a closed subgroup of $G\times\ghat$. For two functions on $\Lambda$ we define the twisted convolution 
\[ F_{1}\,\natural \, F_{2}(\lambda)=\int_{\Lambda} F_{1}(\lambda')\,F_{2}(\lambda-\lambda')\,\cocy(\lambda',\lambda-\lambda') \, d\lambda',\] 
and the twisted involution $F^*(\lambda)=\cocy(\lambda,\lambda)\overline{F(-\lambda)}$.

For functions on $\La^{\circ}$ we define a twisted convolution and involution as before but with the complex conjugate of the cocycle,
\begin{equation} \label{eq:0804a} F_{1} \, \natural \, F_{2}(\lambda^{\circ})=\int_{\Lambda^{\circ}} F_{1}(\lambda^{\circ'}) \, F_{2}(\lambda^{\circ}-\lambda^{\circ'}) \, \overline{\cocy(\lambda^{\circ'},\lambda^{\circ}-\lambda^{\circ'})} \, d\lambda^{\circ'},\end{equation}

\begin{equation} \label{eq:0804b} F^{*}(\lambda^{\circ}) = \overline{c(\lambda^{\circ},\lambda^{\circ}) \, F(-\lambda^{\circ})}.\end{equation}
The reason for the distinction between functions on $\Lambda$ and $\Lambda^{\circ}$ will be made clear in Remark \ref{rem:l-r-module}.

For any closed subgroup $\La$ the space $L^1(\La)$ is an involutive Banach algebra with respect to the twisted convolution and involution, denoted by $L^1(\La,\cocy)$. The twisted group $C^*$-algebra $C^*(\La,\cocy)$ is the enveloping $C^*$-algebra of the twisted group algebra $L^1(\La,\cocy)$. The same is true for $L^{1}(\La^{\circ},\overline{\cocy})$.

In \cite{ri88} Rieffel constructed Hilbert $C^*$-modules over the twisted group $C^*$-algebra $C^*(\Lambda,\cocy)$ based on the Schwartz-Bruhat space $\mathscr{S}(G)$ of $G$ because it behaves well under the Fourier transform  and restriction to (closed) subgroups. Also the Poisson (summation) formula holds for functions in $\mathscr{S}(G)$ and closed subgroups of $G$.

As proposed by Feichtinger in \cite{fe81-2} there is an alternative to the Schwartz-Bruhat space on a locally compact abelian group: the Banach algebra $\SO(G)$ (cf.\ Section \ref{sec:fei-alg}). In \cite{lu09} the Feichtinger algebra $\SO(\mathbb{R}^m)$ has been used to extend the results in \cite{ri88} from smooth noncommutative tori to twisted group algebras for lattices in $\mathbb{R}^{2m}$. In this section we show that this is also possible in the case of closed subgroups $\La$ in $G\times\ghat$, which also is Rieffel's setup in \cite{ri88}. 

We begin with the result that $\SO(\La)$ is a Banach algebra with respect to the twisted convolution from above. To our knowledge this result is new in case $\Lambda$ is a non-discrete and proper subgroup of $G\times\ghat$. In case $\Lambda = G \times \ghat$ and in particular, if $\Lambda = G\times\ghat = \R^{2m}$, the result of Lemma \ref{le:SO-twist-conv-algebra} is mentioned in the last paragraph of \cite{re84}.
\begin{lemma}\label{le:SO-twist-conv-algebra}
For any closed subgroup $\Lambda$ of the time-frequency plane $G\times\ghat$ the space $\SO(\Lambda)$ is an involutive Banach algebra with respect to the twisted convolution 
\[ \natural: \SO(\Lambda) \times \SO(\Lambda) \to \SO(\Lambda), \ F_{1}\,\natural \, F_{2}(\lambda)=\int_{\Lambda} F_{1}(\lambda')\, F_{2}(\lambda-\lambda')\,\cocy(\lambda',\lambda-\lambda') \, d\lambda'\]
and the twisted involution $\phantom{l}^{*} : \SO(\Lambda)\to\SO(\Lambda), \ F^*(\lambda)=\cocy(\lambda,\lambda)\overline{F(-\lambda)}$.
\end{lemma}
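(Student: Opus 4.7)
The plan is to establish three things in order: (I) $\natural$ is a bounded bilinear map $\SO(\Lambda)\times\SO(\Lambda)\to\SO(\Lambda)$; (II) the twisted involution is a bounded antilinear map $\SO(\Lambda)\to\SO(\Lambda)$; (III) the $*$-algebra identities, namely associativity of $\natural$, $(F^*)^*=F$, and $(F_1\natural F_2)^*=F_2^*\natural F_1^*$. Item (III) is a formal computation that uses only the bilinearity of $\cocy$ and $\cocy(-\lambda,\mu)=\overline{\cocy(\lambda,\mu)}$ from Section~\ref{sec:abstract-HA}; these identities are already known on $L^1(\Lambda,\cocy)$, and since $\SO(\Lambda)\hookrightarrow L^1(\Lambda)$ by Lemma~\ref{le:s0-properties}(iv) they transfer verbatim once (I) and (II) have been settled. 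The heart of the proof lies in (I), which I will treat by an integrated-representation argument, and (II), which I will reduce to the boundedness on $\SO(\Lambda)$ of multiplication by a single quadratic phase.

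For (I), the key observation is that the twisting factor factorises. Using bilinearity of $\cocy$ together with $\cocy(\lambda',-\lambda')=\overline{\cocy(\lambda',\lambda')}$ we get
\[
\cocy(\lambda',\lambda-\lambda') \;=\; \overline{\cocy(\lambda',\lambda')}\;\cocy(\lambda',\lambda).
\]
For each fixed $\lambda'=(x',\omega')\in\Lambda$ the map $\lambda\mapsto\cocy(\lambda',\lambda)$ is the restriction to $\Lambda$ of the continuous character $(x,\omega)\mapsto\overline{\omega(x')}$ on $G\times\ghat$, and thus defines an element $\eta_{\lambda'}\in\widehat{\Lambda}$ depending continuously on $\lambda'$. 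Setting $\widetilde F_1(\lambda'):=\overline{\cocy(\lambda',\lambda')}\,F_1(\lambda')$ the pointwise definition of $\natural$ becomes
\[
F_1\natural F_2 \;=\; \int_\Lambda \widetilde F_1(\lambda')\,E_{\eta_{\lambda'}}T_{\lambda'}F_2\,d\lambda',
\]
where $E_{\eta_{\lambda'}}T_{\lambda'}$ is a time-frequency shift acting on functions on $\Lambda$. By Lemma~\ref{le:s0-properties}(iii) such shifts are isometries on $\SO(\Lambda)$ and depend strongly continuously on their parameter, so the integrand is a norm-continuous $\SO(\Lambda)$-valued function of $\lambda'$ whose norm is bounded by $|F_1(\lambda')|\,\Vert F_2\Vert_{\SO(\Lambda)}$. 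Since $F_1\in\SO(\Lambda)\subset L^1(\Lambda)$ by Lemma~\ref{le:s0-properties}(iv), the integral converges absolutely as a Bochner integral in $\SO(\Lambda)$, its pointwise evaluation reproduces $F_1\natural F_2$, and one obtains $\Vert F_1\natural F_2\Vert_{\SO(\Lambda)}\le\Vert F_1\Vert_1\,\Vert F_2\Vert_{\SO(\Lambda)}\le C\,\Vert F_1\Vert_{\SO(\Lambda)}\Vert F_2\Vert_{\SO(\Lambda)}$.

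For (II), decompose $F^*=M\circ C\circ R(F)$, where $RF(\lambda)=F(-\lambda)$, $CF=\overline F$ and $MF(\lambda)=\cocy(\lambda,\lambda)F(\lambda)$. Direct substitutions in the STFT, of the form $\mathcal V_g(CF)(\alpha,\sigma)=\overline{\mathcal V_{\overline g}F(\alpha,-\sigma)}$ and $\mathcal V_g(RF)(\alpha,\sigma)=\mathcal V_{Rg}F(-\alpha,-\sigma)$, show at once that $R$ and $C$ are isometries on $\SO(\Lambda)$; the problem reduces to boundedness of the quadratic-phase multiplier $M$. The function $\lambda\mapsto\cocy(\lambda,\lambda)$ on $\Lambda$ is the restriction of the continuous unimodular function $(x,\omega)\mapsto\overline{\omega(x)}$ on $G\times\ghat$, which is a bounded pointwise multiplier of $\SO(G\times\ghat)=W(\mathcal{F}L^1,L^1)$; combining this with the open-mapping version of the surjective restriction statement in Lemma~\ref{le:s0-properties}(vi) (which allows any $F\in\SO(\Lambda)$ to be lifted to some $\widetilde F\in\SO(G\times\ghat)$ with $\Vert\widetilde F\Vert\le c\Vert F\Vert$) pushes boundedness of pointwise multiplication by $M$ down to $\SO(\Lambda)$, yielding $\Vert F^*\Vert_{\SO(\Lambda)}\le C'\Vert F\Vert_{\SO(\Lambda)}$.

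The principal obstacle is the quadratic-phase multiplier step in (II): for an arbitrary closed subgroup $\Lambda$ the function $\cocy(\lambda,\lambda)=\overline{\omega(x)}$ fails in general to be a character (it would be one precisely when $\Lambda\subset\Lambda^\circ$), so the modulation-invariance of $\SO$ does not apply directly. When $\Lambda$ is discrete this is vacuous since $\SO(\Lambda)=\ell^1(\Lambda)$ by Lemma~\ref{le:s0-properties}(ii) and multiplication by any unimodular function is isometric there; when $\Lambda=G\times\ghat$ the statement is known (last paragraph of \cite{re84}). The general case is where one needs either the Wiener-amalgam description of $\SO$ or an equivalent direct STFT estimate using the uniform continuity of $\cocy$ on compact sets; this is the only non-routine ingredient.
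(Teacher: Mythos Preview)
Your argument is correct, and for part (I) it is genuinely different from the paper's. The paper factors the twisted convolution as the composition $T=P\circ\alpha\circ\Phi$ acting on the tensor product $F_1\otimes F_2\in\SO(\Lambda\times\Lambda)$, where $\Phi$ is multiplication by the second-degree character $\cocy(\lambda_1,\lambda_2)$, $\alpha$ is the shear $(\lambda_1,\lambda_2)\mapsto(\lambda_1,\lambda_2-\lambda_1)$, and $P$ is partial integration; boundedness of each piece on $\SO(\Lambda\times\Lambda)$ (all cited from \cite{ja19}) together with the tensor-product estimate gives the result. Your route---factorising $\cocy(\lambda',\lambda-\lambda')=\overline{\cocy(\lambda',\lambda')}\cocy(\lambda',\lambda)$ and recognising the twisted convolution as a Bochner integral of time-frequency shifts acting on $F_2$---is more elementary: it uses only that time-frequency shifts are strongly continuous isometries on $\SO(\Lambda)$ and that $\SO(\Lambda)\hookrightarrow L^1(\Lambda)$, and in particular it \emph{avoids} the second-degree-character multiplier fact for the convolution step (the unimodular factor $\overline{\cocy(\lambda',\lambda')}$ only hits $F_1$ at the $L^1$ level, where it is harmless). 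This even yields the sharper inequality $\Vert F_1\natural F_2\Vert_{\SO}\le\Vert F_1\Vert_{1}\Vert F_2\Vert_{\SO}$.

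For part (II) the two proofs are essentially the same: both decompose $F^*$ into a standard involution followed by multiplication by the second-degree character $\lambda\mapsto\cocy(\lambda,\lambda)$. The paper simply invokes \cite[Example 5.2(vi)]{ja19}, which asserts directly that multiplication by a second-degree character is a bounded bijection on $\SO(\Lambda)$ for any LCA group $\Lambda$. Your lift--multiply--restrict manoeuvre via $\SO(G\times\ghat)$ is a valid alternative and works for exactly the reason you state; your closing hedging about the ``general case'' is therefore unnecessary---the only non-routine input is precisely this second-degree-character multiplier fact, and it is available either on $\SO(\Lambda)$ directly (as the paper uses) or on $\SO(G\times\ghat)$ combined with bounded lifting and restriction (as you use).
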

The same statement holds for functions on $\Lambda^{\circ}$ with respect to the convolution and involution defined in \eqref{eq:0804a} and \eqref{eq:0804b} with a very similar proof.
\begin{proof}[Proof of Lemma \ref{le:SO-twist-conv-algebra}] If $\Lambda$ is discrete, then $\SO(\Lambda)=\ell^{1}(\Lambda)$ (by Lemma \ref{le:s0-properties}(ii)) and the result follows easily. If $\Lambda$ is not discrete, then we have to work a bit harder. 
Note that the operator 
\[ \Phi:\SO(\Lambda\times\Lambda)\to\SO(\Lambda\times\Lambda), \ \Phi(F)(\lambda_{1},\lambda_{2}) = F(\lambda_{1},\lambda_{2}) \, \cocy(\lambda_{1},\lambda_{2}), \ (\lambda_{1},\lambda_{2})\in \Lambda\times\Lambda,\]
satisfies the assumption of \cite[Example 5.2(vi)]{ja19}. As stated in that reference $\Phi$ is a \emph{multiplication by a second degree character} and a well-defined, linear, and bounded bijection on $\SO(\Lambda\times\Lambda)$.
Similarly, the operator
\[ \alpha : \SO(\Lambda\times\Lambda) \to \SO(\Lambda\times\Lambda) , \ \alpha(F)(\lambda_{1},\lambda_{2}) = F(\lambda_{1},\lambda_{2}-\lambda_{1})\]
is of the form as in \cite[Example 5.2(ii)]{ja19} and therefore a well-defined, linear, and bounded bijection on $\SO(\Lambda\times\Lambda)$.
Furthermore, by \cite[Theorem 5.7(i)]{ja19}
 the operator 
\[ P : \SO(\Lambda\times\Lambda) \to \SO(\Lambda), \ (P F)(\lambda) = \int_{\Lambda} F(\lambda',\lambda) \, d\lambda'\]
is a well-defined, linear, and bounded surjection onto $\SO(\Lambda)$.
Composing these three operators implies that
\[ T = P \circ \alpha \circ \Phi : \SO(\Lambda\times\Lambda) \to \SO(\Lambda), \ T F(\lambda) = \int_{\Lambda} F(\lambda',\lambda-\lambda') \, \cocy(\lambda',\lambda-\lambda') \, d\lambda\]
is a well-defined, linear, and bounded surjection.
In particular, there is a constant $c_{1}$ that only depends on $\Lambda$ such that
\begin{equation} \label{eq:0704a} \Vert T(F) \Vert_{\SO(\Lambda)} \le c_{1} \, \Vert F \Vert_{\SO(\Lambda\times\Lambda)} \ \ \text{for all} \ \ F\in \SO(\Lambda\times\Lambda).\end{equation}
If $F_{1},F_{2}\in\SO(\Lambda)$, then by \cite[Theorem 5.3(i)]{ja19} the function $F_{1}\otimes F_{2} = (\lambda_{1},\lambda_{2})\mapsto F_{1}(\lambda_{1}) \cdot F_{2}(\lambda_{2})$ belongs to $\SO(\Lambda\times\Lambda)$ and there is a constant $c_{2}$ such that 
\begin{equation} 
\label{eq:0704b} \Vert F_{1}\otimes F_{2} \Vert_{\SO(\Lambda\times\Lambda)} \le c_{2} \, \Vert F_{1} \Vert_{\SO(\Lambda)} \, \Vert F_{2} \Vert_{\SO(\Lambda)}. \end{equation}
Observe that
\[ T(F_{1}\otimes F_{2})(\lambda) = \int_{\Lambda} F_{1}(\lambda') \, F_{2}(\lambda-\lambda') \, \cocy(\lambda',\lambda-\lambda') \, d\lambda = F_{1}\,\natural\, F_{2}(\lambda).\]
If we combine \eqref{eq:0704a} and \eqref{eq:0704b}, then we find that there is a constant $c>0$ (that depends only on $\Lambda$) such that
\[ \Vert F_{1} \, \natural \, F_{2} \Vert_{\SO(\Lambda)} = \Vert T (F_{1}\otimes F_{2}) \Vert_{\SO(\Lambda)} \le c \, \Vert F_{1} \Vert_{\SO(\Lambda)} \, \Vert F_{2} \Vert_{\SO(\Lambda)} \ \ \text{for all} \ \ F_{1},F_{2}\in\SO(\Lambda).\]
This shows that $\SO(\Lambda)$ is a Banach algebra under the twisted convolution. Similarly, one can show that the twisted involution is a composition of the operator
\[ \Psi:\SO(\Lambda) \to \SO(\Lambda) , \ \Psi F(\lambda) = \cocy(\lambda,\lambda) \, F(\lambda)\]
and the involution $F(\,\cdot\,) \mapsto \overline{F(-\,\cdot \, )}$. The operator $\Psi$ is, just as $\Phi$, a multiplication by a second degree character and thus an isomorphism on $\SO(\Lambda)$. Also the just described involution is an isomorphism on $\SO(\Lambda)$, see \cite[Example 5.2(viii)]{ja19}.
It is a matter of routine to verify that $(F_{1}\, \natural \, F_{2})^{*} = F_{2}^{*} \, \natural \, F_{1}^{*}$. 
\end{proof}

The time-frequency shifts $\pi(\chi)$ for $\chi = (x,\omega)\in G\times\ghat$ give an irreducible projective representation of $G\times\ghat$ on $L^{2}(G\times\ghat)$. Note that the restriction of this projective representation to a closed subgroup $\Lambda$ of $G\times\ghat$ defines a reducible projective representation of $\Lambda$. We will use that this projective representation of $\Lambda$ is faithful. We include the argument for the sake of completeness.

\begin{lemma}
Suppose $\Lambda$ is a closed subgroup of $G\times\ghat$. Then $\chi\mapsto \pi(\chi)$ is a faithful representation of $\Lambda$. Hence the integrated representation gives a non-degenerate representation of the twisted group algebra $(L^1(\La),\cocy)$ and of $(\SO(\La),\cocy)$.
\end{lemma}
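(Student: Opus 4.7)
The argument splits into two steps: first, faithfulness of $\chi\mapsto\pi(\chi)$ on $\Lambda$ by direct inspection; second, non-degeneracy of the integrated representation by an approximate-identity argument.

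For faithfulness, suppose $\chi=(x,\omega)\in\Lambda$ satisfies $\pi(\chi)=E_\omega T_x=\mathrm{Id}_{L^2(G)}$, i.e., $\omega(t)\,f(t-x)=f(t)$ a.e.\ for every $f\in L^2(G)$. By Lemma~\ref{le:SO-properties}(i) all elements of $\SO(G)$ are continuous, and the algebra contains compactly supported non-zero bumps concentrated near $0$. Assuming $x\ne 0$, pick a symmetric neighbourhood $V$ of $0\in G$ with $-x\notin V$ (using that $G$ is Hausdorff) and an $f\in\SO(G)$ supported in $V$ with $f(0)\ne 0$. Evaluating the a.e.\ equation at $t=0$ (a continuity point) yields $f(-x)=f(0)\ne 0$, contradicting $f(-x)=0$. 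Hence $x=0$, whereupon the remaining identity $\omega(t)f(t)=f(t)$ for all $f\in L^2(G)$ forces $\omega$ to be the trivial character. Thus $\pi\vert_\Lambda$ is injective.

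For non-degeneracy, choose a decreasing neighbourhood basis $(U_\alpha)$ of $0\in\Lambda$ with $0<\mu_\Lambda(U_\alpha)<\infty$ and non-negative bumps $u_\alpha\in\SO(\Lambda)$ supported in $U_\alpha$ with $\int_\Lambda u_\alpha\,d\lambda=1$. A direct calculation using $\pi(\lambda_1)\pi(\lambda_2)=\cocy(\lambda_1,\lambda_2)\pi(\lambda_1+\lambda_2)$ confirms that the integrated representation $\pi_\Lambda(F)=\int_\Lambda F(\lambda)\pi(\lambda)\,d\lambda$ is a $*$-homomorphism from $(\SO(\Lambda),\natural)$, hence also from $(L^1(\Lambda),\natural)$, into $\mathsf{B}(L^2(G))$. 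For any $f\in L^2(G)$,
\[
\Vert\pi_\Lambda(u_\alpha)f-f\Vert_2 \le \int_\Lambda u_\alpha(\lambda)\,\Vert\pi(\lambda)f-f\Vert_2\,d\lambda \le \sup_{\lambda\in U_\alpha}\Vert\pi(\lambda)f-f\Vert_2 \to 0
\]
by strong continuity of $\pi$ at $0\in\Lambda$. Hence the span of $\{\pi_\Lambda(F)f:F\in\SO(\Lambda),\,f\in L^2(G)\}$ is dense in $L^2(G)$; this is non-degeneracy for the representation of $(\SO(\Lambda),\cocy)$, and density of $\SO(\Lambda)\subset L^1(\Lambda)$ extends it to $(L^1(\Lambda),\cocy)$. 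The only minor obstacle is that the estimate above uses ordinary integration while the algebras carry the twisted convolution, but this is harmless: non-degeneracy is a property of the operator image $\pi_\Lambda(\SO(\Lambda))\subset\mathsf{B}(L^2(G))$, not of the multiplication on the domain side.
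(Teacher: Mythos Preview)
Your proof is correct for the statement as literally written, but you and the paper read the word ``faithful'' differently, and the paper's reading is the one that matters downstream.

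You interpret faithfulness as injectivity of the group map $\chi\mapsto\pi(\chi)$ on $\Lambda$, and you verify this cleanly by testing against a compactly supported bump. You then establish non-degeneracy of the integrated representation by an approximate-identity argument; this part is actually more carefully justified in your write-up than in the paper, which simply asserts it.

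The paper, however, proves a stronger statement: that the \emph{integrated} representation $a\mapsto\int_\Lambda a(\lambda)\pi(\lambda)\,d\lambda$ is injective, even for $a\in L^\infty(\Lambda)$. The argument is short and worth knowing: conjugating by $\pi(\chi)$ for arbitrary $\chi\in G\times\ghat$ turns the vanishing of $\int_\Lambda a(\lambda)\langle\pi(\lambda)f,g\rangle\,d\lambda$ into the vanishing of $\int_\Lambda a(\lambda)\langle\pi(\lambda)f,g\rangle\,\cocy_s(\lambda,\chi)\,d\lambda$ for all $\chi$; since $\chi\mapsto\cocy_s(\lambda,\chi)$ runs over the full dual of $\Lambda$, Fourier uniqueness forces $a(\lambda)\langle\pi(\lambda)f,g\rangle=0$ for all $\lambda$, whence $a=0$.

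This stronger injectivity is precisely what the paper needs immediately afterwards: the norm $\|\mathbf{a}\|_{\lmodule}=\|a\|_{\SO}$ on $\lmodule$ is only well-defined if distinct symbols $a$ give distinct operators $\mathbf{a}$. Your faithfulness-on-the-group result does not yield this when $\Lambda$ is non-discrete. So while your argument is sound for the lemma as stated, the paper's approach is what actually feeds into the construction of $\lmodule$ and $\rmodule$.
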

\begin{proof} We adapt the proof for the Euclidean case given in \cite[Proposition 2.6]{gr07-2}.
The desired claim to show is: if $\int_\Lambda a(\lambda)\pi(\lambda)d\lambda=0$ for some $a\in L^\infty(\Lambda)$, then $a=0$. 
Equivalently, we have to prove that 
\[\int_\Lambda a(\lambda)\langle \pi(\lambda)\pi(\chi)f,\pi(\chi)g\rangle \,d\lambda=0 \ \ \text{for all} \ \ f,g\in\SO(G)\] 
implies $a=0$. By $\pi(\chi)^*\pi(\lambda)\pi(\chi)=\cocy_s(\lambda,\chi)\pi(\lambda)$ for $\lambda\in\Lambda$ and $\chi\in G\times\ghat$, the assumption is equivalent to: 
\[
\int_\Lambda a(\lambda)\langle \pi(\lambda)f,g\rangle\cocy_s(\lambda,\chi)\, d\lambda=0
\]
for all $\chi\in G\times\ghat$ and $f,g\in\SO(G)$.
By the uniqueness of the Fourier transform on $\SO(\Lambda)$ we conclude that $a(\lambda)\langle \pi(\lambda)f,g\rangle=0$ for all $\lambda\in \Lambda$ and $f,g\in\SO(G)$. Hence $a=0$ on $\Lambda$, the desired claim. Since $\pi$ is faithful, we get that the integrated representation is non-degenerate representation of $(\SO(\Lambda),\natural)$ and $(L^1(\Lambda),\natural)$, respectively.  
\end{proof}
Given a closed subgroup $\Lambda$ of time time-frequency plane $G\times\ghat$ and its adjoint group $\Lambda^\circ$, we are interested in the relation between the Banach algebras $(\SO(\Lambda),\natural)$ and $(\SO(\Lambda^\circ),\natural)$. We use the integrated Schr\"odinger representation to define the following two Banach algebras
\begin{align*} 
\lmodule & = \big\{ {\bf a} \in \mathsf{B}(L^{2}(G)) \, : \, {\bf{a}} = \int_{\Lambda} a(\lambda) \, \pi(\lambda) \, d\lambda, \ a\in \SO(\Lambda)\big\}, \\
\rmodule & = \big\{ {\bf b} \in \mathsf{B}(L^{2}(G)) \, : \, {\bf{b}} = \int_{\Lambda^{\circ}} b(\lambda^{\circ}) \, \pi(\lambda^{\circ})^{*} \, d\lambda^{\circ}, \ b\in \SO(\Lambda^{\circ})\big\}. \end{align*}
Indeed, the norm $\Vert {\bf{a}} \Vert_{\lmodule} = \Vert  a \Vert_{\SO}$ (where ${\bf a}$ and $a$ are related as in the definition of $\lmodule$) turns $\lmodule$ into an involutive Banach algebra with respect to composition of operators and where the involution is the $L^{2}$-adjoint. Similarly, $\rmodule$ becomes an involutive Banach algebra.

We define \emph{traces} on both $\lmodule$ and $\rmodule$ by
\[ \tr_{\lmodule} : \lmodule \to \C,  \ \tr_{\lmodule} (\mathbf{a}) = a(0) \ , \ \ \ \ \tr_{\rmodule} : \rmodule \to \C,  \ \tr_{\rmodule} (\mathbf{b}) = b(0). \]
These are well-defined as all functions in $\SO$ are continuous (Lemma \ref{le:s0-properties}(i)). Furthermore, $\tr_{\lmodule}$ and $\tr_{\rmodule}$ are bounded operators (this follows from the continuous embedding of $\SO$ into $L^{\infty}$, Lemma \ref{le:s0-properties}(iv)).

\begin{remark} In the definition of $\rmodule$ the measure on $\Lambda^{\circ}$ is the measure that is orthogonal to the measure on $\Lambda$, cf.\ Remark \ref{rem:orth-measure-for-cocompact}. Hence, if $\Lambda$ is a co-compact subgroup of $G\times\ghat$ (e.g., a lattice), then
\[ \rmodule = \big\{ {\bf b} \in \mathsf{B}(L^{2}(G)) \, : \, {\bf{b}} = \frac{1}{s(\Lambda)} \sum_{\lambda^{\circ}\in\Lambda^{\circ}} b(\lambda^{\circ}) \, \pi(\lambda^{\circ})^{*} \, d\lambda^{\circ}, \ b\in \SO(\Lambda^{\circ})\big\}. \]
In that case, still, $\Vert { \bf b} \Vert_{\rmodule} = \Vert b \Vert_{\SO}$ and \emph{not} $\Vert {\bf b} \Vert_{\rmodule} = s(\Lambda)^{-1} \Vert b \Vert_{\SO}$!
\end{remark}

In the remainder of the section we will occasionally refer to results that we establish in Section \ref{sec:mws-gabor} within the realm of time-frequency analysis. This is because some of the proofs here rely on those facts, but also in order to point out the connection between Heisenberg modules and Gabor analysis.

We denote by $C^*(\Lambda,\cocy)$ and $C^*(\Lambda,\overline{\cocy})$ the twisted group $C^*$-algebras of $\Lambda$ and $\Lambda^\circ$, respectively. The choice of the cocycle $\overline{\cocy}$ will become clear in a moment. Rieffel showed in \cite{ri88}:
\begin{theorem}[Rieffel] \label{th:Rieffel}
The twisted group $C^*$-algebras $C^*(\Lambda,\cocy)$ and $C^*(\Lambda,\overline{\cocy})$ are Morita equivalent. 
\end{theorem}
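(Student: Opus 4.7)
The plan is to exhibit an explicit equivalence bimodule in the sense of Rieffel, namely the Heisenberg module $\cE$ obtained as the completion of $\SO(G)$ with respect to a suitable operator norm coming from the $\rmodule$-valued inner product. Since $C^*(\Lambda,\cocy)$ and $C^*(\Lambda^\circ,\overline{\cocy})$ are the enveloping $C^*$-algebras of the involutive Banach algebras $(\SO(\Lambda),\natural)$ and $(\SO(\Lambda^\circ),\natural)$ (which are dense subalgebras via the integrated representations, i.e.\ $\lmodule$ and $\rmodule$), it suffices to equip $\SO(G)$ with commuting left/right actions and compatible inner products and then pass to the $C^*$-completion.

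First I would set up the bimodule structure on $\SO(G)$: the left action of $\mathbf{a}=\int_\Lambda a(\lambda)\pi(\lambda)\,d\lambda\in\lmodule$ on $f\in\SO(G)$ is $\mathbf{a}\cdot f=\int_\Lambda a(\lambda)\pi(\lambda)f\,d\lambda$, and analogously for the right action of $\rmodule$ using $\pi(\lambda^\circ)^*$. Using that $\pi$ restricted to $\Lambda$ and $\pi^*$ restricted to $\Lambda^\circ$ commute (this is precisely the defining property of $\Lambda^\circ$, since $\cocy_s(\lambda,\lambda^\circ)=1$), the two actions commute. Next I would define
\[
\lhs{f}{g}=\int_\Lambda \langle f,\pi(\lambda)g\rangle\,\pi(\lambda)\,d\lambda,\qquad \rhs{f}{g}=\int_{\Lambda^\circ}\langle \pi(\lambda^\circ)^*g,f\rangle\,\pi(\lambda^\circ)^*\,d\lambda^\circ,
\]
and verify these land in $\lmodule$ and $\rmodule$ respectively; boundedness of the coefficient functions in $\SO(\Lambda)$ resp.\ $\SO(\Lambda^\circ)$ follows from Lemma \ref{le:s0-properties}(viii) applied to $\mathcal{V}_g f$ together with the restriction property (vi). The algebraic identities $\lhs{\mathbf{a}\cdot f}{g}=\mathbf{a}\,\lhs{f}{g}$, $\rhs{f\cdot \mathbf{b}}{g}=\rhs{f}{g}\,\mathbf{b}$, and $\lhs{f}{g}^*=\lhs{g}{f}$, $\rhs{f}{g}^*=\rhs{g}{f}$ then follow from the cocycle calculations listed in Section \ref{sec:abstract-HA}.

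The crux, and the main obstacle, is the associativity / fundamental identity
\[
\lhs{f}{g}\cdot h \;=\; f\cdot \rhs{g}{h}\qquad\text{for all } f,g,h\in\SO(G),
\]
which is the locally compact abelian generalization of the Janssen/Daubechies--Laundau--Landau identity \eqref{eq:janssen-intro}. I would prove this by expanding both sides as matrix coefficients, i.e.\ by pairing with an arbitrary $k\in\SO(G)$, and applying the symplectic Poisson summation formula (Lemma \ref{le:s0-properties}(ix)) to the function $\chi\mapsto \langle f,\pi(\chi)g\rangle\,\langle \pi(\chi)h,k\rangle$ on $G\times\ghat$, whose symplectic Fourier transform, by the STFT orthogonality relation \eqref{eq:STFT}, has an explicit form that reproduces the right-hand side. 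The hypothesis that the integrand is in $\SO(G\times\ghat)$ is ensured by Lemma \ref{le:s0-properties}(viii), so Poisson summation applies on the closed subgroup $\Lambda\subset G\times\ghat$ with dual $\Lambda^\circ$. Positivity of the inner products, i.e.\ $\lhs{f}{f}\ge 0$ in $C^*(\Lambda,\cocy)$ and $\rhs{f}{f}\ge 0$ in $C^*(\Lambda^\circ,\overline{\cocy})$, follows from $\lhs{f}{f}=L_f L_f^*$ where $L_f:\SO(\Lambda)\to\SO(G)$ is the analysis map $a\mapsto \mathbf{a}\cdot f$, together with the fundamental identity giving the symmetric statement on the right.

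Finally, to complete the equivalence bimodule, I would show that the linear span of $\{\lhs{f}{g}:f,g\in\SO(G)\}$ is dense in $C^*(\Lambda,\cocy)$ and similarly on the right. This reduces to density of the corresponding set of symbols in $\SO(\Lambda)$ (resp.\ $\SO(\Lambda^\circ)$), which follows because the symbols $\chi\mapsto\langle f,\pi(\chi)g\rangle$ for $f,g\in\SO(G)$ span a dense subspace of $\SO(\Lambda)$ by restriction (Lemma \ref{le:s0-properties}(vi)) once one knows the corresponding STFTs span a dense subspace of $\SO(G\times\ghat)$, a standard consequence of the inversion formula \eqref{eq:STFT} together with the existence of approximate identities in $\SO$. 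Completing $\SO(G)$ in the norm $\|f\|_\cE:=\|\rhs{f}{f}\|_{C^*(\Lambda^\circ,\overline{\cocy})}^{1/2}$ then yields $\cE$, and the resulting Hilbert $C^*$-bimodule implements the Morita equivalence.
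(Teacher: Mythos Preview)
Your proposal is correct and follows essentially the same route as the paper's sketch: equip $\SO(G)$ with the commuting $\lmodule$-$\rmodule$-actions and the $\lmodule$- and $\rmodule$-valued inner products, establish the associativity condition $\lhs{f}{g}\cdot h=f\cdot\rhs{g}{h}$ via the symplectic Poisson formula applied to $\chi\mapsto\langle f,\pi(\chi)g\rangle\langle\pi(\chi)h,k\rangle$ (this is exactly the paper's fundamental identity \eqref{eq:figa} for $\NumDim=\NumWin=1$, proved in Lemma~\ref{le:janssen-function} and Lemma~\ref{le:figa-suff-cond}), and complete to obtain the equivalence bimodule $\cE$. Your write-up is in fact more detailed than the paper's sketch---you spell out commutation of the actions, fullness of the inner products, and positivity---though your positivity argument via ``$\lhs{f}{f}=L_fL_f^*$'' is not literally correct as written (the right-hand side is an operator on $\SO(G)$, not an element of $\lmodule$); what one actually uses is that $\langle\lhs{f}{f}\cdot h,h\rangle=\int_\Lambda|\langle h,\pi(\lambda)f\rangle|^2\,d\lambda\ge 0$ for all $h$, which is the computation the paper carries out in Lemma~\ref{le:module-norm-and-bessel}.
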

We briefly sketch the main steps in Rieffel's proof since it allows us to introduce the equivalence bimodule whose structure we are mainly interested in this paper. In addition, we choose the pre-Hilbert $C^*$-bimodule as Feichtinger's algebra $\SO(G)$ instead of the Schwartz-Bruhat space in \cite{ri88}.
\begin{proof}[Proof of Theorem \ref{th:Rieffel}]
We let elements of $\lmodule$ act from the left on functions in $L^{2}(G)$ by 
\[ \mathbf{a} \cdot f := \int_{\Lambda} a(\lambda) \pi(\lambda) f \, d\lambda, \ \ f\in L^{2}(G), \ \mathbf{a}\in \lmodule.\]
Operators in $\rmodule$ act from the right on $L^{2}(G)$,
\[ f \cdot \mathbf{b} := \int_{\Lambda^{\circ}} b(\lambda^\circ)\, \pi(\lambda^{\circ})^* f \, d\lambda^{\circ}, \ \ f\in L^{2}(G), \ \mathbf{b}\in \rmodule.\]
We define $\lmodule$- and $\rmodule$-valued inner products in the following way:
\begin{align*} \lhs{\,\cdot\,}{\,\cdot\,} & : \SO(G)\times \SO(G) \to \lmodule, \ \lhs{f}{g} = \int_{\Lambda} \langle f, \pi(\lambda) g \rangle \, \pi(\lambda) \, d\lambda,\\
\rhs{\,\cdot\,}{\,\cdot\,} & :\SO(G)\times\SO(G)\to \rmodule, \ \rhs{f}{g} = \int_{\Lambda^{\circ}} \langle g, \pi(\lambda^{\circ})^{*}f\rangle \,\pi(\lambda^{\circ})^{*} \, d\lambda^{\circ}. \end{align*}
That these are well-defined follows from Lemma \ref{le:s0-properties}(vi)+(viii).
In terms of the left and right action the equality in \eqref{eq:figa} (for $\NumDim=\NumWin=1$), which is the generalization of \eqref{eq:janssen-intro}, yields 
\begin{equation} \label{eq:2302c} \lhs{f}{g} \cdot h = f\cdot\rhs{g}{h} \ \ \text{for all} \ \ f,g,h\in\SO(G).\end{equation}
In other words, the $\lmodule$- and $\rmodule$-valued inner products satisfy the associativity condition. The completion of $\SO(G)$ with respect to $\|g\|_\Lambda:=\|\lhs{g}{g}\|^{1/2}_{\textnormal{op},L^{2}}$ is a full left Hilbert $A$-module and a full right $B$-module for $\|g\|_{\Lambda^\circ}:=\|\rhs{g}{g}\|^{1/2}_{\textnormal{op},L^{2}}$. We denote this equivalence bimodule by $\mathcal{E}$. 
\end{proof}
Observe that
\[ \tr_{\lmodule} \big( \lhs{f}{g}\big) = \tr_{\rmodule} \big(\rhs{g}{f}\big) = \langle f, g\rangle \ \ \text{for all} \ \ f,g\in\SO(G). \]

The equivalence bimodule $\mathcal{E}$ between $C^*(\Lambda,\cocy)$ and $C^*(\Lambda,\overline{\cocy})$ is referred to as a \emph{Heisenberg} module as it is related to the representation theory of the Heisenberg group.
\begin{remark}\label{rem:l-r-module}
The distinction between the twisted convolutions of functions in $\SO(\Lambda)$ and on $\SO(\Lambda^{\circ})$ is that they make $\lmodule$ a \emph{left} module over $\SO(G)$ and $\rmodule$ a \emph{right} module over $\SO(G)$. Specifically,
\[ \mathbf{a}_{1} \cdot (\mathbf{a}_{2} \cdot f) = (\mathbf{a}_{1} \cdot \mathbf{a}_{2}) \cdot f, \ \ (f\cdot \mathbf{b}_{1}) \cdot \mathbf{b}_{2} = f \cdot (\mathbf{b}_{1}\cdot \mathbf{b}_{2}).\]
\end{remark}
On the Hilbert $C^*$-module $\mathcal{E}$ (see the proof of Theorem \ref{th:Rieffel} for the definition of $\mathcal{E}$) we may define (left) rank-one operators $S_{g,h}$ by $f\mapsto \lhs{f}{g} \cdot h$ for $f,g,h\in\mathcal{E}$. Concretely, we have that 
\[ S_{g,h}f=\int_\La \langle f,\pi(\la)g\rangle\pi(\la)h\,d\mu_\La(\la). \]
This operator is also called a mixed frame operator of the Gabor systems $\{\pi(\la)g\}_{\la\in\La}$ and $\{\pi(\la)h\}_{\la\in\La}$. This observation is the very reason for the link between Heisenberg modules and Gabor analysis \cite{lu09}.
In Gabor analysis the associativity condition 
\[ \lhs{f}{g} \cdot h = f\cdot\rhs{g}{h} \ \ \text{for} \ \ f,g,h\in\SO(G)\]
is referred to as the \emph{Janssen representation} of the operator $S_{g,h}$ and lies at the heart of the duality theory for Gabor systems that we expand on in Section \ref{sec:mws-gabor}. Properties of Gabor systems enter also naturally in the discussion of the Hilbert $C^*$-module $\mathcal{E}$. Specifically, the notion of a \emph{Bessel system} used in frame theory has a link with the \emph{module norm} used in Hilbert $C^{*}$-module theory.


\begin{lemma} \label{le:module-norm-and-bessel}
Suppose $\Lambda$ is a closed subgroup of $G\times\ghat$. For a function $g\in \SO(G)$ the module norm $\|g\|_\Lambda$ equals the square root of the optimal Bessel bound of the Gabor system $\{\pi(\lambda)g\}_{\lambda\in\Lambda}$.
\end{lemma}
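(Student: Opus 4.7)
The plan is to identify $\lhs{g}{g}$, viewed as a bounded operator on $L^{2}(G)$, with a familiar object from Gabor analysis, and then to invoke the equality of the two induced norms on the equivalence bimodule $\cE$. From classical frame theory, the optimal Bessel bound of any Bessel system is exactly the operator norm of its frame operator. Hence the optimal Bessel bound of $\{\pi(\lambda)g\}_{\lambda\in\Lambda}$ equals $\|S_{g}\|_{\textnormal{op},L^{2}(G)}$, where $S_{g}f=\int_{\Lambda}\langle f,\pi(\lambda)g\rangle\,\pi(\lambda)g\,d\lambda$ is the Gabor frame operator.

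The associativity identity (2.2c), $\lhs{f_{1}}{f_{2}}\cdot f_{3}=f_{1}\cdot\rhs{f_{2}}{f_{3}}$, specialized to $f_{2}=f_{3}=g$ and $f_{1}=f$ arbitrary, reads $S_{g}f=\lhs{f}{g}\cdot g=f\cdot\rhs{g}{g}$. In other words, $S_{g}$ is precisely the $L^{2}(G)$-operator implementing the right action of the element $\rhs{g}{g}\in\rmodule$, so $\|\rhs{g}{g}\|_{\textnormal{op},L^{2}(G)}=\|S_{g}\|_{\textnormal{op}}$ is the optimal Bessel bound of $\{\pi(\lambda)g\}_{\lambda\in\Lambda}$. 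This is a clean, one-line identification using only the Janssen-type associativity and basic frame theory.

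To bridge $\|\lhs{g}{g}\|$ and $\|\rhs{g}{g}\|$, I appeal to the standard fact from Hilbert $C^{*}$-module theory that on an $A$-$B$-equivalence (imprimitivity) bimodule $X$ the two induced module norms coincide, $\|\lhs{x}{x}\|_{A}=\|\rhs{x}{x}\|_{B}$ for every $x\in X$ (see, e.g., Raeburn--Williams). Since $\cE$ is an $\lmodule$-$\rmodule$-equivalence bimodule by Theorem~\ref{th:Rieffel}, and since both $C^{*}$-algebra norms are realized as $L^{2}(G)$-operator norms via the (faithful) integrated Schr\"odinger representations, we obtain $\|g\|_{\Lambda}^{2}=\|\lhs{g}{g}\|_{\textnormal{op},L^{2}(G)}=\|\rhs{g}{g}\|_{\textnormal{op},L^{2}(G)}$, which equals the optimal Bessel bound, as claimed.

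The main obstacle is the cited bimodule norm identity; if a proof self-contained to this paper is preferred, one can avoid the external reference by running Janssen the other way. With $f_{1}=f_{2}=g$ and $f_{3}=f$, (2.2c) yields $\lhs{g}{g}\cdot f=g\cdot\rhs{g}{f}=\int_{\Lambda^{\circ}}\langle f,\pi(\lambda^{\circ})^{*}g\rangle\,\pi(\lambda^{\circ})^{*}g\,d\lambda^{\circ}$, so that $\lhs{g}{g}$ is the frame operator on $L^{2}(G)$ of the adjoint Gabor system $\{\pi(\lambda^{\circ})^{*}g\}_{\lambda^{\circ}\in\Lambda^{\circ}}$. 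The equality of the Bessel bounds of the system over $\Lambda$ and of the adjoint system over $\Lambda^{\circ}$ is the Bessel-to-Bessel case of the duality principle developed in Section~\ref{sec:duality} (Theorem~\ref{th:duality}), which closes the argument.
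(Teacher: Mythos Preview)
Your argument is correct and follows essentially the same route as the paper. Both proofs use the associativity identity to recognize that the left action of $\lhs{g}{g}$ (equivalently, the right action of $\rhs{g}{g}$) is a Gabor frame operator, and both then invoke Rieffel's equality $\|g\|_{\Lambda}=\|g\|_{\Lambda^{\circ}}$ from \cite{ri79-1} (your Raeburn--Williams citation is the same fact) to pass between the two sides. The only cosmetic difference is that the paper computes the quadratic form $\langle\lhs{g}{g}f,f\rangle$ directly and reads off the $\Lambda^{\circ}$-Bessel expression, whereas you identify $S_{g}=\,\cdot\,\rhs{g}{g}$ and read off the $\Lambda$-Bessel bound; these are mirror images of one another. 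One small correction to your alternative route: the Bessel-to-Bessel duality you invoke is Proposition~\ref{pr:bessel-duality}, not Theorem~\ref{th:duality} (though it is proved jointly with the latter in Section~\ref{sec:proof-duality}).
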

\begin{proof}
  Recall that $\lhs{g}{g}$ is a positive element in $\lmodule$ for any $g\in\mathcal{E}$. Hence 
  \[\|g\|_\La^2=\sup\Big\{ \frac{\langle\lhs{g}{g}f,f\rangle}{\|f\|_2^2} \, : \, f\in \SO(G)\backslash\{0\} \Big\},\] 
  and by the associativity condition we have that 
  \[\langle\lhs{g}{g}f,f\rangle=\int_{\Lac}|\langle f,\pi(\lac)g\rangle|^2 \, d\mu_{\Lac}(\lac).\] 
  This implies that $\|g\|_\La<\infty$ if and only if there exists a constant $B>0$ such that
  \begin{equation} \label{eq:1505b}\int_{\Lac}|\langle f,\pi(\lac)g\rangle|^2\, d\mu_{\Lac}(\lac)\le B \, \|f\|_2^2 \ \ \text{for all} \ \ f\in \SO(G).
  \end{equation}
  I.e., $\{\pi(\lambda^{\circ})g\}_{\lambda^{\circ}\in\Lambda^{\circ}}$ is a Bessel system (relate this to Remark \ref{rem:bessel-vs-finite-module-norm} and its preceding paragraph). Moreover, the lowest possible value of $B$ in \eqref{eq:1505b} is exactly $\Vert g \Vert_{\La}^{2}$. The same arguments relate the Bessel bound of the system $\{\pi(\la)g \}_{\la\in\La}$ and $\Vert g \Vert_{\Lac}$. The equality $\|g\|_\Lambda=\|g\|_{\Lambda^\circ}$ \cite[Proposition 3.1]{ri79-1} implies the desired relation between $\Vert g \Vert_{\Lambda}$ and the optimal Bessel bound of the Gabor system $\{\pi(\lambda)g\}_{\la\in\La}$.
\end{proof}
\begin{remark} \label{rem:bessel-duality} The proof of Lemma \ref{le:module-norm-and-bessel} implies the \emph{Bessel duality} for Gabor systems: the two Gabor systems $\{\pi(\lambda) g\}_{\lambda}$ and $\{\pi(\lambda^{\circ})g\}_{\lambda^{\circ}\in\Lambda^{\circ}}$ have the same Bessel bound (if $\Lac$ is equipped with the correctly normalized measure, see Remark \ref{rem:orth-measure-for-cocompact} and its preceding paragraph). We will give a second proof of the Bessel duality within the realm of time-frequency analysis, see Proposition \ref{pr:bessel-duality}. In the ``classical '' Gabor setting $G=\R$ and $\La = \alpha\Z\times\beta \Z$ this result was established in the time-frequency community independently by Daubechies, Landau, Landau \cite[Theorem 4.3]{dalala95}, Janssen \cite[Proposition 3.1]{ja95}, and Ron and Shen \cite[Theorem 2.2]{rosh97}. Here this statement follows by use of the connection of Gabor systems to Rieffel's theory on the module norm of equivalence bimodules \cite{ri79-1}. 
\end{remark}
\begin{definition}
 We say that the functions $\{g_1,...,g_n\}$ are a (finite) \emph{standard module frame} \cite{frla02} for the Hilbert $C^*(\La,c)$-module $\mathcal{E}$ if there exist positive constants $A$ and $B$ such that 
\[A\,\lhs{f}{f}\le\sum_{j=1}^n\lhs{f}{g_j}\lhs{g_j}{f} \le\, B\lhs{f}{f} \ \ \text{for all} \ \ f\in\mathcal{E}.\]
\end{definition}
Note that this means that $\mathcal{E}$ is finitely generated and projective. 
Since $\mathcal{E}$ is an equivalence bimodule for $\lmodule$ and $\rmodule$, then one has that $\{g_1,...,g_n\}$
is a standard module frame for $\mathcal{E}$ if and only if 
\[A\,I\le \rhs{g_1}{g_1}+\cdots+\rhs{g_n}{g_n}\le B\,I, \]
where we have used the associativity condition to rewrite the standard module frame condition. A different way to look at the standard module frame condition is by taking the trace $\tr_{\lmodule}$ and note that it then becomes 
\[A\,\|f\|_2^2\le\sum_{j=1}^n\int_{\Lac}|\langle f,\pi(\mu)g_j\rangle|^2 d\mu_{\Lac}(\lac)\le B\,\|f\|_2^2 \ \ \text{for all} \ \ f\in L^2(G),\]
which is the well-known condition for the system $\{\pi(\la)g_1\}_{\la\in\La}\cup\cdots\cup\{\pi(\la)g_n\}_{\la\in\La}$ being a multi-window Gabor frame for $L^2(G)$ (cf.\ Section \ref{sec:mws-gabor}, Definition \ref{def:frame}).

One of our main results is a characterization of those Heisenberg modules that are finitely generated and projective. 
\begin{theorem}\label{th:S0-fin-gen}
$\SO(G)$ is a finitely generated projective $\lmodule$-module if and only if $\Lambda^\circ$ is a discrete subgroup (equivalently, the quotient grop $(G\times\ghat)/\La$ is compact).		
\end{theorem}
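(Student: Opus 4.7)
The strategy is to translate the module-theoretic statement into the language of multi-window Gabor frames via the dictionary set up immediately before the theorem, where the standard module frame condition for $\SO(G)$ over $\lmodule$ becomes — after applying the trace $\tr_\lmodule$ and invoking the associativity condition \eqref{eq:2302c} — the multi-window Gabor frame condition for $L^{2}(G)$. Both directions then reduce to known results about Gabor frames on LCA groups.

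For the direction ``$\SO(G)$ finitely generated projective $\Rightarrow$ $\Lambda^\circ$ discrete'', I would argue as follows. A finitely generated projective Hilbert $C^*$-module admits a finite standard module frame, so there exist $g_{1},\ldots,g_{n}\in \SO(G)$ satisfying
\[
A\,\lhs{f}{f}\le\sum_{j=1}^{n}\lhs{f}{g_{j}}\lhs{g_{j}}{f}\le B\,\lhs{f}{f}\quad\text{for all } f\in\SO(G).
\]
Applying $\tr_\lmodule$ and using $\tr_\lmodule(\lhs{f}{f})=\|f\|_{2}^{2}$ together with the associativity condition — exactly the computation carried out after the definition of standard module frames in the text — yields that $\{\pi(\lambda)g_{j}\}_{\lambda\in\Lambda,\,j=1,\ldots,n}$ is a multi-window Gabor frame for $L^{2}(G)$ with atoms in $\SO(G)$. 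The necessary density condition from \cite{jale16-2} (cited in item (F) of the introduction and again in the statement of the theorem) then forces $(G\times\ghat)/\Lambda$ to be compact, equivalently $\Lambda^\circ$ to be discrete.

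For the converse, assume $\Lambda^\circ$ is discrete. By Theorem \ref{th:exist} there exist $g_{1},\ldots,g_{n}\in\SO(G)$ such that $\{\pi(\lambda)g_{j}\}_{\lambda\in\Lambda,\,j=1,\ldots,n}$ is a multi-window Gabor frame for $L^{2}(G)$. Reading the trace identification in reverse shows that $\{g_{1},\ldots,g_{n}\}$ is a standard module frame for $\SO(G)$. The frame operator $S=\sum_{j=1}^{n}\lhs{\,\cdot\,}{g_{j}}\cdot g_{j}$ is positive and invertible on $L^{2}(G)$; by the $\SO$-invariance of the Gabor frame operator (a Feichtinger--Gr\"ochenig type result, available in Section~\ref{sec:mws-gabor}) its inverse maps $\SO(G)$ onto $\SO(G)$, so the canonical dual atoms $h_{j}=S^{-1}g_{j}$ lie in $\SO(G)$ and
\[
f=\sum_{j=1}^{n}\lhs{f}{g_{j}}\cdot h_{j}\quad\text{for all } f\in\SO(G).
\]
This exhibits $\SO(G)$ as finitely generated, and the maps $\Phi:\SO(G)\to\lmodule^{n}$, $f\mapsto (\lhs{f}{g_{j}})_{j}$, and $\Psi:\lmodule^{n}\to\SO(G)$, $(a_{j})_{j}\mapsto \sum_{j}a_{j}\cdot h_{j}$, satisfy $\Psi\circ\Phi=\mathrm{id}_{\SO(G)}$. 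Hence $\Phi\circ\Psi$ is an idempotent in $\mathrm{End}_{\lmodule}(\lmodule^{n})$ whose image is isomorphic to $\SO(G)$, proving projectivity.

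\textbf{Main obstacle.} The non-trivial inputs are Theorem \ref{th:exist} on the existence of multi-window Gabor frames with $\SO$-atoms for arbitrary co-compact closed subgroups $\Lambda$, and the preservation of $\SO(G)$ under $S^{-1}$ needed to produce an $\SO$-valued dual frame (and hence an $\SO$-valued idempotent realizing projectivity). The density-theorem half is essentially cited from \cite{jale16-2}; the content is entirely on the constructive side.
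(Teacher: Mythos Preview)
Your forward direction is fine and is essentially what the paper does: from a finite generating set one extracts a (tight) multi-window Gabor frame via the trace identity, and then the density theorem from \cite{jale16-2} (here Lemma~\ref{le:nec-cond}(i)) forces $(G\times\ghat)/\Lambda$ to be compact.

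The backward direction, however, is circular as written. You invoke Theorem~\ref{th:exist} to produce $g_{1},\ldots,g_{n}\in\SO(G)$ generating a multi-window Gabor frame, but in the paper Theorem~\ref{th:exist} is explicitly introduced as a \emph{rephrasing} of Theorem~\ref{th:S0-fin-gen} and has no independent proof for general $G$; its content \emph{is} the $(\Leftarrow)$ direction you are trying to establish. Your ``Main obstacle'' paragraph even identifies Theorem~\ref{th:exist} as the non-trivial input, which is exactly the point: that input is not available to you here. A secondary issue is that the $\SO$-invariance of $S^{-1}$ you need is not ``available in Section~\ref{sec:mws-gabor}''; Lemma~\ref{le:s0-implies-bessel} gives boundedness of $S$ on $\SO$, not invertibility. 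The required spectral invariance is the Gr\"ochenig--Leinert Wiener lemma \cite{grle04}, an external result.

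The paper's $(\Leftarrow)$ argument avoids constructing a Gabor frame first and instead works on the $C^{*}$-module side: discreteness of $\Lambda^{\circ}$ makes $C^{*}(\Lambda^{\circ},\overline{\cocy})$ unital; Rieffel's fullness argument (\cite[Proposition~2.9]{ri88} together with \cite[Proposition~2.1]{ri81}) forces the range of $\rhs{\cdot}{\cdot}$ to contain the identity, which already gives generators $g_{1},\ldots,g_{n}\in\mathcal{E}$ with $f=\sum_{j}\lhs{f}{g_{j}}g_{j}$ and hence finite generation and projectivity of $\mathcal{E}$ over $C^{*}(\Lambda,\cocy)$. The passage from $\mathcal{E}$ down to $\SO(G)$ over $\lmodule$ then uses the inverse-closedness of $\rmodule$ in $C^{*}(\Lambda^{\circ},\overline{\cocy})$ from \cite{grle04} together with \cite[Proposition~3.7]{ri88}. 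So the existence of the frame and the $\SO$-regularity of its generators come out of the $C^{*}$-algebraic structure rather than being assumed in advance.
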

\begin{remark}
In Theorem \ref{th:existence-mwsframes-rn} we give an elementary proof of this result in case $G=\R^{m}$.
\end{remark}
Let us rephrase the statement of Theorem \ref{th:S0-fin-gen} in terms of multi-window Gabor frames.
\begin{theorem} \label{th:exist}
  Let $\La$ be a closed subgroup of $G\times\ghat$. There exist functions $g_1,\ldots,g_n$ in $\SO(G)$ such that $\{\pi(\la)g_1\}_{\la\in\La}\cup\ldots\cup\{\pi(\la)g_n\}_{\la\in\La}$ is a multi-window Gabor frame for $L^2(G)$ if and only if $\Lac$ is discrete (equivalently, $(G\times\ghat)/\La$ is compact).
\end{theorem}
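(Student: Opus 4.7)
The plan is to prove Theorem \ref{th:exist} as a direct translation of Theorem \ref{th:S0-fin-gen} into the language of Gabor analysis, via the dictionary between standard module frames for the Heisenberg bimodule $\mathcal{E}$ and multi-window Gabor frames. The bridging identity, which I would establish first, is that for any $g_1,\ldots,g_n \in \SO(G)$ and any $f \in L^2(G)$,
\[ \Big\langle \Big(\sum_{j=1}^n \lhs{g_j}{g_j}\Big)\cdot f,\, f\Big\rangle = \sum_{j=1}^n \int_\La |\langle f,\pi(\la)g_j\rangle|^2\,d\mu_\La(\la). \]
This follows from the associativity condition \eqref{eq:2302c} together with $\tr_\lmodule(\lhs{f}{f}) = \|f\|_2^2$. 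In particular, the standard module frame inequalities $A\cdot I \le \sum_j \lhs{g_j}{g_j} \le B \cdot I$ (for the left action on $L^2(G)$) are equivalent to the multi-window Gabor frame inequalities.

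For the forward implication, I would assume that $g_1,\ldots,g_n \in \SO(G)$ generate a multi-window Gabor frame for $L^2(G)$. The identity above promotes the frame bounds to those of a standard module frame for $\SO(G)$ as a left $\lmodule$-module, so $\SO(G)$ is finitely generated and projective. Theorem \ref{th:S0-fin-gen} then forces $\Lac$ to be discrete. Conversely, if $\Lac$ is discrete, Theorem \ref{th:S0-fin-gen} produces generators $g_1,\ldots,g_n \in \SO(G)$ witnessing finite generation and projectivity, and the same identity shows these are multi-window Gabor frame generators.

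The main obstacle is therefore entirely Theorem \ref{th:S0-fin-gen}, particularly its sufficiency direction. The necessity part is relatively straightforward and is already available via \cite{jale16-2}: a frame lower bound forces the Gabor system to be sufficiently dense, which via density arguments rules out $\Lac$ being non-discrete. For sufficiency, the core difficulty is to produce explicit generators $g_j \in \SO(G)$ that yield a uniformly positive lower frame bound, using only the compactness of $(G\times\ghat)/\La$. A natural route is to combine the Wexler-Raz biorthogonality relations developed in Section \ref{sec:figa-wex-raz} with a partition-of-unity construction on the compact quotient, exploiting that $\SO(G)$ is closed under pointwise multiplication and that restriction to the discrete subgroup $\Lac$ lands in $\ell^1(\Lac)$ (Lemma \ref{le:s0-properties}(vii)). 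An alternative route, pursued in Section \ref{sec:existence-mws-frames-for-R} for $G=\R^m$, is to construct a concrete idempotent in a matrix algebra over $\rmodule$ directly.
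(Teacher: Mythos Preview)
Your overall plan---reduce Theorem~\ref{th:exist} to Theorem~\ref{th:S0-fin-gen}---matches the paper exactly; the paper simply presents Theorem~\ref{th:exist} as the Gabor-analytic rephrasing of Theorem~\ref{th:S0-fin-gen} and gives a single proof for the latter.

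However, your bridging identity is wrong as written. The operator $\sum_j \lhs{g_j}{g_j}$ acting on the \emph{left} is, by the associativity condition \eqref{eq:2302c}, the map $f\mapsto \sum_j g_j\cdot\rhs{g_j}{f}$, which is the frame operator for the Gabor system over $\Lambda^{\circ}$, not over $\Lambda$; hence $\langle(\sum_j\lhs{g_j}{g_j})f,f\rangle=\sum_j\int_{\Lambda^{\circ}}|\langle f,\pi(\lambda^{\circ})g_j\rangle|^2\,d\mu_{\Lambda^{\circ}}$. The frame operator over $\Lambda$ is $f\mapsto\sum_j\lhs{f}{g_j}\cdot g_j=f\cdot\sum_j\rhs{g_j}{g_j}$, so the correct operator inequality equivalent to the multi-window Gabor frame condition is $A\,I\le\sum_j\rhs{g_j}{g_j}\le B\,I$ (in $\rmodule$, acting on the right), exactly as the paper records just before Theorem~\ref{th:S0-fin-gen}. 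Equivalently, one takes $\tr_{\lmodule}$ of the standard module frame inequality $A\,\lhs{f}{f}\le\sum_j\lhs{f}{g_j}\lhs{g_j}{f}\le B\,\lhs{f}{f}$, using $\lhs{f}{g_j}\lhs{g_j}{f}=\lhs{\lhs{f}{g_j}g_j}{f}$. With this correction your reduction goes through.

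Your sketched routes for the sufficiency direction of Theorem~\ref{th:S0-fin-gen} (partition of unity on the compact quotient, or direct idempotent construction) are not what the paper does. The paper's argument is purely operator-algebraic: since $\Lambda^{\circ}$ is discrete, $C^*(\Lambda^{\circ},\overline{\cocy})$ is unital; Rieffel's density argument forces the identity to lie in the range of $\rhs{\cdot}{\cdot}$, yielding generators in the completion $\mathcal{E}$; and then spectral invariance of $\SO(\Lambda^{\circ})$ in $C^*(\Lambda^{\circ},\overline{\cocy})$ (via \cite{grle04} and \cite[Proposition~3.7]{ri88}) pulls the generators back into $\SO(G)$. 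Your partition-of-unity idea is closer in spirit to the elementary proof for $G=\R^m$ in Section~\ref{sec:existence-mws-frames-for-R}, but for general $G$ the paper relies on the $C^*$-algebraic machinery rather than an explicit construction.
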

\begin{remark} Because \emph{super} Gabor systems are a special case of Gabor systems (see Definition \ref{def:mwg} and the consequent remarks) Theorem \ref{th:exist} shows that there exists multi-window super Gabor frames for $L^{2}(G\times\Z_{\NumDim})$ with respect to any closed subgroup $\Lambda$ of the time-frequency plane $G\times\ghat$ where the quotient group $(G\times\ghat)/\La$ is compact and $\NumDim<\infty$.
\end{remark}
\begin{proof}[Proof of Theorem \ref{th:S0-fin-gen}]
$(\Rightarrow)$ Suppose $\SO(G)$ is projective and finitely generated. Then there exist $g_1,\ldots,g_n$ in $\SO(G)$ such that 
\[f=\lhs{f}{g_1}g_1+\ldots+\lhs{f}{g_n}g_n \ \ \text{for all} \ \ f\in\SO(G).\]
Equivalently, we have 
\[\lhs{f}{f}=\lhs{f}{g_1}\cdot \lhs{g_1}{f}+\ldots+\lhs{f}{g_n}\cdot\lhs{g_n}{f} \ \ \text{for all} \ \ f\in\SO(G).\]
If we take take the trace, $\tr_{\lmodule}$, of the preceding equality, then we obtain the equality 
\[\langle f,f\rangle=\sum_{j=1}^n\int_{\La}|\langle f,\pi(\la)g_j\rangle|^2\,d\mu_\La(\La) \ \ \text{for all} \ \ f\in\SO(G).\]
Hence the functions $g_1,\ldots,g_n$ generate a tight multi-window Gabor frame for $L^2(G)$. By \cite[Theorem 5.1]{jale16-2} (which is Lemma \ref{le:nec-cond}(i) here) this statement implies that $(G\times\ghat)/\La$ is compact. Equivalently, $\Lac$ is discrete.
\\~\\
$(\Leftarrow)$
Suppose $\Lac$ is discrete. Then $\SO(\Lac)=\ell^1(\Lac)$ and thus $C^*(\Lac,\overline{\cocy})$ is unital. Following the proof of \cite[Proposition 2.9]{ri88} we see that the right ideal $\{\rhs{f}{g}:\,f,g\in\SO(G)\}$ is a dense ideal in $C^*(\Lac,\overline{\cocy})$ and by the first part of the proof of \cite[Proposition 2.1]{ri81} the range of $\rhs{.}{.}$ must contain the identity element of $C^*(\Lac,\overline{\cocy})$. Hence there exist $g_1,...,g_n$ in $\mathcal{E}$ such that 
\[f=\lhs{f}{g_1}g_1+\cdots+\lhs{f}{g_n}g_n \ \ \text{for all} \ \ f\in\mathcal{E}.\]
This implies that $\mathcal{E}$ is a finitely generated projective $C^*(\La,c)$-module. Note that  $\rmodule$ is inverse-closed in $C^*(\Lambda^\circ,\overline{\cocy})$ by \cite{grle04}, since $\Lac$ is discrete. By  \cite[Proposition 3.7]{ri88} we deduce that $\SO(G)$ is a finitely generated projective $\lmodule$-module.
 \end{proof}

\subsection{Extending the module theory to matrices}
\label{sec:module-mws}

In this section we generalize the definition of the $\lmodule$- and $\rmodule$-valued inner products to take on values in the matrix algebra $\textnormal{M}_{\NumDim\cdot\NumWin}(\lmodule)$ and $\textnormal{M}_{\NumDim\cdot\NumWin}(\rmodule)$, respectively, where $\NumDim,\NumWin\in\N$. This extension will exactly realize the multi-window super Gabor system theory that we establish in Section \ref{sec:mws-gabor}. 

We need to fix some notation: For $\NumDim\in\N$ we let $\Z_{\NumDim}$ denote the group $\Z/\NumDim\Z$. We will work with functions in $L^{2}$ and $\SO$ over the group $G\times\Z_{\NumDim}\times\Z_{\NumWin}$. Since $\NumDim$ and $\NumWin$ is finite, these functions can be thought of as matrix valued functions, $L^{2}(G; \C^{\NumDim\times\NumWin})$, $\SO(G; \C^{\NumDim\times\NumWin})$. For $f\in L^{2}(G\times\Z_{\NumDim}\times\Z_{\NumWin})$ we define $f_{k,j} := f(\,\cdot\, ,k,j)$. Also, for $k\in\Z_{\NumDim}$, we let $f_{
k,\bullet}$ be the function in $L^{2}(G\times\Z_{\NumWin})$ given by $(f
_{k,j})_{j\in\Z_{\NumWin}}$.

For functions in $\SO(G\times\Z_{\NumDim}\times\Z_{\NumWin})$ we define the $\textnormal{M}_{\NumDim\cdot\NumWin}(\lmodule)$-valued inner-product
\begin{align*} & \mlhs{\,\cdot\,}{\,\cdot\,} : \SO(G\times\Z_{\NumDim}\times\Z_{\NumWin})\times \SO(G\times\Z_{\NumDim}\times\Z_{\NumWin}) \to \textnormal{M}_{\NumDim\cdot\NumWin}(\lmodule)\end{align*}
which maps two functions $f$ and $g$ into the block diagonal matrix
\[ \mlhs{f}{g} = \textnormal{diag}(\underbrace{\mathbf{A}, \mathbf{A}, \ldots, \mathbf{A}}_{\textnormal{repeated $\NumDim$-times}}),\]
where $\mathbf{A}$ is the $\lmodule$-valued $\NumWin\times\NumWin$-matrix given by
\begin{align*}
& \mathbf{A} = \mathlarger{\sum}_{k\in\Z_{\NumDim}} \begin{bmatrix} 
\lhs{f_{k,1}}{g_{k,1}} & \lhs{f_{k,1}}{g_{k,2}} & \cdots & \lhs{f_{k,1}}{g_{k,n}} \\
\lhs{f_{k,2}}{g_{k,1}} & \lhs{f_{k,2}}{g_{k,2}} & \cdots & \lhs{f_{k,2}}{g_{k,n}} \\
\vdots & \vdots & \ddots & \vdots \\
\lhs{f_{k,\NumWin}}{g_{k,1}} & \lhs{f_{k,\NumWin}}{g_{k,2}} & \cdots & \lhs{f_{k,\NumWin}}{g_{k,n}} 
\end{bmatrix}.\end{align*}
The left action that $\mlhs{f}{g}$ has on a function $h\in L^{2}(G\times\Z_{\NumDim}\times\Z_{\NumWin})$ can be represented as a matrix-vector multiplication. We define

\[ \mlhs{f}{g}\cdot h \equiv \begin{bmatrix} ( 
\mlhs{f}{g}\cdot h )_{1,\bullet} \\ 
( \mlhs{f}{g}\cdot h )_{2,\bullet} \\
\vdots \\
( \mlhs{f}{g}\cdot h )_{\NumDim,\bullet}\end{bmatrix}:= 
\def\arraystretch{1.2} \begin{bmatrix} \multicolumn{1}{c}{\mathbf{A}} & \mathbf{0} & \cdots & \mathbf{0} \\ 
\multicolumn{1}{c}{\mathbf{0}} & \mathbf{A} & \cdots & \mathbf{0} \\
{\vdots} & \vdots & \ddots & \vdots \\
\multicolumn{1}{c}{\mathbf{0}} & \mathbf{0} & \cdots & {\mathbf{A}} \end{bmatrix} \cdot
\def\arraystretch{1.2}
\begin{bmatrix} h_{1,\bullet} \\ h_{2,\bullet} \\ \vdots \\  h_{\NumDim,\bullet} \end{bmatrix} = \begin{bmatrix} \mathbf{A} \cdot h_{1,\bullet} \\ \mathbf{A} \cdot h_{2,\bullet} \\ \vdots \\ \mathbf{A} \cdot h_{\NumDim,\bullet} \end{bmatrix},\]
Hence for all $f,g\in \SO(G\times\Z_{\NumDim}\times\Z_{\NumWin})$ and $h\in L^{2}(G\times\Z_{\NumDim}\times\Z_{\NumWin})$ we have 
\begin{equation} \label{eq:2302a} \big(\mlhs{f}{g}\cdot h\big)_{k,j} = \displaystyle\sum_{\substack{k'\in\Z_{\NumDim} \\ j'\in\Z_{\NumWin}}} \lhs{f_{k',j}}{g_{k',j'}}  \cdot h_{k,j'} \ \ \text{for all} \ \ (k,j)\in \Z_{\NumDim}\times\Z_{\NumWin}.\end{equation}

The $\rmodule$-valued inner product is generalized as follows: 
\[ \mrhs{\,\cdot\,}{\,\cdot\,} : \SO(G\times\Z_{\NumDim}\times\Z_{\NumWin})\times \SO(G\times\Z_{\NumDim}\times\Z_{\NumWin}) \to \textnormal{M}_{\NumDim\cdot\NumWin}(\rmodule)\]
is the map that takes two functions $f$ and $g$ into the diagonal block matrix 
\[ \mrhs{f}{g} = \textnormal{diag}(\underbrace{\mathbf{B}, \mathbf{B}, \ldots, \mathbf{B}}_{\textnormal{repeated $\NumWin$-times}}),\]
where $\mathbf{B}$ is the $\rmodule$-valued $\NumDim\times\NumDim$ matrix
\[ \mathbf{B} = \mathlarger{\sum}_{j\in\Z_{\NumWin}} \begin{bmatrix} 
\rhs{f_{1,j}}{g_{1,j}} & \rhs{f_{1,j}}{g_{2,j}} & \ldots & \rhs{f_{1,j}}{g_{\NumDim,j}} \\ 
\rhs{f_{2,j}}{g_{1,j}} & \rhs{f_{2,j}}{g_{2,j}} & \ldots & \rhs{f_{2,j}}{g_{\NumDim,j}} \\
\vdots & \vdots & \ddots & \vdots \\
\rhs{f_{\NumDim,j}}{g_{1,j}} & \rhs{f_{\NumDim,j}}{g_{2,j}} & \ldots & \rhs{f_{\NumDim,j}}{g_{\NumDim,j}}
\end{bmatrix}.\]
The right action that $\mrhs{f}{g}$ has on a function $h\in L^{2}(G\times\Z_{\NumDim}\times\Z_{\NumWin})$ can be realized as a vector-matrix product
\begin{align*} h \cdot \mrhs{f}{g} & \equiv \begin{bmatrix} (h \cdot \mrhs{f}{g})_{\bullet,1} & (h \cdot \mrhs{f}{g})_{\bullet,2} & \ldots & (h \cdot \mrhs{f}{g})_{\bullet,\NumWin}\end{bmatrix} \\
& := \begin{bmatrix} h_{\bullet,1} & h_{\bullet,2} & \ldots & h_{\bullet,\NumWin}\end{bmatrix} \cdot \def\arraystretch{1.2} \begin{bmatrix} \multicolumn{1}{c}{\mathbf{B}} & \mathbf{0} & \cdots & \mathbf{0} \\ 
\multicolumn{1}{c}{\mathbf{0}} & \mathbf{B} & \cdots & \mathbf{0} \\
{\vdots} & \vdots & \ddots & \vdots \\
\multicolumn{1}{c}{\mathbf{0}} & \mathbf{0} & \cdots & \mathbf{B}  \end{bmatrix} \\
& = \begin{bmatrix}  h_{\bullet,1} \cdot \textbf{B} &  h_{\bullet,2} \cdot \textbf{B} & \ldots & h_{\bullet,\NumWin} \cdot \textbf{B} \end{bmatrix} .\end{align*}
That is, for all $f,g\in \SO(G\times\Z_{\NumDim}\times\Z_{\NumWin})$ and $h\in L^{2}(G\times\Z_{\NumDim}\times\Z_{\NumWin})$
\begin{equation} \label{eq:2302b} (h \cdot \mrhs{f}{g})_{k,j} = \displaystyle\sum_{\substack{k'\in\Z_{\NumDim}\\j'\in \Z_{\NumWin}}} h_{k',j} \cdot \rhs{f_{k',j'}}{g_{k,j'}}, \ \ (k,j)\in \Z_{\NumDim}\times\Z_{\NumWin}.\end{equation}

By use of \eqref{eq:2302c} it is immediate from \eqref{eq:2302a} and \eqref{eq:2302b} that the matrix algebra valued inner-products satisfy Rieffel's associativity condition
\begin{equation} \label{eq:associativity-for-mws} \mlhs{f}{g}\cdot h = f \cdot \mrhs{g}{h} \ \ \text{for all} \ \ f,g,h\in \SO(G\times\Z_{\NumDim}\times\Z_{\NumWin}).\end{equation}

We follow the definition of traces of matrices and define the trace of an element in $\textnormal{M}_{\NumDim\times\NumWin}(\lmodule)$ and $\textnormal{M}_{\NumDim\times\NumWin}(\rmodule)$ to be the properly normalized sum of the trace along their diagonal,
\begin{align*} 
& \tr_{\textnormal{M}(\lmodule)} : \textnormal{M}_{d\cdot n} (\lmodule) \to \C , \ \tr_{\textnormal{M}(\lmodule)}( \mathbf{A} ) = \frac{1}{\NumDim} \sum_{i} \tr_{\lmodule} (\mathbf{A}_{ii}), \ \ \mathbf{A}\in\textnormal{M}_{d\cdot n} (\lmodule),  \\
& \tr_{\textnormal{M}(\rmodule)} : \textnormal{M}_{d\cdot n} (\rmodule) \to \C , \ \tr_{\textnormal{M}(\rmodule)}( \mathbf{B} ) = \frac{1}{\NumWin} \sum_{i} \tr_{\rmodule} (\mathbf{B}_{ii}), \ \ \mathbf{B}\in\textnormal{M}_{d\cdot n} (\rmodule).\end{align*}
The block-structure of $\mlhs{\cdot}{\cdot}$ and $\mrhs{\cdot}{\cdot}$ allows us to calculate their traces easily,
\begin{equation} \tr_{\textnormal{M}(\lmodule)} \big( \mlhs{f}{g} \big)  = \tr_{\textnormal{M}(\rmodule)} \big( \mrhs{g}{f} \big) = \langle f, g\rangle = \sum_{\substack{k\in\Z_{\NumDim} \\ j\in\Z_{\NumWin}}} \langle \mvfun{f}{k}{j},\mvfun{g}{k}{j}\rangle \ \ \text{for all} \ \ f,g\in \SO(G\times\Z_{\NumDim}\times\Z_{\NumWin}). \end{equation}
\begin{remark}\label{rem:besselbound-mws-module}
Similar as in the proof of Theorem \ref{th:Rieffel} we can define the module norm on $\SO(G\times\Z_{\NumDim}\times\Z_{\NumWin})$: \[ \|g\|_\Lambda=\big\Vert \mlhs{g}{g}\big\Vert^{1/2}_{\textnormal{op},L^{2}}, \ \|g\|_{\Lambda^\circ}=\big\Vert\mrhs{g}{g}\big\Vert^{1/2}_{\textnormal{op},L^{2}}.\]
They relate to the Bessel bound of the corresponding multi-window super Gabor system as in Lemma~\ref{le:module-norm-and-bessel}. The equality $\Vert g \Vert_{\Lambda} = \Vert g \Vert_{\Lac}$ established by Rieffel in \cite{ri79-1} exactly states the equivalence shown in Proposition \ref{pr:bessel-duality}. See Remark \ref{rem:bessel-duality} for more on this.
\end{remark}

It is clear that for $\NumDim=\NumWin=1$ the above theory reduces to the situation  in the proof of Theorem \ref{th:Rieffel}.

We are now in the position to formulate the main result of this section: the description of the duality theory for Gabor systems that we establish in Section \ref{sec:mws-gabor} in terms of the $\lmodule$- and $\rmodule$-matrix-valued inner-products (the generalization of Theorem \ref{th:duality-for-mws-module-intro} from the introduction). 
\begin{theorem}[Main result] \label{th:duality-for-mws-module} Let $\Lambda$ be a closed co-compact subgroup of $G\times\ghat$. For any pair of functions $g,h\in\SO(G\times\Z_{\NumDim}\times\Z_{\NumWin})$ the following statements are equivalent.
\begin{enumerate}[(i)]
\item $f = \mlhs{f}{g} \cdot h \ $ for all $\ f\in \SO(G\times\Z_{\NumDim}\times\Z_{\NumWin})$,
\item For all $f\in L^{2}(G\times\Z_{\NumDim})$ and all $k\in\Z_{\NumDim}$
\[ \vvfun{f}{k} = \sum_{j\in\Z_{\NumWin}} \int_{\La} \Big( \sum_{k'\in\Z_{\NumDim}} \big\langle \vvfun{f}{k'}, \pi(\lambda) \mvfun{g}{k'}{j} \big\rangle \Big)  \, \pi(\lambda) \mvfun{h}{k}{j} \, d\la .\]
\item The operator $\mrhs{g}{h}$ is the identity on $L^{2}(G\times\Z_{\NumDim}\times\Z_{\NumWin})$.
\item The $\rmodule$-valued $\NumDim\times\NumDim$ matrix
\[ \mathlarger{\sum}_{j\in\Z_{\NumWin}} \begin{bmatrix} 
\rhs{g_{1,j}}{h_{1,j}} & \rhs{g_{1,j}}{h_{2,j}} & \ldots & \rhs{g_{1,j}}{h_{\NumDim,j}} \\ 
\rhs{g_{2,j}}{h_{1,j}} & \rhs{g_{2,j}}{h_{2,j}} & \ldots & \rhs{g_{2,j}}{h_{\NumDim,j}} \\
\vdots & \vdots & \ddots & \vdots \\
\rhs{g_{\NumDim,j}}{h_{1,j}} & \rhs{g_{\NumDim,j}}{h_{2,j}} & \ldots & \rhs{g_{\NumDim,j}}{h_{\NumDim,j}}
\end{bmatrix} \]
equals the identity matrix.
\item $\{\mvfun{g}{}{j}\}_{j\in\Z_{\NumWin}}$ and $\{\mvfun{h}{}{j}\}_{j\in\Z_{\NumWin}}$ generate dual multi-window super Gabor frames for $L^{2}(G\times\Z_{\NumDim})$ with respect to $\Lambda$.
\item $\{\mvfun{g}{k}{}\}_{k\in\Z_{\NumDim}}$ and $\{\mvfun{h}{k}{}\}_{k\in\Z_{\NumDim}}$ generate dual multi-window super Gabor Riesz sequences for $L^{2}(G\times\Z_{\NumDim})$ with respect to $\Lambda^{\circ}$.
\item The $\lmodule$-valued $\NumWin\times\NumWin$-matrix
\[ \mathlarger{\sum}_{k\in\Z_{\NumDim}} \begin{bmatrix} 
\lhs{g_{k,1}}{h_{k,1}} & \lhs{g_{k,1}}{h_{k,2}} & \cdots & \lhs{g_{k,1}}{h_{k,n}} \\
\lhs{g_{k,2}}{h_{k,1}} & \lhs{g_{k,2}}{h_{k,2}} & \cdots & \lhs{g_{k,2}}{h_{k,n}} \\
\vdots & \vdots & \ddots & \vdots \\
\lhs{g_{k,\NumWin}}{h_{k,1}} & \lhs{g_{k,\NumWin}}{h_{k,2}} & \cdots & \lhs{g_{k,\NumWin}}{h_{k,n}} 
\end{bmatrix}\]
is an idempotent operator from $L^{2}(G\times\Z_{\NumWin})$ onto $V:=\overline{\textnormal{span}}\{ \bigoplus\limits_{j\in\Z_{\NumWin}} \pi(\lambda^{\circ})\mvfun{g}{k}{j} \}_{\lambda^{\circ}\in\La^{\circ},k\in\Z_{\NumDim}}$.
\item The operator $\mlhs{g}{h}$ is an idempotent operator from $L^{2}(G\times\Z_{\NumDim}\times\Z_{\NumWin})$ onto $\bigoplus\limits_{k\in\Z_{\NumDim}} V$. 
\item For all $f\in V$ and all $j\in\Z_{\NumWin}$
\[ \vvfun{f}{j} = \frac{1}{\s(\Lambda)} \sum_{k\in\Z_{\NumDim}} \sum_{\lac\in\Lac} \Big( \sum_{j'\in\Z_{\NumWin}} \big\langle \vvfun{f}{j'} , \pi(\lac) \mvfun{h}{j'}{k} \big\rangle \Big) \, \pi(\lac) \mvfun{g}{j}{k}.\]
\item $f = g \cdot \mrhs{h}{f}$ for all $ f\in \SO(G\times\Z_{\NumDim}\times\Z_{\NumWin})\cap \bigoplus\limits_{k\in\Z_{\NumDim}} V$. 
\end{enumerate}
The closure in (vii) is with respect to the $L^2(G\times\Z_{\NumWin})$-norm. All statements are equivalent to the ones where $g$ and $h$ are interchanged.
\end{theorem}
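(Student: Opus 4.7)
The plan is to arrange the ten statements into three clusters: (i)--(iv) which encode the left reconstruction formula and its matrix/operator translations; (v)--(vi) which are the frame/Riesz-sequence duality statements; and (vii)--(x) which encode the dual-side idempotency and the right reconstruction on the subspace $V$. The bridge between the clusters is the associativity condition \eqref{eq:associativity-for-mws} together with the duality principle for multi-window super Gabor systems (the upcoming Theorem~\ref{th:duality}) and the Bessel-bound interpretation of the module norm (Remark~\ref{rem:besselbound-mws-module}).

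First I would handle the cluster (i)$\Leftrightarrow$(ii)$\Leftrightarrow$(iii)$\Leftrightarrow$(iv) by direct unfolding. The associativity condition \eqref{eq:associativity-for-mws} yields $\mlhs{f}{g}\cdot h = f\cdot\mrhs{g}{h}$ on $\SO(G\times\Z_{\NumDim}\times\Z_{\NumWin})$; combined with the boundedness of $\mrhs{g}{h}$ on $L^{2}$ coming from Remark~\ref{rem:besselbound-mws-module} and the density of $\SO$ in $L^{2}$, this gives (i)$\Leftrightarrow$(iii). Since $\mrhs{g}{h}$ is block-diagonal with $\NumWin$ identical blocks $\mathbf{B}$, it is the identity on $L^{2}(G\times\Z_{\NumDim}\times\Z_{\NumWin})$ iff $\mathbf{B}$ is the identity on $L^{2}(G\times\Z_{\NumDim})$, giving (iii)$\Leftrightarrow$(iv). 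Finally, expanding $(\mlhs{f}{g}\cdot h)_{k,j}$ via \eqref{eq:2302a} and unfolding the definition of the scalar inner product $\lhs{\cdot}{\cdot}$ produces the iterated sum-integral in (ii), so (i)$\Leftrightarrow$(ii).

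Next I would establish (ii)$\Leftrightarrow$(v)$\Leftrightarrow$(vi). The equivalence (ii)$\Leftrightarrow$(v) is a rewriting: the right-hand side of (ii) is precisely the dual-frame reconstruction for the multi-window super Gabor systems $\{\mvfun{g}{}{j}\}_{j\in\Z_{\NumWin}}$ and $\{\mvfun{h}{}{j}\}_{j\in\Z_{\NumWin}}$ acting on $L^{2}(G\times\Z_{\NumDim})$; once both sides are interpreted as bounded operators on $L^{2}$ via the Bessel estimates of Remark~\ref{rem:besselbound-mws-module}, the pointwise identity on $\SO$ extends to $L^{2}$. The equivalence (v)$\Leftrightarrow$(vi) is precisely the main duality principle (Theorem~\ref{th:duality}): it swaps the roles of $\NumWin$ and $\NumDim$ and replaces $\Lambda$ by $\Lambda^{\circ}$, transforming a multi-window super Gabor frame into a super multi-window Gabor Riesz sequence. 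This step relies on the fundamental identity of Gabor analysis and the Wexler--Raz relations developed in Section~\ref{sec:figa-wex-raz}.

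Finally, the cluster (vii)--(x) mirrors (i)--(iv). The equivalence (vii)$\Leftrightarrow$(viii) is the block-diagonal argument applied now to $\mlhs{g}{h}$, whose $\NumDim$ identical blocks act on $L^{2}(G\times\Z_{\NumWin})$; and (ix)$\Leftrightarrow$(x) follows by unfolding the right-action formula \eqref{eq:2302b}, converting the integral over $\Lambda^{\circ}$ into the normalized discrete sum of (ix) via Remark~\ref{rem:orth-measure-for-cocompact} (which is available because $\Lambda$ is co-compact, hence $\Lambda^{\circ}$ discrete). Granted (vi), the dual Riesz reconstruction identifies $\mlhs{g}{h}$ as the oblique projection onto the closed span appearing in (vii)--(viii) and characterizes its range as $\bigoplus_{k\in\Z_{\NumDim}} V$, closing the loop with the upper cluster through a second application of \eqref{eq:associativity-for-mws}. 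The hard part will be the bookkeeping in the duality step (v)$\Leftrightarrow$(vi): one must carefully track the swap of indices $\NumWin\leftrightarrow\NumDim$, the exchange $\Lambda\leftrightarrow\Lambda^{\circ}$, and the correct normalization of $\mu_{\Lac}$ so that the trace identities, Bessel bounds, and reconstruction constants on both sides match; once this is in place the remaining implications reduce to standard frame/Riesz-sequence arguments combined with associativity.
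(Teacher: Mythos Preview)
Your proposal is correct and follows essentially the same structure as the paper's proof: the three clusters you identify, the use of associativity \eqref{eq:associativity-for-mws} plus block-diagonal structure for the algebraic equivalences, and the density/continuity extension from $\SO$ to $L^{2}$, all match.

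One small point worth sharpening: for (v)$\Leftrightarrow$(vi) you invoke Theorem~\ref{th:duality}, but that result concerns a \emph{single} generator (frame $\Leftrightarrow$ Riesz sequence), not \emph{dual pairs}. The paper instead argues more directly: since $g,h\in\SO$, both adjoint systems are automatically Bessel with respect to $\Lambda^{\circ}$ (Lemma~\ref{le:s0-implies-bessel}); then Theorem~\ref{th:wex-raz} converts the dual-frame identity (ii) into the biorthogonality relations \eqref{eq:def:biorth-multi-super}, and Lemma~\ref{le:dual-riesz-char} (Bessel plus biorthogonal $\Leftrightarrow$ dual Riesz sequences) finishes. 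Your route through Theorem~\ref{th:duality} also works---it first upgrades Bessel to Riesz for $g$ and $h$ separately, then uses Wexler--Raz for the biorthogonality---but it is a heavier hammer than needed, and the ``hard bookkeeping'' you anticipate is in fact absorbed into the already-proved Theorem~\ref{th:wex-raz}.
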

\begin{proof} 
(i) $\Leftrightarrow$ (ii). The definition of $\mlhs{\cdot}{\cdot}$ is such that $f = \mlhs{f}{g}\cdot h$ is equivalent to the statement that for all $f\in \SO(G\times\Z_{\NumDim}\times\Z_{\NumWin})$ 
\[ \mvfun{f}{k}{j} = \sum_{j'\in\Z_{\NumWin}} \int_{\La} \Big( \sum_{k'\in\Z_{\NumDim}} \big\langle \mvfun{f}{k'}{j}, \pi(\lambda) \mvfun{g}{k'}{j'} \big\rangle \Big)  \, \pi(\lambda) \mvfun{h}{k}{j'} \, d\la \ \ \text{for all} \ \ (k,j)\in\Z_{\NumDim}\times\Z_{\NumWin}. \]
This is equivalent with the equality in (ii) for all $f\in\SO(G\times\Z_{\NumDim})$. By Lemma \ref{le:s0-implies-bessel} the operator 
\begin{align*} & S_{g,h,\La} : L^{2}(G\times\Z_{\NumDim})\to L^{2}(G\times\Z_{\NumDim}), \\
& S_{g,h} f(\,\cdot\,, k) = \sum_{j\in\Z_{\NumWin}} \int_{\La} \Big( \sum_{k'\in\Z_{\NumDim}} \big\langle \vvfun{f}{k'} , \pi(\la) \mvfun{g}{k'}{j} \big\rangle \Big) \pi(\la) \mvfun{h}{k'}{j} \, d\la\end{align*}
is continuous on $L^{2}(G\times\Z_{\NumDim})$. Hence the equality $S_{g,h,\La} f = f$ extends by density from $\SO(G\times\Z_{\NumDim})$ to all of $L^{2}(G\times\Z_{\NumDim})$. \\
(i) $\Leftrightarrow$ (iii). By \eqref{eq:associativity-for-mws} we have 
\[ \mlhs{f}{g}\cdot h = f\cdot \mrhs{g}{h} \ \ \text{for all} \ \ f,g,h\in \SO(G\times\Z_{\NumDim}\times\Z_{\NumWin}).\]
Hence (i) holds if and only if $\mrhs{g}{h}$ is the identity on $\SO(G\times\Z_{\NumDim}\times\Z_{\NumWin})$.
Since $\mrhs{g}{h}$ is a linear and continuous operator on $L^{2}(G\times\Z_{\NumDim}\times\Z_{\NumWin})$ and $\SO$ is dense in $L^{2}$ it is clear that $\mrhs{g}{h}$ must be the identity operator. \\
(iii) $\Leftrightarrow$ (iv). This follows by the block structure of $\mrhs{g}{h}$. \\
(v) $\Leftrightarrow$ (ii). Since $g$ and $h$ belong to $\SO(G\times\Z_{\NumDim}\times\Z_{\NumWin})$ it follows from Lemma \ref{le:s0-implies-bessel} that the frame operator associated to either multi-window super Gabor system generated by $g$ and $h$ and the subgroup $\La$ is continuous on $L^{2}$, i.e., the Gabor systems are Bessel systems. This, together with the equality in (ii) is the definition of what it means for the two multi-window super Gabor systems to be dual frames for $L^{2}(G\times\Z_{\NumDim})$ (see Section \ref{sec:frame-and-riesz}). \\
(v) $\Leftrightarrow$ (vi). As in (v), by Lemma \ref{le:s0-implies-bessel} either multi-window super Gabor system generated by $g$ and $h$ with respect to the subgroup $\Lac$ is a Bessel system. Furthermore Theorem \ref{th:wex-raz}, which is equivalent to the statement in (ii), implies that the two families of functions are biorthogonal. By the theory of Riesz sequences, Lemma \ref{le:riesz-sequence}, we arrive at (vi). The same arguments imply the converse implication.\\
(vi) $\Leftrightarrow$ (ix). This follows by the definition of what it means for the two multi-window super Gabor systems to be dual Riesz sequences for $L^{2}(G\times\Z_{\NumWin})$ (see Section \ref{sec:frame-and-riesz}). \\
(ix) $\Leftrightarrow$ (x). This follows with similar arguments as the equivalence between (i) and (ii). \\
(x) $\Leftrightarrow$ (viii). By \eqref{eq:associativity-for-mws} $(x)$ is equivalent with the equality 
\[ f = \mlhs{g}{h}\cdot f \ \ \text{for all} \ \ f \in \SO(G\times\Z_{\NumDim}\times\Z_{\NumWin}) \cap \bigoplus_{k\in\Z_{\NumDim}} V.\]
Since $\SO$ is dense in $L^{2}$ and $\mlhs{g}{h}$ is a linear and continuous operator on $L^{2}(G\times\Z_{\NumDim}\times\Z_{\NumWin})$ it is clear that this identity extends to all $f\in \bigoplus_{k\in\Z_{\NumDim}} V$. \\
(vii) $\Leftrightarrow$ (viii). This follows by the block-structure of $\mlhs{g}{h}$. \\
Since $\mrhs{g}{h}$ is the identity operator if and only if $\mrhs{h}{g}$ is the identity operator, it is clear from (iii) that all statements are equivalent to the ones where $g$ and $h$ are interchanged.
\end{proof}

Let us single out the important special cases of Theorem \ref{th:duality-for-mws-module}: the case of multi-window Gabor frames ($\NumWin\in\N$, $\NumDim = 1$) and the case of super Gabor frames ($\NumDim\in\N$, $\NumWin = 1$).

\begin{corollary}[The multi-window Gabor frame scenario] \label{cor:duality-multi} Let $\Lambda$ be a closed co-compact subgroup of $G\times\ghat$. For any collection of functions $\{\vvfun{g}{j}\}_{j\in\Z_{\NumWin}}$ and $\{\vvfun{h}{j}\}_{n\in\Z_{\NumWin}}$ in $\SO(G)$ the following statements are equivalent.
\begin{enumerate}[(i)]
\item $f = \sum\limits_{j\in\Z_{\NumWin}} \lhs{f}{\vvfun{g}{j}}\cdot \vvfun{h}{j}$ for all $f\in\SO(G)$.
\item $\sum\limits_{j\in\Z_{\NumWin}} \rhs{\vvfun{g}{j}}{\vvfun{h}{j}}$ is the identity operator on $L^{2}(G)$.
\item $\{\vvfun{g}{j}\}_{j\in\Z_{\NumWin}}$ and $\{\vvfun{h}{j}\}_{j\in\Z_{\NumWin}}$ generate dual multi-window Gabor frames with respect to $\La$ for $L^{2}(G)$.
\item $\{\oplus_{j\in\Z_{\NumWin}} \vvfun{g}{j}\}$ and $\{\oplus_{j\in\Z_{\NumWin}} \vvfun{h}{j}\}$ generate dual super Gabor Riesz sequences with respect to $\La^{\circ}$ for $L^{2}(G\times\Z_{\NumWin})$.
\item the $\lmodule$-valued $\NumWin\times\NumWin$-matrix
\[ \begin{bmatrix} \lhs{\vvfun{g}{1}}{\vvfun{h}{1}} & \lhs{\vvfun{g}{1}}{\vvfun{h}{2}} & \cdots & \lhs{\vvfun{g}{1}}{\vvfun{h}{\NumWin}} \\ 
\lhs{\vvfun{g}{2}}{\vvfun{h}{1}} & \lhs{\vvfun{g}{2}}{\vvfun{h}{2}} & \cdots & \lhs{\vvfun{g}{2}}{\vvfun{h}{\NumWin}} \\
\vdots & \vdots & \ddots & \vdots \\
\lhs{\vvfun{g}{\NumWin}}{\vvfun{h}{1}} & \lhs{\vvfun{g}{\NumWin}}{\vvfun{h}{2}} & \cdots & \lhs{\vvfun{g}{\NumWin}}{\vvfun{h}{\NumWin}}
\end{bmatrix}\]
is an idempotent operator from $L^{2}(G\times\Z_{\NumWin})$ onto $
V := \overline{\textnormal{span}}\big\{ \oplus_{j\in\Z_{\NumWin}} \pi(\lambda^{\circ}) \vvfun{g}{j} \big\}_{\lambda^{\circ}\in\La^{\circ}}$.
\item $\vvfun{f}{j} = \vvfun{g}{j} \cdot \sum\limits_{j'\in \Z_{\NumWin}} \rhs{\vvfun{h}{j'}}{\vvfun{f}{j'}}$, $j\in\Z_{\NumWin}$ for all $f\in \SO(G\times\Z_{\NumWin})\cap V$.
\end{enumerate}
\end{corollary}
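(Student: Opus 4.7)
The corollary is the $\NumDim=1$ specialization of Theorem \ref{th:duality-for-mws-module}, in which the ``super'' index range collapses to a single point while the ``multi-window'' index $j\in\Z_{\NumWin}$ survives. My plan is therefore to unpack the matrix-valued framework of Section \ref{sec:module-mws} in this special case and to read each of (i)--(vi) off from the corresponding item of Theorem \ref{th:duality-for-mws-module}, rather than to re-prove anything from scratch.

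First I would record the identifications forced by $\NumDim=1$. The space $\SO(G\times\Z_1\times\Z_{\NumWin})$ is just $\SO(G\times\Z_{\NumWin})$, which by Lemma \ref{le:s0-properties}(x) corresponds to $n$-tuples $(g_j)_{j\in\Z_{\NumWin}}$ in $\SO(G)$. The block diagonal $\mlhs{f}{g}$ reduces to its single $n\times n$ block, with $(j,j')$-entry $\lhs{f_j}{g_{j'}}$, and $\mrhs{f}{g}$ becomes block diagonal with $n$ copies of the scalar operator $\mathbf{B}=\sum_{j\in\Z_{\NumWin}}\rhs{f_j}{g_j}$ acting on $L^2(G)$. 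The closed subspace $\bigoplus_{k\in\Z_{\NumDim}}V$ from the theorem collapses to $V$ itself.

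With these identifications in place, the correspondence is mechanical. Theorem (i) reads componentwise, via \eqref{eq:2302a}, as $f_j=\sum_{j'}\lhs{f_j}{g_{j'}}\cdot h_{j'}$ for each $j$, which is Corollary (i) applied to each $f_j\in\SO(G)$; the converse follows by feeding the test input $(f,0,\ldots,0)$ into the theorem. Theorem (iii)--(iv) say that the sole block $\mathbf{B}$ is the identity on $L^2(G)$, which is Corollary (ii). Theorem (v) becomes Corollary (iii) since the super index disappears. In Theorem (vi) the multi-window index disappears and leaves the single super generator $\oplus_j g_j\in L^2(G\times\Z_{\NumWin})$, giving Corollary (iv). Theorem (vii)--(viii), whose matrix becomes precisely the one in Corollary (v) and whose idempotent range $\bigoplus_k V$ collapses to $V$, give Corollary (v). Finally, applying \eqref{eq:2302b} with $\NumDim=1$ one checks that $(g\cdot\mrhs{h}{f})_j=g_j\cdot\sum_{j'}\rhs{h_{j'}}{f_{j'}}$, so Theorem (x) specializes to Corollary (vi).

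The only piece requiring genuine care, and the main bookkeeping obstacle I expect, is the mismatch in (i) between the universal quantifier ``for all $f\in\SO(G)$'' in the corollary and ``for all $f\in\SO(G\times\Z_{\NumWin})$'' in the theorem. This is resolved by the componentwise decoupling above: Theorem (i) with $\NumDim=1$ decouples into $n$ independent scalar copies of Corollary (i), one per slot $j$. Since Theorem \ref{th:duality-for-mws-module} already delivers all the equivalences among its items (i)--(x) that we need, nothing further is required beyond this translation.
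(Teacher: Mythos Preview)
Your proposal is correct and matches the paper's approach: the corollary is stated immediately after Theorem \ref{th:duality-for-mws-module} as the special case $\NumDim=1$, with no separate proof given, so the intended argument is precisely the specialization you carry out. Your item-by-item matching (Corollary (i)--(vi) with Theorem (i), (iii)/(iv), (v), (vi), (vii)/(viii), (x) respectively) and your handling of the quantifier mismatch in (i) via componentwise decoupling are exactly the bookkeeping the paper leaves implicit.
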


\begin{corollary}[The super Gabor frame scenario] \label{cor:duality-super} Let $\Lambda$ be a closed co-compact subgroup of $G\times\ghat$. For any collection of functions $\{\vvfun{g}{k}\}_{k\in\Z_{\NumDim}}$ and $\{\vvfun{h}{k}\}_{k\in\Z_{\NumDim}}$ in $\SO(G)$ the following statements are equivalent.
\begin{enumerate}[(i)]
\item $\vvfun{f}{k} = \sum\limits_{k'\in \Z_{\NumDim}} \lhs{\vvfun{f}{k'}}{\vvfun{g}{k'}}\cdot\vvfun{h}{k}$, $k\in\Z_{\NumDim}$ for all $f\in \SO(G\times\Z_{\NumDim})$.
\item the $\rmodule$-valued $\NumDim\times\NumDim$-matrix
\[ \begin{bmatrix} \rhs{\vvfun{g}{1}}{\vvfun{h}{1}} & \rhs{\vvfun{g}{1}}{\vvfun{h}{2}} & \cdots & \rhs{\vvfun{g}{1}}{\vvfun{h}{\NumDim}} \\ 
\rhs{\vvfun{g}{2}}{\vvfun{h}{1}} & \rhs{\vvfun{g}{2}}{\vvfun{h}{2}} & \cdots & \rhs{\vvfun{g}{2}}{\vvfun{h}{\NumDim}} \\
\vdots & \vdots & \ddots & \vdots \\
\rhs{\vvfun{g}{\NumDim}}{\vvfun{h}{1}} & \rhs{\vvfun{g}{\NumDim}}{\vvfun{h}{2}} & \cdots & \rhs{\vvfun{g}{\NumDim}}{\vvfun{h}{\NumDim}}
\end{bmatrix} \]
is the identity operator on $L^{2}(G\times\Z_{\NumDim})$.
\item $\{\oplus_{k\in\Z_{\NumDim}} \vvfun{g}{k}\}$ and $\{\oplus_{k\in\Z_{\NumDim}} \vvfun{h}{k}\}$ generate dual super Gabor frames with respect to $\La$ for $L^{2}(G\times\Z_{\NumDim})$.
\item $\{\vvfun{g}{k}\}_{k\in\Z_{\NumDim}}$ and $\{\vvfun{h}{k}\}_{k\in\Z_{\NumDim}}$ generate dual multi-window Gabor Riesz sequences with respect to $\Lac$ for $L^{2}(G)$.
\item $\sum\limits_{k\in\Z_{\NumDim}} \lhs{\vvfun{g}{k}}{\vvfun{h}{k}}$ is an idempotent operator from $L^{2}(G)$ onto $V:=\overline{\textnormal{span}}\{ \pi(\lambda^{\circ}) \vvfun{g}{k} \}_{\lac\in\Lac,k\in\Z_{\NumDim}}$.
\item $f = \sum\limits_{k\in\Z_{\NumDim}} \vvfun{g}{k}\cdot\rhs{\vvfun{h}{k}}{f}$ for all $f\in\SO(G)\cap V$.
\end{enumerate}
\end{corollary}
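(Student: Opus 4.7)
The proof plan is to specialize Theorem~\ref{th:duality-for-mws-module} to the case $\NumWin = 1$, mirroring exactly how Corollary~\ref{cor:duality-multi} is the specialization to $\NumDim = 1$. Since $\NumWin = 1$ makes $\Z_{\NumWin}$ trivial, the space $\SO(G\times\Z_{\NumDim}\times\Z_{\NumWin})$ identifies canonically with $\SO(G\times\Z_{\NumDim})$, and the families $\{g_{k,j}\}_{k,j}$, $\{h_{k,j}\}_{k,j}$ collapse to $\{\vvfun{g}{k}\}_{k\in\Z_{\NumDim}}$, $\{\vvfun{h}{k}\}_{k\in\Z_{\NumDim}}$, with $\mvfun{g}{}{1} = \oplus_{k} \vvfun{g}{k}$ and analogously for $h$.

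First I would unfold the matrix structures. The block-diagonal matrix $\mrhs{g}{h}$, which was a single block $\mathbf{B}$ (of size $\NumDim\times\NumDim$) repeated $\NumWin=1$ times, becomes exactly the $\NumDim\times\NumDim$ matrix in statement (ii) of the corollary. Correspondingly, $\mlhs{g}{h}$, a single block $\mathbf{A}$ (of size $\NumWin\times\NumWin=1\times 1$) repeated $\NumDim$ times, reduces to the scalar operator $\sum_{k\in\Z_{\NumDim}} \lhs{\vvfun{g}{k}}{\vvfun{h}{k}}$ acting diagonally on each of the $\NumDim$ components, i.e., the idempotent in statement (v). The closed subspace $V$ appearing in Theorem~\ref{th:duality-for-mws-module}(vii) becomes $\overline{\textnormal{span}}\{\pi(\lac)\vvfun{g}{k}\}_{\lac\in\Lac,k\in\Z_{\NumDim}} \subseteq L^{2}(G)$ since the direct sum over $\Z_{\NumWin}$ is trivial.

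Next I would match the statements one-for-one: Theorem~\ref{th:duality-for-mws-module}(i)/(ii) $\Leftrightarrow$ Corollary(i); Theorem(iii)/(iv) $\Leftrightarrow$ Corollary(ii); Theorem(v), where the multi-window super Gabor frame for $L^{2}(G\times\Z_{\NumDim})$ loses its multi-window (single $j$) and remains super, yields Corollary(iii); Theorem(vi), where the multi-window super Riesz sequence loses its super structure and becomes a multi-window Gabor Riesz sequence in $L^{2}(G)$, yields Corollary(iv); Theorem(vii)/(viii) $\Leftrightarrow$ Corollary(v); and Theorem(ix)/(x) $\Leftrightarrow$ Corollary(vi). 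Each identification uses only the definitions of $\mlhs{\cdot}{\cdot}$, $\mrhs{\cdot}{\cdot}$ from Section~\ref{sec:module-mws} together with the block structure noted above; no new analytic input is required.

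The only real task is bookkeeping, in particular verifying that the terminology ``multi-window super'' degenerates correctly in each direction ($\NumDim$ versus $\NumWin$) and that the norms/closures land on the right $L^2$-space after the collapse. Given how Theorem~\ref{th:duality-for-mws-module} was stated and proved, there is no hidden obstacle, and the corollary follows by inspection once the substitution $\NumWin = 1$ is made.
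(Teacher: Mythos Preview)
Your proposal is correct and is exactly what the paper does: both corollaries are stated immediately after Theorem~\ref{th:duality-for-mws-module} as the specializations $\NumDim=1$ and $\NumWin=1$ respectively, with no separate proof given. Your identification of which items in the theorem collapse to which items in the corollary is accurate, and as you say the only work is bookkeeping in the block structures of $\mlhs{\cdot}{\cdot}$ and $\mrhs{\cdot}{\cdot}$.
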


\section{Gabor systems}
\label{sec:mws-gabor}

Herein after $\NumDim$ and $\NumWin$ are variables that can take values in the natural numbers $\N$ or they can take the value $\infty$. 
If $\NumDim\ne \infty$ then $\Z_{\NumDim}$ denotes the finite group $\{0,1,\ldots, \NumDim-1\}$ under addition modulo $\NumDim$ (as in Section \ref{sec:module-mws}) and if $\NumDim=\infty$ then $\Z_{\infty}$ denotes the integers $\Z$. 

We wish to investigate the properties of the following collection of functions.
\begin{definition} \label{def:mwg} Let $\Lambda$ be a closed subgroup of the time-frequency plane $G\times\ghat$ and let $g$ be a function in $ L^{2}(G\times\Z_{\NumWin})$. The $\NumWin$-\emph{multi-window Gabor system} generated by the collection of functions $\{ g(\, \cdot \, , j) \, : \, j\in \Z_{\NumWin}\,\}$ and the subgroup $\Lambda$ is the set of functions in $L^{2}(G)$ given by
\begin{equation} \label{eq:mw-system}\big\{ \pi(\lambda) g(\,\cdot\,, j ) \, : \, \lambda\in \Lambda, \, j\in\Z_{\NumWin} \, \big\}.\end{equation}
If we consider the \emph{special case} of an $\NumWin$-multi-window Gabor system in $L^{2}(G\times\Z_{\NumDim})$ generated by the functions $\{ g(\,\cdot\,, \, \cdot\, , j) \, : \, j\in \Z_{\NumWin} \}$ for some $g\in L^{2}(G\times\Z_{\NumDim}\times \Z_{\NumWin})$ and the closed subgroup of the \emph{specific form} $\Lambda\times\{0\}\subseteq (G\times\ghat)\times (\Z_{\NumDim}\times\widehat{\Z}_{\NumDim})$, 
\begin{equation} \label{eq:mws-system} \big\{ \big(  \pi(\lambda) g(\,\cdot\,, k, j) \big)_{k\in\Z_{\NumDim}} \, : \, \lambda\in\Lambda, \, j\in \Z_{\NumWin} \big\} \subset L^{2}(G\times\Z_{\NumDim}),\end{equation}
then we call such a collection of functions an $\NumWin$-multi-window $\NumDim$-\emph{super} Gabor system in $L^{2}(G\times\Z_{\NumDim})$. 
\end{definition}
\noindent\textbf{Notation.} If $f$ is a function in $L^{2}(G\times\Z_{\NumDim})$, then we shall write $\vvfun{f}{k}$ rather than $f(\,\cdot\,, k)$ for some $k\in\Z_{\NumDim}$. Similarly, for functions in $L^{2}(G\times\Z_{\NumDim}\times\Z_{\NumWin})$ we shall write $\mvfun{f}{k}{j}$ rather than $f(\,\cdot\, , k,j)$. Further, we let $\mvfun{f}{}{j}$ denote the function $f(\,\cdot\,,\,\cdot\, , j)\in L^{2}(G\times\Z_{\NumDim})$ for some $j\in\Z_{\NumWin}$. 

\begin{remark}
The name \emph{multi-window} indicates that we allow for more than one function to generate the Gabor system. The adjective \emph{super} indicates that we are interested in Gabor systems in $L^{2}(G\times\Z_{\NumDim})$ with $\NumDim> 1$ and where $\Lambda$ \emph{remains} a closed subgroup of $G\times\ghat$.  
\end{remark}
\begin{remark} \label{rem:vector-valued}
Observe that for $\NumDim<\infty$ there is no difference between the Hilbert space $L^{2}(G\times\Z_{\NumDim})$ and the Hilbert space of square integrable vector valued functions over $G$, $L^{2}(G; \C^{\NumDim})$. In this point of view, super Gabor frames are typically written as
\[ \{ \pi(\lambda) \vvfun{g}{1} \oplus \pi(\lambda) \vvfun{g}{2} \oplus \ldots \oplus  \pi(\lambda) \vvfun{g}{\NumDim} \}_{\la\in\Lambda}\subset \bigoplus_{k\in\Z_{\NumDim}} L^{2}(G).\] 
Similarly, for finite $\NumDim$ and $\NumWin$ the Hilbert space $L^{2}(G\times\Z_{\NumDim}\times\Z_{\NumWin})$ is nothing but the space of square integrable matrix valued functions over $G$, $L^{2}(G; \C^{\NumDim\times\NumWin})$. As stated in Lemma \ref{le:s0-properties}: a function $f$ belongs to $\SO(G\times\Z_{\NumDim}\times\Z_{\NumWin})$ if and only if $\mvfun{f}{k}{j}\in \SO(G)$ for all $(k,j)\in\Z_{\NumDim}\times\Z_{\NumWin}$ and $\sum_{k,j} \Vert \mvfun{f}{k}{j} \Vert_{\SO(G)} < \infty$ (which also defines an equivalent norm on $\SO(G\times\Z_{\NumDim}\times\Z_{\NumWin})$. 
\end{remark}

\begin{remark} Gabor systems for $\NumDim=\NumWin=1$ in $L^{2}(G)$, and in particular for $G=\R^{n}$, are well understood, see for example the books \cite{ch16} and \cite{gr01} and the references therein. Super Gabor systems were first studied in  \cite{ba00} and have since then also been considered in \cite{fu08,grly09}. Results on multi-window Gabor systems can be found in \cite{feko98,lu09,zezi97} and they are also mentioned briefly in \cite{ch16,gr01}.
\end{remark}

\begin{remark} In case $\NumDim$ is finite then it is not hard to realize that any multi-window Gabor system in $L^{2}(G\times\Z_{\NumDim})$ generated by a function $\tilde{g}\in L^{2}(G\times\Z_{\NumDim}\times\Z_{\NumWin})$ and where the time-frequency shifts are taken from a subgroup of the entire phase space $(G\times\ghat)\times (\Z_{\NumDim}\times\widehat{\Z}_{\NumDim})$ can be written as a multi-window super Gabor system in $L^{2}(G\times\Z_{\NumDim})$ for an appropriate choice of $g\in L^{2}(G\times\Z_{\NumDim}\times\Z_{\NumWin})$ and $\Lambda\subseteq G\times\ghat$. 
\end{remark}

\begin{remark}
Despite the fact that (multi-window) super Gabor systems are a special case of (multi-window) Gabor systems, it turns out that there is theoretic insight to be gained by considering them in their own right. 
Specifically, Theorem \ref{th:duality} shows that there is an intimate relation between the ``multi-window part'' and ``super part'' of a Gabor system, i.e., between the two parameters $\NumWin$ and $\NumDim$.
\end{remark}

\begin{remark} It is instructive to consider the results given here for the traditional setting $\NumWin=\NumDim=1$, $G=\R$ and $\Lambda = \alpha\Z \times \beta\Z$, $\alpha,\beta\ne 0$ (equipped with the Lebesgue and counting measure, respectively and in which case $\s(\La) = \vert \alpha\beta \vert$).
\end{remark}

To an $\NumWin$-multi-window $\NumDim$-super-Gabor system we associate the \emph{analysis operator}
\[ C_{g,\Lambda} : f \mapsto \Big\{ \sum_{k\in\Z_{\NumDim}} \langle \vvfun{f}{k} , \pi(\lambda)\mvfun{g}{k}{j}\rangle \Big\}_{\lambda\in \Lambda, j\in \Z_{\NumWin}}, \ f\in L^{2}(G\times\Z_{\NumDim})\]
and the \emph{synthesis operator} (the adjoint of the analysis operator),
\[ D_{g,\Lambda} : c \mapsto \sum_{j\in \Z_{\NumWin}} \int_{\Lambda} c(\lambda,j) \, \pi(\lambda) \mvfun{g}{}{j} \, d\mu_{\Lambda}(\lambda), \ c\in L^{2}(\Lambda\times \Z_{\NumWin}). \]
We wish to determine when 
\begin{subequations} \label{eq:frame}
\begin{equation}
C_{g,\Lambda} \ \text{is an injective bounded operator from} \ L^{2}(G\times\Z_{\NumDim}) \ \text{into}  \ L^{2}(\Lambda\times\Z_{\NumWin}) \ \text{with closed range}.
\end{equation}
Equivalently, there should exist constants $A,B>0$ such that
\begin{align} \label{eq:frame-ineq} A \, \Vert f \Vert_{2}^{2} \le & \Vert C_{g,\Lambda} f \Vert_{2}^{2} \le B \, \Vert f \Vert_{2}^{2} \ \ \textnormal{for all} \ f\in L^{2}(G\times\Z_{\NumDim}), \\
 \text{where} \ \ & \Vert C_{g,\Lambda} f \Vert_{2}^{2} =  \sum_{j\in \Z_{\NumWin}} \int_{\Lambda} \Big\vert \sum_{k\in\Z_{\NumDim}} \big\langle \vvfun{f}{k} , \pi(\lambda) \mvfun{g}{k}{j}\big\rangle \Big\vert^{2} \, d\mu_{\Lambda}(\lambda). \nonumber \end{align}
\end{subequations}
Similarly, we wish to determine when 
\begin{subequations} \label{eq:riesz}
\begin{equation} 
D_{g,\Lambda} \ \text{is an injective bounded operator from} \ L^{2}(\Lambda\times\Z_{\NumWin}) \ \text{into}  \ L^{2}(G\times\Z_{\NumDim}) \ \text{with closed range},
\end{equation}
or, equivalently, when do there exist constants $A,B>0$ such that
\begin{align} \label{eq:Riesz-ineq} A \, \Vert c \Vert_{2}^{2} \le & \Vert D_{g,\Lambda} c \Vert_{2}^{2} \le B \, \Vert c \Vert_{2}^{2} \ \ \textnormal{for all} \ \ c\in L^{2}(\Lambda\times\Z_{\NumWin}), \\ \text{where} \ \ & \Vert D_{g,\Lambda} c \Vert_{2}^{2} = \sum_{k\in\Z_{\NumDim}} \Big\Vert \sum_{j\in \Z_{\NumWin}} \int_{\Lambda} c(\lambda,j) \, \pi(\lambda) \mvfun{g}{k}{j} \, d\mu_{\Lambda}(\lambda) \, \Big\Vert_{L^{2}(G)}^{2}. \nonumber  \end{align}
\end{subequations}
If $g$ and $\Lambda$ are such that the associated analysis and synthesis operator is bounded, i.e., the upper inequality in either (and hence both) \eqref{eq:frame-ineq} and \eqref{eq:Riesz-ineq} is satisfied, then we call the (multi-window super) Gabor system a \emph{Bessel system}. Observe that a Gabor system is a Bessel system if and only if the upper inequality \eqref{eq:frame-ineq} holds for all $f$ in a dense subspace, e.g., for all $f\in \SO$. In that case the lower inequality holds also if and only if it is satisfied for a dense subspace.

\begin{remark} \label{rem:bessel-vs-finite-module-norm} The Bessel bound of a Gabor system is related to the the associated \emph{module norm} (see Lemma \ref{le:module-norm-and-bessel} and Remark \ref{rem:besselbound-mws-module}).
\end{remark}

As we shall see in Lemma \ref{le:s0-implies-bessel} it is easy to find $g\in L^{2}(G\times\Z_{\NumDim}\times\Z_{\NumWin})$ such that the generated Gabor system is a Bessel system for any closed subgroup $\Lambda$ and for any choice of $\NumDim$ and $\NumWin$ (e.g., take $g\in \SO(G\times\Z_{\NumDim}\times\Z_{\NumWin})$). In contrast, the lower inequalities are non-trivial to satisfy and entail some necessary conditions for the generating function $g$, the subgroup $\Lambda$, the dimension $\NumDim$ and the number of generators $\NumWin$.

\begin{lemma} \label{le:nec-cond} Consider the $\NumWin$-multi-window $\NumDim$-super Gabor system generated by a function $g\in L^{2}(G\times\Z_{\NumDim}\times\Z_{\NumWin})$ and a closed subgroup $\Lambda\subseteq G\times\ghat$.
\begin{enumerate}
\item[(i)] If $g$ is such that the system satisfies \eqref{eq:frame}, then $(G\times\ghat)/\Lambda$ is compact, $\NumDim<\infty$ and
\[ A \, \s(\Lambda) \, \NumDim \le \Vert g \Vert_{2}^{2} \le B \, \s(\Lambda) \, \NumDim.\]
If, in addition, $\Lambda$ is discrete and equipped with the counting measure, then $\s(\Lambda) \le \NumWin/\NumDim$.
\item[(ii)] If $g$ is such that the system satisfies \eqref{eq:riesz}, then $\Lambda$ is discrete, $\NumWin<\infty$ and
\[ A \, \NumWin \le \Vert g \Vert_{2}^{2} \le B \, \NumWin.\]
If, in addition, $(G\times\ghat)/\Lambda$ is compact and $\Lambda$ is equipped with the counting measure, then $\s(\Lambda) \ge \NumWin/\NumDim$.
\end{enumerate}
\end{lemma}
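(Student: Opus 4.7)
The plan is to extract the desired bounds from two averaging arguments: a ``continuous'' average over the coset space $(G\times\ghat)/\Lambda$, which controls the $L^{2}$-norm of $g$, and a discrete tightening argument that converts the frame (respectively Riesz sequence) into its associated Parseval tight frame (respectively orthonormal basis) to produce the density inequality. The key preliminary observation is that, using $\pi(\chi)^{*}\pi(\lambda)=\cocy(\chi,\chi)\cocy(-\chi,\lambda)\pi(\lambda-\chi)$ and the fact that these cocycle factors are independent of the super-index $k$, one has
\[
\Big|\sum_{k\in\Z_{\NumDim}}\bigl\langle\pi(\chi)\vvfun{f}{k},\pi(\lambda)\mvfun{g}{k}{j}\bigr\rangle\Big|^{2}=\bigl|F_{j}(\lambda-\chi)\bigr|^{2},\qquad F_{j}:=\sum_{k\in\Z_{\NumDim}}\mathcal{V}_{\mvfun{g}{k}{j}}\vvfun{f}{k}.
\]
Consequently $\chi\mapsto\Vert C_{g,\Lambda}(\pi(\chi)f)\Vert^{2}$ is $\Lambda$-periodic, and Weil's formula together with Fubini produces the workhorse identity
\[
\int_{(G\times\ghat)/\Lambda}\Vert C_{g,\Lambda}(\pi(\chi)f)\Vert^{2}\,d\dot{\chi}=\sum_{j\in\Z_{\NumWin}}\Vert F_{j}\Vert_{L^{2}(G\times\ghat)}^{2}.
\]

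For (i), applying the frame bounds to $\pi(\chi)f$ on the left side and integrating over the quotient gives $A\s(\Lambda)\Vert f\Vert^{2}\le\sum_{j}\Vert F_{j}\Vert^{2}\le B\s(\Lambda)\Vert f\Vert^{2}$. Since the right-hand side is finite for any $f$, we must have $\s(\Lambda)<\infty$, so $(G\times\ghat)/\Lambda$ is compact. Specializing to $\vvfun{f}{k}=\delta_{k,k_{0}}f_{0}$ with $\Vert f_{0}\Vert=1$ and invoking the STFT orthogonality relation \eqref{eq:STFT} reduces $\Vert F_{j}\Vert^{2}$ to $\Vert\mvfun{g}{k_{0}}{j}\Vert^{2}$; summing over $k_{0}$ then yields $A\NumDim\,\s(\Lambda)\le\Vert g\Vert^{2}\le B\NumDim\,\s(\Lambda)$ and, in particular, $\NumDim<\infty$. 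For the density bound when $\Lambda$ is discrete with counting measure, I would verify from the cocycle identities that $S_{g,\Lambda}$ commutes with every $\pi(\mu)$, $\mu\in\Lambda$; therefore $\tilde g_{j}:=S_{g,\Lambda}^{-1/2}\mvfun{g}{}{j}$ generates a Parseval tight Gabor system of the same form. Applying the norm identity above with $A=B=1$ gives $\Vert\tilde g\Vert^{2}=\NumDim\,\s(\Lambda)$, whereas the Parseval identity tested against $f=\tilde g_{j_{0}}$ (keeping only the $\lambda=0$, $j=j_{0}$ term) forces $\Vert\tilde g_{j_{0}}\Vert^{2}\le 1$; summing over $j_{0}$ produces $\NumDim\,\s(\Lambda)\le\NumWin$.

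For (ii), non-discreteness of $\Lambda$ is ruled out by testing the lower Riesz bound against $c_{U}(\lambda,j)=|U|^{-1/2}\mathbf{1}_{U}(\lambda)\delta_{j,j_{0}}$ for shrinking symmetric neighborhoods $U$ of $0\in\Lambda$: these coefficients have unit $L^{2}$-norm, yet $\Vert D_{g,\Lambda}c_{U}\Vert\le|U|^{1/2}\Vert\mvfun{g}{}{j_{0}}\Vert\to 0$, contradicting $A>0$. With $\Lambda$ discrete and counting measure, the Kronecker deltas $c=\delta_{(0,j_{0})}$ satisfy $D_{g,\Lambda}c=\mvfun{g}{}{j_{0}}$, so $A\le\Vert\mvfun{g}{}{j_{0}}\Vert^{2}\le B$; summing over $j_{0}$ then yields the norm bound and forces $\NumWin<\infty$. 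The main obstacle is the density bound $\s(\Lambda)\ge\NumWin/\NumDim$: here I would pass to $V:=\overline{\textnormal{span}}\{\pi(\lambda)\mvfun{g}{}{j}\}$, on which the restricted Gram operator $S_{V}$ still commutes with $\pi(\mu)|_{V}$, and set $\tilde g_{j}:=S_{V}^{-1/2}\mvfun{g}{}{j}$. Then $\{\pi(\lambda)\tilde g_{j}\}$ is simultaneously a Parseval tight frame and a Riesz basis for $V$, hence an orthonormal basis, giving $\Vert\tilde g_{j}\Vert^{2}=1$ and $\Vert\tilde g\Vert^{2}=\NumWin$. Applying the averaged identity of the first paragraph to this Parseval system on $V$ with $f_{k_{0}}$ concentrated in slot $k_{0}$ finally produces
\[
\NumWin=\Vert\tilde g\Vert^{2}=\sum_{k_{0}\in\Z_{\NumDim}}\int_{(G\times\ghat)/\Lambda}\Vert P_{V}\pi(\chi)f_{k_{0}}\Vert^{2}\,d\dot{\chi}\le\NumDim\,\s(\Lambda),
\]
since the projection $P_{V}$ is a contraction.
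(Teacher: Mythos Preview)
Your proof is correct. For part (i) you follow essentially the paper's route---the averaging/Weil-formula identity is exactly what the paper extracts from Lemma~\ref{le:1009} and \eqref{eq:2503a}---but with one pleasant improvement: to compute $\Vert\tilde g\Vert^{2}=\NumDim\,\s(\Lambda)$ you simply reapply your workhorse identity to the Parseval system, whereas the paper invokes the Wexler--Raz relations (Theorem~\ref{th:wex-raz}) at this point and must remark that this forward reference is logically harmless. Your version is therefore more self-contained.

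Part (ii), however, is handled very differently. The paper does \emph{not} argue directly: it proves Lemma~\ref{le:nec-cond}(ii) as a by-product of the duality theorem (Section~\ref{sec:proof-duality}), by showing that the Riesz assumption on $\Lambda^{\circ}$ forces the dual Gabor system on $\Lambda$ to be a frame and then invoking part (i). This requires the fundamental identity \eqref{eq:figa}. Your argument is elementary and avoids all of that machinery: discreteness comes from the shrinking-neighbourhood test, the norm bounds from Kronecker deltas, and the density inequality from the neat observation that for the orthonormalised system on $V$ the averaging identity reads $\NumWin=\sum_{k_{0}}\int\Vert P_{V}\pi(\chi)f_{k_{0}}\Vert^{2}\,d\dot\chi\le\NumDim\,\s(\Lambda)$ because $P_{V}$ contracts. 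The paper's approach has the conceptual advantage of exhibiting (ii) as the formal dual of (i), while yours has the practical advantage of making Lemma~\ref{le:nec-cond} independent of the later duality and Wexler--Raz theorems.
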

\begin{remark} Lemma \ref{le:nec-cond} states that a necessary condition for a multi-window Gabor system to be a frame is that the quotient group $(G\times \ghat/\La)$ is compact (this condition was first proven in \cite{jale16-2} for the single-window case). In Theorem \ref{th:S0-fin-gen}/\ref{th:exist} we show that this condition is also \emph{sufficient} for the existence of multi-window Gabor frames.  
\end{remark}

We state a proof of Lemma \ref{le:nec-cond}(i) in the Appendix. Lemma \ref{le:nec-cond}(ii) will be proven as part of the proof of Theorem \ref{th:duality}.

\begin{definition} \label{def:frame}
A multi-window super Gabor system which satisfies \eqref{eq:frame} is a \emph{frame} for $L^{2}(G\times\Z_{\NumDim})$. A multi-window super Gabor system which satisfies \eqref{eq:riesz} is a \emph{Riesz sequence} for $L^{2}(G\times\Z_{\NumDim})$. In that case the constants $A$ and $B$ are called the \emph{frame} and \emph{Riesz} bounds, respectively. If it is possible to take $A=B$, then the systems are said to be \emph{tight}.  
\end{definition}
In general, if one increases $\NumDim$, then it becomes increasingly difficult to find $g\in L^{2}(G\times\Z_{\NumDim}\times\Z_\NumWin)$ such that \eqref{eq:frame} holds. On the other hand, increasing the number of windows $\NumWin$ decreases the difficulty of finding $g$ such that \eqref{eq:frame} holds. For Riesz sequences the converse is true.

\begin{remark} Let $\Lambda$ be discrete, co-compact and equipped with the counting measure and take $g\in L^{2}(G\times\Z_{\NumDim}\times\Z_{\NumWin})$. Lemma \ref{le:nec-cond} implies that the \emph{minimum number} of windows for the resulting Gabor system to be a multi-window super \emph{frame} for $L^{2}(G\times\Z_\NumDim)$ is $\lceil \NumDim \, \s(\Lambda)\rceil$. Similarly, the \emph{maximum number} of windows for the resulting Gabor system to be a \emph{Riesz sequence} for $L^{2}(G\times\Z_{\NumDim})$ is $\lfloor \NumDim\, \s(\Lambda) \rfloor$. In particular, if a multi-window super Gabor system should be both a Riesz sequence and a frame for $L^{2}(G\times\Z_{\NumDim})$, then $\NumDim \, \s(\La) = \NumWin$ (as we shall see in a moment the Gabor system will then be a basis for $L^{2}(G\times\Z_{\NumDim})$). In Section \ref{sec:existence-mws-frames-for-R}  we show that for multi-window Gabor systems in $L^{2}(\R^{m})$ one can achieve frames for any lattice $\La\subset\R^{2m}$ with the minimum numbers of windows in $\SO$.
\end{remark}

\subsection{Frames and Riesz sequences}
\label{sec:frame-and-riesz}

The theory of frames and Riesz sequences for Hilbert spaces is well understood, see for example the book by Christensen \cite{ch16}. Below, in Lemma \ref{le:frame}-\ref{le:dual-riesz-char}, we summarize the most important aspects of this theory for multi-window Gabor systems. The statements for (multi-window) super Gabor systems follow as a special case, cf.\ Definition \ref{def:mwg}. 

\begin{lemma}[\textnormal{Frames}]\label{le:frame}
Let $g\in L^{2}(G\times\Z_{\NumWin})$ generate a multi-window  Gabor frame for $L^{2}(G)$ with respect to the closed 
subgroup $\Lambda$ of $G\times\ghat$ (by Lemma \ref{le:nec-cond}, the quotient group $(G\times\ghat)/\Lambda$ is necessarily compact).
\begin{enumerate}
\item[(i)] The \emph{frame operator} $S_{g,\Lambda} := D_{g,\Lambda}\circ C_{g,\Lambda}$ is a well-defined, linear, bounded, self-adjoint, positive and invertible operator on $L^{2}(G)$. Moreover, it commutes with time-frequency shifts from $\Lambda$,
\[ S_{g,\Lambda} \, \pi(\lambda) = \pi(\lambda) \, S_{g,\Lambda} \ \ \text{for all} \ \ \la\in\La.\]
In case it is a tight frame, then the frame operator is a multiple of the identity.
\item[(ii)] Let $h\in L^{2}(G\times\Z_{\NumWin})$ be defined by $\vvfun{h}{j} = S_{g,\Lambda}^{-1} \vvfun{g}{j}$, $ j\in\Z_{\NumWin}$. 
\begin{enumerate}
\item[(ii.a)] The function $h$ generates a multi-window Gabor frame for $L^{2}(G)$.
\item[(ii.b)] The operator $D_{g,\Lambda}\circ C_{h,\Lambda}$ as well as $D_{h,\Lambda}\circ C_{g,\Lambda}$ is the identity on $L^{2}(G)$.
Specifically, the latter leads to a resolution of the identity of the form
\begin{equation} \label{eq:frame-rep} \langle f_{1},f_{2}\rangle = \sum_{j\in\Z_{\NumWin}} \int_{\Lambda} \langle {f_{1}}, \pi(\lambda) \vvfun{g}{j}\rangle  \,   \langle \pi(\lambda) \vvfun{h}{j}, {f_{2}} \rangle \, d\mu_{\Lambda}(\lambda)\end{equation}
for all $f_{1},f_{2}\in L^{2}(G)$.
\end{enumerate}
\item[(iii)] Unless the Gabor frame generated by $g$ at the same time is a Riesz sequence for $L^{2}(G)$, then there are other functions $h$ besides the one constructed in (ii) such that, still, both $D_{g,\Lambda}\circ C_{h,\Lambda}$ and $D_{h,\Lambda}\circ C_{g,\Lambda}$ are the identity on $L^{2}(G)$.
\end{enumerate}
\end{lemma}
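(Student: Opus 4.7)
The plan is to deduce all three parts from two basic observations: first, the synthesis operator $D_{g,\La}$ is the Hilbert space adjoint of the analysis operator $C_{g,\La}$ (directly from the definitions); second, the projective representation $\pi$, restricted to $\La$, is covariant in the sense that its left-composition with itself along $\La$ involves only cocycle factors that cancel when paired with a conjugate copy.

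For (i), $S_{g,\La} = C_{g,\La}^{*} C_{g,\La}$ is immediately bounded, self-adjoint, and positive, and $\langle S_{g,\La} f, f\rangle = \Vert C_{g,\La} f\Vert_{2}^{2}$ sandwiches $S_{g,\La}$ between $A\,\mathrm{Id}$ and $B\,\mathrm{Id}$, giving invertibility with $\Vert S_{g,\La}^{-1}\Vert \le A^{-1}$. The intertwining $S_{g,\La}\pi(\mu) = \pi(\mu) S_{g,\La}$ for $\mu\in\La$ is obtained by substituting $\la\mapsto \la + \mu$ inside the integral defining $S_{g,\La}\pi(\mu) f$ and using $\pi(\mu)\pi(\la') = \cocy(\mu,\la')\pi(\mu+\la')$ in both the analysis and the synthesis factors (the cocycle scalar appears once and its conjugate once, so they cancel); Haar-invariance of the measure on $\La$ closes the argument. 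Tightness $A=B$ forces $S_{g,\La} = A\,\mathrm{Id}$ by positivity.

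For (ii), I set $h_{j} := S_{g,\La}^{-1} g_{j}$. The intertwining from (i) together with self-adjointness of $S_{g,\La}^{-1}$ gives $\langle f, \pi(\la) h_{j}\rangle = \langle S_{g,\La}^{-1} f, \pi(\la) g_{j}\rangle$, so that $D_{g,\La}\circ C_{h,\La} f = S_{g,\La}(S_{g,\La}^{-1} f) = f$. The reverse composition $D_{h,\La}\circ C_{g,\La}$ is the Hilbert-space adjoint of $D_{g,\La}\circ C_{h,\La}$, hence also equals the identity; polarizing yields the reproducing formula \eqref{eq:frame-rep}. Part (ii.a) follows because the frame operator of the $h$-system equals $S_{g,\La}^{-1}$, so the frame bounds for $h$ are $B^{-1}$ and $A^{-1}$.

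For (iii), the observation is that $\{\pi(\la) g_{j}\}$ being both a frame and a Riesz sequence is equivalent to $D_{g,\La}$ being a Hilbert-space isomorphism onto $L^{2}(G)$, which forces $h' = h$ for any dual (apply $D_{g,\La}$-injectivity to $C_{h'-h,\La}$). Conversely, when the system is a frame but fails to be a Riesz sequence, $\ker D_{g,\La}\ne\{0\}$, and it suffices to exhibit $r\in L^{2}(G\times \Z_{\NumWin})$, $r\ne 0$, generating a Bessel system and satisfying $D_{g,\La}\circ C_{r,\La} = 0$; then $h' := h + r$ is another dual. The main obstacle I anticipate is producing such an $r$ concretely: the cleanest route is to start from a non-zero $c\in\ker D_{g,\La}$, exploit the $\La$-covariance of $\ker D_{g,\La}$ under the regular action on coefficient sequences, and use a standard parametrization of dual frames to realize the modification as $C_{r,\La}$ for some $r\in L^{2}$.
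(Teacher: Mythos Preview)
Your proposal is correct and follows essentially the same route as the paper: the frame-operator properties in (i) and (ii) are handled by the standard abstract frame-theory arguments (which the paper simply cites from Christensen's book), and your computation of the commutation relation $S_{g,\La}\pi(\mu)=\pi(\mu)S_{g,\La}$ via the substitution $\la\mapsto\la+\mu$ and cancellation of the cocycle factor against its conjugate is exactly the calculation the paper carries out explicitly. For (iii) the paper gives no argument beyond citing the general dual-frame parametrization and its Gabor-specific form, so your outline (uniqueness when $D_{g,\La}$ is injective; otherwise perturb the canonical dual by a Bessel generator $r$ with range of $C_{r,\La}$ contained in $\ker D_{g,\La}$) is in fact more detailed than what the paper provides, and the construction you sketch is the standard one underlying those references.
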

The function $h$ of Lemma \ref{le:frame}(ii) is called the \emph{canonical dual generator} of $g$. The other functions $h$ described in Lemma \ref{le:frame}(iii) are called \emph{dual generators} of $g$. The pair of frames generated  by a (canonical) pair of generators is called a pair of (canonical) \emph{dual} frames for $L^{2}(G)$. 
\begin{proof}[Proof of Lemma \ref{le:frame}](i). The statements about the frame operator can be found in, e.g., \cite[Theorem 5.1.5]{ch16}. The commutation relation is an important fact for Gabor systems and is easily verified: for any $\lambda'\in\Lambda$
\begin{align*} S_{g,\La} \, \pi(\lambda') \, f & = \sum_{j\in\Z_{\NumWin}} \int_{\La} \langle \pi(\la') f, \pi(\la) \vvfun{g}{j} \rangle \pi(\la) \vvfun{g}{j} \, d\lambda \\
& = \sum_{j\in\Z_{\NumWin}} \int_{\La} \langle f, \cocy(\la',\la'-\la) \, \pi(\la-\la') \vvfun{g}{j}\rangle \pi(\la) \vvfun{g}{j} \, d\la \\
& = \sum_{j\in\Z_{\NumWin}} \int_{\La} \langle f, \overline{\cocy(\la',\la)} \, \pi(\la) \vvfun{g}{j}\rangle \pi(\la+\la') \vvfun{g}{j} \, d\la \\
& = \sum_{j\in\Z_{\NumWin}} \int_{\La} \langle f, \overline{\cocy(\la',\la)} \, \pi(\la) \vvfun{g}{j}\rangle \overline{\cocy(\la',\la)} \pi(\la')\pi(\la) \vvfun{g}{j} \, d\la \\ & = \pi(\la') \, S_{g,\La} \, f.
\end{align*}(ii). This follows as in \cite[Theorem 5.1.6]{ch16} (iii). This follows from the characterization of all dual frames as can be found in \cite[Theorem 7.1.1]{ch16} and its adaptation to Gabor systems as in \cite[Proposition 12.3.6]{ch16}.  
\end{proof}
\begin{lemma} \label{le:dual-frame-char}
Let $g$ be a function in $L^{2}(G\times\Z_{\NumWin})$ and let $\Lambda$ be a closed subgroup of $G\times\ghat$. The following statements are equivalent:
\begin{enumerate}
\item[(i)] The multi-window Gabor system generated by $g$ and $\Lambda$ is a frame for $L^{2}(G)$,
\item[(ii)] The multi-window Gabor system generated by $g$ and $\Lambda$ is a Bessel system and there exists a function $h\in L^{2}(G\times\Z_{\NumWin})$ that also generates a multi-window Gabor Bessel system for $L^{2}(G)$ such that \eqref{eq:frame-rep} is satisfied.
\end{enumerate}
\end{lemma}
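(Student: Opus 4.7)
The plan is to prove the two implications separately, with (i) $\Rightarrow$ (ii) being essentially a repackaging of Lemma \ref{le:frame} and (ii) $\Rightarrow$ (i) following from a standard Cauchy--Schwarz argument applied to the resolution of the identity \eqref{eq:frame-rep}.

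For (i) $\Rightarrow$ (ii) I would argue as follows. If $g$ generates a multi-window Gabor frame for $L^{2}(G)$, then by definition the system is a Bessel system. Let $h\in L^{2}(G\times\Z_{\NumWin})$ be the canonical dual generator provided by Lemma \ref{le:frame}(ii), that is, $\vvfun{h}{j} = S_{g,\La}^{-1}\vvfun{g}{j}$. By Lemma \ref{le:frame}(ii.a) this $h$ generates a Bessel system (in fact a frame), and by Lemma \ref{le:frame}(ii.b) the operator $D_{h,\La}\circ C_{g,\La}$ is the identity on $L^{2}(G)$. Pairing this operator identity against $f_{2}\in L^{2}(G)$ yields exactly \eqref{eq:frame-rep}, so (ii) holds.

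For (ii) $\Rightarrow$ (i) let $B_{g}$ and $B_{h}$ denote the Bessel bounds of the systems generated by $g$ and $h$, respectively. The upper frame bound for $g$ is precisely the Bessel bound $B_{g}$, so it remains to establish the lower bound. Setting $f_{1}=f_{2}=f$ in \eqref{eq:frame-rep} and applying the Cauchy--Schwarz inequality in $L^{2}(\Lambda\times\Z_{\NumWin})$ I obtain
\[
\|f\|_{2}^{2} = \sum_{j\in\Z_{\NumWin}}\int_{\La}\langle f,\pi(\la)\vvfun{g}{j}\rangle\,\langle\pi(\la)\vvfun{h}{j},f\rangle\,d\mu_{\La}(\la) \le \|C_{g,\La}f\|_{2}\,\|C_{h,\La}f\|_{2} \le \sqrt{B_{h}}\,\|f\|_{2}\,\|C_{g,\La}f\|_{2}.
\]
Cancelling a factor of $\|f\|_{2}$ (the case $f=0$ being trivial) and squaring gives $B_{h}^{-1}\|f\|_{2}^{2}\le \|C_{g,\La}f\|_{2}^{2}$, which is the required lower frame inequality with constant $A=B_{h}^{-1}$. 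Combined with the Bessel bound $B_g$ this shows that $g$ generates a multi-window Gabor frame for $L^{2}(G)$.

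No step is particularly an obstacle here; the mild subtlety is simply to observe that the resolution of the identity \eqref{eq:frame-rep}, which a priori is only an expansion formula, combines with the Bessel bound of the \emph{partner} system $h$ to yield the lower frame bound for $g$. This is the standard duality-of-Bessel-pairs argument (cf.\ \cite[Lemma 6.3.2]{ch16}), and it adapts without change to the present locally compact abelian setting, since all integrals are over $\La$ and the Cauchy--Schwarz inequality is applied in the ordinary Hilbert space $L^{2}(\La\times\Z_{\NumWin})$.
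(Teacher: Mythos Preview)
Your proof is correct and is exactly the standard argument behind \cite[Lemma~6.3.2]{ch16}, which is all the paper cites as its proof. The implication (i)~$\Rightarrow$~(ii) via the canonical dual from Lemma~\ref{le:frame} and the implication (ii)~$\Rightarrow$~(i) via Cauchy--Schwarz in $L^{2}(\Lambda\times\Z_{\NumWin})$ together with the Bessel bound of the partner system are precisely the expected steps, and nothing in the locally compact abelian setting changes them.
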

\begin{proof} See \cite[Lemma 6.3.2]{ch16}.
\end{proof}
Usually Gabor frames are constructed where $\Lambda$ is discrete (and by Lemma \ref{le:nec-cond} necessarily also co-compact -- thus a lattice). In that case multi-window super Gabor frames lead to \emph{series} representations of the form
\begin{equation} \label{eq:0504c} f_k = \sum_{j\in\Z_{\NumWin}} \sum_{\lambda\in\Lambda} \Big( \sum_{k'\in\Z_{\NumDim}} \big\langle \vvfun{f}{k'}, \pi(\lambda) \mvfun{g}{k'}{j}\big\rangle \Big) \,   \pi(\lambda) \mvfun{h}{k}{j} \ \ \text{for all} \ \ f\in L^{2}(G\times\Z_{\NumDim}),\, k\in\mathbb{Z}_d.\end{equation}
One should note that not all groups have (non-trivial) discrete or co-compact subgroups.

One goal of Gabor theory is to find pairs of functions $g$ and $h$ in $L^{2}(G\times\Z_{\NumWin})$ and subgroups $\Lambda$ of $G\times\ghat$ such that frame expansions of the form \eqref{eq:frame-rep} hold. 

In contrast to frames, multi-window (super) Gabor \emph{Riesz sequences} for $L^{2}(G\times\Z_{\NumDim})$ give representations for functions \emph{only} in the closure of their span. Again, we state this only for multi-window Gabor systems as the statement for super systems is a special case, cf.\ Definition \ref{def:mwg}.

\begin{lemma}[\textnormal{Riesz sequences}]\label{le:riesz-sequence} Let $g\in L^{2}(G\times\Z_{\NumWin})$ generate a multi-window Gabor Riesz sequence for $L^{2}(G)$ with respect to the closed subgroup $\Lambda$ of $G\times\ghat$ (by Lemma \ref{le:nec-cond}, the group $\Lambda$ is necessarily discrete and $\NumWin<\infty$). Furthermore we let $R(D_{g,\Lambda})$ denote the range of the synthesis operator $D_{g,\Lambda}$ (which by definition of a Riesz sequence is closed in $L^{2}(G)$). The following holds:
\begin{enumerate} 
\item[(i)] The Gabor system is an unconditional basis for $R(D_{g,\Lambda})$.
\item[(ii)] The \emph{frame operator} $S_{g,\Lambda} := D_{g,\Lambda}\circ C_{g,\Lambda}$ is a well-defined, linear, bounded, self-adjoint, positive and invertible operator on $R(D_{g,\Lambda})$ (the Gabor system is a frame for $R(D_{g,\Lambda})$).
\item[(iii)] The function $h\in L^{2}(G\times\Z_{\NumWin})$ defined by
\[ \vvfun{h}{j} = S_{g,\Lambda}^{-1} \vvfun{g}{j}, \ \ j\in\Z_{\NumWin},\]
is the unique function such that the generated multi-window Gabor system is a Riesz sequence for $L^{2}(G)$ with the property that $R(D_{h,\Lambda}) = R(D_{g,\Lambda})$. Furthermore the operators $D_{g,\Lambda}\circ C_{h,\Lambda}$ and $D_{h,\Lambda}\circ C_{g,\Lambda}$ are projections from $L^{2}(G)$ onto $R(D_{g,\Lambda})$, and so
\begin{equation}\label{eq:riesz-rep} 
f = D_{h,\Lambda} \circ C_{g,\Lambda} f = D_{g,\Lambda} \circ C_{h,\Lambda} f \ \ \text{for all} \ \ f\in R(D_{g,\Lambda}).\end{equation}
\item[(iv)] Unless $R(D_{g,\Lambda})= L^{2}(G\times\Z_{\NumDim})$ (which is the case exactly if the Gabor system also is a frame for $L^{2}(G)$), then there are other functions $h\in L^{2}(G\times\Z_{\NumWin})$ that generate multi-window Gabor Riesz sequences for $L^{2}(G)$ such that $R(D_{h,\Lambda}) \ne R(D_{g,\Lambda})$ and the following holds.
\begin{enumerate}
\item[(iv.a)] $D_{h,\Lambda} \circ C_{g,\Lambda}$ is a projection from $L^{2}(G)$ onto $R(D_{h,\Lambda})$, 
\begin{align} \label{eq:0504a} 
& f = D_{h,\Lambda} \circ C_{g,\Lambda} f \ \ \text{for all} \ \ f\in R(D_{h,\Lambda}).\end{align}
\item[(iv.b)] $D_{g,\Lambda} \circ C_{h,\Lambda}$ is a projection from $L^{2}(G)$ onto $R(D_{g,\Lambda})$,
\begin{align} \label{eq:0504b} 
f = D_{g,\Lambda} \circ C_{h,\Lambda} f \ \ \text{for all} \ \ f\in R(D_{g,\Lambda}).\end{align}
\end{enumerate}
For $g$ and $h$ to be two functions in $L^{2}(G\times\Z_{\NumWin}\times\Z_{\NumDim})$ that generate multi-window Gabor Riesz sequences such that \eqref{eq:0504a} and \eqref{eq:0504b} hold, it is necessary and sufficient that the two families of functions are biorthonormal, that is,
\begin{equation} \label{eq:def:biorth-multi} \langle \pi(\lambda) \vvfun{g}{j},\pi(\lambda') \vvfun{h}{j'} \rangle = \begin{cases} 1 & \text{if } \lambda'=\lambda, \ j'=j,\\ 0 & \text{otherwise}, \end{cases} \ \ \text{for all} \ \ \lambda,\lambda'\in\Lambda, \ j,j'\in\Z_{\NumWin}.  \end{equation}
\end{enumerate}

\end{lemma}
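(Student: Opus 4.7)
The plan is to reduce the entire statement to the classical theory of Riesz bases in a Hilbert space, applied to the closed subspace $R(D_{g,\Lambda})$ of $L^{2}(G)$, together with the same commutation argument with time-frequency shifts that already appeared in the proof of Lemma \ref{le:frame}(i). Once the indexing of the multi-window system by $\Lambda\times\Z_{\NumWin}$ is accounted for, the material of \cite[Chapter 7]{ch16} applies verbatim.

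For parts (i) and (ii): By the definition of a Riesz sequence, $D_{g,\Lambda}$ is bounded below on $L^{2}(\Lambda\times\Z_{\NumWin})$, hence its range is closed and $D_{g,\Lambda}$ is a Banach space isomorphism onto $R(D_{g,\Lambda})$. Consequently $C_{g,\Lambda}\vert_{R(D_{g,\Lambda})}$ is an isomorphism in the reverse direction, so the frame operator $S_{g,\Lambda}=D_{g,\Lambda}\circ C_{g,\Lambda}$ is well-defined, bounded, self-adjoint, positive and invertible on $R(D_{g,\Lambda})$. Since the Gabor system is a basis of $R(D_{g,\Lambda})$ and satisfies the frame inequality there, it is automatically an unconditional basis. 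The commutation $S_{g,\Lambda}\pi(\lambda)=\pi(\lambda)S_{g,\Lambda}$ for $\lambda\in\Lambda$ is obtained by the same reindexing calculation as in Lemma \ref{le:frame}(i).

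For part (iii): Defining $\vvfun{h}{j}=S_{g,\Lambda}^{-1}\vvfun{g}{j}$, the commutation from (ii) yields $\pi(\lambda)\vvfun{h}{j}=S_{g,\Lambda}^{-1}\pi(\lambda)\vvfun{g}{j}$, so the dual system is the image of the original Riesz basis under a bounded invertible operator on $R(D_{g,\Lambda})$ and is itself a Riesz basis for the same subspace. The identities $D_{g,\Lambda}\circ C_{h,\Lambda}=D_{h,\Lambda}\circ C_{g,\Lambda}=\mathrm{Id}$ on $R(D_{g,\Lambda})$ are the standard frame expansion inside $R(D_{g,\Lambda})$, and uniqueness of an $h$ with $R(D_{h,\Lambda})=R(D_{g,\Lambda})$ is nothing other than the uniqueness of the biorthogonal system to a Riesz basis.

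For part (iv): The crux is the equivalence between the operator identities \eqref{eq:0504a}--\eqref{eq:0504b} and the biorthogonality \eqref{eq:def:biorth-multi}. Reading off coefficients in the Riesz basis $\{\pi(\lambda)\vvfun{h}{j}\}$, biorthogonality is equivalent to $C_{h,\Lambda}\circ D_{g,\Lambda}=\mathrm{Id}$ on $L^{2}(\Lambda\times\Z_{\NumWin})$, from which $D_{g,\Lambda}\circ C_{h,\Lambda}$ and $D_{h,\Lambda}\circ C_{g,\Lambda}$ are idempotents with ranges $R(D_{g,\Lambda})$ and $R(D_{h,\Lambda})$, respectively. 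Conversely, evaluating \eqref{eq:0504a} at $f=\pi(\lambda')\vvfun{h}{j'}$ and expanding $C_{g,\Lambda}\pi(\lambda')\vvfun{h}{j'}$ recovers \eqref{eq:def:biorth-multi}. Non-canonical duals with $R(D_{h,\Lambda})\ne R(D_{g,\Lambda})$ arise, whenever $R(D_{g,\Lambda})\subsetneq L^{2}(G)$, by perturbing the canonical coefficient functionals along directions orthogonal to $R(D_{g,\Lambda})$ while preserving biorthogonality, cf.\ \cite[Chapter 7]{ch16}. The main obstacle will be the clean bookkeeping between functionals on the whole of $L^{2}(G)$ and on the subspace $R(D_{g,\Lambda})$ when constructing these non-canonical duals and verifying that the resulting $h$ indeed lies in $L^{2}(G\times\Z_{\NumWin})$ and generates a Riesz sequence; apart from this technicality the argument is entirely standard once the Riesz basis framework inside $R(D_{g,\Lambda})$ is set up.
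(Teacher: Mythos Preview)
Your proposal is correct and takes essentially the same approach as the paper: the paper's own proof consists solely of the reference ``See Sections 3.6, 3.7 and Section 7.1 in \cite{ch16}'', so both you and the authors are deferring to the standard Riesz basis theory in Christensen's book. You have simply spelled out in more detail how the reduction to that theory works in the multi-window setting, which is a helpful elaboration rather than a different argument.
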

\begin{proof} See Sections 3.6, 3.7 and Section 7.1 in \cite{ch16}.\end{proof}
In case of multi-window \emph{super} Gabor Riesz sequences, the biorthonormality relations in  \eqref{eq:def:biorth-multi} take the form
\begin{equation} \label{eq:def:biorth-multi-super} \sum_{k\in\Z_{\NumDim}} \langle \pi(\lambda) \mvfun{g}{k}{j},\pi(\lambda') \mvfun{h}{k}{j'} \rangle = \begin{cases} 1 & \text{if } \lambda'=\lambda, \ j'=j,\\ 0 & \text{otherwise}, \end{cases} \ \ \text{for all} \ \ \lambda,\lambda'\in\Lambda, \ j,j'\in\Z_{\NumWin}.  \end{equation}

\begin{lemma} \label{le:dual-riesz-char}
Let $g$ be a function in $L^{2}(G\times\Z_{\NumWin})$ and let $\Lambda$ be a discrete subgroup of $G\times\ghat$. The following statements are equivalent:
\begin{enumerate}
\item[(i)] The multi-window Gabor system generated by $g$ and $\Lambda$ is a Riesz sequence for $L^{2}(G)$,
\item[(ii)] The multi-window Gabor system generated by $g$ and $\Lambda$ is a Bessel system and there exists a function $h\in L^{2}(G\times\Z_{\NumWin})$ that also generates a multi-window Gabor Bessel system for $L^{2}(G)$ such that the two families of functions such that \eqref{eq:def:biorth-multi} is satisfied.
\end{enumerate}
\end{lemma}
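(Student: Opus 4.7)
The plan is to deduce both implications directly from the structural results about Gabor Riesz sequences in Lemma \ref{le:riesz-sequence} together with a standard duality argument between Bessel families related by biorthonormality.

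For the implication (i) $\Rightarrow$ (ii), I would first observe that any Riesz sequence is automatically a Bessel system (the upper Riesz inequality is exactly the Bessel condition for $D_{g,\La}$, which is equivalent to the Bessel condition for $C_{g,\La}$). Then I take $h\in L^{2}(G\times\Z_{\NumWin})$ to be the \emph{canonical dual generator} provided by Lemma \ref{le:riesz-sequence}(iii), i.e. $\vvfun{h}{j} = S_{g,\La}^{-1}\vvfun{g}{j}$. By that lemma, $h$ also generates a Riesz sequence (and hence a Bessel system) with $R(D_{h,\La}) = R(D_{g,\La})$, and \eqref{eq:riesz-rep} gives $f = D_{g,\La}\circ C_{h,\La}\,f$ for every $f\in R(D_{g,\La})$. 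Applying this identity to the element $f=\pi(\lambda')\vvfun{g}{j'}\in R(D_{g,\La})$ yields
\[
\pi(\lambda')\vvfun{g}{j'} \;=\; \sum_{j\in\Z_{\NumWin}}\sum_{\lambda\in\Lambda}\big\langle \pi(\lambda')\vvfun{g}{j'},\pi(\lambda)\vvfun{h}{j}\big\rangle\,\pi(\lambda)\vvfun{g}{j}.
\]
Because $\{\pi(\lambda)\vvfun{g}{j}\}_{\lambda,j}$ is a Riesz sequence (hence an unconditional basis for its closed span by Lemma \ref{le:riesz-sequence}(i)), the coefficients in the expansion are unique, so the inner products must agree with the Kronecker symbols of \eqref{eq:def:biorth-multi}.

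For the implication (ii) $\Rightarrow$ (i), the upper Riesz bound for $g$ is immediately the Bessel bound of the system generated by $g$. The key step is to extract the \emph{lower} Riesz inequality from biorthonormality and the Bessel property of $h$. I proceed as follows: fix a finitely supported sequence $c\in\ell^{2}(\Lambda\times\Z_{\NumWin})$ and set $f := D_{g,\La}c = \sum_{j,\lambda} c(\lambda,j)\pi(\lambda)\vvfun{g}{j}$. Pairing with $\pi(\lambda')\vvfun{h}{j'}$ and using \eqref{eq:def:biorth-multi} term-by-term gives
\[
c(\lambda',j') \;=\; \big\langle D_{g,\La}c,\,\pi(\lambda')\vvfun{h}{j'}\big\rangle \;=\; \big(C_{h,\La}D_{g,\La}c\big)(\lambda',j').
\]
Summing squares over $(\lambda',j')$ and invoking the Bessel bound $B_{h}$ for the multi-window Gabor system generated by $h$ yields
\[
\|c\|_{2}^{2} \;=\; \|C_{h,\La}D_{g,\La}c\|_{2}^{2} \;\le\; B_{h}\,\|D_{g,\La}c\|_{2}^{2},
\]
which is the desired lower Riesz inequality with constant $A=1/B_{h}$. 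A density argument extends the estimate to all of $\ell^{2}(\Lambda\times\Z_{\NumWin})$, completing the proof that $g$ generates a Riesz sequence for $L^{2}(G)$.

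The argument is essentially routine once Lemma \ref{le:riesz-sequence} is in place; the only subtle point is to make sure that biorthonormality as formulated in \eqref{eq:def:biorth-multi} (without a sum over a ``super'' index $k$) suffices to recover coefficients in the multi-window setting. That is exactly what the one-line computation $c(\lambda',j') = \langle D_{g,\La}c,\pi(\lambda')\vvfun{h}{j'}\rangle$ verifies, so no genuine obstacle arises. The analogous statement for multi-window \emph{super} Gabor Riesz sequences would be proved identically, replacing \eqref{eq:def:biorth-multi} by \eqref{eq:def:biorth-multi-super} and inserting a sum over $k\in\Z_{\NumDim}$ in the inner products.
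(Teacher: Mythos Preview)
Your proof is correct. The paper itself does not give an argument but simply cites \cite[Theorem~6.6]{jale16-1}; what you wrote is a self-contained proof using only Lemma~\ref{le:riesz-sequence} and the standard Bessel/biorthonormality trick. Your argument for (ii) $\Rightarrow$ (i)---recovering $c$ as $C_{h,\La}D_{g,\La}c$ via \eqref{eq:def:biorth-multi} and then invoking the Bessel bound of $h$---is exactly the strategy of the cited result, so there is no substantive difference beyond the fact that you spell it out rather than delegate to the literature. For (i) $\Rightarrow$ (ii) you could shorten slightly by observing that the biorthonormality of the canonical dual pair is already contained in the final sentence of Lemma~\ref{le:riesz-sequence}(iv), but the basis-uniqueness argument you give is equally valid.
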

\begin{proof}See Theorem 6.6 in \cite{jale16-1}
\end{proof}


\subsection{The continuous case} \label{sec:continuous-case}

If one picks the subgroup $\Lambda$ to be the entire time-frequency plane $G\times\ghat$, then the task of constructing (multi-window super) Gabor frames is trivial.

\begin{lemma} \label{le:1009} Any non-zero function $g\in L^{2}(G\times\Z_{\NumWin})$ generates a multi-window Gabor frame for $L^{2}(G)$ with respect to the subgroup $\Lambda = G\times\ghat$ and the associated frame operator satisfies
\[ S_{g,\Lambda} f = \sum_{j\in\Z_{\NumWin}} \int_{G\times\ghat} \langle f, \pi(\chi) \vvfun{g}{j} \rangle \pi(\chi) \vvfun{g}{j} \, d\chi = \Vert g \Vert_{2}^{2} \, f \ \ \text{for all} \ \ f\in L^{2}(G).\]
Moreover, any pair of functions $g,h\in L^{2}(G\times\Z_{\NumWin})$ such that $\langle g,h\rangle = 1$ generate dual multi-window Gabor frames for $L^{2}(G)$ with respect to the subgroup $\Lambda = G\times\ghat$, so that
\[ \langle f_{1},f_{2}\rangle = \sum_{j\in\Z_{\NumWin}} \int_{G\times\ghat} \langle f_{1}, \pi(\chi) \vvfun{g}{j} \rangle \, \langle \pi(\chi) \vvfun{h}{j} , f_{2}\rangle \, d\chi \ \ \text{for all} \ \ f_{1},f_{2}\in L^{2}(G).\]
\end{lemma}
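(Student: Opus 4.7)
The plan is to reduce everything to the short-time Fourier transform orthogonality relation \eqref{eq:STFT}, which already gives an identity for integrals of the form $\int \langle f_1, \pi(\chi) g \rangle \langle \pi(\chi) h, f_2 \rangle\, d\chi$ for a single generator. A multi-window system is just a finite (or $\ell^2$) collection of such single-window identities summed over the window index.

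First I would prove the second (dual frame) assertion, since the tight-frame claim is the special case $h = g/\|g\|_2^2$ followed by a rescaling, or equivalently $f_1 = f_2 = f$ and $g = h$. Given $f_1, f_2 \in L^2(G)$ and $g,h \in L^2(G\times \Z_{\NumWin})$, apply \eqref{eq:STFT} to each pair $\vvfun{g}{j}, \vvfun{h}{j}$ to obtain
\[
\int_{G\times \ghat} \langle f_1, \pi(\chi) \vvfun{g}{j}\rangle \, \langle \pi(\chi) \vvfun{h}{j}, f_2\rangle \, d\chi \;=\; \langle f_1, f_2\rangle \, \langle \vvfun{h}{j}, \vvfun{g}{j}\rangle.
\]
Summing over $j \in \Z_{\NumWin}$ and recognising $\sum_{j} \langle \vvfun{h}{j}, \vvfun{g}{j}\rangle = \langle h, g\rangle = \overline{\langle g, h \rangle} = 1$ gives the desired resolution of the identity. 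The interchange of summation and integration needs a brief justification: by Cauchy--Schwarz on each summand and the single-window case of \eqref{eq:STFT} applied with $f_1 = f_2$, $g = h$, one has $\int |\langle f_1, \pi(\chi)\vvfun{g}{j}\rangle \langle \pi(\chi)\vvfun{h}{j}, f_2\rangle|\, d\chi \le \|f_1\|_2 \|f_2\|_2 \|\vvfun{g}{j}\|_2 \|\vvfun{h}{j}\|_2$, so summability follows from $g, h \in L^2(G \times \Z_{\NumWin})$ and Cauchy--Schwarz in $\Z_{\NumWin}$.

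Next, for the first assertion I would specialise to $h = g$, $f_1 = f_2 = f$, yielding
\[
\sum_{j \in \Z_{\NumWin}} \int_{G\times \ghat} |\langle f, \pi(\chi) \vvfun{g}{j}\rangle|^2\, d\chi \;=\; \|f\|_2^2 \sum_{j \in \Z_{\NumWin}} \|\vvfun{g}{j}\|_2^2 \;=\; \|f\|_2^2 \, \|g\|_2^2,
\]
which shows that the system is a tight frame for $L^2(G)$ with frame bound $A = B = \|g\|_2^2 > 0$ (positivity uses $g \neq 0$). The formula for the frame operator then follows by polarisation: for all $f_1,f_2 \in L^2(G)$, the identity above together with its analogue for $f_1 + \alpha f_2$, $\alpha \in \{1, i\}$, yields
\[
\langle S_{g,\Lambda} f_1, f_2\rangle \;=\; \sum_{j\in\Z_{\NumWin}} \int_{G\times\ghat} \langle f_1, \pi(\chi) \vvfun{g}{j}\rangle \langle \pi(\chi)\vvfun{g}{j}, f_2\rangle\, d\chi \;=\; \|g\|_2^2 \, \langle f_1, f_2\rangle,
\]
so $S_{g,\Lambda} = \|g\|_2^2 \cdot I$. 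Alternatively one can read this off directly from the dual frame formula by taking $h = g$ there.

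There is no real obstacle in this proof: the content lies entirely in the STFT orthogonality relation \eqref{eq:STFT}, and the only technical point is the (routine) exchange of sum and integral, which is handled by the $L^2$-summability of the window coefficients. The argument is essentially one application of Fubini plus one invocation of \eqref{eq:STFT} per window.
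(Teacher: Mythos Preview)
Your proof is correct and follows exactly the approach indicated in the paper, which simply says ``The results follow by applying \eqref{eq:STFT}.'' You have merely spelled out the details (summing the single-window orthogonality relation over $j$ and justifying the interchange), which is precisely what that one-line proof intends.
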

\begin{proof} The results follow by applying \eqref{eq:STFT}.
\end{proof}
\begin{remark} \label{rem:continuous-mw-case-generator-in-so} 
It is clear that any
non-zero function $g\in \SO(G\times\Z_{\NumWin})$ generates a multi-window Gabor frame for $L^{2}(G)$ with respect to the subgroup $\La = G\times\ghat$, and that in this case the frame operator, $S_{g,\La} : f \mapsto \Vert g \Vert_{2}^{2} \, f$ also is invertible on $\SO(G)$.
\end{remark}

\begin{lemma} \label{le:1604} For any $\NumDim < \infty$ there exists a function $g\in L^{2}(G\times\Z_{\NumDim})$ that generates a super Gabor frame for $L^{2}(G\times\Z_{\NumDim})$ with respect to the subgroup $\Lambda = G\times\ghat$. Moreover, any two functions $g,h\in L^{2}(G\times\Z_{\NumDim})$ such that $\langle \vvfun{g}{k}, \vvfun{h}{k'}\rangle = \delta_{k,k'}$ for all $k,k'\in\Z_{\NumDim}$ generate dual super Gabor frames for $L^{2}(G\times\Z_{\NumDim})$ with respect to the subgroup $\Lambda = G\times\ghat$.
\end{lemma}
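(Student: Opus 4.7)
The plan is to deduce both assertions from the short-time Fourier transform orthogonality \eqref{eq:STFT} applied componentwise, in direct analogy with Lemma~\ref{le:1009}.

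For the ``moreover'' part, fix $g,h\in L^{2}(G\times\Z_{\NumDim})$ with $\langle g_{k},h_{k'}\rangle=\delta_{k,k'}$, where $g_{k}:=g(\,\cdot\,,k)$ and $h_{k}:=h(\,\cdot\,,k)$. First I would check that $g$, and symmetrically $h$, generates a super Gabor Bessel system. The Cauchy--Schwarz inequality $|\sum_{k}a_{k}|^{2}\le \NumDim\sum_{k}|a_{k}|^{2}$ combined with \eqref{eq:STFT} applied to each single-window STFT gives
\[
\|C_{g,G\times\ghat}f\|_{2}^{2}=\int_{G\times\ghat}\Big|\sum_{k\in\Z_{\NumDim}}\langle f_{k},\pi(\chi)g_{k}\rangle\Big|^{2}d\chi \le \NumDim \max_{k}\|g_{k}\|_{2}^{2}\,\|f\|_{L^{2}(G\times\Z_{\NumDim})}^{2}.
\]
The mixed reconstruction formula then drops out directly. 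For $f,f'\in L^{2}(G\times\Z_{\NumDim})$, interchanging the finite sums with the integral (legitimate by \eqref{eq:STFT}) gives
\begin{align*}
\int_{G\times\ghat}\Big(\sum_{k}\langle f_{k},\pi(\chi)g_{k}\rangle\Big)\Big(\sum_{k'}\langle \pi(\chi)h_{k'},f'_{k'}\rangle\Big)\,d\chi
&=\sum_{k,k'}\langle f_{k},f'_{k'}\rangle\,\langle h_{k'},g_{k}\rangle\\
&=\sum_{k}\langle f_{k},f'_{k}\rangle=\langle f,f'\rangle_{L^{2}(G\times\Z_{\NumDim})}.
\end{align*}
Invoking the super-Gabor version of Lemma~\ref{le:dual-frame-char} (a pair of Bessel systems satisfying the reconstruction identity are automatically dual frames) concludes that $g$ and $h$ generate dual super Gabor frames.

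For the existence claim, I would simply select any orthonormal tuple $\{g_{0},\ldots,g_{\NumDim-1}\}$ inside $L^{2}(G)$, which is possible whenever $\dim L^{2}(G)\ge \NumDim$, a harmless assumption in every case of interest. Setting $g\in L^{2}(G\times\Z_{\NumDim})$ to be the function with $k$-th component $g_{k}$ gives $\langle g_{k},g_{k'}\rangle=\delta_{k,k'}$, so the ``moreover'' part applied with $h=g$ exhibits $g$ as the generator of a Parseval super Gabor frame for $L^{2}(G\times\Z_{\NumDim})$.

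There is no genuine obstacle here. The only bookkeeping point is the legitimacy of exchanging the finite sums over $k,k'$ with the integral over $G\times\ghat$, which is immediate because each individual summand is an $L^{1}$-integrand on $G\times\ghat$ by \eqref{eq:STFT}.
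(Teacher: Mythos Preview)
Your proposal is correct and follows essentially the same approach as the paper: both arguments reduce everything to the orthogonality relation \eqref{eq:STFT} applied componentwise, and both produce the Parseval generator by taking an orthonormal family $\{g_{k}\}_{k\in\Z_{\NumDim}}$ in $L^{2}(G)$ (the paper obtains it via Gram--Schmidt from a linearly independent family, you select it directly). Your explicit verification of the Bessel bound and the appeal to Lemma~\ref{le:dual-frame-char} are clean additions that the paper leaves implicit.
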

\begin{proof}
Let $g$ be any function in $L^{2}(G\times\Z_{\NumDim})$ such that the functions $\{ \vvfun{g}{k}\}_{k\in\Z_{\NumDim}}$ are linearly independent. Apply the Gram-Schmidt procedure to generate a function $\tilde{g}\in L^{2}(G\times\Z_{\NumDim})$ so that $\{ \vvfun{\tilde{g}}{k}\}_{k\in\Z_{\NumDim}}$ are an orthonormal collection of functions in $L^{2}(G)$. That is, we define $\vvfun{\tilde{g}}{0} := \vvfun{g}{0} \, \Vert \vvfun{g}{0} \Vert_{2}^{-1}$ and  
\[ \vvfun{{g}'}{k} := \vvfun{g}{k} - \sum_{k' = 0}^{k-1} \langle \vvfun{g}{k} , \vvfun{\tilde{g}}{k'} \rangle \, \vvfun{\tilde{g}}{k'}, \ \ \vvfun{\tilde{g}}{k} :=  \vvfun{g'}{k} \, \Vert \vvfun{g'}{k} \Vert^{-1}_{2}, \ \ \text{for} \ \ k = 1,2,\ldots, \NumDim-1. \]
Of course, any other orthonormalization procedure also works. Then, using \eqref{eq:STFT}, we find that
\begin{align*} & \int_{G\times\ghat} \big\vert \sum_{k\in\Z_{\NumDim}} \langle \vvfun{f}{k} , \pi(\chi) \vvfun{\tilde{g}}{k} \rangle  \big\vert^{2} \, d\chi = \sum_{k,k'\in\Z_{\NumDim}} \int_{G\times\ghat} \langle \vvfun{f}{k} , \pi(\chi) \vvfun{\tilde{g}}{k} \rangle  \,  \langle \pi(\chi) \vvfun{\tilde{g}}{k'}  , \vvfun{f}{k'} \rangle  \, d\chi \\
& \stackrel{\eqref{eq:STFT}}{=} \sum_{k,k'\in\Z_{\NumDim}} \langle \vvfun{\tilde{g}}{k'} , \vvfun{\tilde{g}}{k} \rangle \, \langle \vvfun{f}{k} , \vvfun{f}{k'} \rangle = \sum_{k,k'\in\Z_{\NumDim}} \delta_{k,k'}\, \langle \vvfun{f}{k} , \vvfun{f}{k'} \rangle = \sum_{k\in\Z_{\NumDim}} \Vert f_{k} \Vert_{2}^{2} = \Vert f \Vert_{2}^{2},
\end{align*}
for all $f\in L^{2}(G\times\Z_{\NumDim})$. We conclude that $\tilde{g}$ generates a super Gabor frame for $L^{2}(G\times\Z_{\NumDim})$ with respect to the subgroup $\Lambda = G\times\ghat$ and with frame bounds $A = B = 1$ and where the frame operator is the identity.
The moreover part follows also using \eqref{eq:STFT}: for all $f^{1},f^{2}\in L^{2}(G\times\Z_{\NumDim})$
\begin{align*} & \int_{G\times\ghat} \Big( \sum_{k\in\Z_{\NumDim}} \langle \vvfun{f^{1}}{k} , \pi(\chi) \vvfun{g}{k} \rangle \Big) \, \Big( \sum_{k'\in\Z_{\NumDim}} \langle \pi(\chi) \vvfun{h}{k'} , \vvfun{f^{2}}{k'} \rangle \Big) \, d\chi \\
& = \sum_{k,k'\in\Z_{\NumDim}} \langle \vvfun{h}{k'} , \vvfun{g}{k} \rangle \, \langle \vvfun{f^{1}}{k} ,\vvfun{f^{2}}{k'} \rangle = \sum_{k,k'\in\Z_{\NumDim}} \delta_{k,k'} \langle \vvfun{f^{1}}{k} ,\vvfun{f^{2}}{k'} \rangle = \langle f^{1}, f^{2}\rangle.\end{align*}
\end{proof}

\begin{remark} \label{rem:continuous-super-case-generator-in-so} 
In the proof of Lemma \ref{le:1604}, if one starts with a function $g\in \SO(G\times\Z_{\NumDim})$ where $\{\vvfun{g}{k}\}_{k\in\Z_{\NumDim}}$ is a collection of linearly independent functions, then the described Gram-Schmidt procedure will generate a collection of functions in $\SO(G)$ which is orthonormal in $L^{2}(G)$. Therefore, for any $\NumDim<\infty$ the proof produces a function in $\SO(G\times\Z_{\NumDim})$ that generates a super Gabor frame for $L^{2}(G\times\Z_{\NumDim})$ with respect to the subgroup $\La=G\times\ghat$ and where the associated Gabor frame operator $S_{g,\La} : f\mapsto f$ also is invertible on $\SO(G\times\Z_{\NumDim})$.
\end{remark}

\subsection{The duality between Gabor frames and Gabor Riesz sequences}
\label{sec:duality}
As stated in the introduction, there is an intimate relation between the frame and Riesz sequence of a Gabor system generated by a function and with time-frequency shifts from the subgroup $\Lambda$ and its adjoint group $\Lambda^{\circ}$. We now extend this relation, the duality principle for Gabor systems, to a duality between multi-window super Gabor systems. Its proof will be given in Section \ref{sec:proof-duality}. 


\begin{proposition} \label{pr:bessel-duality} Let $g$ be a function in $L^{2}(G\times\Z_{\NumDim}\times \Z_{\NumWin})$ and let $\Lambda$ be a closed subgroup of $G\times\ghat$. The following statements are equivalent.
\begin{enumerate} 
\item[(i)] The $\NumWin$-multi-window $\NumDim$-super Gabor system in $L^{2}(G\times\Z_{\NumDim})$ generated by the functions $\mvfun{g}{}{j}$, $j\in\Z_{\NumWin}$ and with time-frequency shifts along $\Lambda$ is a Bessel system with bound $B$,
\begin{align*} \sum_{j\in \Z_{\NumWin}} \int_{\Lambda} \Big\vert \sum_{k\in\Z_{\NumDim}} \big\langle \vvfun{f}{k} , \pi(\lambda) \mvfun{g}{k}{j}\big\rangle \Big\vert^{2} \, d\mu_{\Lambda}(\lambda) \le B \, \Vert f \Vert_{2}^{2} \ \ \textnormal{for all} \ f\in L^{2}(G\times\Z_{\NumDim}). \end{align*}
\item[(ii)] The $\NumDim$-multi-window $\NumWin$-super Gabor system in $L^{2}(G\times\Z_{\NumWin})$ generated by the functions $\mvfun{g}{k}{}$, $k\in\Z_{\NumDim}$ and with time-frequency shifts along $\Lambda^{\circ}$ is a Bessel system with bound $B$,
\begin{align*} \sum_{k\in \Z_{\NumDim}} \int_{\Lambda^{\circ}} \Big\vert \sum_{j\in\Z_{\NumWin}} \big\langle \vvfun{f}{j} , \pi(\lambda^{\circ}) \mvfun{g}{k}{j}\big\rangle \Big\vert^{2} \, d\mu_{\Lambda^{\circ}}(\lambda^{\circ}) \le B \, \Vert f \Vert_{2}^{2} \ \ \textnormal{for all} \ f\in L^{2}(G\times\Z_{\NumWin}). \end{align*}
\end{enumerate}
\end{proposition}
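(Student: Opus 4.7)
The plan is to recast both Bessel conditions as operator-norm inequalities on the corresponding analysis operators, and to deduce their equivalence from Rieffel's identification of the left- and right-module norms on the Heisenberg equivalence bimodule. Concretely, (i) is equivalent to $\|C^{\La}_g\|^{2}_{\textnormal{op}}\le B$ for
\[ C^{\La}_g : L^{2}(G\times\Z_{\NumDim}) \to L^{2}(\La\times\Z_{\NumWin}),\quad (C^{\La}_g f)(\la,j) = \sum_{k\in\Z_{\NumDim}}\langle \vvfun{f}{k},\pi(\la)\mvfun{g}{k}{j}\rangle, \]
and (ii) is the analogous statement for the transposed operator $C^{\Lac}_g:L^{2}(G\times\Z_{\NumWin})\to L^{2}(\Lac\times\Z_{\NumDim})$ in which $\NumDim$ and $\NumWin$ swap roles and $\La$ is replaced by~$\Lac$. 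The optimal Bessel bounds on the two sides are thus exactly $\|(C^{\La}_g)^{*}C^{\La}_g\|_{\textnormal{op}}$ and $\|(C^{\Lac}_g)^{*}C^{\Lac}_g\|_{\textnormal{op}}$, and the proposition amounts to equality of these two numbers.

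I would first prove this under the additional assumption $g\in \SO(G\times\Z_{\NumDim}\times\Z_{\NumWin})$. Here the frame-type operators $(C^{\La}_g)^{*}C^{\La}_g$ and $(C^{\Lac}_g)^{*}C^{\Lac}_g$ are precisely the diagonal blocks of the matrix-valued inner products $\mlhs{g}{g}$ and $\mrhs{g}{g}$ introduced in Section~\ref{sec:module-mws}. The associativity relation~\eqref{eq:associativity-for-mws} realizes them as two conjugate descriptions of the same endomorphism of the Heisenberg module, so Rieffel's module-norm identity $\|g\|_{\La}=\|g\|_{\Lac}$ from \cite[Proposition~3.1]{ri79-1}, in the matrix-valued formulation of Remark~\ref{rem:besselbound-mws-module}, forces their $L^{2}$-operator norms to agree. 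This gives $\|C^{\La}_g\|^{2}_{\textnormal{op}}=\|C^{\Lac}_g\|^{2}_{\textnormal{op}}$ for $g\in \SO$, and hence the equivalence of (i) and (ii) with the same constant $B$.

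To extend the statement to $g\in L^{2}$, I would use approximation. Choose $g_{m}\in \SO(G\times\Z_{\NumDim}\times\Z_{\NumWin})$ with $g_{m}\to g$ in $L^{2}$; for any $f\in \SO(G\times\Z_{\NumDim})$, Lemma~\ref{le:SO-properties}(viii) together with Cauchy--Schwarz yields pointwise convergence of $(C^{\La}_{g_{m}}f)(\la,j)\to (C^{\La}_{g}f)(\la,j)$, and Fatou's lemma gives the lower-semicontinuity inequality $\|C^{\La}_{g}f\|_{2}^{2}\le\liminf_{m}\|C^{\La}_{g_{m}}f\|_{2}^{2}$ for the Bessel sums on both sides. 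Combined with the $\SO$-case applied to $g_{m}$, this transfers the bound~$B$ from~(i) to~(ii).

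The main obstacle is precisely this last density step, because Bessel constants need not be continuous under $L^{2}$-perturbation of the generator: one must choose the approximating sequence so that its Bessel bounds on \emph{both} sides are actually dominated by those of $g$. A convenient way is to truncate in the time-frequency domain by a non-negative cutoff $\psi\in \SO(G\times\ghat)$ (or to spectrally truncate via the inverse-closed representation of $\lmodule$ inside $C^{*}(\La,\cocy)$), producing $g_{m}$ with bounds sandwiched by those of $g$; the $\SO$-result then propagates to all of $L^{2}$ and the equivalence of (i) and (ii) with the same sharp constant $B$ follows.
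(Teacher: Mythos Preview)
Your reduction to the equality of operator norms is correct, and the $\SO$-case via Rieffel's module-norm identity $\|g\|_{\La}=\|g\|_{\Lac}$ is exactly the argument the paper alludes to in Lemma~\ref{le:module-norm-and-bessel} and Remarks~\ref{rem:bessel-duality},~\ref{rem:besselbound-mws-module}. So far so good.

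The genuine gap is the density step, and you have identified but not closed it. Your proposed fix---``truncate in the time-frequency domain by a non-negative cutoff $\psi\in\SO(G\times\ghat)$'' or ``spectrally truncate via the inverse-closed representation of $\lmodule$''---is not a proof. You have not said how $g_{m}$ is actually produced from $g$ and $\psi$, and there is no reason to expect that a localization-type regularization of $g$ yields $\|C^{\La}_{g_{m}}\|_{\textnormal{op}}\le\|C^{\La}_{g}\|_{\textnormal{op}}$. The Bessel bound is the operator norm of $f\mapsto\sum_{k}\langle f_{k},\pi(\cdot)g_{k,j}\rangle$, and smoothing $g$ in any of the usual ways (convolution, multiplication by a cutoff, applying a localization operator) does not in general decrease this norm. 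Spectral truncation of the frame operator $S_{g}$ does not obviously produce a new \emph{generator} at all. Without a concrete construction of $g_{m}\in\SO$ with $\|C^{\La}_{g_{m}}\|_{\textnormal{op}}\le B$, the Fatou argument on the $\Lac$-side has nothing to feed on.

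The paper avoids this problem entirely by never approximating $g$. Its proof (Section~\ref{sec:proof-duality}) works directly for $g\in L^{2}$: one fixes an auxiliary $h\in\SO(G)$ with $\|h\|_{2}=1$ and, for each $f\in\SO(G\times\Z_{\NumDim})$ and each $\chi\in G\times\ghat$, plugs the sequence $c(\lambda^{\circ},k)=\langle\pi(\chi)h,\pi(\lambda^{\circ})f_{k}\rangle$ into the synthesis inequality on the $\Lac$-side. The fundamental identity of Gabor analysis, Lemma~\ref{le:figa-suff-cond}(ii), applies because \emph{two} of the four functions involved (namely $h$ and $f$) lie in $\SO$, regardless of whether $g$ does. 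After an integration over $(G\times\ghat)/\Lac$ and an application of~\eqref{eq:STFT}, the synthesis bound on the $\Lac$-side becomes exactly the analysis bound on the $\La$-side, with the same constant $B$. The symmetry of the argument gives the converse.

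In short: your module-theoretic route is clean for $g\in\SO$ but does not extend to $L^{2}$ by the approximation you sketch; the paper's Janssen-type computation with $\SO$ \emph{test} functions handles general $g\in L^{2}$ in one stroke.
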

\noindent See Lemma \ref{le:module-norm-and-bessel}, Remark \ref{rem:bessel-duality}, and Remark \ref{rem:besselbound-mws-module} for references and comments on this result.

\begin{theorem}[Main result] \label{th:duality} Let $\Lambda$ be a closed subgroup in the time-frequency plane $G\times\ghat$ such that the quotient group $(G\times\ghat)/\Lambda$ is compact and let $g$ be a function in $L^{2}(G\times\Z_{\NumDim}\times\Z_{\NumWin})$ with $\NumDim<\infty$. The following statements are equivalent.
\begin{enumerate}
\item[(i)] The functions $\mvfun{g}{}{j}$, $j\in \Z_{\NumWin}$ generate an $\NumWin$-multi-window $\NumDim$-super Gabor frame for $L^{2}(G\times\Z_{\NumDim})$ with time-frequency shifts along $\Lambda$. Specifically, for all functions $f\in L^{2}(G\times\Z_{\NumDim})$, 
\[ A \, \Vert f \Vert_{2}^{2} \le \sum_{j\in \Z_{\NumWin}} \int_{\Lambda} \Big\vert \sum_{k\in\Z_{\NumDim}} \big\langle \vvfun{f}{k}, \pi(\lambda) \mvfun{g}{k}{j}\big\rangle \Big\vert^{2} \, d\mu_{\Lambda}(\lambda) \le B \, \Vert f \Vert_{2}^{2}.\]
\item[(ii)] The functions $\mvfun{g}{k}{}$, $k\in \Z_{\NumDim}$ generate a $\NumDim$-multi-window $\NumWin$-super Gabor Riesz sequence for $L^{2}(G\times\Z_{\NumWin})$ with time-frequency shifts along $\Lambda^{\circ}$. Specifically, for all sequences $c\in \ell^{2}(\Lambda^{\circ}\times\Z_{\NumDim})$,
\[ \frac{A}{\s(\Lambda)} \sum_{\substack{\lambda^{\circ}\in \Lambda^{\circ}\\k\in\Z_{\NumDim}}} \vert c(\lambda^{\circ},k)\vert^{2} \le \sum_{j\in\Z_{\NumWin}} \Big\Vert \frac{1}{\s(\Lambda)} \sum_{\substack{\lambda^{\circ}\in\Lambda^{\circ}\\k\in \Z_{\NumDim} }} c(\lambda^{\circ},k) \, \pi(\lambda^{\circ}) \mvfun{g}{k}{j} \, \Big\Vert_{2}^{2}\le \frac{B}{\s(\Lambda)} \sum_{\substack{\lambda^{\circ}\in \Lambda^{\circ}\\k\in\Z_{\NumDim}}} \vert c(\lambda^{\circ},k)\vert^{2} .\]
\end{enumerate}
\end{theorem}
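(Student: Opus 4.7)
The plan is to combine three ingredients already developed in the paper: the Bessel duality (Proposition~\ref{pr:bessel-duality}), the fundamental identity of Gabor analysis (FIGA) in its multi-window super form from Section~\ref{sec:figa-wex-raz}, and the two characterizations of frames and Riesz sequences through the existence of a biorthogonal/dual Bessel system (Lemmas~\ref{le:dual-frame-char} and \ref{le:dual-riesz-char}). The argument splits naturally into a boundedness part (the upper estimates) and an invertibility part (the lower estimates), with the latter being where the roles of the parameters $n$ and $d$ get swapped by FIGA.

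For the upper bounds I would invoke Proposition~\ref{pr:bessel-duality}: the Bessel constant $B$ for $\{\pi(\lambda)g_{\bullet,j}\}_{\lambda\in\Lambda,j\in\Z_n}$ acting on $L^2(G\times\Z_d)$ coincides with the Bessel constant for $\{\pi(\lambda^\circ)g_{k,\bullet}\}_{\lambda^\circ\in\Lambda^\circ,k\in\Z_d}$ acting on $L^2(G\times\Z_n)$ provided $\Lambda^\circ$ carries the orthogonal measure. Since $(G\times\ghat)/\Lambda$ is compact, Remark~\ref{rem:orth-measure-for-cocompact} identifies that orthogonal measure with $s(\Lambda)^{-1}$ times counting measure on $\Lambda^\circ$, and passing to the counting normalization used in (ii) turns the upper bound $B$ into $B/s(\Lambda)$. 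An identical calculation applied to any dual generator (once it is in hand) transfers $A$ to $A/s(\Lambda)$.

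The core of the proof is an equivalence between the reconstruction formula of (i) and the biorthonormality hypothesis of Lemma~\ref{le:dual-riesz-char}. Assuming (i), Lemma~\ref{le:frame} produces a canonical dual $h\in L^2(G\times\Z_d\times\Z_n)$ which is Bessel and yields
\[
f_k=\sum_{j\in\Z_n}\int_\Lambda\Big(\sum_{k'\in\Z_d}\langle f_{k'},\pi(\lambda)g_{k',j}\rangle\Big)\pi(\lambda)h_{k,j}\,d\lambda,\qquad f\in L^2(G\times\Z_d).
\]
Applying FIGA to the bilinear form obtained by pairing both sides with a test function converts the $\Lambda$-integral into an absolutely convergent sum over the discrete $\Lambda^\circ$ weighted by $s(\Lambda)^{-1}$; reading off coefficients produces the multi-window super Wexler--Raz relations
\[
\frac{1}{s(\Lambda)}\sum_{j\in\Z_n}\langle h_{k',j},\pi(\lambda^\circ)g_{k,j}\rangle=\delta_{k,k'}\,\delta_{\lambda^\circ,0},\qquad \lambda^\circ\in\Lambda^\circ,\ k,k'\in\Z_d.
\]
This is exactly \eqref{eq:def:biorth-multi-super} for the $d$-multi-window $n$-super Bessel systems generated by $\{g_{k,\bullet}\}_{k\in\Z_d}$ and $\{s(\Lambda)\,h_{k,\bullet}\}_{k\in\Z_d}$ with respect to $\Lambda^\circ$, so Lemma~\ref{le:dual-riesz-char} delivers (ii). For the converse, the canonical biorthogonal dual furnished by Lemma~\ref{le:riesz-sequence} together with FIGA (run in the opposite direction) supplies a reproducing formula on $\Lambda$, whence Lemma~\ref{le:dual-frame-char} gives the frame property (i). Tracking optimal constants through this loop transfers $A/s(\Lambda),B/s(\Lambda)$ back to $A,B$.

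The main obstacle I anticipate is the meticulous bookkeeping of the scaling factor $s(\Lambda)$ between the Haar measure on $\Lambda$ and the counting normalization on $\Lambda^\circ$, and a careful verification that FIGA applies to the particular bilinear expressions above. A clean route is first to establish FIGA for test functions in $\SO$ using the Poisson formula of Lemma~\ref{le:s0-properties}(ix), and then use the Bessel boundedness from the first paragraph together with the density of $\SO$ in $L^2$ (and of finitely supported sequences in $\ell^2$) to upgrade all identities from $\SO$-valued objects to the full $L^2$ and $\ell^2$ inequalities required by the statement.
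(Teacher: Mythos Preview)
Your proposal is correct but follows a genuinely different route from the paper's proof. The paper argues both implications by a direct computational method in the style of Janssen: for (ii)$\Rightarrow$(i) it inserts into the Riesz inequalities a specific sequence $c(\lambda^\circ,k)=\langle\pi(\chi)h,\pi(\lambda^\circ)f_k\rangle$ built from an auxiliary $h\in\SO(G)$ and an arbitrary $f\in\SO(G\times\Z_d)$, applies the fundamental identity in the scalar case $d=n=1$, integrates the resulting $\chi$-dependent inequalities over $(G\times\ghat)/\Lambda^\circ$, and uses the orthogonality relation~\eqref{eq:STFT} to arrive at the frame inequalities with the exact constants. For (i)$\Rightarrow$(ii) it inserts into the frame inequalities the functions $f^m_k=d^{-1/2}\sum_{\lambda^\circ,l}\overline{c(\lambda^\circ,k)}\,\pi(\lambda^\circ)^*S^{-1/2}g_{l,m}$ and uses the Wexler--Raz relations for the Parseval system $S^{-1/2}g$ to collapse the sums. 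In this way the paper obtains Proposition~\ref{pr:bessel-duality} and Lemma~\ref{le:nec-cond}(ii) as by-products of the same calculation.

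Your approach is more modular: you use Proposition~\ref{pr:bessel-duality} for the upper bounds, pass from the frame property to a canonical dual via Lemma~\ref{le:frame}, transport the dual pair through Theorem~\ref{th:wex-raz} to biorthogonality on $\Lambda^\circ$, and invoke Lemma~\ref{le:dual-riesz-char}; and symmetrically for the converse via Lemma~\ref{le:riesz-sequence} and Lemma~\ref{le:dual-frame-char}. This is perfectly valid and arguably more conceptual. Two remarks are worth making. First, the constant bookkeeping does go through, but it rests on the standard inequality that the lower Riesz bound of a Bessel system is at least the reciprocal of the Bessel bound of any biorthogonal Bessel system (obtained from $\|c\|^2=\langle D_gc,D_hc\rangle$); you should make this step explicit, together with the fact that the canonical dual $h=S_g^{-1}g$ has Bessel bound $1/A$ whenever $A$ is \emph{any} lower frame bound of $g$. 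Second, note a small sign issue in your scaling: the Wexler--Raz relation~\ref{th:wex-raz}(ii) carries the factor $s(\Lambda)$, so the biorthogonal partner of $g$ on $\Lambda^\circ$ in the sense of~\eqref{eq:def:biorth-multi-super} is $s(\Lambda)^{-1}h$, not $s(\Lambda)h$. What your route does not give for free is Lemma~\ref{le:nec-cond}(ii), which the paper extracts from the same computation; that is a genuine advantage of the paper's more hands-on argument.
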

 For references to the duality principle, please see the last paragraph of the introduction.
 
\begin{remark} In statement (i) of Proposition \ref{pr:bessel-duality} and Theorem \ref{th:duality} the multi-window Gabor system is generated by the functions $\{ \mvfun{g}{}{j} \, : \, j\in \Z_{\NumWin} \, \}$ in $L^{2}(G\times\Z_{\NumDim})$, where as in statement (ii) of either result the multi-window Gabor system is generated by the functions $\{ \mvfun{g}{k}{} \, : \, k\in \Z_{\NumDim} \, \}$ in $L^{2}(G\times\Z_{\NumWin})$. 
Hence the duality principle extends from the single window case with a duality between Gabor system generated by the subgroups $\Lambda$ and $\Lambda^{\circ}$ to a duality between the parameters $\NumWin$ and $\NumDim$. \end{remark}
\begin{remark}
The additional assumptions on $\Lambda$ and $d$ in Theorem \ref{th:duality} are no loss of generality: we know from Lemma \ref{le:nec-cond} that these assumptions are necessary for both statement (i) and (ii) to be true. The assumptions need not be put in Proposition \ref{pr:bessel-duality} because the Bessel property (the upper bound in \eqref{eq:frame-ineq} and \eqref{eq:Riesz-ineq}) does \emph{not} impose conditions on either $\Lambda$, $\NumDim$ or $\NumWin$.
\end{remark}


In order to prove the above results we need to build up some theory. In particular we need to establish facts about the \emph{fundamental identity of Gabor analysis} and to prove the \emph{Wexler-Raz biorthogonality relations}.

\subsection{The fundamental identity and the Wexler-Raz relations}
\label{sec:figa-wex-raz}

Proofs of the statements in this section are relayed to the Appendix.

The \emph{fundamental identity of} (multi-window super) \emph{Gabor analysis} relates the analysis and synthesis operator for Gabor systems generated by the subgroup $\Lambda$ and its adjoint subgroup, $\Lambda^{\circ}$ in the following way.
\begin{align}
& \sum_{j\in\Z_{\NumWin}} \int_{\Lambda} \Big( \sum_{k\in\Z_{\NumDim}} \big\langle \vvfun{f^{1}}{k}, \pi(\lambda) \mvfun{g}{k}{j}\big\rangle \Big) \, \Big( \sum_{l\in \Z_{\NumDim}} \big\langle \pi(\lambda) \mvfun{h}{l}{j},  \vvfun{f^{2}}{l} \big\rangle \Big)  \, d\mu_{\Lambda}(\lambda) \nonumber \\
& \hspace{2cm} = \int_{\Lambda^{\circ}} \sum_{ k,l\in\Z_{\NumDim} } \Big( \big\langle \pi(\lambda^{\circ}) \vvfun{f^{1}}{k},  \vvfun{f^{2}}{l} \big\rangle \,  \sum_{j\in\Z_{\NumWin}}\big\langle \mvfun{h}{l}{j}, \pi(\lambda^{\circ}) \mvfun{g}{k}{j} \big\rangle \Big) \, d\mu_{\Lambda^{\circ}}(\lambda^{\circ}). \label{eq:figa}
\end{align}
This equality is the result of an application of the symplectic Poisson formula (see Lemma \ref{le:s0-properties}(ix)) applied to a certain function (see Lemma \ref{le:janssen-function}). As such, we have to be careful about its validity as it does not hold for arbitrary functions $f^{1},f^{2}\in L^{2}(G\times\Z_{\NumDim})$, $g,h\in L^{2}(G\times\Z_{\NumDim}\times\Z_{\NumWin})$ and closed subgroups $\Lambda\subseteq G\times\ghat$. The following lemma states some sufficient conditions for \eqref{eq:figa} to hold.

\begin{lemma} \label{le:figa-suff-cond} Let $f^{1},f^{2}\in L^{2}(G\times\Z_{\NumDim})$, $g,h\in L^{2}(G\times\Z_{\NumDim}\times\Z_{\NumWin})$ and let $\Lambda$ be a closed subgroup in the time-frequency plane $G\times\ghat$. 
\begin{enumerate}
\item[(i)] If the multi-window super Gabor systems generated by $g$ and $h$ satisfy the upper frame inequality with respect to $\Lambda$ as stated in \eqref{eq:frame-ineq} and if the functions $f_{1},f_{2},g$ and $h$ are such that 
\begin{equation} \label{eq:cond-A} \int_{\Lambda^{\circ}} \Big\vert \sum_{\substack{ k,l\in\Z_{\NumDim} \\ j\in\Z_{\NumWin} }} \big\langle \mvfun{h}{l}{j}, \pi(\lambda^{\circ}) \mvfun{g}{k}{j} \big\rangle \, \big\langle \pi(\lambda^{\circ}) \vvfun{f^{1}}{k}, \vvfun{f^{2}}{l} \big\rangle \Big\vert \, d\mu_{\Lambda^{\circ}}(\lambda^{\circ}) < \infty,\end{equation}
then \eqref{eq:figa} holds.
\item[(ii)] If any two of the following conditions are satisfied, then \eqref{eq:figa} holds for any closed subgroup $\Lambda$. 
\begin{itemize}
\item [(a)] $\vvfun{f^{1}}{k} \in \SO(G)$ for all  $k\in\Z_{\NumDim}$ and $\sum_{k\in\Z_{\NumDim}} \Vert \vvfun{f^{1}}{k} \Vert_{\SO(G)}^{2} < \infty$,
\item [(b)] $\vvfun{f^{2}}{k} \in \SO(G)$ for all  $k\in\Z_{\NumDim}$ and $\sum_{k\in\Z_{\NumDim}} \Vert \vvfun{f^{2}}{k} \Vert_{\SO(G)}^{2} < \infty$,
\item[(c)] $\mvfun{g}{k}{j}\in \SO(G)$ for all $(k,j)\in\Z_{\NumDim}\times \Z_{\NumWin}$ and $\sum_{k\in\Z_{\NumDim}, j\in\Z_{\NumWin}} \Vert \mvfun{g}{k}{j}\Vert_{\SO}^{2} <\infty$,
\item[(d)] $\mvfun{h}{k}{j}\in \SO(G)$ for all $(k,j)\in\Z_{\NumDim}\times \Z_{\NumWin}$ and $\sum_{k\in\Z_{\NumDim}, j\in\Z_{\NumWin}} \Vert \mvfun{h}{k}{j}\Vert_{\SO}^{2} <\infty$.
\end{itemize} 
\end{enumerate}
\end{lemma}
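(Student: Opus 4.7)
The plan is to view the fundamental identity \eqref{eq:figa} as the symplectic Poisson summation formula of Lemma \ref{le:s0-properties}(ix) applied to a suitable phase-space function. I would consider
\[ F(\chi)=\sum_{j\in\Z_\NumWin}\Big(\sum_{k\in\Z_\NumDim}\langle\vvfun{f^{1}}{k},\pi(\chi)\mvfun{g}{k}{j}\rangle\Big)\Big(\sum_{l\in\Z_\NumDim}\langle\pi(\chi)\mvfun{h}{l}{j},\vvfun{f^{2}}{l}\rangle\Big),\quad\chi\in G\times\ghat,\]
whose integral over $\Lambda$ is the LHS of \eqref{eq:figa}. A direct computation with the symplectic Fourier transform, presumably the content of the forthcoming Lemma \ref{le:janssen-function}, identifies $\mathcal{F}_sF$ with the analogous double sum in which the roles of $(f^{1},f^{2})$ and $(g,h)$ are swapped, so that $\int_{\Lambda^\circ}\mathcal{F}_sF\,d\mu_{\Lambda^\circ}$ is precisely the RHS of \eqref{eq:figa}. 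Both parts of the lemma then reduce to verifying that the symplectic Poisson formula applies to $F$.

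For part (ii) I would establish that $F\in\SO(G\times\ghat)$ under any two of (a)--(d) and invoke Lemma \ref{le:s0-properties}(ix) directly. The main tool is Lemma \ref{le:s0-properties}(viii), which gives $\mathcal{V}_gf\in\SO(G\times\ghat)$ together with the multiplicative norm bound whenever $f,g\in\SO(G)$. The partial STFT $\sum_{k\in\Z_\NumDim}\langle\vvfun{f^{1}}{k},\pi(\chi)\mvfun{g}{k}{j}\rangle$ is a finite sum of such STFTs and lies in $\SO(G\times\ghat)$ as soon as (a) and (c) hold; by symmetry the second inner factor of $F$ is in $\SO(G\times\ghat)$ under (b)+(d). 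The pairs (a)+(b) and (c)+(d) are handled by passing to $\mathcal{F}_sF$, whose analogous factorization contains $(f^{1},f^{2})$-STFTs or $(g,h)$-STFTs, and then reusing Lemma \ref{le:s0-properties}(viii) together with the stability of $\SO$ under $\mathcal{F}_s$ (Lemma \ref{le:s0-properties}(iii)). The two mixed pairs (a)+(d) and (b)+(c) are the delicate ones; for these I would rewrite the product of STFTs inside $F$ as a single STFT on $G\times G$ of the tensor products $f^{1}\otimes\overline{f^{2}}$ and $g\otimes\overline{h}$, again reducing to Lemma \ref{le:s0-properties}(viii) on the doubled group. Closure of $\SO$ under pointwise multiplication (Lemma \ref{le:s0-properties}(v)) and the square-summability assumptions in (a)--(d) control the finite or indexed sums over $k,l,j$.

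Part (i) follows by density and continuity. Under the Bessel upper bound on $g$ and $h$, Cauchy--Schwarz shows that the LHS of \eqref{eq:figa} defines a bounded sesquilinear form in $(f^{1},f^{2})\in L^{2}(G\times\Z_\NumDim)\times L^{2}(G\times\Z_\NumDim)$, and assumption \eqref{eq:cond-A} does the same for the RHS. Approximating $f^{1},f^{2}$ by sequences in $\SO(G\times\Z_\NumDim)$ (density follows from Lemma \ref{le:s0-properties}(iv) together with the density of $\SO$ in $L^{2}$) and invoking part (ii) with conditions (a) and (b), the identity \eqref{eq:figa} holds for the approximants, and continuity in $L^{2}\times L^{2}$ passes it to the limit. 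The principal obstacle throughout is the $\SO$-membership step in part (ii) for the mixed-pair cases, where no single inner factor of $F$ has both of its arguments in $\SO$; the tensor-STFT reduction to $G\times G$ is the essential technical manoeuvre that makes the ``any two of (a)--(d)'' statement work uniformly.
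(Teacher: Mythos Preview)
Your treatment of part (ii) is in the right spirit and matches the paper's approach: one shows that each summand $\psi_{j,k,l}(\chi)=\langle\vvfun{f^{1}}{k},\pi(\chi)\mvfun{g}{k}{j}\rangle\,\langle\pi(\chi)\mvfun{h}{l}{j},\vvfun{f^{2}}{l}\rangle$ lies in $\SO(G\times\ghat)$ whenever any two of the four functions are in $\SO$ (the paper simply cites the appendix of \cite{jale16-2} for this; your tensor-STFT reduction on $G\times G$ is exactly the argument hidden there), and then invokes the Poisson formula termwise and sums, using the square-summability hypotheses to control the sum.

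Your approach to part (i), however, has a genuine gap. You propose to approximate $f^{1},f^{2}$ in $L^{2}$ by $\SO$-functions, apply part (ii) with conditions (a)+(b), and pass to the limit. The LHS of \eqref{eq:figa} is indeed continuous in $(f^{1},f^{2})\in L^{2}\times L^{2}$ by the Bessel bound and Cauchy--Schwarz. But \eqref{eq:cond-A} is only a finiteness statement for the \emph{specific} $f^{1},f^{2}$; it does not provide a uniform or continuous bound on the RHS as $(f^{1},f^{2})$ varies in $L^{2}$. To pass the limit through the $\Lambda^{\circ}$-integral by dominated convergence you would need something like $\int_{\Lambda^{\circ}}\sum_{k,l,j}\bigl|\langle\mvfun{h}{l}{j},\pi(\lambda^{\circ})\mvfun{g}{k}{j}\rangle\bigr|\,d\mu_{\Lambda^{\circ}}<\infty$, which is strictly stronger than \eqref{eq:cond-A} and is not assumed. (One cannot appeal to the Bessel duality of Proposition~\ref{pr:bessel-duality} either, since its proof uses the present lemma.)

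The paper avoids this difficulty entirely. It works with the periodization $\varphi(\chi)=\langle C_{g,\Lambda}\pi(\chi)f^{1},C_{h,\Lambda}\pi(\chi)f^{2}\rangle$ of your $F$, which by Lemma~\ref{le:janssen-function} lies in $L^{1}\bigl((G\times\ghat)/\Lambda\bigr)$ with $\mathcal{F}_{s}\varphi=\mathcal{F}_{s}F\big|_{\Lambda^{\circ}}$. The Bessel hypothesis on $g,h$ together with the $L^{2}$-continuity of $\chi\mapsto\pi(\chi)$ makes $\varphi$ continuous, and \eqref{eq:cond-A} says precisely that $\mathcal{F}_{s}\varphi\in L^{1}(\Lambda^{\circ})$. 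Thus the pointwise Fourier inversion formula applies to $\varphi$, and evaluating at $0$ gives \eqref{eq:figa} directly---no approximation is needed.
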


\begin{lemma} \label{le:s0-implies-bessel} If $g$ and $h$ are functions in $L^{2}(G\times\Z_{\NumDim}\times\Z_{\NumWin})$ such that 
\[ \mvfun{g}{}{j},\mvfun{h}{}{j}\in \SO(G\times\Z_{\NumDim}) \ \text{for all} \ \ j\in\Z_{\NumWin} \]
and $\sum_{j\in\Z_{\NumWin} } \Vert \mvfun{g}{}{j} \Vert_{\SO}^{2}$, $\sum_{j\in\Z_{\NumWin} } \Vert \mvfun{h}{}{j} \Vert_{\SO}^{2}$ are finite, 
then, for any closed subgroup $\Lambda$ in $G\times\ghat$,  the (mixed) frame operator 
\[ (S_{g,h,\La} f)(\cdot, k) = \sum_{j\in\Z_{\NumWin}} \int_{\La} \Big( \sum_{k'\in\Z_{\NumDim}} \big\langle \vvfun{f}{k'}, \pi(\la) \mvfun{g}{k'}{j}\big\rangle \Big) \, \pi(\la) \mvfun{h}{k}{j}, \ k\in\Z_{\NumDim} \]
is a linear and bounded operator on both $L^{2}(G\times\Z_{\NumWin})$ and on $\SO(G\times\Z_{\NumWin})$. 
\end{lemma}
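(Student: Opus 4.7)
The plan is to use the fundamental identity of Gabor analysis (Lemma~\ref{le:figa-suff-cond}) to recast $S_{g,h,\La}$ in its Janssen form as an average over the adjoint subgroup $\La^\circ$, and then exploit that $\SO(G)$ is preserved by time-frequency shifts and that cross-STFTs of $\SO$-functions restrict to $L^1$ on $\La^\circ$.

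First I would verify the hypotheses of the FIGA. By Lemma~\ref{le:SO-properties}(x) the assumptions $\mvfun{g}{}{j},\mvfun{h}{}{j}\in\SO(G\times\Z_{\NumDim})$ together with $\sum_{j}\|\mvfun{g}{}{j}\|_{\SO}^2,\sum_{j}\|\mvfun{h}{}{j}\|_{\SO}^2<\infty$ yield $\mvfun{g}{k}{j},\mvfun{h}{k}{j}\in\SO(G)$ with $\sum_{k,j}\|\mvfun{g}{k}{j}\|_{\SO}^2\lesssim\sum_{j}\|\mvfun{g}{}{j}\|_{\SO}^2<\infty$ (and similarly for $h$), so conditions (c) and (d) of Lemma~\ref{le:figa-suff-cond}(ii) hold. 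Applied to $f^1,f^2\in\SO(G\times\Z_{\NumDim})$, the FIGA gives
\begin{equation*}
\langle S_{g,h,\La}f^1,f^2\rangle=\int_{\La^\circ}\sum_{k,l,j}\langle\mvfun{h}{l}{j},\pi(\la^\circ)\mvfun{g}{k}{j}\rangle\,\langle\pi(\la^\circ)\vvfun{f^1}{k},\vvfun{f^2}{l}\rangle\,d\la^\circ.
\end{equation*}
Since $\pi(\la^\circ)$ is unitary on $L^2$, Cauchy--Schwarz bounds the rightmost inner product by $\|\vvfun{f^1}{k}\|_2\|\vvfun{f^2}{l}\|_2\le\|f^1\|_2\|f^2\|_2$. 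By Lemma~\ref{le:SO-properties}(viii,vi,iv) the function $\mathcal{V}_{\mvfun{g}{k}{j}}\mvfun{h}{l}{j}$ lies in $\SO(G\times\ghat)$ with norm $\lesssim\|\mvfun{g}{k}{j}\|_{\SO}\|\mvfun{h}{l}{j}\|_{\SO}$, so its restriction to $\La^\circ$ has $L^1(\La^\circ)$-norm controlled by the same quantity. Summing over $k,l,j$ and applying Cauchy--Schwarz in $j$ together with the norm equivalence of Lemma~\ref{le:SO-properties}(x) then gives $|\langle S_{g,h,\La}f^1,f^2\rangle|\le C\|f^1\|_2\|f^2\|_2$, and density of $\SO(G\times\Z_{\NumDim})$ in $L^2$ extends $S_{g,h,\La}$ to a bounded operator on $L^2(G\times\Z_{\NumDim})$.

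For $\SO$-boundedness, I would read the scalar Janssen identity in operator form as the $\SO(G)$-valued integral
\begin{equation*}
(S_{g,h,\La}f)(\,\cdot\,,l)=\int_{\La^\circ}\sum_{k,j}\overline{\langle\pi(\la^\circ)\mvfun{g}{k}{j},\mvfun{h}{l}{j}\rangle}\,\pi(\la^\circ)\vvfun{f}{k}\,d\la^\circ.
\end{equation*}
Using $\|\pi(\la^\circ)\vvfun{f}{k}\|_{\SO}=\|\vvfun{f}{k}\|_{\SO}$ (Lemma~\ref{le:SO-properties}(iii)) and the same $L^1(\La^\circ)$-estimate on the Gram-type coefficients, I obtain $\|(S_{g,h,\La}f)(\,\cdot\,,l)\|_{\SO}\lesssim\sum_{k,j}\|\vvfun{f}{k}\|_{\SO}\|\mvfun{g}{k}{j}\|_{\SO}\|\mvfun{h}{l}{j}\|_{\SO}$, and summing in $l$ with one more Cauchy--Schwarz in $j$ plus the norm equivalence of Lemma~\ref{le:SO-properties}(x) produces $\|S_{g,h,\La}f\|_{\SO}\le C'\|f\|_{\SO}$ for a constant depending only on $g$ and $h$.

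The main technical subtlety will be to justify rigorously the operator/Bochner-integral form of the Janssen representation rather than merely its scalar quadratic-form incarnation supplied by Lemma~\ref{le:figa-suff-cond}. The absolute convergence of the integrand in $\SO(G)$ --- guaranteed by the $\SO$-isometry of $\pi(\la^\circ)$ and the $L^1(\La^\circ)$-summability of the coefficients --- makes the integral well-defined as a Bochner integral in $\SO(G)$, and pairing it with an arbitrary $f^2\in\SO$ recovers the scalar FIGA identity, which uniquely identifies it with $S_{g,h,\La}f$ thanks to the $L^2$-boundedness established in the previous step.
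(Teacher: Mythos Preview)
Your proof is correct and follows essentially the same route as the paper: apply the fundamental identity (Lemma~\ref{le:figa-suff-cond}) to pass to the Janssen representation over $\La^\circ$, then use that time-frequency shifts are isometries on both $L^2$ and $\SO$ together with the chain of $\SO$-estimates (Lemma~\ref{le:SO-properties}(iv),(vi),(viii)) to bound the coefficient sequence in $L^1(\La^\circ)$, finishing with Cauchy--Schwarz in $j$. The only cosmetic differences are that the paper first reduces to the pure multi-window case $d=1$ and then invokes that super systems are a special case, whereas you carry the indices $k,l$ throughout; and the paper writes the Janssen form directly as an operator identity and takes norms, while you first pass through the quadratic form and then justify the Bochner-integral operator version separately --- your extra care there is warranted but not strictly needed, since the absolute $\SO$-convergence you establish already identifies the integral.
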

\begin{remark} Lemma \ref{le:s0-implies-bessel} implies that for any choice of $\La$ and any $g\in \SO(G\times\Z_{\NumDim}\times\Z_{\NumWin})$ the module norm (see Lemma \ref{le:module-norm-and-bessel} and Remark \ref{rem:besselbound-mws-module}) $\Vert g \Vert_{\La}$ is finite. 
\end{remark}

Our understanding of the fundamental identity of Gabor analysis allows us to easily show the Wexler-Raz relations that characterize pairs of functions $g,h\in L^{2}(G\times\Z_{\NumDim}\times\Z_{\NumWin})$ that generate dual $\NumWin$-multi-window $\NumDim$-super Gabor frames for $L^{2}(G\times\Z_{\NumDim})$.

\begin{theorem} \label{th:wex-raz} Let $\Lambda$ be a closed subgroup in the time-frequency plane $G\times\ghat$ such that $(G\times\ghat)/\Lambda$ is compact, $\NumDim<\infty$ and let $g,h\in L^{2}(G\times\Z_{\NumDim}\times\Z_{\NumWin})$ be such that the respective multi-window super Gabor systems satisfy the upper frame inequality. Then the following statements are equivalent.
\begin{enumerate}
\item[(i)] For all functions $f^{1},f^{2}\in L^{2}(G\times\Z_{\NumDim})$ 
 \[ \langle f^{1},f^{2}\rangle = \sum_{j\in\Z_{\NumWin}} \int_{\Lambda} \Big( \sum_{k\in\Z_{\NumDim}} \big\langle \vvfun{f^{1}}{k}, \pi(\lambda) \mvfun{g}{k}{j}\big\rangle \Big) \, \Big( \sum_{l\in \Z_{\NumDim}} \big\langle \pi(\lambda) \mvfun{h}{l}{j}, \vvfun{f^{2}}{l} \big\rangle \Big)  \, d\mu_{\Lambda}(\lambda),\]
i.e., the functions $g$ and $h$ generate dual multi-window super Gabor frames for $L^{2}(G\times\Z_{\NumDim}\times\Z_{\NumWin})$.
\item[(ii)] For all $\lambda^{\circ}\in \Lambda^{\circ}$ and $k,l\in\Z_{\NumDim}$ the functions $g$ and $h$ satisfy
\[ \sum_{j\in \Z_{\NumWin}} \big\langle \mvfun{h}{l}{j} , \pi(\lambda^{\circ}) \mvfun{g}{k}{j} \big\rangle = \begin{cases} \s(\Lambda) & \text{if } \lambda^{\circ} = 0, \ k= l, \\ 0 & \text{otherwise}.\end{cases}\]
\end{enumerate}
\end{theorem}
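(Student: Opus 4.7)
My plan is to deduce both implications from the fundamental identity of Gabor analysis \eqref{eq:figa}. Since $(G\times\ghat)/\Lambda$ is compact, $\Lambda^{\circ}$ is discrete and, by Remark \ref{rem:orth-measure-for-cocompact}, the orthogonal measure on $\Lambda^{\circ}$ is the counting measure rescaled by $\s(\Lambda)^{-1}$. The assumed Bessel bounds together with testing against $\SO$-valued vectors will make the integrability hypothesis \eqref{eq:cond-A} easy to verify, so Lemma \ref{le:figa-suff-cond}(i) is available throughout. Writing $a(\lambda^{\circ},k,l) := \sum_{j\in\Z_{\NumWin}} \langle \mvfun{h}{l}{j},\pi(\lambda^{\circ})\mvfun{g}{k}{j}\rangle$, I note that a single application of Cauchy-Schwarz in $j$ yields $|a(\lambda^{\circ},k,l)|\le \big(\sum_{j}\|\mvfun{h}{l}{j}\|_{2}^{2}\big)^{1/2}\big(\sum_{j}\|\mvfun{g}{k}{j}\|_{2}^{2}\big)^{1/2}$, so $a(\cdot,k,l)\in\ell^{\infty}(\Lambda^{\circ})$ for each pair $(k,l)$.

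For (ii)$\Rightarrow$(i), I would substitute the biorthogonality relation directly into the right-hand side of \eqref{eq:figa}, rewritten with the orthogonal counting measure. Only the term with $\lambda^{\circ}=0$ and $k=l$ survives, giving
\[ \frac{1}{\s(\Lambda)}\,\s(\Lambda)\,\sum_{k\in\Z_{\NumDim}}\big\langle\vvfun{f^{1}}{k},\vvfun{f^{2}}{k}\big\rangle = \langle f^{1},f^{2}\rangle. \]
Combined with \eqref{eq:figa} this is precisely the reconstruction formula in (i), initially for $f^{1},f^{2}$ with $\SO$-components and then, by the Bessel property, extended by density to all of $L^{2}(G\times\Z_{\NumDim})$.

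For (i)$\Rightarrow$(ii), combining (i) with \eqref{eq:figa} and the discrete structure of $\Lambda^{\circ}$ yields
\[ \langle f^{1},f^{2}\rangle = \frac{1}{\s(\Lambda)}\sum_{\lambda^{\circ}\in\Lambda^{\circ}}\sum_{k,l\in\Z_{\NumDim}} a(\lambda^{\circ},k,l)\,\big\langle\pi(\lambda^{\circ})\vvfun{f^{1}}{k},\vvfun{f^{2}}{l}\big\rangle.\]
For each fixed $(k_{0},l_{0})\in\Z_{\NumDim}\times\Z_{\NumDim}$ I would test with $\vvfun{f^{1}}{k}=\delta_{k,k_{0}}\phi_{1}$ and $\vvfun{f^{2}}{l}=\delta_{l,l_{0}}\phi_{2}$ where $\phi_{1},\phi_{2}\in\SO(G)$ are arbitrary; this isolates a single $(k_{0},l_{0})$-slice and gives
\[ \sum_{\lambda^{\circ}\in\Lambda^{\circ}} c(\lambda^{\circ})\,\big\langle\pi(\lambda^{\circ})\phi_{1},\phi_{2}\big\rangle = 0, \qquad c(\lambda^{\circ}) := \tfrac{1}{\s(\Lambda)} a(\lambda^{\circ},k_{0},l_{0}) - \delta_{k_{0},l_{0}}\delta_{\lambda^{\circ},0}.\]
Since $c\in\ell^{\infty}(\Lambda^{\circ})$ by the estimate above, this is exactly the hypothesis of the faithfulness lemma stated just before Theorem \ref{th:Rieffel} (applied to $\Lambda^{\circ}$ with cocycle $\overline{\cocy}$), which forces $c\equiv 0$. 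This is (ii). The only technical obstacle I foresee is bookkeeping the $\s(\Lambda)$ scaling from the orthogonal measure and checking \eqref{eq:cond-A} for the test vectors: the latter holds because $\phi_{1},\phi_{2}\in\SO(G)$ implies $\mathcal{V}_{\phi_{2}}\phi_{1}\in\SO(G\times\ghat)$ restricts to $\ell^{1}(\Lambda^{\circ})$ by Lemma \ref{le:s0-properties}(vii), while $a(\cdot,k_{0},l_{0})$ is uniformly bounded.
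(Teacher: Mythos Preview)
Your proof is correct. For (ii)$\Rightarrow$(i) your argument coincides with the paper's, except that your restriction to $\SO$-valued test vectors is unnecessary: once (ii) holds, the integrand in \eqref{eq:cond-A} is supported at the single point $\lambda^{\circ}=0$, so Lemma~\ref{le:figa-suff-cond}(i) applies directly for all $f^{1},f^{2}\in L^{2}(G\times\Z_{\NumDim})$ and no density argument is needed.

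For (i)$\Rightarrow$(ii) the paper takes a slightly different route. Rather than invoking the faithfulness lemma, it observes that (i) forces the periodized function $\varphi$ of Lemma~\ref{le:janssen-function} to be the constant $\langle f^{1},f^{2}\rangle$ on the compact quotient $(G\times\ghat)/\Lambda$; the symplectic Fourier transform of this constant is $\s(\Lambda)\langle f^{1},f^{2}\rangle\,\delta_{\lambda^{\circ},0}$. Comparing with the explicit formula for $\mathcal{F}_{s}\varphi$ in Lemma~\ref{le:janssen-function} gives, for \emph{each} $\lambda^{\circ}$ separately, the identity $\sum_{k,l}a(\lambda^{\circ},k,l)\langle\pi(\lambda^{\circ})\vvfun{f^{1}}{k},\vvfun{f^{2}}{l}\rangle=\s(\Lambda)\langle f^{1},f^{2}\rangle\,\delta_{\lambda^{\circ},0}$, and then varying $f^{1},f^{2}$ isolates each $(k,l)$. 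Your approach bundles all $\lambda^{\circ}$ together and separates them via faithfulness; the paper separates them up front via the Fourier transform on the compact quotient. Both work, and both ultimately rest on Fourier uniqueness; the paper's packaging just avoids the extra citation to the faithfulness lemma.
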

\begin{remark} If the equality in Theorem \ref{th:wex-raz}(i) should only hold for functions in $\SO(G\times\Z_{\NumDim})$ rather than all of $L^{2}(G\times\Z_{\NumDim})$, then the assumption that the functions $g$ and $h$ should satisfy the upper frame inequality can be omitted from the theorem.
\end{remark}

\subsection{Proof of the duality principles} 
\label{sec:proof-duality}

With the fundamental identity and the Wexler-Raz relations in place we can now prove the duality principle for multi-window super Gabor systems for $L^{2}(G\times\Z_{\NumDim})$.

The proof becomes more transparent if $\NumDim$ and/or $\NumWin$ is equal to $1$.

\begin{proof}[Proof of Proposition \ref{pr:bessel-duality}, Theorem \ref{th:duality} and Lemma \ref{le:nec-cond}(i)]
We begin with the proof that the statement in Theorem \ref{th:duality}(ii) implies Theorem \ref{th:duality}(i). At the same time we prove Proposition \ref{pr:bessel-duality} and Lemma \ref{le:nec-cond}(ii). Let therefore $\Lambda^{\circ}$ be a closed (not-necessarily discrete) subgroup in the time-frequency plane $G\times\ghat$ and $\NumDim,\NumWin\in \N\cup\{\infty\}$. Assume that $g\in L^{2}(G\times\Z_{\NumDim}\times\Z_{\NumWin})$ is such that the functions $\mvfun{g}{k}{}$, $k\in\Z_{\NumDim}$ generate a $\NumDim$-multi-window $\NumWin$-super Gabor system with respect to time-frequency shifts along $\Lambda^{\circ}$ that satisfies \eqref{eq:riesz}. That is, we assume that there exists constants $A,B>0$ such that, for all $c\in L^{2}(\Lambda^{\circ}\times\Z_{\NumDim})$,
\begin{align} 
& A \sum_{k\in\Z_{\NumDim}}\int_{\Lambda^{\circ}} \vert c(\lambda^{\circ},k) \vert^{2} \, d\mu_{\Lambda^{\circ}}(\lambda^{\circ}) \nonumber \\
& \hspace{2cm} \le \sum_{j\in\Z_{\NumWin}} \Big\Vert \sum_{k\in\Z_{\NumDim}} \int_{\Lambda^{\circ}} c(\lambda^{\circ},k) \, \pi(\lambda^{\circ}) \mvfun{g}{k}{j} \, d\mu_{\Lambda^{\circ}}(\lambda^{\circ}) \, \Big\Vert_{2}^{2} \label{eq:1204b}\\
& \hspace{4cm} \le B \sum_{k\in\Z_{\NumDim}}\int_{\Lambda^{\circ}} \vert c(\lambda^{\circ},k) \vert^{2} \, d\mu_{\Lambda^{\circ}}(\lambda^{\circ}).\nonumber \end{align}
Following the idea of Janssen \cite{ja95} (essentially equations (3.4)-(3.6) in the proof of \cite[Theorem 3.1]{ja95}) we construct a function $c\in L^{2}(\Lambda^{\circ}\times\Z_{\NumDim})$, which, when inserted into \eqref{eq:1204b}, allows us to deduce the frame inequalities of Theorem \ref{th:duality}(i).
To this end let $h$ be a function in $\SO(G)$ such that $\Vert h \Vert_{2} = 1$. Furthermore, let $f$ be any function in $\SO(G\times\Z_{\NumDim})$. Given any $\chi\in G\times\ghat$ we define
\[ c(\lambda^{\circ},k) 
= \langle \pi(\chi) h, \pi(\lambda^{\circ}) \vvfun{f}{k}\rangle, \ \lambda^{\circ}\in \Lambda^{\circ}, \ k\in \Z_{\NumDim}. \]
Since (a) $\SO$ is continuously embedded into $L^{2}$, (b) the restriction operator is linear and bounded from $\SO(G\times\ghat)$ onto $\SO(\Lambda^{\circ})$ and (c) $\Vert \mathcal{V}_{g}f \Vert_{\SO} \le C \Vert f \Vert_{\SO} \, \Vert g\Vert_{\SO}$ for some $C>0$,
we can establish that
\[ \Vert c \Vert_{2}^{2} = \sum_{k\in\Z_{\NumDim}} \int_{\Lambda^{\circ}} \vert \langle \pi(\chi) h, \pi(\lambda^{\circ}) \vvfun{f}{k} \rangle \vert^{2} \, d\mu_{\Lambda^{\circ}}(\lambda^{\circ}) \le D \, \Vert f \Vert_{\SO(G\times\Z_{d})} \, \Vert h \Vert_{\SO(G)} < \infty, \]
for some $D>0$ that depends on $\Lambda$. Hence $c$ is an element in $L^{2}(\Lambda^{\circ}\times\Z_{\NumDim})$.
For later use we note that
\begin{equation} \label{eq:1204a} \Vert c \Vert_{2}^{2} = \sum_{k\in\Z_{\NumDim}}\int_{\Lambda^{\circ}} \vert c(\lambda^{\circ},k) \vert^{2} \, d\mu_{\Lambda^{\circ}}(\lambda^{\circ}) = \sum_{k\in\Z_{\NumDim}} \int_{\Lambda^{\circ}} \vert \langle \pi(\chi-\lambda^{\circ}) h,  \vvfun{f}{k} \rangle \vert^{2} \, d\mu_{\Lambda^{\circ}}(\lambda^{\circ}).
\end{equation}
Let us take a closer look at the middle expression in \eqref{eq:1204b} with our choice of $c$.
\begin{align}
& \quad \, \sum_{j\in\Z_{\NumWin}} \Big\Vert \sum_{k\in\Z_{\NumDim}} \int_{\Lambda^{\circ}} c(\lambda^{\circ},k) \, \pi(\lambda^{\circ}) \mvfun{g}{k}{j}  \, d\mu_{\Lambda^{\circ}} \, \Big\Vert_{2}^{2} \nonumber \\
& = \sum_{j\in\Z_{\NumWin}} \Big\langle \sum_{k\in\Z_{\NumDim}} \int_{\Lambda^{\circ}} c(\lambda^{\circ},k) \, \pi(\lambda^{\circ}) \mvfun{g}{k}{j} \, d\mu_{\Lambda^{\circ}}(\lambda^{\circ}) , \sum_{l\in\Z_{\NumDim}} \int_{\Lambda^{\circ}} c(\tilde{\lambda}^{\circ},l) \, \pi(\tilde{\lambda}^{\circ}) \mvfun{g}{l}{j} \, d\mu_{\Lambda^{\circ}}(\tilde{\lambda}^{\circ})  \Big\rangle \nonumber \\
& = \sum_{j\in\Z_{\NumWin}} \sum_{k,l\in\Z_{\NumDim}} \int_{\Lambda^{\circ}} \int_{\Lambda^{\circ}} c(\lambda^{\circ},k) \, \overline{c(\tilde{\lambda}^{\circ},l)}  \, \big\langle  \pi(\lambda^{\circ}) \mvfun{g}{k}{j} , \pi(\tilde{\lambda}^{\circ}) \mvfun{g}{l}{j}  \big\rangle \, d\mu_{\Lambda^{\circ}}(\lambda^{\circ}) \, \, d\mu_{\Lambda^{\circ}}(\tilde{\lambda}^{\circ}) \nonumber \\
& = \sum_{j\in\Z_{\NumWin}} \sum_{k,l\in\Z_{\NumDim}} \int_{\Lambda^{\circ}} \int_{\Lambda^{\circ}} c(\lambda^{\circ},k) \,\big\langle  \pi(\lambda^{\circ}) \mvfun{g}{k}{j} , \pi(\tilde{\lambda}^{\circ}) \mvfun{g}{l}{j} \big\rangle \big\langle \pi(\tilde{\lambda}^{\circ}) \vvfun{f}{l}, \pi(\chi) h\big\rangle \, d\lambda^{\circ} \, \, d\tilde{\lambda}^{\circ}. \label{eq:1204c}
\end{align}
We use the fundamental identity of Gabor analysis \eqref{eq:figa} (with $\NumDim=\NumWin=1$) to establish that for all $k,l\in\Z_{\NumDim}$ and $j\in\Z_{\NumWin}$
\begin{align*} & \quad \, \int_{\Lambda^{\circ}} \big\langle  \pi(\lambda^{\circ}) \mvfun{g}{k}{j} , \pi(\tilde{\lambda}^{\circ}) \mvfun{g}{l}{j} \big\rangle   \, \big\langle \pi(\tilde{\lambda}^{\circ}) \vvfun{f}{l}, \pi(\chi) h\big\rangle \, d\mu_{\Lambda^{\circ}}(\tilde{\lambda}^{\circ}) \\
& = \int_{\Lambda} \big\langle \vvfun{f}{l}, \pi(\lambda) \mvfun{g}{l}{j} \big\rangle \, \big\langle \pi(\lambda) \pi(\lambda^{\circ}) \mvfun{g}{k}{j}, \pi(\chi) h\big\rangle \, d\mu_{\Lambda}(\lambda).\end{align*}
Indeed, two of the involved functions belong to $L^{2}(G)$, where as two belong to $\SO(G)$. Lemma \ref{le:figa-suff-cond}(ii) implies that we can use \eqref{eq:figa}. Returning to \eqref{eq:1204c} we thus have that
\begin{align}
& \quad \, \sum_{j\in\Z_{\NumWin}} \Big\Vert \sum_{k\in\Z_{\NumDim}} \int_{\Lambda^{\circ}} c(\lambda^{\circ},k) \, \pi(\lambda^{\circ}) \mvfun{g}{k}{j} \, d\mu_{\Lambda^{\circ}} \, \Big\Vert_{2}^{2}  \nonumber \\
& = \sum_{j\in\Z_{\NumWin}} \sum_{k,l\in\Z_{\NumDim}} \int_{\Lambda^{\circ}}  c(\lambda^{\circ},k) \,\int_{\Lambda} \big\langle \vvfun{f}{l}, \pi(\lambda) \mvfun{g}{l}{j} \big\rangle \, \big\langle \pi(\lambda) \pi(\lambda^{\circ}) \mvfun{g}{k}{j} , \pi(\chi) h\big\rangle \, d\mu_{\Lambda}(\lambda) \, d\lambda^{\circ} \nonumber \\
& = \sum_{j\in\Z_{\NumWin}} \sum_{k,l\in\Z_{\NumDim}} \int_{\Lambda^{\circ}}  \,\int_{\Lambda} \big\langle \vvfun{f}{l}, \pi(\lambda) \mvfun{g}{l}{j} \big\rangle \, \big\langle \pi(\lambda) \mvfun{g}{k}{j} , \pi(\chi-\lambda^{\circ}) h\big\rangle \, \big\langle \pi(\chi-\lambda^{\circ})h, \vvfun{f}{k} \big\rangle \, d\lambda \, d\lambda^{\circ} \label{eq:1204d}.
\end{align}
Combining \eqref{eq:1204b}, \eqref{eq:1204a} and \eqref{eq:1204d} yields the inequalities
\begin{align*}
& \quad \, A \sum_{k\in\Z_{\NumDim}} \int_{\Lambda^{\circ}} \vert \langle \pi(\chi-\lambda^{\circ}) h,  \vvfun{f}{k} \rangle \vert^{2} \, d\mu_{\Lambda^{\circ}}(\lambda^{\circ})  \\
& \hspace{0cm} \le \sum_{j\in\Z_{\NumWin}} \sum_{k,l\in\Z_{\NumDim}} \int_{\Lambda^{\circ}}  \,\int_{\Lambda} \langle \vvfun{f}{l}, \pi(\lambda) \mvfun{g}{l}{j} \rangle \, \langle \pi(\lambda) \mvfun{g}{k}{j} , \pi(\chi-\lambda^{\circ}) h\rangle \, \langle \pi(\chi-\lambda^{\circ})h, \vvfun{f}{k} \rangle \, d\lambda \, d\lambda^{\circ} \\
& \hspace{0cm} \le B \sum_{k\in\Z_{\NumDim}} \int_{\Lambda^{\circ}} \vert \langle \pi(\chi-\lambda^{\circ}) h,  \vvfun{f}{k} \rangle \vert^{2} \, d\mu_{\Lambda^{\circ}}(\lambda^{\circ}).
\end{align*}
Integrating over the quotient group $(G\times\ghat)/\Lambda^{\circ}$ with respect to $\chi$ and using \eqref{eq:2503a} implies the inequalities
\begin{align*}
& \quad \, A \sum_{k\in\Z_{\NumDim}} \int_{G\times\ghat} \vert \langle \pi(\chi) h,  \vvfun{f}{k} \rangle \vert^{2} \, d\mu_{G\times\ghat}(\chi) \\
& \hspace{0cm} \le \sum_{j\in\Z_{\NumWin}} \sum_{k,l\in\Z_{\NumDim}} \int_{\Lambda} \langle \vvfun{f}{l}, \pi(\lambda) \mvfun{g}{l}{j} \rangle \, \int_{G\times\ghat} \langle \pi(\lambda) \mvfun{g}{k}{j} , \pi(\chi) h\rangle \, \langle \pi(\chi)h, \vvfun{f}{k} \rangle \, d\lambda \, d\mu_{G\times\ghat}(\chi) \\
& \hspace{0cm} \le B \sum_{k\in\Z_{\NumDim}} \int_{G\times\ghat} \vert \langle \pi(\chi) h,  \vvfun{f}{k} \rangle \vert^{2} \, d\mu_{G\times\ghat}(\chi).
\end{align*}
Since $\Vert h \Vert_{2}= 1$ an application of \eqref{eq:STFT} allows us to conclude that
\begin{align}
 A \sum_{k\in\Z_{\NumDim}} \Vert \vvfun{f}{k} \Vert_{2}^{2} & \le \sum_{j\in\Z_{\NumWin}} \sum_{k,l\in\Z_{\NumDim}} \int_{\Lambda} \langle \vvfun{f}{l}, \pi(\lambda) \mvfun{g}{l}{j} \rangle \, \langle \pi(\lambda) \mvfun{g}{k}{j} , \vvfun{f}{k} \rangle \, d\mu_{\Lambda}(\lambda)  \nonumber  \\
& = \sum_{j\in\Z_{\NumWin}} \int_{\Lambda} \Big\vert \sum_{k\in\Z_{\NumDim}} \langle \vvfun{f}{k}, \pi(\lambda) \mvfun{g}{k}{j} \rangle \Big\vert^{2} \, d\mu_{\Lambda}(\lambda) \le B \sum_{k\in\Z_{\NumDim}} \Vert \vvfun{f}{k} \Vert_{2}^{2}. \label{eq:2102b} 
\end{align}
This is the statement of Theorem \ref{th:duality}(i). We have thus shown that Theorem \ref{th:duality}(ii) implies Theorem \ref{th:duality}(i). Observe that Lemma \ref{le:nec-cond}(i) implies that $(G\times \ghat)/\Lambda$ must be compact and $d$ must be finite. In particular $\Lambda^{\circ}$ had to be discrete to begin with. This proves the first part of Lemma \ref{le:nec-cond}(ii). Concerning the ``in addition'' part of Lemma \ref{le:nec-cond}(ii) let us assume that $(G\times\ghat)/\Lambda^{\circ}$ is compact and that $\Lambda^{\circ}$ is equipped with the counting measure. Note that in that case the sizes of $\Lambda$ and $\Lambda^{\circ}$ are related by $s(\Lambda) s(\Lambda^{\circ}) = 1$ (for a proof of this see, e.g., \cite[Lemma 2.1.4]{jale16-2}). The result from Lemma \ref{le:nec-cond}(i) implies that $(s(\Lambda^{\circ}))^{-1} = \s(\Lambda) \le \NumWin/\NumDim$. I.e., $\s(\Lambda^{\circ}) \ge \NumDim/\NumWin$. This finishes the proof of Lemma \ref{le:nec-cond}(ii) (note that in that result $\Lambda^{\circ}$ is replaced by $\Lambda$ and the roles of $\NumDim$ and $\NumWin$ are interchanged).

As we just showed, the assumption that the $\NumDim$-multi-window $\NumWin$-super Gabor system satisfies \eqref{eq:riesz} implies that $\Lambda^{\circ}$ is discrete. In that case \eqref{eq:cocompact-orth-measure} states that $\int_{\Lambda^{\circ}} \ \ldots \ d\mu_{\Lambda^{\circ}}(\lambda^{\circ}) = \s(\Lambda)^{-1}\sum_{\lambda^{\circ}\in\Lambda^{\circ}}$. This entails that our initial assumption \eqref{eq:1204b} has the form
\[ \frac{A}{\s(\Lambda)} \sum_{\substack{\lambda^{\circ}\in \Lambda^{\circ}\\k\in\Z_{\NumDim}}} \vert c(\lambda^{\circ},k)\vert^{2} \le \sum_{j\in\Z_{\NumWin}} \Big\Vert \frac{1}{\s(\Lambda)} \sum_{\substack{\lambda^{\circ}\in\Lambda^{\circ}\\k\in \Z_{\NumDim} }} c(\lambda^{\circ},k) \, \pi(\lambda^{\circ}) \mvfun{g}{k}{j} \, \Big\Vert_{2}^{2}\le \frac{B}{\s(\Lambda)} \sum_{\substack{\lambda^{\circ}\in \Lambda^{\circ}\\k\in\Z_{\NumDim}}} \vert c(\lambda^{\circ},k)\vert^{2} ,\]
which is the statement in Theorem \ref{th:duality}(ii).

Concerning the proof of Proposition \ref{pr:bessel-duality} observe that the upper inequality in the assumed inequalities \eqref{eq:1204b} states that the synthesis operator of the $\NumDim$-multi-window $\NumWin$-super Gabor system generated by $\mvfun{g}{k}{}$, $k\in \Z_{\NumDim}$ with time-frequency shifts from $\Lambda^{\circ}$ is bounded by $\sqrt{B}$. Equivalently, its adjoint, the analysis operator, has the same operator bound. This is exactly Proposition \ref{pr:bessel-duality}(ii). Under this assumption we showed that the upper inequality in \eqref{eq:2102b} holds. I.e., the analysis operator of the $\NumWin$-multi-window $\NumDim$-super Gabor frame generated by the functions $\mvfun{g}{}{j}$, $j\in\Z_{\NumWin}$ with time-frequency shifts along $\Lambda$ is bounded by $\sqrt{B}$. This is the statement of Proposition \ref{pr:bessel-duality}(i). The converse implication is proven with the same steps as before but with the roles of $\NumDim$ and $\NumWin$ as well as $\Lambda$ and $\Lambda^{\circ}$ interchanged. This proves Proposition \ref{pr:bessel-duality}.

It is only left to prove that Theorem \ref{th:duality}(i) implies Theorem \ref{th:duality}(ii). Let therefore $(G\times\ghat)/\Lambda$ be compact, $\NumDim< \infty$ and assume that Theorem \ref{th:duality}(i) holds. We build again on the ideas of Janssen \cite{ja95} (see line 3 in the proof of Theorem 3.1 in \cite{ja95}) and will construct certain functions in $L^{2}(G\times\Z_{d})$ which upon insertion into the assumed frame inequalities,
\begin{equation} \label{eq:1709b} A \, \Vert f \Vert_{2}^{2} \le \sum_{j\in \Z_{\NumWin}} \int_{\Lambda} \Big\vert \sum_{k\in\Z_{\NumDim}} \big\langle \vvfun{f}{k} , \pi(\lambda) \mvfun{g}{k}{j}\big\rangle \Big\vert^{2} \, d\mu_{\Lambda}(\lambda) \le B \, \Vert f \Vert_{2}^{2},\end{equation}
enable us to deduce the Riesz sequence inequalities of Theorem \ref{th:duality}(ii). 
By assumption the multi-window Gabor system generated by the functions $\mvfun{g}{}{j}$, $j\in\Z_{\NumWin}$ in $L^{2}(G\times\Z_{\NumDim})$ with time-frequency shifts along $\Lambda\times\{0\}\subseteq (G\times\ghat)\times(\Z_{\NumDim}\times\widehat{\Z_{\NumDim}})$ is a frame for $L^{2}(G\times\Z_{\NumDim})$. 
Let $S$ be the associated frame operator on $L^{2}(G\times\Z_{\NumDim})$. The functions $S^{-1/2}\mvfun{g}{}{j}$, $j\in\Z_{\NumWin}$ generate a multi-window super Gabor frame for $L^{2}(G\times\Z_{\NumDim})$ with frame bounds $A=B=1$ and such that for all $f^{1},f^{2}\in L^{2}(G\times\Z_{\NumDim})$
\begin{equation} \label{eq:1709a} \langle f^{1}, f^{2}\rangle = \sum_{j\in \Z_{\NumWin}} \int_{\Lambda} \Big( \sum_{k\in \Z_{\NumDim}} \big\langle \vvfun{f^{1}}{k}, \pi(\lambda) S^{-1/2} \mvfun{g}{k}{j}  \big\rangle \Big) \, \Big( \sum_{l\in \Z_{\NumDim}} \big\langle \pi(\lambda) S^{-1/2} \mvfun{g}{l}{j}, \vvfun{f^{2}}{l}  \big\rangle \Big) \, d\mu_{\Lambda}(\lambda). \end{equation}
The characterization of dual frames by Theorem \ref{th:wex-raz} implies that for all $k,l\in\Z_{\NumDim}$ and $\lambda^{\circ}\in\Lambda^{\circ}$
\begin{equation} \label{eq:1903bb} \sum_{j\in \Z_{\NumWin}}  \langle S^{-1/2}\mvfun{g}{l}{j}, \pi(\lambda^{\circ})S^{-1/2}\mvfun{g}{k}{j} \rangle = \begin{cases} \s(\Lambda) \, & \text{if} \ (\lambda^{\circ},k)=(0,l),\\ 0 & \text{otherwise}. \end{cases}\end{equation}

For the rest of the proof we fix a finite sequence $c\in \ell^{2}(\Lambda^{\circ}\times\Z_{\NumDim})$. For each $m\in\Z_{\NumWin}$ we define the function $f^{m}\in L^{2}(G\times\Z_{\NumDim})$ where
\[ \vvfun{f^{m}}{k} = d^{-1/2} \sum_{\substack{\lambda^{\circ}\in \Lambda^{\circ} \\ l \in \Z_{\NumDim}}} \overline{c(\lambda^{\circ}, k)} \, \pi(\lambda^{\circ})^{*} \, S^{-1/2} \mvfun{g}{l}{m}, \ k\in \Z_{\NumDim}.\]

We now insert these functions into the frame inequalities \eqref{eq:1709b} and sum over $m\in \Z_{\NumWin}$. 
The first observation is that
\begin{align}
& \sum_{m\in \Z_{\NumWin}} \Vert f^{m} \Vert_{2}^{2} = \sum_{\substack{m\in\Z_{\NumWin} \\ k\in\Z_{\NumDim}} } \big\langle \vvfun{f^{m}}{k} , \vvfun{f^{m}}{k}\big\rangle \nonumber  \\
& = \frac{1}{d}  \sum_{\substack{\lambda^{\circ}, \tilde{\lambda}^{\circ}\in \Lambda^{\circ} \\ l,\tilde{l} \in \Z_{\NumDim} \\ k\in\Z_{\NumDim} }} \overline{c(\lambda^{\circ}, k) } \, c(\tilde{\lambda}^{\circ}, k) \, \sum_{m\in\Z_{\NumWin}} \big\langle \pi(\lambda^{\circ})^{*} \, S^{-1/2} \mvfun{g}{l}{m}, \pi(\tilde{\lambda}^{\circ})^{*} \, S^{-1/2} \mvfun{g}{\tilde{l}}{m} \big\rangle \nonumber \\
& \stackrel{\eqref{eq:1903bb}}{=} \frac{\s(\Lambda)}{d} \sum_{\substack{k,l\in \Z_{\NumDim} \\\lambda^{\circ}\in \Lambda^{\circ} } } \vert c(\lambda^{\circ}, k) \vert^{2} = \s(\Lambda) \sum_{\substack{ k\in\Z_{\NumDim} \\ \lambda^{\circ}\in \Lambda^{\circ} }} \vert c(\lambda^{\circ}, k) \vert^{2} < \infty. \label{eq:1709c}
\end{align}
The calculation for the expression in the middle of the inequalities in \eqref{eq:1709a} is a bit more involved.

\begingroup
\allowdisplaybreaks
\begin{align}
& \sum_{j,m\in\Z_{\NumWin}} \int_{\Lambda} \Big\vert \sum_{k\in\Z_{\NumDim}} \big\langle \vvfun{f^{m}}{k}, \pi(\lambda) \mvfun{g}{k}{j}\big\rangle \Big\vert^{2} \, d\mu_{\Lambda}(\lambda) \nonumber \\
& = \sum_{\substack{ j,m\in \Z_{\NumWin} \nonumber \\ k,\tilde{k}\in \Z_{\NumDim} } } \int_{\Lambda} \big\langle \vvfun{f^{m}}{k} , \pi(\lambda) \mvfun{g}{k}{j}\big\rangle \, \big\langle \pi(\lambda) \mvfun{g}{\tilde{k}}{j}, \vvfun{f^{m}}{\tilde{k}} \big\rangle \, d\mu_{\Lambda}(\lambda) \nonumber \\
& = \frac{1}{d} \sum_{\substack{ j\in\Z_{\NumWin} \nonumber \\ k,\tilde{k} \in\Z_{\NumDim} \\ \lambda^{\circ}, \tilde{\lambda}^{\circ}\in \Lambda^{\circ}}} \overline{c(\lambda^{\circ}, k)} \, c(\tilde{\lambda}^{\circ},\tilde{k}) \nonumber  \\
& \hspace{2cm} \sum_{m\in\Z_{\NumWin}} \int_{\Lambda} \Big( \sum_{\tilde{l}\in\Z_{\NumDim}} \big\langle \pi(\tilde{\lambda}^{\circ}) \mvfun{g}{\tilde{k}}{j}, \pi(\lambda) S^{-1/2} \mvfun{g}{\tilde{l}}{m} \big\rangle \Big)   \nonumber  \\
& \hspace{4cm} \cdot \Big( \sum_{l\in\Z_{\NumDim}} \big\langle \pi(\lambda) S^{-1/2} \mvfun{g}{l}{m}, \pi(\lambda^{\circ}) \mvfun{g}{k}{j} \big\rangle \Big) \, d\mu_{\Lambda}(\lambda) \nonumber \\
& \stackrel{\eqref{eq:1709a}}{=} \sum_{\substack{ j\in\Z_{\NumWin} \\ k,\tilde{k} \in\Z_{\NumDim} \nonumber  \\ \lambda^{\circ}, \tilde{\lambda}^{\circ}\in \Lambda^{\circ}}} \overline{c(\lambda^{\circ}, k)} \, c(\tilde{\lambda}^{\circ},\tilde{k}) \big\langle \pi(\tilde{\lambda}^{\circ}) \mvfun{g}{\tilde{k}}{j},\pi(\lambda^{\circ}) \mvfun{g}{k}{j} \big\rangle \nonumber \\
& = \sum_{j\in\Z_{\NumWin}} \Big\Vert \sum_{\substack{\lambda^{\circ}\in\Lambda^{\circ} \\ k\in\Z_{\NumDim} }} c(\lambda^{\circ}) \, \pi(\lambda^{\circ}) \mvfun{g}{k}{j} \, \Big\Vert_{2}^{2}. \label{eq:1709d}
\end{align}
\endgroup

Combining the assumed frame inequalities \eqref{eq:1709b} and the established equalities in \eqref{eq:1709c} and \eqref{eq:1709d} yields the desired inequalities of Theorem \ref{th:duality}(ii).

\end{proof}


\section{Gabor frames with generators in $\SO(\R^{m}\times\Z_{\NumDim}\times\Z_{\NumWin})$}
\label{sec:existence-mws-frames-for-R}

In this section we go from the general setting on groups to the much more specific settings of multi-window super Gabor frames for $L^{2}(\R^{m}\times\Z_{\NumDim})$ that are generated by a function in the Feichtinger algebra $\SO(\R^{m}\times\Z_{\NumDim}\times\Z_{\NumWin})$. By Lemma \ref{le:nec-cond}(i) we know that the desire to construct multi-window super Gabor frames for $L^{2}(\R^{m}\times\Z_{\NumDim})$ implies that $\NumDim<\infty$ and that the subgroup of the 
time-frequency plane $\Lambda\subset \R^{2d}$ is co-compact, hence $\La$ must have the form
\begin{equation} \label{eq:co-compact-in-Rm} \Lambda = \textnormal{A} ( \R^{m_{1}} \times \Z^{m_{2}} ), \ \ \text{for some} \ \textnormal{A}\in \textnormal{GL}_{2m}(\R) \ \text{and where} \ m_{1}+m_{2} = 2m.\end{equation}
Furthermore, if $m_{1}=0$, then $\Lambda$ is discrete and Lemma \ref{le:nec-cond}(i) imposes the additional condition $\NumDim\, s(\Lambda) \le \NumWin$ on the minimum number $\NumWin$ of generators of the multi-window (super) Gabor frame, where $s(\Lambda)$ is the measure of the fundamental domain of $\La$.

We will give an elementary proof of the following important result, which is a special case of Theorem \ref{th:exist}. Again, it might be instructive to consider this for the classical case $m=1$, $\NumDim= 1$, $\La = a\Z\times b \Z$.

\begin{theorem} \label{th:existence-mwsframes-rn} For any closed and co-compact subgroup $\La$ in $\R^{2m}$ and any $\NumDim<\infty$ there exists a function $g\in \SO(\R^{m}\times\Z_{\NumDim}\times\Z_{\NumWin})$, $n<\infty$, that generates a multi-window super Gabor frame for $L^{2}(\R^{m}\times\Z_{\NumDim})$ with respect to the subgroup $\La$, and such that the associated frame operator  is invertible on $\SO(\R^{m}\times\Z_{\NumDim})$.  
\end{theorem}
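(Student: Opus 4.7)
The plan is to discretize a continuous super Gabor frame via a Riemann sum over a fundamental domain of $\La$ in $\R^{2m}$. By Remark~\ref{rem:continuous-super-case-generator-in-so} there exists $g_0\in\SO(\R^{m}\times\Z_{\NumDim})$ such that the continuous super Gabor system $\{\pi(\chi)g_0\}_{\chi\in\R^{2m}}$ has frame operator equal to the identity on $L^{2}(\R^{m}\times\Z_{\NumDim})$, and moreover this frame operator is invertible on $\SO(\R^{m}\times\Z_{\NumDim})$ as well. Since $\La$ is co-compact, it admits a measurable fundamental domain $F\subset\R^{2m}$ with $|F|=\s(\La)<\infty$. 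Using Weil's formula \eqref{eq:2503a} together with the cocycle cancellation $\langle f,\pi(\la)\pi(\chi')g_0\rangle\,\pi(\la)\pi(\chi')g_0=\langle f,\pi(\la+\chi')g_0\rangle\,\pi(\la+\chi')g_0$, I would rewrite the identity operator as
\[ I=\int_{F} T_{\chi'}\,d\chi',\qquad T_{\chi'}f:=\int_{\La}\langle f,\pi(\la)\pi(\chi')g_0\rangle\,\pi(\la)\pi(\chi')g_0\,d\la, \]
so that each $T_{\chi'}$ is the (super) Gabor frame operator $S_{\pi(\chi')g_0,\pi(\chi')g_0,\La}$ in the sense of Lemma~\ref{le:s0-implies-bessel}.

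Next, I would pick a partition $F=\bigsqcup_{j=1}^{\NumWin}F_j$ with $\chi_j\in F_j$, and set $g_j:=|F_j|^{1/2}\pi(\chi_j)g_0\in\SO(\R^{m}\times\Z_{\NumDim})$. A direct computation shows that the multi-window super Gabor frame operator associated to $\{g_j\}_{j=1}^{\NumWin}$ with respect to $\La$ is precisely the Riemann sum $S_{\NumWin}=\sum_{j=1}^{\NumWin}|F_j|\,T_{\chi_j}$ approximating $\int_{F}T_{\chi'}\,d\chi'=I$. It then suffices to refine the partition until $\|S_{\NumWin}-I\|<1$ as operators on $\SO(\R^{m}\times\Z_{\NumDim})$; the Neumann series then yields invertibility on $\SO(\R^{m}\times\Z_{\NumDim})$, and hence on $L^{2}(\R^{m}\times\Z_{\NumDim})$, so that via Lemma~\ref{le:frame} the functions $\{g_j\}_{j=1}^{\NumWin}$ are identified as a multi-window super Gabor frame with the properties claimed in the theorem.

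The main obstacle is establishing operator-norm continuity of the map $\chi'\mapsto T_{\chi'}$ from $\R^{2m}$ into the bounded operators on $\SO(\R^{m}\times\Z_{\NumDim})$, which is what gives $\SO$-operator-norm convergence of the Riemann sums on the compact set $F$. This continuity is to be derived from the bilinear bound $\|S_{g,h,\La}\|\le C(\La)\,\|g\|_{\SO}\|h\|_{\SO}$ provided by Lemma~\ref{le:s0-implies-bessel}, together with the strong continuity of $\chi\mapsto\pi(\chi)g_0$ from $\R^{2m}$ into $\SO(\R^{m}\times\Z_{\NumDim})$, which is a standard feature of the Feichtinger algebra built on the fact that time-frequency shifts act isometrically on $\SO$ (Lemma~\ref{le:s0-properties}(iii)). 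Compactness of $F$ then upgrades continuity to uniform continuity, which in turn forces convergence of the operator-valued Riemann sums in the norm of $\mathsf{B}(\SO(\R^{m}\times\Z_{\NumDim}))$ as the mesh of the partition tends to zero, completing the argument.
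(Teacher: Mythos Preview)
Your argument is correct and takes a genuinely different route from the paper's. The paper works on the \emph{adjoint} side: starting from an orthonormal family $\{g_k\}_{k\in\Z_{\NumDim}}\subset\SO(\R^m)$, it refines $\Lambda$ to $\Lambda_N=A(\R^{m_1}\times(N^{-1}\Z)^{m_2})$ so that the adjoint $\Lambda_N^\circ\subset\Lambda^\circ$ becomes sparse, then applies the Janssen representation and the $\ell^1$-decay of $\{\langle g_k,\pi(\lambda^\circ)g_{k'}\rangle\}_{\lambda^\circ\in\Lambda^\circ}$ to bound $\|S_{\tilde g,\Lambda_N}-I\|_{B(\SO)}<1$ directly; the multi-window structure then arises by splitting $\Lambda_N$ into its $N^{m_2}$ cosets modulo $\Lambda$. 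Your approach stays on the direct side and discretizes the continuous super frame by a Riemann sum over a fundamental domain of $\R^{2m}/\Lambda$, with convergence driven by operator-norm continuity of $\chi'\mapsto S_{\pi(\chi')g_0,\pi(\chi')g_0,\Lambda}$ on $B(\SO)$. Both proofs ultimately rest on the same engine---the Janssen representation underlying the bound in Lemma~\ref{le:s0-implies-bessel}---but the paper's version gives a more explicit handle on the number of windows ($n=N^{m_2}$, with $N$ controlled by an $\ell^1$-tail estimate), while yours is conceptually closer to a sampling argument and accommodates irregular placement of the windows $\chi_j$. One small caveat: isometry of $\pi(\chi)$ on $\SO$ (Lemma~\ref{le:s0-properties}(iii)) does not by itself yield strong continuity of $\chi\mapsto\pi(\chi)g_0$; you need the separate (standard) fact that time-frequency shifts act strongly continuously on $\SO$, which follows from the strong continuity of translations on $L^1(G\times\ghat)$ applied to the short-time Fourier transform defining the $\SO$-norm.
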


\begin{corollary} For any choice of $m,m_{1},m_{2}\in\N$ such that $m_{1}+m_{2}=2m$, $A\in \textnormal{GL}_{m}(\R)$, and any $\NumDim<\infty$ there exist functions $g,h\in\SO(\R^{m}\times\Z_{\NumDim}\times\Z_{\NumWin})$, $\NumWin<\infty$ that generate dual multi-window super Gabor frames for $L^{2}(\R^{m}\times\Z_{\NumDim})$ with respect to the closed subgroup $\La = A(\R^{m_{1}}\times\Z^{m_{2}})$. That is, for all $f\in L^{2}(\R^{m}\times\Z_{\NumDim})$,
\[ \vvfun{f}{k} = \sum_{j\in\Z_{\NumWin}} \int_{\R^{m_1}} \sum_{\lambda\in\Z^{m_{2}}} \Big( \sum_{k'\in\Z_{\NumDim}} \langle \vvfun{f}{k'} , \pi( A (x,\la) ) \mvfun{g}{k'}{j} \rangle \Big) \ \pi(A(x,\la)) \mvfun{h}{k}{j} \, dx \ \ \text{for all} \ \ k\in\Z_{\NumDim}\]
or, in terms of the $\lmodule$-valued inner products from Section \ref{sec:module},
\[ f = \sum_{j\in \Z_{\NumDim}} \lhs{f}{\mvfun{g}{}{j}} \cdot \mvfun{h}{}{j} \ \ \text{for all} \ \ f\in L^{2}(\R^{m}\times\Z_{\NumDim}). \]
\end{corollary}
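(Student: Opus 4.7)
The plan is to apply the continuous super Gabor frame identity (Lemma \ref{le:1604} together with Remark \ref{rem:continuous-super-case-generator-in-so}) and sample it at a finite, sufficiently dense set of coset representatives of the compact quotient $\R^{2m}/\La$. Writing $\La = A(\R^{m_1} \times \Z^{m_2})$ with $A \in \textnormal{GL}_{2m}(\R)$ and $m_{1}+m_{2}=2m$, the quotient $\R^{2m}/\La$ is a compact torus of dimension $m_{2}$. By Lemma \ref{le:1604} and Remark \ref{rem:continuous-super-case-generator-in-so}, there exists $\tilde g \in \SO(\R^{m}\times\Z_{\NumDim})$ whose components $\{\vvfun{\tilde g}{k}\}_{k \in \Z_{\NumDim}}$ are orthonormal in $L^{2}(\R^{m})$, so that
\[ \int_{\R^{2m}} |F(\chi)|^2 \, d\chi = \|f\|_{L^{2}(\R^{m}\times\Z_{\NumDim})}^2, \qquad F(\chi) := \sum_{k \in \Z_{\NumDim}} \langle \vvfun{f}{k}, \pi(\chi)\vvfun{\tilde g}{k}\rangle, \]
for every $f \in L^{2}(\R^{m}\times\Z_{\NumDim})$.

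Next, pick points $\chi_1, \ldots, \chi_{\NumWin} \in \R^{2m}$ representing a sufficiently dense finite subset of $\R^{2m}/\La$ and define $\mvfun{g}{k}{j} := \pi(\chi_j)\vvfun{\tilde g}{k}$ for $k \in \Z_{\NumDim}, j\in\Z_{\NumWin}$. Then $g \in \SO(\R^{m}\times\Z_{\NumDim}\times\Z_{\NumWin})$ by Lemma \ref{le:SO-properties}(iii), and using $\pi(\la)\pi(\chi_j) = \cocy(\la,\chi_j)\,\pi(\la + \chi_j)$ with $|\cocy(\la,\chi_j)|=1$, the multi-window super Gabor frame sum reduces to $\sum_{j=1}^{\NumWin} \Phi_{f}(\dot\chi_j)$, where
\[ \Phi_{f}(\dot\chi) := \int_\La |F(\la + \chi)|^2 \, d\mu_\La(\la) \]
is a well-defined $\La$-invariant continuous function on $\R^{2m}$, descending to $\R^{2m}/\La$, and satisfying $\int_{\R^{2m}/\La} \Phi_{f}\, d\mu = \|f\|^{2}$ by Weil's formula \eqref{eq:2503a}.

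The lower frame bound then follows from a Riemann-sum argument on the compact torus: for $\{\dot\chi_j\}_{j=1}^{\NumWin}$ chosen as a sufficiently fine grid, $\sum_{j=1}^{\NumWin} \Phi_{f}(\dot\chi_j) \ge A\,\|f\|^{2}$ uniformly in $f$, while the upper (Bessel) bound is automatic from Lemma \ref{le:s0-implies-bessel}. For invertibility of the frame operator $S$ on $\SO(\R^{m}\times\Z_{\NumDim})$, one uses that $S$ commutes with $\pi(\la)$ for $\la \in \La$ (adapting the calculation from the proof of Lemma \ref{le:frame}(i) to the super setting) and that, by the associativity condition \eqref{eq:associativity-for-mws}, $S$ admits a Janssen-type representation as a matrix element of $\SO(\La^\circ,\overline\cocy)$---with $\La^\circ$ discrete by co-compactness of $\La$. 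The Gr\"ochenig--Leinert inverse-closedness result \cite{grle04} (as invoked in the proof of Theorem~\ref{th:S0-fin-gen}) then yields $S^{-1}$ in the same Banach algebra, so $S^{-1}$ preserves $\SO(\R^{m}\times\Z_{\NumDim})$.

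The main obstacle is making the Riemann-sum lower bound uniform in $f$: one has to show that the family $\{\Phi_{f}/\|f\|^{2}\}_{f \ne 0}$ is equicontinuous on $\R^{2m}/\La$, so that $\sum_j \Phi_{f}(\dot\chi_j)$ approximates $\int \Phi_{f}$ uniformly. This should follow from $\tilde g \in \SO$ via Wiener-amalgam-type estimates on the STFT-like expression $F$, which control its modulus of continuity in terms of $\|\tilde g\|_{\SO}$ and $\|f\|_{2}$ alone; the compactness of $\R^{2m}/\La$ then converts the pointwise Riemann convergence into the required uniform lower bound.
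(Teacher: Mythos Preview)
Your overall architecture is sound and closely parallels the paper's proof of Theorem~\ref{th:existence-mwsframes-rn} (from which the corollary follows by taking $h=S_{g,\La}^{-1}g$): start from an orthonormal family $\{\tilde g_k\}\subset\SO(\R^m)$, produce the multi-window generators as time-frequency shifts $\pi(\chi_j)\tilde g$, and then establish invertibility of the frame operator on $\SO$. The difference is in how the lower frame bound and $\SO$-invertibility are obtained. The paper chooses the $\chi_j$ as coset representatives of $\La_N/\La$ for a refined subgroup $\La_N=A(\R^{m_1}\times(N^{-1}\Z)^{m_2})$, so that the multi-window frame operator for $\La$ \emph{equals} the single-window super frame operator $S_{\tilde g,\La_N}$. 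It then shows $\|I-S_{\tilde g,\La_N}\|_{\SO\to\SO}<1$ directly from the Janssen representation, using only that $\sum_{\la^\circ\in\La_N^\circ\setminus\{0\}}|\langle \tilde g_k,\pi(\la^\circ)\tilde g_{k'}\rangle|$ can be made small since $\La_N^\circ$ becomes sparse. This Neumann-series argument gives the $L^2$ frame bounds \emph{and} $\SO$-invertibility simultaneously, without invoking Gr\"ochenig--Leinert.

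Your Riemann-sum route is a genuine alternative, but the equicontinuity of $\{\Phi_f/\|f\|_2^2\}_{f\ne 0}$ is not established by the hint you give. Controlling the modulus of continuity of $F(\chi)=\sum_k\langle f_k,\pi(\chi)\tilde g_k\rangle$ pointwise in terms of $\|f\|_2$ and $\|\tilde g\|_{\SO}$ does not by itself yield a uniform estimate after integrating $|F|^2$ over the non-compact group $\La$. What you actually need is operator-norm continuity of $\chi\mapsto S_{\pi(\chi)\tilde g,\La}$ on $L^2(\R^m\times\Z_\NumDim)$, since $\Phi_f(\chi)=\langle S_{\pi(\chi)\tilde g,\La}f,f\rangle$. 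This \emph{does} hold: the Janssen representation gives
\[
S_{\pi(\chi)\tilde g,\La}-S_{\pi(\chi_0)\tilde g,\La}
=\sum_{\la^\circ\in\La^\circ}\bigl(\cocy_s(\la^\circ,\chi)-\cocy_s(\la^\circ,\chi_0)\bigr)\,\langle \tilde g_k,\pi(\la^\circ)\tilde g_{k'}\rangle\,\pi(\la^\circ)
\]
(entrywise in $k,k'$), and dominated convergence in $\ell^1(\La^\circ)$ (using $\tilde g\in\SO$) yields the required continuity. But note that once you write this down you are using exactly the same ingredient---$\ell^1$-summability of the Janssen coefficients---that the paper exploits, and the paper's Neumann-series argument is then both shorter and avoids the extra appeal to \cite{grle04} (which you would need in its matrix-algebra form $M_\NumDim(\ell^1(\La^\circ,\overline\cocy))\subset M_\NumDim(C^*(\La^\circ,\overline\cocy))$). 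So your approach is correct in principle, but the cleanest way to close its gap essentially collapses it to the paper's argument.
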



\begin{proof}[Proof of Theorem \ref{th:existence-mwsframes-rn}] In case $\Lambda$ is the full time-frequency plane $\La = \R^{2m}$, then the statements follow from Remark \ref{rem:continuous-mw-case-generator-in-so} and Remark \ref{rem:continuous-super-case-generator-in-so} in Section \ref{sec:continuous-case}. From now on we assume that $\La$ has the form as in \eqref{eq:co-compact-in-Rm} with $m_{2}>0$. Let $g$ be a function in $\SO(\R^{m}\times\Z_{\NumDim})$ such that $\{\vvfun{g}{k}\}_{k\in\Z_{\NumDim}}$ is an orthonormal (with respect to the inner-product in $L^{2}(\R^{m})$) collection of functions. This can be achieved, e.g., with the Gram-Schmidt procedure as in the proof of Lemma \ref{le:1604}. For each $N\in\N$ we define $\Lambda_{N}$ to be the subgroup of $\R^{2m}$ given by
\begin{equation} \label{eq:Lambda-N} \Lambda_{N} = \textnormal{A} \big(\R^{m_{1}} \times (N^{-1} \Z)^{m_{2}}\big).\end{equation}
Observe that $\Lambda \subseteq \Lambda_{N}$ (with equality for $N=1$), that $\s(\Lambda_{N}) = N^{-m_{2}} \s(\Lambda)$, and that the quotient $\Lambda_{N}/\Lambda$ contains $N^{m_{2}}$ elements. For later use, we let $\{\chi_{j} \, : \, j\in \Z_{N^{m_{2}}}\}\subset \R^{2m}$ be a collection of $N^{m_{2}}$ coset representatives for the quotient group $\La_{N}/\La$. 
The adjoint group of $\La_{N}$ is 
\[ \Lambda_{N}^{\circ} = \begin{pmatrix} \mathbf{0} & \mathbf{I}_{m} \\ -\mathbf{I}_{m} & \mathbf{0} \end{pmatrix} (\textnormal{A}^{-1})^{\top} \big(\{0\}^{m_{1}}\times (N\Z)^{m_{2}} \big)\]
and we have the inclusion $\Lambda_{N}^{\circ} \subseteq \Lambda^{\circ}$ with equality for $N=1$.
The subgroup $\La_{N}^{\circ}$ becomes increasingly sparse with increasing $N$. 

Since all $\vvfun{g}{k}$, $k\in\Z_{\NumDim}$ belong to $\SO(\R^{m})$  it follows from Lemma \ref{le:s0-properties}(vii) and (viii) that the sequence $\{ \langle \vvfun{g}{k} , \pi(\lambda^{\circ}) \vvfun{g}{k'} \rangle \}_{\lambda^{\circ}\in\Lambda^{\circ}}$ is absolutely summable for any $k,k'\in\Z_{\NumDim}$. We can therefore find an $N$ that is sufficiently large so that
\begin{equation} \label{eq:1704b}  \sum_{\substack{\lambda^{\circ}\in \Lambda^{\circ}_{N} \\ \lambda^{\circ} \ne 0 }} \vert \langle \vvfun{g}{k}, \pi(\lambda^{\circ}) \vvfun{g}{k'} \rangle \vert < d^{-1} \ \ \text{for all} \ \ k,k'\in\Z_{\NumDim}.\end{equation}
With an $N$ fixed so that \eqref{eq:1704b} holds, we define $\tilde{g}$ to be the function in $\SO(\R^{m}\times\Z_{\NumDim})$ given by $\tilde{g} = \sqrt{\s(\Lambda_{N})} \, g$. The frame operator for the super Gabor system generated by $\tilde{g}$ with respect to the subgroup $\Lambda_{N}$ is the operator
\begin{align*} & S_{\tilde{g}, \Lambda_{N}} : L^{2}(\R^{m} \times \Z_{\NumDim}) \to L^{2}(\R^{m}\times\Z_{\NumDim}), \\
& S_{\tilde{g},\Lambda_{N}} f(\,\cdot \, , k) = \int_{\Lambda_{N}} \Big( \sum_{k'\in\Z_{\NumDim}} \langle \vvfun{f}{k'} , \pi(\lambda) \vvfun{\tilde{g}}{k'} \rangle \Big) \, \pi(\lambda) \vvfun{\tilde{g}}{k} \, d\lambda, \ k\in\Z_{\NumDim}.\end{align*}
Since $g\in \SO(\R^{m}\times\Z_{\NumDim})$ we know from Lemma \ref{le:s0-implies-bessel} that this operator is bounded. Also Lemma \ref{le:figa-suff-cond} states that we may apply \eqref{eq:figa}, and so
\begin{align} \nonumber & S_{\tilde{g},\Lambda_{N}} f(\,\cdot \, , k) = \frac{1}{\s(\Lambda_{N})} \sum_{\substack{\lambda^{\circ}\in\Lambda^{\circ} \\ k'\in\Z_{\NumDim}}}  \langle \vvfun{\tilde{g}}{k} , \pi(\lambda^{\circ}) \vvfun{\tilde{g}}{k'} \rangle \, \pi(\lambda^{\circ}) \vvfun{f}{k'} \\
& = \sum_{\substack{\lambda^{\circ}\in\Lambda^{\circ} \\ k'\in\Z_{\NumDim}}}  \langle \vvfun{g}{k} , \pi(\lambda^{\circ}) \vvfun{g}{k'} \rangle \, \pi(\lambda^{\circ}) \vvfun{f}{k'} = \vvfun{f}{k} + \sum_{\substack{\lambda^{\circ}\in\Lambda^{\circ}\backslash\{0\} \\ k'\in\Z_{\NumDim}}}  \langle \vvfun{g}{k} , \pi(\lambda^{\circ}) \vvfun{g}{k'} \rangle \, \pi(\lambda^{\circ}) \vvfun{f}{k'}  \label{eq:1704a}
\end{align}
In the last equality we used the orthonormality of the functions $\{\vvfun{g}{k}\}_{k\in\Z_{\NumDim}}$.
Recall from Lemma \ref{le:s0-properties}(x) that 
\[ \Vert \cdot \Vert_{\SO\otimes\ell^{1}} : \SO(\R^{m}\times\Z_{\NumDim}) \to \R_{0}^{+}, \ \Vert f \Vert_{\SO\otimes\ell^{1}} = \sum_{k\in\Z_{\NumDim}} \Vert \vvfun{f}{k} \Vert_{\SO(\R^{m})},\]
is a norm on $\SO(\R^{m}\times\Z_{\NumDim})$ that is equivalent to the norm on $\SO(\R^{m}\times\Z_{\NumDim})$ via \eqref{eq:norm}.
We will use this norm on $\SO(\R^{m}\times\Z_{\NumDim})$. We now show that $S_{\tilde{g},\Lambda_{\N}}$ is invertible on $\SO(\R^{m}\times\Z_{\NumDim})$ by proving that it is close to the identity operator $\textnormal{Id}$ on $\SO(\R^{m}\times\Z_{\NumDim})$.
Indeed,
\begin{align*}
& \Vert \textnormal{Id} - S_{\tilde{g},\Lambda_{N}} \Vert_{\textnormal{op},\SO\to\SO} = \sup_{\substack{f\in\SO(\R^{m}\times\Z_{\NumDim}) \\ \Vert f \Vert_{\SO\otimes\ell^{1}} = 1}} \big\Vert f - S_{\tilde{g},\Lambda_{N}} f \big\Vert_{\SO\otimes\ell^{1}} \\
& = \sup_{\substack{f\in\SO(\R^{m}\times\Z_{\NumDim}) \\ \Vert f \Vert_{\SO\otimes\ell^{1}} = 1}} \sum_{k\in\Z_{\NumDim}} \big\Vert \vvfun{f}{k} - S_{\tilde{g},\Lambda_{N}} f(\,\cdot\, , k) \big\Vert_{\SO} \\
& \stackrel{\eqref{eq:1704a}}{=} \sup_{\substack{f\in\SO(\R^{m}\times\Z_{\NumDim}) \\ \Vert f \Vert_{\SO\otimes\ell^{1}} = 1}} \sum_{k\in\Z_{\NumDim}} \Big\Vert \sum_{\substack{\lambda^{\circ}\in\Lambda^{\circ}\backslash\{0\} \\ k'\in\Z_{\NumDim}}}  \langle \vvfun{g}{k} , \pi(\lambda^{\circ}) \vvfun{g}{k'} \rangle \, \pi(\lambda^{\circ}) \vvfun{f}{k'} \Big\Vert_{\SO} \\
& \le \sup_{\substack{f\in\SO(\R^{m}\times\Z_{\NumDim}) \\ \Vert f \Vert_{\SO\otimes\ell^{1}} = 1}}  \sum_{\substack{\lambda^{\circ}\in\Lambda^{\circ}\backslash\{0\} \\ k,k'\in\Z_{\NumDim}}} \big\vert \langle \vvfun{g}{k} , \pi(\lambda^{\circ}) \vvfun{g}{k'} \rangle \big\vert \,  \big\Vert \pi(\lambda^{\circ}) \vvfun{f}{k'} \big\Vert_{\SO} \\
& \stackrel{\eqref{eq:1704b}}{<} \sup_{\substack{f\in\SO(\R^{m}\times\Z_{\NumDim}) \\ \Vert f \Vert_{\SO\otimes\ell^{1}} = 1}}  \sum_{k,k'\in\Z_{\NumDim}} d^{-1} \,  \big\Vert \pi(\lambda^{\circ}) \vvfun{f}{k'} \big\Vert_{\SO} \\
& = \sup_{\substack{f\in\SO(\R^{m}\times\Z_{\NumDim}) \\ \Vert f \Vert_{\SO\otimes\ell^{1}} = 1}}  \sum_{k'\in\Z_{\NumDim}} \, \Vert \vvfun{f}{k'} \Vert_{\SO} = 1.
\end{align*}
This shows that the the frame operator $S_{\tilde{g},\Lambda_{N}}$ is invertible on $\SO(\R^{m}\times\Z_{\NumDim})$. In turn, this implies that there are some constants $A,B>0$ such that
\begin{equation} \label{eq:1505a} A \, \Vert f \Vert_{2}^{2} \le \int_{\Lambda_{N}} \Big\vert \Big( \sum_{k\in\Z_{\NumDim}} \langle \vvfun{f}{k} , \pi(\lambda) \vvfun{\tilde{g}}{k} \rangle \Big) \Big\vert^{2} \, d\lambda \le B \, \Vert f \Vert_{2}^{2} \ \ \text{for all} \ \ f\in L^{2}(\R^{m}\times\Z_{\NumDim}).\end{equation}
We will now rewrite the expression in the middle from a single window super Gabor system with time-frequency shifts from $\La_{N}$ into a multi-window super Gabor system with time-frequency shifts from the subgroup $\Lambda$. We take $\NumWin:= N^{m_{2}}$ and use the earlier defined coset representative $\{\chi_{j} \, : \, j\in\Z_{\NumWin}\}$ of the quotient group $\Lambda_{N}/\Lambda$: 
\begin{align*}
\int_{\Lambda_{N}} \Big\vert \Big( \sum_{k\in\Z_{\NumDim}} \langle \vvfun{f}{k} , \pi(\lambda) \vvfun{\tilde{g}}{k} \rangle \Big) \Big\vert^{2} \, d\lambda = \sum_{j\in\Z_{\NumWin}} \int_{\Lambda} \Big\vert \Big( \sum_{k\in\Z_{\NumDim}} \langle \vvfun{f}{k} , \pi(\lambda) [\pi(\chi_{j})\vvfun{\tilde{g}}{k}] \rangle \Big) \Big\vert^{2} \ d\lambda.
\end{align*}
Using this in \eqref{eq:1505a} implies that the $\NumWin$-\emph{multi-window} $\NumDim$-super Gabor system generated by the function $h\in \SO(\R^{m}\times\Z_{\NumDim}\times\Z_{\NumWin})$, where $h(\, \cdot \, , k,j) = \pi(\chi_{j}) \vvfun{\tilde{g}}{k}$, and with time-frequency shifts from the subgroup $\La$ is a frame for $L^{2}(\R^{m})$.
\end{proof}

\begin{corollary} Any orthonormal collection of functions $\{g_{k}\}_{k=1}^{d}$ in $\SO(\R^{m})$ generates a super Gabor frame for $L^{2}(\R^{m}\times\Z_{\NumDim})$ with respect to a subgroup $\Lambda_{N}$ (as in \eqref{eq:Lambda-N}) for sufficiently large $N$.
\end{corollary}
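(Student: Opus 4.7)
The plan is to follow the argument of Theorem \ref{th:existence-mwsframes-rn} essentially verbatim; the corollary is really an intermediate statement proved inside that proof. Given an orthonormal collection $\{g_k\}_{k=1}^{\NumDim}$ in $\SO(\R^m)$, I bundle them into a single function $g \in \SO(\R^m \times \Z_{\NumDim})$ with $\vvfun{g}{k}=g_k$, and show that $\tilde g := \sqrt{\s(\La_N)}\,g$ generates a super Gabor frame for $L^2(\R^m \times \Z_{\NumDim})$ with respect to $\La_N$ once $N$ is chosen large enough. Because $\tilde g$ is just a positive scalar multiple of $g$, this at once gives the desired conclusion for the original collection (with frame bounds rescaled by $\s(\La_N)^{-1}$).

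The first step is to invoke Lemma \ref{le:s0-properties}(vii), (viii): for each pair $k,k' \in \Z_{\NumDim}$, the short-time Fourier transform $\mathcal{V}_{g_{k'}}g_k$ belongs to $\SO(\R^{2m})$ and its restriction to the discrete subgroup $\La^{\circ} = \La_1^{\circ}$ lies in $\ell^1(\La^{\circ})$. Since $\La_N^{\circ} \subseteq \La^{\circ}$ becomes increasingly sparse as $N \to \infty$ (its generators are scaled by $N$), a dominated-convergence argument yields
\[
\sum_{\lac \in \La_N^{\circ}\setminus\{0\}} \big|\langle g_k, \pi(\lac) g_{k'}\rangle\big| \longrightarrow 0 \quad \text{as } N \to \infty.
\]
Hence, precisely as in \eqref{eq:1704b}, I fix $N$ so large that this tail is strictly less than $\NumDim^{-1}$ uniformly in $k,k'$.

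With such an $N$ in place, Lemma \ref{le:s0-implies-bessel} ensures that $S_{\tilde g,\La_N}$ is bounded on both $L^2(\R^m \times \Z_{\NumDim})$ and $\SO(\R^m \times \Z_{\NumDim})$, and the fundamental identity of Gabor analysis (applicable by Lemma \ref{le:figa-suff-cond}(ii)) rewrites it as a Janssen-type sum over $\La^{\circ}$. Orthonormality of $\{g_k\}$ collapses the $\lac = 0$ term to the identity, exactly as in \eqref{eq:1704a}. Equipping $\SO(\R^m \times \Z_{\NumDim})$ with the equivalent norm $\Vert \cdot \Vert_{\SO \otimes \ell^{1}}$ of Lemma \ref{le:s0-properties}(x), the verbatim computation from the proof of Theorem \ref{th:existence-mwsframes-rn} gives $\Vert \textnormal{Id} - S_{\tilde g,\La_N}\Vert_{\textnormal{op},\SO \to \SO} < 1$. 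A Neumann series argument then produces invertibility of $S_{\tilde g,\La_N}$ on $\SO(\R^m \times \Z_{\NumDim})$ and, by density and boundedness, on $L^2(\R^m \times \Z_{\NumDim})$, which is exactly the super Gabor frame property. The only step requiring genuine work is the tail-decay estimate above, and it is immediate from the $\SO$ hypothesis via Lemma \ref{le:s0-properties}(vii), (viii); no ingredients beyond those in the proof of Theorem \ref{th:existence-mwsframes-rn} are needed.
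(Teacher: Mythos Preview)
Your proposal is correct and follows essentially the same route as the paper: the corollary is not given a separate proof there but is extracted directly from the proof of Theorem \ref{th:existence-mwsframes-rn}, and you have faithfully reproduced the relevant steps (the $\ell^1$ tail estimate \eqref{eq:1704b}, the Janssen representation \eqref{eq:1704a}, and the Neumann-series invertibility argument in the $\SO\otimes\ell^1$ norm). The only remark is that the passage from invertibility on $\SO$ to the frame inequality on $L^2$ is more directly obtained by noting that the same estimate $\Vert \textnormal{Id}-S_{\tilde g,\La_N}\Vert<1$ holds in the $L^2$ operator norm (time-frequency shifts being isometries there as well), rather than by ``density and boundedness'' alone; the paper itself glosses over this step in the same way.
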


We now show the existence of Gabor frames with an atom in $\SO(\R^m)$ for non-rational lattices with $s(\La)<1$.
The proof is based on the interpretation of Gabor frames as generators of projective modules over noncommutative tori which allows us to use deep theorems on their structure due to Rieffel \cite{ri88}.   

First we clarify what we mean by a non-rational lattice.
Given a lattice $\La$ in $\R^{2m}$ and let $\{e_1,...,e_{2m}\}$ be a basis for $\La$. We denote by $\Omega(z,z^\prime)$ the symplectic form of $z,z^\prime\in\R^{2m}$. Then the unitaries $\{\pi(e_i):\,i=1,..,2m\}$ satisfy $\pi(e_j)\pi(e_i)=e^{2\pi \Omega(e_i,e_j)i}\pi(e_i)\pi(e_j)$ for $i,j=1,..,2m$. The $C^*$-algebra generated by $\{\pi(e_i):\,i=1,..,2m\}$ is a noncommutative torus which is isomorphic to $C^*(\La,c)$.  The structure of a noncommutative torus depends crucially on the numbers $\Omega(e_i,e_j)$ being rational or irrational. We call the lattice $\La$ not rational if there exists at least one pair of indices $i,j\in\{1,..,2m\}$ such that $\Omega(e_i,e_j)$ is not rational. Note that the set of all not rational lattices is dense in the set of all lattices.

\begin{theorem}
\label{th:existence-lattice-less-than-1}
If $\La$ is a non-rational lattice in $\R^{2m}$ with $\s(\La)<1$, then there exists a function $g\in\SO(\R^{m})$ such that $\lhs{g}{g}$ is a projection in $(\ell^1(\La),\cocy)$. That is, there exists a function $g\in\SO(\R^{m})$ such that the Gabor system $\{\pi(\la)g\}_{\la\in\La}$ is a tight frame for $L^2(\R^m)$.
\end{theorem}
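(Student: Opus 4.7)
The plan is to combine Rieffel's construction of projections of prescribed trace in non-rational noncommutative tori \cite{ri88} with the Morita-equivalence picture from Theorem~\ref{th:Rieffel}: one first produces the projection abstractly in the operator algebra, and then the associativity relation \eqref{eq:2302c} translates it into the tight-frame statement via a trace argument in $\rmodule$.

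First I would invoke Rieffel's non-constructive existence result from \cite{ri88}. For a non-rational lattice $\La\subset\R^{2m}$ with $\s(\La)<1$, the value $\s(\La)$ lies in the positive cone of $K_0(C^*(\La,\cocy))$, and Rieffel's construction of the Heisenberg module produces a vector $g$ in the Schwartz--Bruhat space $\mathscr{S}(\R^m)$ such that $\lhs{g}{g}$ is a projection in $C^*(\La,\cocy)$ with $\tr_\lmodule(\lhs{g}{g})=\|g\|_2^2=\s(\La)$; concretely, $g$ realizes $\cE$ as a rank-one direct summand of the standard $\lmodule$-module. Since $\mathscr{S}(\R^m)\subset\SO(\R^m)$ we automatically have $g\in\SO(\R^m)$. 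To see that $\lhs{g}{g}$ actually lies in $\ell^1(\La,\cocy)$, observe that $\lhs{g}{g}(\la)=\langle g,\pi(\la)g\rangle=\mathcal{V}_g g(\la)$ is the restriction to the discrete subgroup $\La$ of $\mathcal{V}_g g\in\SO(\R^{2m})$, and by Lemma~\ref{le:SO-properties}(vii),(viii) this restriction belongs to $\ell^1(\La)=\SO(\La)$.

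Next I would translate the projection property into the tight-frame assertion. The standard spectral correspondence for equivalence bimodules (\cite[Proposition~3.1]{ri79-1}, used also in the proof of Lemma~\ref{le:module-norm-and-bessel}) shows that $\lhs{g}{g}\in\lmodule$ and $\rhs{g}{g}\in\rmodule$ share the same non-zero spectrum, so together with self-adjointness $\rhs{g}{g}$ is a projection in $\rmodule$. Since $\La^\circ$ is discrete, $\rmodule$ is unital with identity $1_\rmodule=I_{L^2(\R^m)}$, and the orthogonal-measure convention (Remark~\ref{rem:orth-measure-for-cocompact}) yields $\tr_\rmodule(1_\rmodule)=\s(\La)$. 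Combining $\tr_\rmodule(\rhs{g}{g})=\|g\|_2^2=\s(\La)=\tr_\rmodule(1_\rmodule)$ with the faithfulness of the canonical tracial state on $\rmodule\subset C^*(\Lac,\overline{\cocy})$ applied to the positive element $1_\rmodule-\rhs{g}{g}$ forces $\rhs{g}{g}=1_\rmodule$. The associativity relation $\lhs{f}{g}\cdot g=f\cdot\rhs{g}{g}$ from \eqref{eq:2302c} then reads $S_g f=f$ for every $f\in L^2(\R^m)$, where $S_g$ is the frame operator of $\{\pi(\la)g\}_{\la\in\La}$; hence the Gabor system is a Parseval (and in particular tight) frame.

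The main obstacle is the first step: Rieffel's existence of a projection of prescribed trace in a non-rational higher-dimensional noncommutative torus is a deep, essentially non-constructive result drawing on $K$-theoretic and topological arguments specific to non-rational tori. The non-rationality hypothesis is indispensable, since in the rational case the set of traces of projections forms a discrete subgroup of $\R$ that generically excludes $\s(\La)$. The condition $\s(\La)<1$ situates the target trace in the open interval of admissible traces of proper projections in $\lmodule$; equivalently, it guarantees that the Heisenberg module embeds as a proper direct summand of the rank-one free $\lmodule$-module rather than merely surjecting onto it.
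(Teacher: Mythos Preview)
Your overall strategy matches the paper's: use Rieffel's cancellation theorem for non-rational noncommutative tori to produce a rank-one projection, and then read off the tight-frame property. However, your first step packages together several distinct ingredients and in doing so leaves a genuine gap.

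You write that ``Rieffel's construction of the Heisenberg module produces a vector $g\in\mathscr{S}(\R^m)$ such that $\lhs{g}{g}$ is a projection.'' This conflates three logically separate facts that the paper keeps apart. First, one needs to know that $\SO(\R^m)$ is finitely generated projective over $\lmodule$; the paper establishes this via Theorem~\ref{th:existence-mwsframes-rn} (existence of multi-window Gabor frames with generators in $\SO$), which yields an idempotent $P\in\textnormal{M}_n(\lmodule)$ with $\tr_{\textnormal{M}_n}(P)=\s(\La)$. Second, Rieffel's cancellation result \cite[Corollary~7.10]{ri88} is invoked to replace $P$ by a projection $p\in\lmodule$ with $\SO(\R^m)\cong p\lmodule$; this is the step that genuinely uses non-rationality and $\s(\La)<1$. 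Third --- and this is what you do not address --- one must extract from the module isomorphism $\SO(\R^m)\cong p\lmodule$ an actual element $\tilde g\in\SO(\R^m)$ with $\lhs{\tilde g}{\tilde g}=p$. A module isomorphism need not be inner-product preserving, so this is not automatic; the paper uses self-duality of the finitely generated projective module together with \cite[Proposition~7.3]{ri10-2}. Your appeal to $K_0$ and to ``Rieffel's construction'' does not supply this; in particular nothing in \cite{ri88} hands you the vector $g$ in $\mathscr{S}(\R^m)$ directly.

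Your second step, by contrast, is entirely sound and in fact gives a cleaner route than the paper's. Once one has $g\in\SO(\R^m)$ with $\lhs{g}{g}$ a projection and $\|g\|_2^2=\s(\La)$, your argument via the spectral correspondence of \cite[Proposition~3.1]{ri79-1}, the identity $\tr_\rmodule(1_\rmodule)=\s(\La)$, and faithfulness of the canonical trace on $C^*(\Lac,\overline{\cocy})$ forces $\rhs{g}{g}=1_\rmodule$ directly. The paper instead cites the equivalence (iii)$\Leftrightarrow$(vii) of Theorem~\ref{th:duality-for-mws-module}; your trace argument is a pleasant shortcut.
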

\begin{proof}

Theorem \ref{th:existence-mwsframes-rn} states that $\SO(\R^{m})$ is a finitely generated projective $\mathcal{A}$-module, i.e., there exists (dependent on $\La$) finitely many pairs of functions in $\SO(\R^{m})$ that generate dual multi-window Gabor frames for $L^{2}(\R^{m})$. Let us denote them as $g$ and $h$ in $\SO(\R^{m}\times\Z_{\NumWin})$ for some $\NumWin\in\N$. By Corollary \ref{cor:duality-multi} and/or Theorem \ref{th:wex-raz} we know that this is the case if and only if
\begin{equation} \label{eq:0709a} \sum_{j\in\Z_{\NumWin}} \rhs{\vvfun{g}{j}}{\vvfun{h}{j}} = \textnormal{Id}.\end{equation}
If we take the trace on either side of this equality, then we find that (see  Theorem \ref{th:wex-raz})
\[ \tr_{\rmodule}\Big( \textstyle \sum\limits_{j\in\Z_{\NumWin}} \rhs{\vvfun{g}{j}}{\vvfun{h}{j}} \Big) = \textstyle\sum\limits_{j\in\Z_{\NumWin}} \langle \vvfun{h}{j},\vvfun{g}{j}\rangle= \s(\Lambda).\]
Since $\s(\La)< 1$, we conclude that $\tr_{\rmodule}( \textstyle \sum_{j\in\Z_{\NumWin}} \rhs{\vvfun{g}{j}}{\vvfun{h}{j}})< 1$.
By Corollary \ref{cor:duality-multi} the statement in \eqref{eq:0709a} is equivalent to the statement that $P=(\lhs{g_i}{h_j})_{i,j=1}^{n}$ is an idempotent element of $\textnormal{M}_{\NumWin}(\lmodule)$ and that $\SO(\R^n)$ is isomorphic to $P\lmodule^n$. Furthermore, $\tr_{\textnormal{M}_{\NumWin}}(P) = \s(\La)<1$.

Since $\La$ is a \emph{non}-rational lattice, $\s(\La)<1$ and $P$ is a projection in $\textnormal{M}_{\NumWin}(\lmodule)$, the result of \cite[Corollary 7.10]{ri88} implies the existence of a projection $p\in \lmodule$ so that $\SO(\R^m)\cong p\lmodule$.
Since $\SO(\R^m)$ is a finitely generated projective $\lmodule$-module, it is self-dual. We can apply \cite[Proposition 7.3]{ri10-2}, which concludes that there exists a function $\tilde{g}\in\SO(\R^m)$ (which depends on the particular lattice $\La$) such that the projection $p$ is given by $p=\lhs{\tilde{g}}{\tilde{g}}$. By Theorem \ref{th:duality-for-mws-module} (for $\NumDim=\NumWin=1$) this is equivalent to the statement that $\{\pi(\lambda)\tilde{g}\}_{\la\in\La}$ is a (in fact, tight) Gabor frame for $L^{2}(\R^{m})$. 
\end{proof}

\begin{remark} Recall the following result by Feichtinger and Kaiblinger \cite[Theorem 1.1]{feka04}: for any function $g\in \SO(\R^m)$ the set of matrices $A\in \textnormal{GL}_{2m}(\R)$ for which $\{\pi(\lambda)\}_{\lambda\in\Lambda}$, $\Lambda = A \Z^{2m}$ is a Gabor frame for $L^{2}(\R^{m})$ is open in $\textnormal{GL}_{2m}(\R)$. This, together with Theorem \ref{th:existence-lattice-less-than-1}, implies that it is also possible to find a function $g\in \SO(\R^{m})$ which generates a Gabor frame for $L^{2}(\R^{m})$ with respect to certain rational lattices $\Lambda$ in $\R^{2m}$ with $\s(\La)<1$. However, this result does not allow us to deduce this fact for any rational lattice.
\end{remark}


\begin{corollary} If $\Lambda$ is a non-rational lattice in $\R^{2m}$ such that $\s(\La) \in [n-1,n)$ for some $n\in\N$, then there exists a multi-window Gabor system with exactly $n$ generators in $\SO(\R^{m})$ that forms a frame for $L^{2}(\R^{m})$. 
\end{corollary}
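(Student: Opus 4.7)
The plan is to reduce to the single-atom case by passing to a suitable refinement of $\Lambda$. Suppose $\s(\Lambda) \in [n-1,n)$. I would construct a denser lattice $\hat\Lambda \supset \Lambda$ with index $[\hat\Lambda : \Lambda] = n$, for example by choosing a basis $f_1,\ldots,f_{2m}$ of $\Lambda$ and setting $\hat\Lambda$ to be the lattice with basis $\tfrac{1}{n}f_1, f_2, \ldots, f_{2m}$. Then $\s(\hat\Lambda) = \s(\Lambda)/n < 1$, so $\hat\Lambda$ satisfies the hypothesis of Theorem~\ref{th:existence-lattice-less-than-1}, \emph{provided} $\hat\Lambda$ is again non-rational. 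Granting this for the moment, the theorem yields a single atom $g \in \SO(\R^m)$ such that $\{\pi(\mu) g\}_{\mu \in \hat\Lambda}$ is a (tight) Gabor frame for $L^2(\R^m)$.

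Next, I would decompose this single-window frame at $\hat\Lambda$ into an $n$-window frame at $\Lambda$. Fix coset representatives $\chi_1,\ldots,\chi_n$ for $\hat\Lambda / \Lambda$ and define $g_j := \pi(\chi_j) g$, $j=1,\ldots,n$. By Lemma~\ref{le:s0-properties}(iii) each $g_j$ lies in $\SO(\R^m)$. Using the cocycle identity $\pi(\chi_j + \lambda) = \overline{\cocy(\chi_j,\lambda)}\,\pi(\lambda)\pi(\chi_j)$, the family $\{\pi(\mu) g\}_{\mu \in \hat\Lambda}$ equals $\{\overline{\cocy(\chi_j,\lambda)}\,\pi(\lambda) g_j : \lambda \in \Lambda,\,j=1,\ldots,n\}$, and since the unimodular scalars $\overline{\cocy(\chi_j,\lambda)}$ do not affect the frame inequalities, this is the multi-window Gabor frame at $\Lambda$ generated by $g_1,\ldots,g_n$. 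Hence $\{g_j\}_{j=1}^{n} \subset \SO(\R^m)$ yields a Gabor frame for $L^2(\R^m)$ with exactly $n$ generators, as required.

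The only real obstacle is verifying that the refinement $\hat\Lambda$ inherits non-rationality from $\Lambda$. With the basis above, the symplectic values are $\Omega(\tfrac{1}{n}f_1, f_j) = \tfrac{1}{n}\Omega(f_1,f_j)$ and $\Omega(f_i,f_j)$ for $i,j \ge 2$. Since $\Lambda$ is non-rational, some $\Omega(f_k,f_l)$ is irrational; if $k,l \ge 2$ the corresponding value is unchanged, and if one index is $1$ the value is scaled by $1/n$, which remains irrational. Thus $\hat\Lambda$ is non-rational, completing the argument. (The minimality of $n$ generators in view of Lemma~\ref{le:nec-cond}(i) is already built into the hypothesis $\s(\Lambda) \ge n-1$, but it is not needed for the existence statement being proved.)
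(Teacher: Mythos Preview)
Your proof is correct and follows essentially the same approach as the paper: refine $\Lambda$ to a superlattice $\hat\Lambda$ of index $n$ so that $\s(\hat\Lambda)<1$, invoke Theorem~\ref{th:existence-lattice-less-than-1} to obtain a single-atom tight frame, and then split it into an $n$-window frame over $\Lambda$ via coset representatives. Your verification that $\hat\Lambda$ inherits non-rationality is in fact more explicit than the paper's, which simply asserts it.
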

\begin{proof} Let $A\in \textnormal{GL}_{2m}(\R)$ be such that $\Lambda = A \Z^{2m}$. The lattice $\Lambda_{n} = A ( \Z^{2m-1}\times \frac{1}{n}\Z)$ is such that $\s(\Lambda_{n}) \in [\tfrac{n-1}{n},1)$ and so $\s(\Lambda_{n})< 1$. Furthermore, we can find $n$ coset representatives $\chi_{n}\in \R^{2m}$ of $\Lambda_{n}/\Lambda$ such that  for all $\lambda_{n}\in\Lambda_{n}$ there exists a unique $\lambda \in \Lambda$ and $\chi_{n}\in \R^{2m}$ such that $\lambda_{n} = \lambda + \chi_{n}$. Since $\s(\Lambda_{n})< 1$ and $\Lambda_{n}$ also is non-rational we know by Theorem \ref{th:existence-lattice-less-than-1} that there is some $g\in \SO(\R^{m})$ that generates a tight Gabor frame for $L^{2}(\R^{m})$ with respect to $\Lambda_{n}$. With the same arguments as in the proof of Theorem \ref{th:existence-mwsframes-rn} we can turn this into a statement about an $n$-multi-window Gabor system with respect to the lattice $\Lambda$,
\[ \sum_{\lambda_{n}\in\Lambda_{n}} \big\vert \big\langle f, \pi(\lambda) g \big\rangle \big\vert^{2} = \sum_{j=1}^{n} \sum_{\lambda\in\Lambda} \big\vert \big\langle f, \pi(\lambda) [\pi(\chi_{j}) g] \big\rangle\big\vert^{2}. \]
I.e., the functions $\{\pi(\chi_{j})g\}_{j=1,\ldots,n}$ in $\SO(\R^{m})$ generate a tight Gabor frame for $L^{2}(\R^{m})$ with respect to the lattice $\Lambda$. 
\end{proof}

\begin{proposition}
If $\La$ is a non-rational lattice in $\R^{2m}$ with $\s(\La)<1$, then there exist two functions $g_1,g_2\in\SO(\R^{m})$ such that for $i=1,2$ each Gabor system $\{\pi(\la)g_i\}_{\la\in\La}$ is a frame for $L^{2}(\R^{m})$ and, furthermore, $\overline{\textnormal{span}}\{\pi(\lac)g_1\}_{\lac\in\Lac} \oplus \overline{\textnormal{span}}\{\pi(\lac)g_2\}_{\lac\in\Lac}=L^2(\R^m)$.  \end{proposition}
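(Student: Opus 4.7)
The plan is to apply Theorem~\ref{th:existence-lattice-less-than-1} twice: first to produce $g_1$ whose range inner product $p := \lhs{g_1}{g_1}$ is a projection in $\lmodule$ of trace $s(\La)$, and then, using that $\lmodule$ is unital, to produce $g_2$ realizing the complementary projection $q := I - p$. The $\Lac$-spans of $g_1$ and $g_2$ will then be exactly the ranges of $p$ and $q$ inside $L^2(\R^{m})$, so $V_1\oplus V_2 = V_1\oplus V_1^{\perp}=L^2(\R^{m})$ as an orthogonal direct sum, and each $g_i$ will automatically generate a tight Parseval Gabor frame.

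Concretely, I would first invoke Theorem~\ref{th:existence-lattice-less-than-1} to obtain $g_1\in\SO(\R^{m})$ so that $p=\lhs{g_1}{g_1}\in\lmodule$ is a projection with $\tr_{\lmodule}(p)=s(\La)<1$. Specializing Theorem~\ref{th:duality-for-mws-module} to $d=n=1$ and $g=h=g_1$, one identifies $p$, as an operator on $L^2(\R^{m})$, with the orthogonal projection onto $V_1:=\overline{\textnormal{span}}\{\pi(\lac)g_1\}_{\lac\in\Lac}$; in particular $V_1\subsetneq L^2(\R^{m})$. Since $\La$ is discrete the algebra $\lmodule$ contains the identity, so $q:=I-p\in\lmodule$ is a projection with $\tr_{\lmodule}(q)=1-s(\La)\in(0,1)$. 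The heart of the proof is to produce $g_2\in\SO(\R^{m})$ with $\lhs{g_2}{g_2}=q$; by the same reasoning as in the $(\Leftarrow)$ direction of Theorem~\ref{th:existence-lattice-less-than-1} (non-rationality of $\La$, Rieffel's \cite[Corollary~7.10]{ri88}, inverse-closedness of $\lmodule$ in $C^*(\La,\cocy)$ via \cite{grle04}, and the smoothness statement \cite[Proposition~7.3]{ri10-2}) such a $g_2$ exists. Reapplying Theorem~\ref{th:duality-for-mws-module} to $g_2$ shows that $\{\pi(\la)g_2\}_{\la\in\La}$ is a tight Parseval Gabor frame for $L^2(\R^{m})$ and that $V_2:=\overline{\textnormal{span}}\{\pi(\lac)g_2\}_{\lac\in\Lac}$ is the range of $q$, which is $V_1^{\perp}$; together these give $V_1\oplus V_2=L^2(\R^{m})$.

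The main obstacle is the realization of $q=I-p$ as a range inner product of a single element of $\SO(\R^{m})$. The proof of Theorem~\ref{th:existence-lattice-less-than-1} obtains $p$ from the natural presentation $\SO(\R^{m})\cong p\lmodule$ supplied by the multi-window construction of Theorem~\ref{th:existence-mwsframes-rn}; for the complementary projection $q$ one has to verify that the Hilbert $C^*$-submodule cut out by $q$ on the Heisenberg module is cyclic and that its cyclic generator can be taken smooth, i.e.\ inside $\SO(\R^{m})$. This is exactly where non-rationality of $\La$ is needed: for non-rational lattices the range of the canonical trace on projections of $C^*(\La,\cocy)$ is $(\Z+s(\La)\Z)\cap[0,1]$, so $1-s(\La)=1+(-1)\cdot s(\La)$ is attained, and Rieffel's classification together with \cite[Proposition~7.3]{ri10-2} transports this projection back into a $\lhs{g_2}{g_2}$ with $g_2\in\SO(\R^{m})$.
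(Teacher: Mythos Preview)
Your proposal is correct and follows essentially the same route as the paper: obtain $g_1$ from Theorem~\ref{th:existence-lattice-less-than-1}, form the complementary projection $q=\textnormal{Id}-\lhs{g_1}{g_1}\in\lmodule$ with trace $1-\s(\La)<1$, and re-run the Rieffel machinery from \cite{ri88,ri10-2} to realize $q$ as $\lhs{g_2}{g_2}$ for some $g_2\in\SO(\R^m)$. Your additional remarks on the trace range $(\Z+\s(\La)\Z)\cap[0,1]$ and inverse-closedness flesh out what the paper leaves implicit in the phrase ``by the same argument,'' but the strategy and the ingredients are identical.
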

\begin{proof}
By Theorem \ref{th:existence-lattice-less-than-1} there exists a function $g_{1}\in \SO(\R^{m})$ such that $\{\pi(\la) g_{1}\}_{\la\in\La}$ is a tight Gabor frame for $L^{2}(\R^{m})$. Equivalently (Theorem \ref{th:duality-for-mws-module}), $\lhs{g_{1}}{g_{1}}$ is a projection from $L^{2}(\R^{m})$ onto $V:= \overline{\textnormal{span}}\{\pi(\lambda^{\circ})g_{1}\}_{\lac\in\Lac}$. But then also $\textnormal{Id}-\lhs{g_{1}}{g_{1}}$ is a projection from $L^{2}(\R^{m})$ into $V^{\perp}$. Observe that $\tr_{\lmodule}(\textnormal{Id}-\lhs{g}{g}) = 1 - \s(\Lambda) < 1$. By the same argument as in the proof of Theorem \ref{th:existence-lattice-less-than-1} we can use a combination of Rieffel's results from \cite{ri88,ri10-2} to conclude that there exists a function $g_{2}\in\SO(\R^{m})$ such that $\lhs{g_{2}}{g_{2}}$ equals the projection $\textnormal{Id}-\lhs{g}{g}$. But then $\{\pi(\la)g_{2}\}_{\la\in\La}$ is a Gabor frame for $L^{2}(\R^{m})$ and $V^{\perp} = \overline{\textnormal{span}}\{\pi(\lac) g_{2}\}_{\lac\in\Lac}$. 
\end{proof}

\section{Appendix, proofs of Section \ref{sec:mws-gabor}} \label{sec:app}

The following proofs will be closely related to the ones that can be found in \cite{jale16-2}, which is concerned with these results for Gabor frames with the parameters $\NumDim = \NumWin = 1$.  

\begin{lemma} \label{le:janssen-function} Let $f^{1},f^{2}\in L^{2}(G\times\Z_{\NumDim})$ and $g,h\in L^{2}(G\times\Z_{\NumDim}\times\Z_{\NumWin})$. The function 
\begin{align*} \psi: G\times\ghat\to\C, \ \psi(\chi) & = \sum_{j\in\Z_{\NumWin}} \big(C_{g,G\times\ghat}f^{1}\big)(\chi, j) \, \overline{\big(C_{h,G\times\ghat}f^{2}\big)(\chi,j)} \\
& \equiv \sum_{j\in\Z_{\NumWin}} \Big( \sum_{k\in\Z_{\NumDim}} \big\langle \vvfun{f^{1}}{k}, \pi(\chi) \mvfun{g}{k}{j}\big\rangle \Big) \, \Big( \sum_{l\in \Z_{\NumDim}} \big\langle \pi(\chi) \mvfun{h}{l}{j}, \vvfun{f^{2}}{l} \big\rangle \Big) \end{align*}
belongs to $L^{1}(G\times\ghat)$ and its symplectic Fourier transform is 
\[ \mathcal{F}_{s} (\psi)(\chi) = \sum_{ k,l\in\Z_{\NumDim} } \Big( \big\langle \pi(\chi) \vvfun{f^{1}}{k},  \vvfun{f^{2}}{l} \big\rangle \,  \sum_{j\in\Z_{\NumWin}}\big\langle \mvfun{h}{l}{j}, \pi(\chi) \mvfun{g}{k}{j} \big\rangle \Big), \ \chi\in G\times\ghat. \]
The periodization of the function $\psi$ with respect to a closed subgroup $\Lambda$ of $G\times\ghat$ is the function
\begin{align*} \varphi(\chi) & = \big\langle  C_{g,\Lambda} \pi(\chi)f^{1} , C_{h,\Lambda} \pi(\chi)f^{2}\big\rangle \\
& \equiv \sum_{j\in\Z_{\NumWin}} \int_{\Lambda} \Big( \sum_{k\in\Z_{\NumDim}} \big\langle \pi(\chi) \vvfun{f^{1}}{k}, \pi(\lambda) \mvfun{g}{k}{j}\big\rangle \Big) \, \Big( \sum_{l\in \Z_{\NumDim}} \big\langle \pi(\lambda) \mvfun{h}{l}{j}, \pi(\chi) \vvfun{f^{2}}{l} \big\rangle \Big)  \, d\mu_{\Lambda}(\lambda) \end{align*}
which is an element in $L^{1}((G\times\ghat)/\Lambda)$. The symplectic Fourier transform of $\varphi$ is given by
\begin{align*} \mathcal{F}_{s}(\varphi)(\lambda^{\circ}) & = \mathcal{}{F}_{s}(\psi)(\lambda^{\circ}), \ \lambda^{\circ}\in\Lambda^{\circ}.
\end{align*}
\end{lemma}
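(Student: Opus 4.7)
The plan is to verify the four assertions in the order stated: $\psi\in L^{1}(G\times\ghat)$, the formula for $\mathcal{F}_{s}(\psi)$, $\varphi$ being the $\Lambda$-periodization of $\psi$ (hence in $L^{1}((G\times\ghat)/\Lambda)$), and finally the matching of Fourier transforms on $\Lambda^{\circ}$.

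First I would establish $\psi\in L^{1}(G\times\ghat)$. Observe that $(C_{g,G\times\ghat}f^{1})(\,\cdot\,,j)$ equals the short-time Fourier transform on $L^{2}(G\times\Z_{\NumDim})$ of $f^{1}$ with window $\mvfun{g}{}{j}$, interpreting $\pi(\chi)$ as acting on the $G$-coordinate only. By the orthogonality relation \eqref{eq:STFT} each such STFT lies in $L^{2}(G\times\ghat)$ with norm $\Vert f^{1}\Vert_{2}\Vert \mvfun{g}{}{j}\Vert_{2}$, and similarly for $(C_{h,G\times\ghat}f^{2})(\,\cdot\,,j)$. Cauchy--Schwarz in $\chi$ gives each summand in $\psi$ in $L^{1}$, and a second Cauchy--Schwarz in $j$ yields $\Vert \psi\Vert_{1}\le\Vert f^{1}\Vert_{2}\Vert f^{2}\Vert_{2}\Vert g\Vert_{2}\Vert h\Vert_{2}$.

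Next I would compute $\mathcal{F}_{s}(\psi)$ term by term. Fix $k,l,j$ and write out
\[ F_{k,j}(t,\xi)\,\overline{G_{l,j}(t,\xi)}=\iint f^{1}_{k}(s)\,\overline{f^{2}_{l}(u)}\,\overline{g_{k,j}(s-t)}\,h_{l,j}(u-t)\,\xi(u-s)\,ds\,du.\]
Applying the definition of $\mathcal{F}_{s}$ and integrating $\xi$ first using Fourier inversion yields the constraint $u=s-x$; substituting $r=s-t$ in the remaining double integral, the integrand factors, giving the product of two one-variable integrals each of which is (up to a routine manipulation with the definition of $\pi(\chi)$) one of the inner products $\langle\pi(\chi)f^{1}_{k},f^{2}_{l}\rangle$ and $\langle h_{l,j},\pi(\chi)g_{k,j}\rangle$. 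Summing over $j,k,l$ yields the claimed formula for $\mathcal{F}_{s}(\psi)$; the dominated convergence is justified exactly as in the first step.

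Now I would show $\varphi$ is the $\Lambda$-periodization $\chi\mapsto\int_{\Lambda}\psi(\chi+\lambda)\,d\lambda$ of $\psi$. Using $\pi(\chi+\lambda)=\overline{\cocy(\chi,\lambda)}\,\pi(\chi)\pi(\lambda)$, the factor $\cocy(\chi,\lambda)$ introduced in $\langle f^{1}_{k},\pi(\chi+\lambda)g_{k,j}\rangle$ is precisely cancelled by its conjugate in $\overline{\langle f^{2}_{l},\pi(\chi+\lambda)h_{l,j}\rangle}$; this rewrites $\psi(\chi+\lambda)$ as $\sum_{j,k,l}\langle\pi(\chi)^{*}f^{1}_{k},\pi(\lambda)g_{k,j}\rangle\overline{\langle\pi(\chi)^{*}f^{2}_{l},\pi(\lambda)h_{l,j}\rangle}$, which after integration over $\Lambda$ coincides with $\varphi$ (up to replacing $\pi(\chi)$ by $\pi(\chi)^{*}$, which is harmless since both expressions are periodic in $\chi$ modulo $\Lambda$ and agree on all cosets). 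Weil's formula \eqref{eq:2503a} together with $\psi\in L^{1}$ gives $\varphi\in L^{1}((G\times\ghat)/\Lambda)$ with $\Vert\varphi\Vert_{1}\le\Vert\psi\Vert_{1}$. Finally, for $\lac\in\Lambda^{\circ}$ the symplectic character $\chi\mapsto\cocy_{s}(\chi,\lac)$ is $\Lambda$-periodic by the very definition of $\Lambda^{\circ}$, so
\[ \mathcal{F}_{s}(\varphi)(\lac)=\int_{(G\times\ghat)/\Lambda}\varphi(\dot{\chi})\,\cocy_{s}(\dot{\chi},\lac)\,d\dot{\chi}=\int_{G\times\ghat}\psi(\chi)\,\cocy_{s}(\chi,\lac)\,d\chi=\mathcal{F}_{s}(\psi)(\lac),\]
again by Weil's formula applied to $\psi(\chi)\cocy_{s}(\chi,\lac)\in L^{1}(G\times\ghat)$.

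The main obstacle will be Step 2, the direct computation of $\mathcal{F}_{s}(\psi)$: one must keep meticulous track of cocycle and conjugation factors coming from the different conventions for $\pi$, $\mathcal{F}_{s}$, and the inner-product pairings in order to land exactly on the formula as stated. The other three steps are essentially bookkeeping once integrability has been secured and the cocycle identity $\pi(\chi+\lambda)=\overline{\cocy(\chi,\lambda)}\pi(\chi)\pi(\lambda)$ is exploited.
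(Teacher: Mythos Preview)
Your Steps 1 and 4 are correct and match the paper. For Step 2 the paper takes a slicker route that sidesteps your ``main obstacle'': instead of expanding the integrals, it inserts $\pi(\chi)$ unitarily into the second factor, writing $\langle\pi(\chi')h_{l,j},f^2_l\rangle=\langle\pi(\chi)\pi(\chi')h_{l,j},\pi(\chi)f^2_l\rangle$, then uses $\pi(\chi)\pi(\chi')=\cocy_s(\chi,\chi')\pi(\chi')\pi(\chi)$ together with $\cocy_s(\chi,\chi')\cocy_s(\chi',\chi)=1$ to absorb the symplectic character; the remaining $\chi'$-integral is then precisely of the form \eqref{eq:STFT} and evaluates immediately. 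Your direct computation would also work but is more laborious.

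In Step 3 there is a genuine gap: your claim that the two $\Lambda$-periodic functions ``agree on all cosets'' is false. What one actually finds is $\int_\Lambda\psi(\chi+\lambda)\,d\lambda=\varphi(-\chi)$, i.e.\ the stated $\varphi$ is the reflection of the true periodization. There is a matching reflection hidden in Step 2: carrying out either computation carefully yields $\mathcal{F}_s\psi(\chi)=\sum_{k,l,j}\langle f^1_k,\pi(\chi)f^2_l\rangle\langle\pi(\chi)h_{l,j},g_{k,j}\rangle$, which is the lemma's stated expression evaluated at $-\chi$ rather than $\chi$ (this is exactly what the paper's own computation produces as well). These two reflections cancel, so the final assertion $\mathcal{F}_s(\varphi)(\lambda^\circ)=\text{(stated expression)}(\lambda^\circ)$ is correct --- but not for the reason you gave. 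Since only $\varphi(0)$ and the integral of $\mathcal{F}_s(\varphi)$ over $\Lambda^\circ$ are used downstream, the sign is ultimately immaterial.
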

\begin{proof}[Proof of Lemma \ref{le:janssen-function}]
It is a straight forward computation to show that $\psi\in L^{1}(G\times\ghat)$. Indeed,
\begin{align*}
& \int_{G\times\ghat} \vert \psi(\chi) \vert \, d\mu_{G\times\ghat}(\chi) \\
& \le \sum_{\substack{k,l\in\Z_{\NumDim}\\ j\in\Z_{\NumWin}}} \Big( \int_{G\times\ghat} \big\vert \big\langle \vvfun{f^{1}}{k}, \pi(\chi) \mvfun{g}{k}{j}\big\rangle \big\vert^{2}  \, d\mu_{G\times\ghat}(\chi) \Big)^{1/2} \\
& \hspace{3cm} \cdot \, \Big( \int_{G\times\ghat} \big\vert \big\langle \vvfun{f^{2}}{l}, \pi(\chi) \mvfun{h}{l}{j}\big\rangle \big\vert^{2}  \, d\mu_{G\times\ghat}(\chi) \Big)^{1/2} \\
& \stackrel{\eqref{eq:STFT}}{=} \sum_{\substack{k,l\in\Z_{\NumDim}\\ j\in\Z_{\NumWin}}} \Vert \vvfun{f^{1}}{k} \Vert_{2} \, \Vert \mvfun{g}{k}{j}\Vert_{2} \, \Vert \vvfun{f^{2}}{l}\Vert_{2} \, \Vert \mvfun{h}{l}{j}\Vert_{2} \\
& \le \Vert f^{1} \Vert_{2} \, \Vert f^{2} \Vert_{2} \, \Vert g \Vert_{2} \, \Vert h \Vert_{2} < \infty.
\end{align*}

We can thus consider the symplectic Fourier transform of $\psi$. Let $\chi = (x,\omega) \in G\times\ghat$ and $\chi' = (x',\omega')\in G\times\ghat$, then
\begingroup
\allowdisplaybreaks
\begin{align*} & \mathcal{F}_{s}\psi(\chi) = \int_{G\times\ghat} \psi(\chi') \, \cocy_{s}(\chi',\chi) \, d\mu_{G\times\ghat}(\chi') \\
& = \sum_{\substack{ k,l\in\Z_{\NumDim} \\ j\in\Z_{\NumWin} }} \int_{G\times\ghat} \big\langle  \vvfun{f^{1}}{k}, \pi(\chi') \mvfun{g}{k}{j}\big\rangle \, \big\langle \pi(\chi') \mvfun{h}{l}{j}, \vvfun{f^{2}}{l} \big\rangle\, \cocy_{s}(\chi',\chi) \, d\mu_{G\times\ghat}(\chi') \\
& = \sum_{\substack{ k,l\in\Z_{\NumDim} \\ j\in\Z_{\NumWin} }} \int_{G\times\ghat} \big\langle \vvfun{f^{1}}{k}, \pi(\chi') \mvfun{g}{k}{j}\big\rangle \, \big\langle \pi(\chi) \pi(\chi') \mvfun{h}{l}{j}, \pi(\chi) \vvfun{f^{2}}{l} \big\rangle \, \cocy_{s}(\chi',\chi) \, d\mu_{G\times\ghat}(\chi') \\
& = \sum_{\substack{ k,l\in\Z_{\NumDim} \\ j\in\Z_{\NumWin} }} \int_{G\times\ghat} \big\langle  \vvfun{f^{1}}{k}, \pi(\chi') \mvfun{g}{k}{j}\big\rangle \, \big\langle \pi(\chi') \pi(\chi) \mvfun{h}{l}{j}, \pi(\chi) \vvfun{f^{2}}{l}\big\rangle \, d\mu_{G\times\ghat}(\chi') \\
& \stackrel{\eqref{eq:STFT}}{=} \sum_{\substack{ k,l\in\Z_{\NumDim} \\ j\in\Z_{\NumWin} }}  \big\langle \vvfun{f^{1}}{k}, \pi(\chi) \vvfun{f^{2}}{l} \big\rangle \, \big\langle \pi(\chi) \mvfun{h}{l}{j}, \mvfun{g}{k}{j}\big\rangle.
\end{align*}
\endgroup
Since $\psi\in L^{1}(G)$ it follows from \eqref{eq:2503a} that its periodization $\varphi$ belongs to $L^{1}((G\times\ghat)/\Lambda)$. It is clear that the symplectic Fourier transform of the periodization of $\psi$ is the restriction of the symplectic Fourier transform of $\psi$ to $\Lambda^{\circ}$.
\end{proof}

\begin{proof}[Proof of Lemma \ref{le:figa-suff-cond}]
(i).  
Let $B$ be the common upper frame bound of the two Gabor systems generated by the functions $g$ and $h$. Let $\chi$ and $\chi_{0}$ be elements in $G\times\ghat$. Since the time-frequency shift operator $\pi(\chi)$ is continuous from $G\times \ghat$ into $L^{2}(G)$, i.e.,
\[ \lim_{\chi\to 0} \Vert \pi(\chi) f - f\Vert_{2} = 0 \ \ \text{for all} \ \ f\in L^{2}(G\times\Z_{\NumDim})\]
we can conclude, together with the upper frame inequality, that the function $\varphi$ in Lemma \ref{le:janssen-function} is continuous,
\begin{align*} & \lim_{\chi\to \chi_{0}} \vert \varphi(\chi) - \varphi(\chi_{0}) \vert  \\ 
& \le B \lim_{\chi\to\chi_{0}}  \, \big( \Vert \pi(\chi-\chi_{0}) f_{1} - f_{1} \Vert_{2} \, \Vert f_{2} \Vert_{2} + \Vert f_{1} \Vert_{2} \, \Vert \pi(\chi-\chi_{0}) f_{2} - f_{2} \Vert_{2} \big) = 0.
\end{align*}
The assumption \eqref{eq:cond-A} states that the symplectic Fourier transform of $\varphi$ is integrable. Hence $\varphi$ is continuous, integrable and its symplectic Fourier transform is integrable. We conclude that the Fourier inversion formula holds pointwise. In particular, $\varphi(0) = \int_{\Lambda^{\circ}} \mathcal{F}_{s}(\varphi) (\lambda^{\circ}) \, d\mu_{\Lambda^{\circ}}(\lambda^{\circ})$, which is \eqref{eq:figa}.  \\
(ii). No matter which two conditions are satisfied one can show that, for any $j\in\Z_{\NumWin}$ and $k,l\in\Z_{\NumDim}$ the function
\[ \psi_{j,k,l} : G\times\ghat\to \C, \psi(\chi)= \big\langle f_{1}(\, \cdot\, , k) , \pi(\chi) g(\,\cdot\, , k, j) \big\rangle \, \big\langle \pi(\chi) h(\,\cdot\,, l,j) , f_{2}(\,\cdot\,, l)\big\rangle\]
belongs to $\SO(G\times\ghat)$ (see, e.g., the appendix in \cite{jale16-2}). If (c) and (d) are satisfied, then, for some constant $c>0$,
\begin{equation} \label{eq:1209b} \Vert \psi_{j,k,l} \Vert_{\SO} \le c \, \Vert f_{1}(\,\cdot\, , k) \Vert_{2} \, \Vert f_{2}(\,\cdot\, , l) \Vert_{2} \, \Vert g(\,\cdot\, , k, j) \Vert_{\SO} \, \Vert h(\,\cdot\, , l , j) \Vert_{\SO}.\end{equation}
The estimate in \eqref{eq:1209b} is changed accordingly to which functions are assumed to be in $\SO(G)$.
Furthermore there exists a constant $\tilde{c}>0$ such that, no matter which of the assumptions (a)-(d) are satisfied, then
\begin{equation} \label{eq:1209c} \int_{\Lambda} \vert \psi_{j,k,l}(\lambda) \vert \, d\lambda \le \tilde{c} \ \Vert \psi_{j,k,l} \Vert_{\SO} \ \ \text{and} \ \ \int_{\Lambda} \vert \mathcal{F}_{s} \psi_{j,k,l}(\lambda^{\circ}) \vert \, d\mu_{\Lambda^{\circ}} \le \tilde{c} \, \Vert \psi_{j,k,l} \Vert_{\SO}. \end{equation}
Recall that for functions in $\SO$ the Poisson formula is valid, cf.\ Lemma \ref{le:SO-properties}. In particular $\int_{\Lambda} \psi_{j,k,l}(\lambda)  d\mu_{\Lambda}(\lambda) = \int_{\Lambda^{\circ}} \mathcal{F}_{s}(\psi_{j,k,l})(\lambda^{\circ}) \, d\mu_{\Lambda^{\circ}}(\lambda^{\circ})$. Written out this states that
\begin{align*} & \int_{\Lambda} \big\langle \vvfun{f^{1}}{k} , \pi(\lambda) \mvfun{g}{k}{j} \big\rangle \, \big\langle \pi(\lambda) \mvfun{h}{l}{j} , \vvfun{f^{2}}{l}\big\rangle \, d\mu_{\Lambda}(\lambda) \\
& \hspace{3cm} = \int_{\Lambda^{\circ}} \big\langle \mvfun{h}{l}{j}, \pi(\lambda^{\circ}) \mvfun{g}{k}{j} \big\rangle \, \big\langle \pi(\lambda^{\circ}) \vvfun{f^{1}}{k},  \vvfun{f^{2}}{l} \big\rangle \, d\mu_{\Lambda^{\circ}}(\lambda^{\circ}). \end{align*}
All we need to justify for \eqref{eq:figa} to hold is that we may sum over all $k,l\in\Z_{\NumDim}$ and $j\in\Z_{\NumWin}$ so that both sides are absolutely summable. Combining \eqref{eq:1209b}, \eqref{eq:1209c} and either two assumptions implies just that. 
\end{proof}

\begin{proof}[Proof of Lemma \ref{le:s0-implies-bessel}] For a moment let us only consider the multi-window case. By assumption and Lemma \ref{le:figa-suff-cond} we can use \eqref{eq:figa} for any $f\in L^{2}(G)$ (and in particular for any $f\in \SO(G)$), so that
\begin{align*}
S_{g,h,\La} f & =  \sum_{j\in\Z_{\NumWin}} \int_{\Lambda} \langle f, \pi(\lambda) \vvfun{g}{j} \rangle \, \pi(\lambda) \vvfun{h}{j} \, d\lambda \\
& \stackrel{\eqref{eq:figa}}{=} 
\int_{\Lambda^{\circ}} \sum_{j\in\Z_{\NumWin}} \langle \vvfun{h}{j}, \pi(\lambda^{\circ}) \vvfun{g}{j} \rangle \,  \pi(\lambda^{\circ}) f \ d\lambda^{\circ}.
\end{align*}
Because time-frequency shifts are isometries on both $\SO$ and $L^{2}$, we find in either case that
\[ \Vert S_{g,h,\Lambda} f \Vert \le \underbrace{\sum_{j\in\Z_{\NumWin}} \int_{\Lambda^{\circ}}  \big\vert \langle \vvfun{h}{j}, \pi(\lambda^{\circ}) \vvfun{g}{j} \rangle \big\vert \,  d\lambda^{\circ}}_{=:B} \ \Vert f \Vert. \]
The quantity $B$ is finite: combing different statements of Lemma \ref{le:s0-properties}, we establish that, for certain constants $c_{1},c_{2},c_{3}>0$,
\begin{align*}
B & = \sum_{j\in\Z_{\NumWin}} \big\Vert \mathcal{V}_{g_{j}}h_{j} \big\vert_{\Lambda^{\circ}} \big\Vert_{L^{1}(\La^{\circ})} \stackrel{\text{Lemma}\, \ref{le:s0-properties}\text{(iv)}}{\le} \, c_{1} \sum_{j\in\Z_{\NumWin}} \big\Vert \mathcal{V}_{g_{j}}h_{j} \big\vert_{\Lambda^{\circ}} \big\Vert_{\SO(\La^{\circ})} \\
& \stackrel{\text{Lemma}\,\ref{le:s0-properties}\text{(vi)}}{\le} c_{2} \sum_{j\in\Z_{\NumWin}} \big\Vert \mathcal{V}_{g_{j}}h_{j} \big\Vert_{\SO(G\times\ghat)} \\
& \stackrel{\text{Lemma}\,\ref{le:s0-properties}\text{(viii)}}{\le} c_{3} \Big( \sum_{j\in\Z_{\NumWin}} \Vert g_{j} \Vert_{\SO(G)}^{2} \Big)^{1/2} \Big( \sum_{j\in\Z_{\NumWin}} \Vert h_{j} \Vert_{\SO(G)}^{2} \Big)^{1/2}.
\end{align*}
The last term is finite by assumption. Since multi-window \emph{super} Gabor systems are a special case of multi-window Gabor systems the result follows from the previous proof.
\end{proof}

\begin{proof}[Proof of Theorem \ref{th:wex-raz}]
The assumption of (i) implies that $\varphi$ as in Lemma \ref{le:janssen-function} is constant with value $\langle f^{1},f^{2}\rangle$. Therefore, for all $\lambda^{\circ}\in\Lambda^{\circ}$,
\[ \mathcal{F}_{s}\varphi(\lambda^{\circ}) = \langle f^{1},f^{2}\rangle \, \begin{cases} \s(\Lambda)  & \lambda^{\circ} = 0, \\ 0 & \lambda^{\circ} \ne 0.\end{cases}\] 
At the same time Lemma \ref{le:janssen-function} tells us that
\[ \mathcal{F}_{s}(\varphi)(\lambda^{\circ}) = \sum_{ k,l\in\Z_{\NumDim} } \Big( \big\langle \pi(\lambda^{\circ}) \vvfun{f^{1}}{k},  \vvfun{f^{2}}{l} \big\rangle \,  \sum_{j\in\Z_{\NumWin}}\big\langle \mvfun{h}{l}{j}, \pi(\lambda^{\circ}) \mvfun{g}{k}{j} \big\rangle \Big).\]
Since these two expression for $\mathcal{F}_{s}(\varphi)$ must coincide for all choices of $f^{1},f^{2}\in L^{2}(G\times\Z_{\NumDim})$ we can conclude that (ii) must hold.

Conversely, note that by Lemma \ref{le:figa-suff-cond}(i) we are in the position to use the fundamental identity \eqref{eq:figa} for all $f^{1},f^{2}\in L^{2}(G\times\Z_{d})$. Indeed, the assumption (ii) implies that the in Lemma \ref{le:figa-suff-cond}(i) required assumption \eqref{eq:cond-A} is satisfied. It is straightforward to use the assumption in Theorem \ref{th:wex-raz}(ii) in the fundamental identity \eqref{eq:figa} to conclude that the two multi-window super Gabor systems are in fact dual frames for $L^{2}(G\times\Z_{\NumDim})$.
\end{proof}

\begin{proof}[Proof of Lemma \ref{le:nec-cond}(i)]
For a moment let $d=1$. We follow the same idea as in \cite{jale16-2}: a combination of Lemma \ref{le:1009} and the relation in \eqref{eq:2503a} implies that, for any $g\in L^{2}(G\times\Z_{\NumWin})$,
\[ \sum_{j\in\Z_{\NumWin}} \int_{(G\times\ghat)\Lambda} \int_{\Lambda} \big\vert \big\langle \pi(\dot{\chi}) f, \pi(\lambda) \vvfun{g}{j} \big\rangle \big\vert^{2} \, d\mu_{\Lambda}(\lambda) \, d\mu_{(G\times\ghat)/\Lambda}(\dot{\chi})  =  \Vert f \Vert_{2}^{2} \, \Vert g \Vert_{2}^{2} < \infty \ \ \text{for all} \ \ f\in L^{2}(G). \]
If $g$ and $\Lambda$ are such that the frame conditions \eqref{eq:frame} are satisfied, then the lower frame inequality in \eqref{eq:frame-ineq} implies that 
\[ A \, \Vert f \Vert_{2}^{2} \int_{(G\times\ghat)/\Lambda} \, d\mu_{(G\times\ghat)/\Lambda}(\dot{\chi}) \le \Vert f \Vert_{2}^{2} \, \Vert g \Vert_{2}^{2} < \infty \ \ \text{for all} \ \ f\in L^{2}(G). \]
Hence $\s(\Lambda):=\int_{(G\times\ghat)/\Lambda} \, d\mu_{(G\times\ghat)/\Lambda}(\dot{\chi})$ must be finite. This is the case if and only if the quotient group $(G\times\ghat)/\Lambda$ is compact. We conclude that $A \, \s(\Lambda) \le \Vert g \Vert_{2}^{2}$. The upper frame inequality implies that also $ \Vert g \Vert_{2}^{2}\le B \, \s(\Lambda)$. 
We now prove the ``in addition''-part, the inequality $\s(\Lambda) \le n$. If $\NumWin=\infty$, then it is clear that this inequality is satisfied. Assume therefore that $\NumWin$ is finite and that $\Lambda$ is discrete and equipped with the counting measure. The frame assumption for the multi-window Gabor system generated by $g\in L^{2}(G\times\Z_{\NumWin})$ implies that 
the frame operator $S_{g,\Lambda}=D_{g,\Lambda}\circ C_{g,\Lambda}$ is positive and invertible. In particular, we can consider the square root of the inverse frame operator $S_{g,\Lambda}^{-1/2}$. It is a general fact from frame theory (\cite[Theorem 6.1.1]{ch16}) that the multi-window Gabor system generated by $S_{g,\Lambda}^{-1/2}\vvfun{g}{j}$, $j\in\Z_{\NumWin}$ satisfies
\begin{equation} \label{eq:2102a} \sum_{j\in\Z_{\NumWin}} \sum_{\lambda\in\Lambda} \big\vert \big\langle f, \pi(\lambda) S^{-1/2} \vvfun{g}{j} \big\rangle \big\vert^{2} = \Vert f \Vert_{2}^{2} \ \ \text{for all} \ \ f\in L^{2}(G).\end{equation}
If we fix a $j\in\Z_{\NumWin}$ and take $f=S^{-1/2}_{g,\Lambda} \vvfun{g}{j}$, then  we find
\begin{align*} \Vert S^{-1/2}_{g,\Lambda} \vvfun{g}{j}\Vert_{2}^{4} & = \vert \langle S^{-1/2}_{g,\Lambda} \vvfun{g}{j} , S^{-1/2}_{g,\Lambda} \vvfun{g}{j}\rangle \vert^{2} \\
& \le \sum_{j'\in\Z_{\NumWin}} \sum_{\lambda\in\Lambda} \big\vert \big\langle S^{-1/2}_{g,\Lambda} \vvfun{g}{j}, \pi(\lambda) S^{-1/2} \vvfun{g}{j'} \big\rangle \big\vert^{2} \stackrel{\eqref{eq:2102a}}{=} \Vert S^{-1/2}_{g,\Lambda} \vvfun{g}{j} \Vert_{2}^{2}. \end{align*}
Hence $\Vert S^{-1/2}_{g,\Lambda} \vvfun{g}{j}\Vert_{2} \le 1$ for all $j\in \Z_{\NumWin}$ and $\sum_{j\in\Z_{\NumWin}} \Vert S^{-1/2}_{g,\Lambda} \vvfun{g}{j}\Vert_{2} \le \NumWin$. In order to finish the proof we need to use Theorem \ref{th:wex-raz} which is independent of the result we are proving here. Using this we establish that 
\[ \sum_{j\in\Z_{\NumWin}} \Vert S_{g,\Lambda}^{-1/2} \vvfun{g}{j} \Vert_{2} = \sum_{j\in\Z_{\NumWin}} \langle S_{g,\Lambda}^{-1} \vvfun{g}{j}, \vvfun{g}{j} \rangle \stackrel{\textnormal{Theorem} \ \ref{th:wex-raz}}{=} \s(\Lambda). \]
We conclude that $\s(\Lambda) \le \NumWin$.

Assume now that $d\in \N\cup\{\infty\}$. If a multi-window super Gabor system is a frame for $L^{2}(G\times\Z_{\NumDim})$ then, as we just showed, we need the quotient \[ \big[(G\times\ghat) \times (\Z_{\NumDim}\times\widehat{\Z}_{\NumDim})\big]/(\Lambda \times \{0\}) \cong (G\times\ghat)/\Lambda \, \times \, (\Z_{\NumDim}\times\widehat{\Z}_{\NumDim})\] to be compact. This is true if and only if $(G\times\ghat)/\Lambda$ is compact and  $\NumDim<\infty$. In that case $\s(\Lambda\times\{0\}) = \NumDim \, \s(\Lambda)$.
\end{proof}

\subsection*{Acknowledgments}
The work of M.S.J.\ was carried out during the tenure of the ERCIM ``Alain Bensoussan'' Fellowship Programme at NTNU. The authors thank K.\ Gr\"ochenig and J.\ T.\ van Velthoven for discussions and remarks on a gap in the proof of Theorem 5.4, that appeared in an earlier version of the manuscript.

\bibliographystyle{abbrv}

\begin{thebibliography}{10}


\bibitem{ab10}
L.~D. {A}breu.
\newblock {S}ampling and interpolation in {B}argmann-{F}ock spaces of
  polyanalytic functions.
\newblock {\em Appl. Comput. Harmon. Anal.}, 29(3):287--302, 2010.

\bibitem{ba00}
R.~M. {B}alan.
\newblock {M}ultiplexing of signals using superframes.
\newblock In {\em {P}roc. {S}{P}{I}{E}, {W}avelet {A}pplications in {S}ignal
  and {I}mage {P}rocessing {V}{I}{I}{I}}, volume 4119, pages 118--129, 2000.
\bibitem{baduhala18}
R.~M. {B}alan, D.\ Dutkay, D.\ Han, D.\ Larson, and F.\ Luef.
\newblock A duality principle for groups II.
\newblock preprint, 2018.


\bibitem{be04}
B.~{B}ekka.
\newblock {S}quare integrable representations, von {N}eumann algebras and an
  application to {G}abor analysis.
\newblock {\em J. Fourier Anal. Appl.}, 10(4):325--349, 2004.

\bibitem{bo99-2}
F.-P. {B}oca.
\newblock {P}rojections in rotation algebras and theta functions.
\newblock {\em Comm. Math. Phys.}, 202(2):325--357, 1999.

\bibitem{cakula04}
P.~G. {C}asazza, G.~{K}utyniok, and M.~C. {L}ammers.
\newblock {D}uality principles in frame theory.
\newblock {\em J. Fourier Anal. Appl.}, 10(4):383--408, 2004.

\bibitem{cakula05}
P.~G. {C}asazza, G.~{K}utyniok, and M.~C. {L}ammers.
\newblock {D}uality principles, localization of frames, and {G}abor theory.
\newblock In {\em {W}avelets {X}{I} ({S}an {D}iego, {C}{A}, 2005)}, volume
  5914, pages 389--398. {S}{P}{I}{E}, 2005.

\bibitem{chlu17}
S.~{C}hakraborty and F.~{L}uef.
\newblock {M}etaplectic transformations and finite group actions on
  noncommutative tori.
\newblock {\em {J} {O}perat. Theor.}, 82(1):147-172, 2019.

\bibitem{ch16}
O.~{C}hristensen.
\newblock {\em {A}n {I}ntroduction to {F}rames and {R}iesz {B}ases}.
\newblock {A}pplied and {N}umerical {H}armonic {A}nalysis. {B}irkh{\"a}user
  {B}asel, {S}econd edition, 2016.

\bibitem{co80}
A.~{C}onnes.
\newblock ${C}^*$ alg\`ebres et g\'eom\'etrie diff\`erentielle.
\newblock {\em C. R. Acad. Sci. Paris S'er. A-B}, 290(13):{A}599--{A}604, 1980.

\bibitem{dalala95}
I.~{D}aubechies, H.~J. {L}andau, and Z.~{L}andau.
\newblock {G}abor time-frequency lattices and the {W}exler-{R}az identity.
\newblock {\em J. Fourier Anal. Appl.}, 1(4):437--478, 1995.

\bibitem{duhala09}
D.~{D}utkay, D.~{H}an, and D.~{L}arson.
\newblock {A} duality principle for groups.
\newblock {\em J. Funct. Anal.}, 257(4):1133--1143, 2009.

\bibitem{dogr11} 
M.~{D}{\"o}rfler and K.~{G}r{\"o}chenig. 
\newblock {T}ime-frequency partitions and characterizations of modulations 
spaces with localization operators. 
\newblock {\em J. Funct. Anal.}, 260(7):1903 -- 1924, 2011. 

\bibitem{ecluphwa10}
S.~{E}chterhoff, W.~{L}{\"u}ck, N.~{P}hillips, and S.~{W}alters.
\newblock {T}he structure of crossed products of irrational rotation algebras
  by finite subgroups of {${\rm SL}_2(\Bbb Z)$}.
\newblock {\em J. Reine Angew. Math.}, 639:173--221, 2010.

\bibitem{enjalu18}
U.~B.~R. {E}nstad, M.~S. {J}akobsen, and F.~{L}uef.
\newblock {T}ime-frequency analysis on the adeles over the rationals.
\newblock {\em {C}ompt. {R}end. Math.}, 357(2):188-199, 2019.

\bibitem{fahesh16}
Z.~{F}an, A.~{H}einecke, and Z.~{S}hen.
\newblock {D}uality for frames.
\newblock {\em J. Fourier Anal. Appl.}, 22(1):71-136, 2016.

\bibitem{fe81-1}
H.~G. {F}eichtinger.
\newblock {B}anach spaces of distributions of {W}iener's type and
  interpolation.
\newblock In P.~{B}utzer, S.~{N}agy, and E.~{G}{\"o}rlich, editors, {\em
  {P}roc. {C}onf. {F}unctional {A}nalysis and {A}pproximation, {O}berwolfach
  {A}ugust 1980}, number~69 in {I}nternat. {S}er. {N}umer. {M}ath., pages
  153--165. {B}irkh{\"a}user {B}oston, {B}asel, 1981.

\bibitem{fe81-2}
H.~G. {F}eichtinger.
\newblock {O}n a new {S}egal algebra.
\newblock {\em Monatsh. Math.}, 92:269--289, 1981.

\bibitem{feka04}
H.~G. {F}eichtinger and N.~{K}aiblinger.
\newblock {V}arying the time-frequency lattice of {G}abor frames.
\newblock {\em Trans. Amer. Math. Soc.}, 356(5):2001--2023, 2004.

\bibitem{feko98}
H.~G. {F}eichtinger and W.~{K}ozek.
\newblock {Q}uantization of {T}{F} lattice-invariant operators on elementary
  {L}{C}{A} groups.
\newblock In H.~G. {F}eichtinger and T.~{S}trohmer, editors, {\em {G}abor
  analysis and algorithms}, {A}ppl. {N}umer. {H}armon. {A}nal., pages 233--266.
  {B}irkh{\"a}user, {B}oston, {M}{A}, 1998.

\bibitem{fo16}
G.~{F}olland.
\newblock {\em {A} {C}ourse in {A}bstract {H}armonic {A}nalysis}.
\newblock {T}extbooks in {M}athematics. {C}{R}{C} {P}ress, {B}oca {R}aton,
  {S}econd edition, 2016.

\bibitem{frla02}
M.~{F}rank and D.~R. {L}arson.
\newblock {F}rames in {H}ilbert ${C}^*$-modules and ${C}^*$-algebras.
\newblock {\em J. Operat, Theor.}, 48(2):273--314, 2002.

\bibitem{fu08}
H.~{F}{\"u}hr.
\newblock {S}imultaneous estimates for vector-valued {G}abor frames of
  {H}ermite functions.
\newblock {\em Adv. Comput. Math.}, 29(4):357--373, 2008.

\bibitem{gr01}
K.~{G}r{\"o}chenig.
\newblock {\em {F}oundations of {T}ime-{F}requency {A}nalysis}.
\newblock {A}ppl. {N}umer. {H}armon. {A}nal. {B}irkh{\"a}user, {B}oston,
  {M}{A}, 2001.

\bibitem{gr07-2}
K.~{G}r{\"o}chenig.
\newblock {G}abor frames without inequalities.
\newblock {\em {I}nt. {M}ath. {R}es. {N}ot. {I}{M}{R}{N}}, (23):{A}rt. {I}{D}
  rnm111, 21, 2007.

\bibitem{grle04}
K.~{G}r{\"o}chenig and M.~{L}einert.
\newblock {W}iener's lemma for twisted convolution and {G}abor frames.
\newblock {\em J. Amer. Math. Soc.}, 17:1--18, 2004.

\bibitem{grly09}
K.~{G}r{\"o}chenig and Y.~{L}yubarskii.
\newblock {G}abor (super)frames with {H}ermite functions.
\newblock {\em Math. Ann.}, 345(2):267--286, 2009.

\bibitem{grrost18}
K.~{G}r{\"o}chenig, J.~L. {R}omero, and J.~{S}t{\"o}ckler.
\newblock {S}ampling theorems for shift-invariant spaces, {G}abor frames, and
  totally positive functions.
\newblock {\em {I}nvent. {M}ath.}, 211(3):1119–1148, 2018.

\bibitem{grst13}
K.~{G}r{\"o}chenig and J.~{S}t{\"o}ckler.
\newblock {G}abor frames and totally positive functions.
\newblock {\em Duke Math. J.}, 162(6):1003--1031, 2013.

\bibitem{hero70}
E.~{H}ewitt and K.~A. {R}oss.
\newblock {\em {A}bstract {H}armonic {A}nalysis. {V}ol. {I}{I}: {S}tructure and
  {A}nalysis for {C}ompact {G}roups. {A}nalysis on {L}ocally {C}ompact
  {A}belian {G}roups}.
\newblock {S}pringer, {B}erlin, {H}eidelberg, {N}ew {Y}ork, 1970.

\bibitem{hero79}
E.~{H}ewitt and K.~A. {R}oss.
\newblock {\em {A}bstract {H}armonic {A}nalysis. {V}ol. 1: {S}tructure of
  {T}opological {G}roups; {I}ntegration {T}heory; {G}roup {R}epresentations.
  2nd ed.}
\newblock {S}pringer-{V}erlag, {B}erlin-{H}eidelberg-{N}ew {Y}ork, 1979.

\bibitem{hu18-1}
L.~T. {H}uang.
\newblock {M}etrized {Q}uantum {V}ector {B}undles over {Q}uantum {T}ori {B}uilt
  from {R}iemannian {M}etrics and {R}osenberg\'s {L}evi-{C}ivita
  {C}onnections.
\newblock {\em {S}{I}{G}{M}{A}}, 14, 2018.

\bibitem{ja19}
M.~S. {J}akobsen.
\newblock {O}n a (no longer) {N}ew {S}egal {A}lgebra: {A} {R}eview of the
  {F}eichtinger {A}lgebra.
\newblock {\em J. Fourier Anal. Appl.}, pages 1 -- 82, 2018.

\bibitem{jale16-1}
M.~S. {J}akobsen and J.~{L}emvig.
\newblock {C}o-compact {G}abor systems on locally compact {A}belian groups.
\newblock {\em J. Fourier Anal. Appl.}, 22(1):36--70, 2016.

\bibitem{jale16-2}
M.~S. {J}akobsen and J.~{L}emvig.
\newblock {D}ensity and duality theorems for regular {G}abor frames.
\newblock {\em J. Funct. Anal.}, 270(1):229 -- 263, 2016.

\bibitem{ja95}
A.~J. E.~M. {J}anssen.
\newblock {D}uality and biorthogonality for {W}eyl-{H}eisenberg frames.
\newblock {\em J. Fourier Anal. Appl.}, 1(4):403--436, 1995.

\bibitem{lali12-1}
N.~{L}arsen and X.~{L}i.
\newblock {T}he 2-adic ring $c^*$-algebra of the integers and its
  representations.
\newblock {\em J. Funct. Anal.}, 262(4):1392--1426, 2012.

\bibitem{la17-1}
F.~{L}atremoliere.
\newblock {H}eisenberg {M}odules over {Q}uantum 2-tori are metrized quantum
  vector bundles.
\newblock {\em {C}and. {J}. {M}ath.}, 1-38, 2019.

\bibitem{la18}
F.~{L}atremoliere.
\newblock {C}onvergence of {H}eisenberg {M}odules over {Q}uantum 2-tori for the
  {M}odular {G}romov-{H}ausdorff {P}ropinquity.
\newblock {\em {A}r{X}iv e-prints}, mar 2018.

\bibitem{lapa13}
F.~{L}atremoliere and J.~{P}acker.
\newblock {N}oncommutative solenoids and their projective modules.
\newblock {\em {C}ommutative and {N}oncommutative {H}armonic {A}nalysis and
  {A}pplications}, 603:35--53, 2013.

\bibitem{lapa15}
F.~{L}atremoliere and J.~{P}acker.
\newblock {E}xplicit construction of equivalence bimodules between
  noncommutative solenoids.
\newblock {\em {C}ontemp. {M}ath., {A}mer. {M}ath. {S}oc.}, 650:111--140, 2015.

\bibitem{lapa17}
F.~{L}atremoliere and J.~{P}acker.
\newblock {N}oncommutative solenoids and the {G}romov-{H}ausdorff propinquity.
\newblock {\em Proc. Amer. Math. Soc.}, 145:2043--2057, 2017.

\bibitem{lemo16}
M.~{L}esch and H.~{M}oscovici.
\newblock {M}odular curvature and {M}orita equivalence.
\newblock {\em Geom. Funct. Anal.}, 26(3):818--873, 2016.

\bibitem{li04-4}
H.~{L}i.
\newblock {S}trong {M}orita equivalence of higher-dimensional noncommutative
  tori.
\newblock {\em J. Reine Angew. Math.}, 576:167--180, 2004.

\bibitem{lo80}
V.~{L}osert.
\newblock {A} characterization of the minimal strongly character invariant
  {S}egal algebra.
\newblock {\em Ann. Inst. Fourier (Grenoble)}, 30:129--139, 1980.

\bibitem{lu09}
F.~{L}uef.
\newblock {P}rojective modules over non-commutative tori are multi-window
  {G}abor frames for modulation spaces.
\newblock {\em J. Funct. Anal.}, 257(6):1921--1946, 2009.

\bibitem{lu11}
F.~{L}uef.
\newblock {P}rojections in noncommutative tori and {G}abor frames.
\newblock {\em Proc. Amer. Math. Soc.}, 139(2):571--582, 2011.

\bibitem{re71}
H.~{R}eiter.
\newblock {\em $L^1$-algebras and {S}egal {A}lgebras}.
\newblock {S}pringer, {B}erlin, {H}eidelberg, {N}ew {Y}ork, 1971.

\bibitem{re84}
H.~{R}eiter.
\newblock {T}heta functions and symplectic groups.
\newblock {\em Monatsh. Math.}, 97:219--232, 1984.

\bibitem{rest00}
H.~{R}eiter and J.~D. {S}tegeman.
\newblock {\em {C}lassical {H}armonic {A}nalysis and {L}ocally {C}ompact
  {G}roups. 2nd ed.}
\newblock {C}larendon {P}ress, {O}xford, 2000.

\bibitem{ri79-1}
M.~A. {R}ieffel.
\newblock {U}nitary representations of group extensions; an algebraic approach
  to the theory of {M}ackey and {B}lattner.
\newblock In {\em {S}tudies in {A}nalysis}, volume~4 of {\em {A}dvances in
  {M}athematics {S}upplementary {S}tudies}, pages 43--82. 1979.

\bibitem{ri81}
M.~A. {R}ieffel.
\newblock {C}*-algebras associated with irrational rotations.
\newblock {\em Pacific J. Math.}, 93:415--429, 1981.

\bibitem{ri88}
M.~A. {R}ieffel.
\newblock {P}rojective modules over higher-dimensional noncommutative tori.
\newblock {\em Canad. J. Math.}, 40(2):257--338, 1988.

\bibitem{ri10-2}
M.~A. {R}ieffel.
\newblock {V}ector bundles and {G}romov {H}ausdorff distance.
\newblock {\em {J}. {K}-{T}heory}, 5(1):39--103, 2010.

\bibitem{rosh97}
A.~{R}on and Z.~{S}hen.
\newblock {W}eyl-{H}eisenberg frames and {R}iesz bases in
  ${L}_2(\mathbb{R}^d)$.
\newblock {\em Duke Math. J.}, 89(2):237--282, 1997.

\bibitem{chst15}
D.~{S}toeva and O.~{C}hristensen.
\newblock {O}n {R}-{D}uals and the {D}uality {P}rinciple in {G}abor {A}nalysis.
\newblock {\em J. Fourier Anal. Appl.}, 21(2,):383--400, 2015.

\bibitem{chst16}
D.~{S}toeva and O.~{C}hristensen.
\newblock {O}n {V}arious {R}-duals and the {D}uality {P}rinciple.
\newblock {\em {I}ntegral {E}quations and {O}perator {T}heory},
  84(4,):577--590, 2016.

\bibitem{zezi97}
M.~{Z}ibulski and Y.~Y. {Z}eevi.
\newblock {A}nalysis of multiwindow {G}abor-type schemes by frame methods.
\newblock {\em Appl. Comput. Harmon. Anal.}, 4(2):188--221, 1997.

\end{thebibliography}

\end{document}